\newcommand{\RR}{{\mathbb R}}
\newcommand{\Bcal}{\mathcal{B}}
\newcommand{\Dcal}{\mathcal{D}}
\newcommand{\Ecal}{\mathcal{E}}
\newcommand{\Fcal}{\mathcal{F}}
\newcommand{\Gcal}{\mathcal{G}}
\newcommand{\Ical}{\mathcal{I}}
\newcommand{\Jcal}{\mathcal{J}}
\newcommand{\Mcal}{\mathcal{M}}
\newcommand{\Ocal}{\mathcal{O}}
\newcommand{\Pcal}{\mathcal{P}}
\newcommand{\Rcal}{\mathcal{R}}
\newcommand{\Xcal}{\mathcal{X}}
\newcommand{\VBar}{\overline{V}}
\newcommand{\WBar}{\overline{W}}
\newcommand{\Ubar}{\bar{U}}
\renewcommand{\hbar}{\bar{h}}
\newcommand{\pbar}{\bar{p}}
\newcommand{\ubar}{\bar{u}}
\newcommand{\vbar}{\bar{v}}
\newcommand{\ptil}{\tilde{p}}
\newcommand{\util}{\tilde{u}}
\newcommand{\vtil}{\tilde{v}}
\newcommand{\abs}[1]{\left|#1\right|}
\newcommand{\aabs}[1]{\Big|#1\Big|}
\newcommand{\norm}[1]{\left\|#1\right\|}
\DeclareMathOperator{\supp}{supp}
\renewcommand{\a}{\alpha}
\renewcommand{\b}{\beta}
\newcommand{\g}{\gamma}
\renewcommand{\d}{\delta}
\newcommand{\e}{\varepsilon}
\newcommand{\z}{\zeta}
\newcommand{\et}{\eta}
\renewcommand{\th}{\theta}
\renewcommand{\k}{\kappa}
\renewcommand{\l}{\lambda}
\newcommand{\m}{\mu}
\newcommand{\n}{\nu}
\newcommand{\x}{\xi}
\renewcommand{\r}{\rho}
\newcommand{\s}{\sigma}
\renewcommand{\t}{\tau}
\newcommand{\ps}{\psi}
\renewcommand{\O}{\Omega}
\newcommand{\rd}{\partial}
\newcommand{\thbar}{\bar{\th}}
\newcommand{\thtil}{\tilde{\th}}
\newcommand{\one}[1]{\mathbf{1}_{\{#1\}}}
\definecolor{black}{rgb}{0.0, 0.0, 0.0}
\definecolor{red}{rgb}{1.0, 0.5, 0.5}
\title[   ]{Stability of a Riemann shock in a physical class: \\from Brenner-Navier-Stokes-Fourier to Euler}
\author[Eo]{Saehoon Eo}
\address[Saehoon Eo]
{ Department of Mathematics, \newline
Stanford University \\
CA 94305, USA}
\email{eosehoon@stanford.edu}
\author[Eun]{Namhyun Eun}
\address[Namhyun Eun]
{ Department of Mathematical Sciences, \newline
Korea Advanced Institute of
Science and Technology \\
Daejeon 34141, Korea}
\email{namhyuneun@kaist.ac.kr}
\author[Kang]{Moon-Jin Kang}
\address[Moon-Jin Kang]
{ Department of Mathematical Sciences, \newline
Korea Advanced Institute of
Science and Technology \\
Daejeon 34141, Korea}
\email{moonjinkang@kaist.ac.kr}
\newtheorem{theorem}{Theorem}[section]
\newtheorem{lemma}{Lemma}[section]
\newtheorem{proposition}{Proposition}[section]
\newtheorem{remark}{Remark}[section]
\newcommand{\bbr}{\mathbb R}
\newcommand{\deo}{\d_0}
\numberwithin{figure}{section}
\newcommand{\beq}{\begin{equation}}
\newcommand{\eeq}{\end{equation}}
\newcommand{\bsp}{\begin{split}}
\newcommand{\esp}{\end{split}}
\newcommand{\wc}{\rightharpoonup}
\newcommand{\vt}{{\tilde{v}}}
\newcommand{\ut}{{\tilde{u}}}
\newcommand{\pt}{{\tilde{p}}}
\newcommand{\tht}{{\tilde{\th}}}
\newcommand{\Ut}{{\tilde{U}}}
\newcommand{\vtn}{{\tilde{v}^\nu}}
\newcommand{\utn}{{\tilde{u}^\nu}}
\newcommand{\thtn}{{\tilde{\th}^\nu}}
\newcommand{\vn}{{v^\nu}}
\newcommand{\un}{{u^\nu}}
\newcommand{\thn}{{\th^\nu}}
\newcommand{\Phif}[2]{\Phi\Big(\frac{#1}{#2}\Big)}
\newcommand{\bpf}[1]{\noindent\textbf{Proof of \eqref{#1}:}} 
\newcommand{\bci}[1]{\noindent\textbf{Control of \(I_{#1}\):}} 
\newcommand{\step}[1]{\vskip0.2cm \noindent{\it Step #1:} }
\begin{document}

\date{\today}

\subjclass{76N15, 35B35,   35Q30} \keywords{Compressible Euler system, Shock, Uniqueness, Stability, Brenner-Navier-Stokes-Fourier system, Vanishing viscosity limit, Relative entropy, Hyperbolic conservation laws.}

\thanks{\textbf{Acknowledgment.} This work was supported by Samsung Science and Technology Foundation under Project Number SSTF-BA2102-01. The authors thank HyeonSeop Oh and Professor Alexis F. Vasseur for valuable comments.}

\begin{abstract}
The stability of an irreversible singularity, such as a Riemann shock to the full Euler system, in the absence of any technical conditions on perturbations, remains a major open problem even within mono-dimensional framework.
A natural approach to justify such stability is to consider vanishing dissipation (or viscosity) limits of physical viscous flows.
We prove the existence of vanishing dissipation limits, on which a Riemann shock of small amplitude is stable (up to a time-dependent shift) and unique.
Thus, a Riemann weak shock is rigid (not turbulent) under physical disturbances.
We adopt the Brenner-Navier-Stokes-Fourier system, based on the bi-velocity theory, as a physical viscous model. 
The key ingredient of the proof is the uniform stability of the viscous shock with respect to the viscosity strength.
The uniformity is ensured by contraction estimates of any large perturbations around the shock.
The absence of any restrictions on size of initial perturbations forces us to handle extreme values of density and temperature, which constitutes the most challenging part of our analysis.
We use the method of \(a\)-contraction with shifts, but we improve it by introducing a more delicate analysis of the localizing effect given by viscous shock derivatives. 
This improvement possesses a degree of robustness that renders it applicable to a wide range of models.
This is the first resolution for the challenging open problem on the ``unconditional'' stability and uniqueness of Riemann shock solutions to the full Euler system in a class of vanishing physical dissipation limits.
\end{abstract}
\maketitle \centerline{\date}


\section{Introduction}
\setcounter{equation}{0}
The compressible Euler system has been extensively studied in various directions over an extended period.
In 1965, Glimm \cite{Glimm65} proved the global existence of entropy solutions to \(n \times n\) hyperbolic system of conservation laws (especially for the Euler system) in one space dimension, for small BV initial data.
Concerning the uniqueness and stability, Dafermos and DiPerna established the Weak/Strong principle in 1979 \cite{Dafermos96,Diperna79}, to show that Lipschitz solutions (strong solutions) are unique and \(L^2\)-stable within the broader class of bounded entropic (weak) solutions.

\vspace{2mm}
For singular solutions containing jump discontinuities such as Riemann shocks, the scenario for uniqueness and stability changes significantly.
A Riemann shock is a self-similar singularity that is irreversible and planar jump discontinuous, first proposed by Riemann \cite{Riemann1860} in 1860s, as an entropy solution to the Euler system with Riemann initial data.
For the multi-dimensional isentropic case, De Lellis-Sz\'ekelyhidi \cite{DeLellisLaszlo10} and Chiodaroli-De Lellis-Kreml \cite{ChioDeLellisKreml15} showed non-uniqueness of entropy solutions.
They showed that entropy solutions to the multi-dimensional isentropic Euler systems are not unique, by constructing infinitely many entropy solutions based on the convex integration method \cite{DeLellisLaszlo09,DeLellisLaszlo10}.
We refer to \cite{ChioFeireislKreml15,KlinKremlOMM20} for more general result on the ill-posedness of the full Euler system in multi-dimension. 
These results imply that the entropy inequality (or 2nd law of thermodynamics) is not enough in general to select a unique and stable solution.
We also refer to \cite{ChenVasseurYu,Krupa24,KrupaNonT4} for the study of mono-dimensional hyperbolic conservation laws based on the convex integration method.

\vspace{2mm}
For mono-dimensional case, the uniqueness and stability of singular solutions were proved under some technical conditions. 
It was shown that Riemann solutions containing shocks are unique and stable under certain conditions on perturbations, such as locally BV regularity in space \cite{ChenFridLi02}; strong trace property \cite{KV16,Krupa21,LegerVasseur11,Vasseur16}.   
For more general solutions, it was proved in \cite{BressanCrastaPiccoli00,BressanLiuYang99,ChenKrupaVasseur22,LiuYang99} that small BV entropy solutions are unique and stable, provided that the solutions satisfy some extra conditions such as Tame Oscillation Condition; Bounded Variation Condition along space-like curves; strong trace property.
Those technical conditions for solutions are removed in \cite{BressanDeLellis23,BressanGuerra24,ChenKrupaVasseur22} only for the uniqueness results.

\vspace{2mm}
However, the study of the stability of singularities without any technical conditions for perturbations remains a challenging open problem, even in one-dimension. A natural way to justify the stability involves to consider vanishing ``physical" viscosity limits (or dissipation limits) of Navier-Stokes flows with evanescent viscosities, as ``physical" disturbances (for example, see \cite{dafermos2005hyperbolic}).
In this context, Bianchini-Bressan \cite{BianchiniBressan05} first constructed unique and stable small BV solutions from inviscid limits of parabolic flows with `artificial' viscosities (given by Laplacians for all conserved quantities).
For physical viscous system such as Navier-Stokes systems, Kang-Vasseur \cite{KV-Inven} first proved the stability of a Riemann shock of small amplitude in a class of inviscid limits of Navier-Stokes solutions. More precisely, for the isentropic Euler system, they constructed a set of inviscid limits of Navier-Stokes solutions perturbed from a Riemann weak shock, on which the shock is stable. This result was extended to the case for Riemann solution composed of two weak shocks \cite{KV-JDE}. Recently, based on those results and \cite{Bressan20,ChenKrupaVasseur22,ChenPerepelitsa10}, Chen-Kang-Vasseur \cite{ChenKangVasseur24arxiv} first constructed unique and stable small BV solutions to the isentropic Euler system from inviscid limits of Navier-Stokes solutions. We also refer to \cite{KVW-CMP} regarding  the stability result in 3D of a planar contact discontinuity without shear (i.e., an entropy wave) in the class of vanishing dissipation limits.
However, up to now, for the full Euler system, the above problem of the shock stability is open. In this paper, we present the first result on this challenging problem.

\vspace{2mm}
We aim to investigate the stability and uniqueness of a Riemann shock for the one-dimensional full Euler system in a physical class composed of vanishing dissipation limits from an associated physical viscous system.
For brevity, we consider the full Euler system in the Lagrangian mass coordinates:
\begin{align}
\left\{
\begin{aligned} \label{Euler}
    &v_t-u_x = 0, \\
    &u_t+p_x = 0, \\
    &\Big(e+\frac{u^2}{2}\Big)_t+(pu)_x = 0,
\end{aligned}
\right.
\end{align}
together with the relations (for ideal polytropic gas):
\begin{equation} \label{pressure}
p=p(v,\th)=\frac{R\th}{v}, \qquad e(\th)=\frac{R}{\g-1}\th.
\end{equation} 
Here, \(v\) denotes the specific volume, \(u\) the fluid velocity, \(\th\) the absolute temperature, \(E:=e+\frac{u^2}{2}\) the total energy, \(R>0\) the gas constant and \(\g>1\) the adiabatic constant.

\subsection{Brenner-Navier-Stokes-Fourier system}
For the physical viscous system associated to \eqref{Euler}, we consider the so-called Brenner-Navier-Stokes-Fourier(BNSF) system in the Lagrangian mass coordinates:
\begin{align}
\left\{
\begin{aligned} \label{LBNSF}
    &v_t -(u_v)_x = \Big(\t(\th)\frac{v_x}{v}\Big)_x, \\
    &(u_v)_t+p_x = \Big(\mu(\th)\frac{(u_v)_x}{v}\Big)_x, \\
    &\Big(e+\frac{(u_v)^2}{2}\Big)_t+(p u_v)_x = 
    \Big(\k(\th)\frac{\th_x}{v}\Big)_x+ \Big(\m(\th)\frac{u_v (u_v)_x}{v}\Big)_x.
\end{aligned}
\right.
\end{align}
Here, $u_v$, \(\m\) and \(\k\) respectively represent the volume velocity, the viscosity coefficient and the heat conductivity coefficient. Especially, \(\t(\th)\coloneqq\frac{\k(\th)}{c_p}\) is called the Brenner coefficient, where \(c_{p}\) denotes the specific heat capacity at constant pressure.
These are all described in detail below. 
The BNSF system has been proposed by Brenner \cite{brenner2006fluid} to improve some flaws of the compressible Navier-Stokes-Fourier(NSF) system.
He argued in \cite{brenner2005kinematics,brenner2005navier,brenner2006fluid} that the NSF system does not adequately describe compressible flows, particularly under extreme conditions such as rarefied gases, shock waves and gaseous flows through micro-channels.
In fact, there have been numerous studies pointing out and attempting to improve the shortcomings of the NSF system(see \cite{arkilic2001mass, dadzie2008continuum, dongari2009extended, greenshields2007structure, harley1995gas, klimontovich1992need, klimontovich1993hamiltonian}), but it has been accepted that Brenner's justification is the most systematic and well-established.
His idea is based on the bi-velocity theory, which claims the presence of two different velocities to describe the motion of fluid, namely the mass velocity \(u_m\) (which is identical to fluid velocity appearing in classical fluid equations) and the volume velocity \(u_v\). 
He observed that, in general, \(u_m \neq u_v\), and the inconsistency increases as the density gradient becomes larger. 
More specifically, the BNSF system is introduced in \cite{brenner2006fluid} in arbitrary dimensions and is thus originally written in Eulerian coordinates.
In the one-dimensional case, it takes the following form:
\beq\label{EBNSF}
\left\{
\begin{aligned}
    & \rd_t \rho + \rd_y (\rho u_m) = 0,\\
    & \rd_t (\rho u_v) + \rd_y (\rho u_v u_m + p) = \rd_y (\m(\th) \rd_y u_v), \\
    & \rd_t \Big(\rho\Big(\frac{u_v^2}{2}+e\Big)\Big) + \rd_y \Big(\rho\Big(\frac{u_v^2}{2}+e\Big)u_m + pu_v\Big) = \rd_y (\m(\th) u_v\rd_y u_v + \k(\th)\rd_y \th),
\end{aligned}
\right.
\eeq
where \(\r\) denotes the density and \(u_m\) the mass velocity which is used to describe the mass transportation and convection effect and is identical to the fluid velocity appearing in classical fluid equations such as the NSF system, while the volume velocity is introduced to define the momentum, energy, work and viscous stress. 
Moreover, from \cite{brenner2006fluid}, the constitutive relation between \(u_v\) and \(u_m\) is given by 
\begin{equation}\label{brenner}
u_v = u_m + \frac{\k(\th)}{\r c_p}\partial_y\ln\r.
\end{equation}
Note that $u_v=u_m$ in the Euler system by $\kappa=0$.
In his series of works \cite{brenner2005kinematics,brenner2005navier,brenner2006fluid}, he proposed \eqref{brenner} based on the theory by \"Ottinger and justified it through various ways including alignment with Burnett's solution to the Boltzmann equation and comparison with experimental results.
In \cite{brenner2006fluid}, he also demonstrated that \eqref{brenner} is consistent with linear irreversible thermodynamics, and notably, representing the no-slip boundary condition at solid surfaces using the volume velocity more accurately reflects experimental observations.

Notice that \eqref{brenner} shows that the difference between \(u_v\) and \(u_m\) is proportional to density gradient, and can be zero or negligibly small near the regime where the density is uniform in space as incompressible fluids.
This feature justifies our adoption of the BNSF system as a physical viscous model in the present paper.
To obtain an estimate of viscous perturbations around the shock that is uniform with respect to the viscous strength, which implies the existence of a set of inviscid limits for the stability of the shock, it is essential to control perturbed viscous flows with abrupt spatial variations, where the density could be arbitrarily small or large, as in our strategy for the proof (see Section \ref{sec:idea}). 

The need to introduce the concept of volume velocity is elaborated in a more intuitive manner as follows: consider the work produced by the displacement of gas particles due to the pressure within the gas.
To determine the amount of this work, it is essential to define the displacement distance.
However, it is important to note that it is not the individual gas particles that generate pressure and are displaced by it, but rather a collection of particles.
Since the boundary of such a collection is inherently ambiguous, it is intuitively evident that a rigorous definition of the displacement distance is infeasible.
This is linked to the fact that the volume of gas is not a point-based property.
The volume of gas could not be understood as an attribute of particles at a single point, but rather it could be interpreted, at best, statistically as a property of a collection of particles.
Therefore, the concept of volume velocity, as a new form of velocity, is necessary to describe the motion of particle collections, rather than relying solely on the traditional concept of velocity.

Despite the thoroughness of Brenner's theory, there are few results that analyze the BNSF system from the mathematical perspective. 
Feireisl-Vasseur \cite{FeVa} established the existence of the global weak solutions to the initial boundary value problem of the BNSF system. It is also worth mentioning that the BNSF system is used to construct measure-valued solutions of Euler in \cite{Fe1}, and the finite volume scheme for Euler in \cite{Fe2}.

\vspace{2mm}
To obtain the Lagrangian representation \eqref{LBNSF} of the BNSF system as a physical viscous model associated to \eqref{Euler}, we rewrite the (original) BNSF system \eqref{EBNSF} (in Eulerian coordinates $(t,y)$) in Lagrangian mass coordinates $(t,x)$, where the flow map is derived from the mass velocity $u_m$ as follows: 
\[
\partial_t y(t,x) = u_m (t,y(t,x)),\qquad \int_0^{y(0,x)} \rho(0,z) dz = x.
\]
Indeed, $v(t,x)=\frac{1}{\rho(t,y(t,x))}, u_v(t,x)=u_v(t,y(t,x))$ and $u_m(t,x)=u_m(t,y(t,x))$ satisfy the system composed of $v_t - (u_m)_x =0$ and the last two equations of \eqref{LBNSF}. Then, $v_t - (u_m)_x =0$ can be written as the first equation of \eqref{LBNSF} by using the constitutive relation $u_v=u_m - \frac{\k(\th)}{c_p}\frac{v_x}{v}$ from \eqref{brenner}, together with $\tau(\th)=\kappa(\th)/c_p$.

\vspace{2mm}
From the Chapman-Enskog theory or the first level of approximation in kinetic theory, the viscosity and the heat conductivity coefficients are known to be a function of temperature alone. 
According to Chapman-Cowling \cite{ChapmanCowling90} or Vincenti-Kruger \cite{VincentiKruger66}, for a gas with intermolecular potential proportional to \(r^{-a}\), where \(r\) is the intermolecular distance, \(\m(\th)\) and \(\k(\th)\) are given as follows: 
\beq\label{mkbeta}
\m(\th) = \m_0\th^\b, \qquad \k(\th) = \k_0\th^\b, \qquad \text{ where } \b = \frac{a+4}{2a}.
\eeq
Thus, the Brenner coefficient \(\t(\th) = \k(\th)/c_p\) may also degenerate as $\theta^\beta$ near absolute zero.

\vspace{2mm}
In what follows, for notational simplicity, we set $u=u_v$ in \eqref{LBNSF} as the mass velocity \(u_m\) does not appear. To construct the desired class of vanishing dissipation limits, we consider a vanishing parameter $\nu>0$ on the right-hand side of \eqref{LBNSF}:
\begin{align}
\left\{
\begin{aligned} \label{inveq}
    &v_t^\nu-u_x^\nu = \nu\Big(\t(\th^\nu)\frac{v_x^\nu}{v^\nu}\Big)_x, \\
    &u_t^\nu+p(v^\nu,\th^\nu)_x = \nu\Big(\mu(\th^\nu)\frac{u_x^\nu}{v^\nu}\Big)_x, \\
    &\Big(e^\nu+\frac{|u^\nu|^2}{2}\Big)_t+(p(v^\nu,\th^\nu)u^\nu)_x = 
    \nu\Big(\k(\th^\nu)\frac{\th_x^\nu}{v^\nu}\Big)_x+\nu \Big(\m(\th^\nu)\frac{u^\nu u_x^\nu}{v^\nu}\Big)_x.
\end{aligned}
\right.
\end{align}

\subsection{Riemann shocks and viscous shocks}
We consider a Riemann shock to the Euler system \eqref{Euler} as
\[
(\vbar,\ubar,\bar{E})(x-\s t)= \left\{
\begin{aligned}
(v_-,u_-,E_-) \quad \text{if } x-\s t<0, \\
(v_+,u_+,E_+) \quad \text{if } x-\s t>0,
\end{aligned} \right.
\]
where the constant states \((v_-,u_-,E_-)\) and \((v_+,u_+,E_+)\), and its shock speed $\s$ satisfy 
the Rankine-Hugoniot condition and the Lax entropy condition:
\begin{align}
\begin{aligned} \label{end-con}
&\exists~\s \quad \text{s.t.}~\left\{
\begin{aligned}
&-\s(v_+-v_-) -(u_+-u_-) =0, \\
&-\s(u_+-u_-) +p(v_+,\th_+)-p(v_-,\th_-) =0, \\
&-\s(E_+-E_-) +p(v_+,\th_+)u_+ -p(v_-,\th_-)u_- =0,
\end{aligned} \right. \\
&\text{and either \(v_->v_+\), \(u_->u_+\), \(\th_-<\th_+\) or \(v_-<v_+\), \(u_->u_+\), \(\th_->\th_+\) holds.}
\end{aligned}
\end{align}
Here, \(\th_-\) and \(\th_+\) satisfy \(E_- = \frac{R}{\g-1}\th_- + \frac{1}{2}u_-^2\) and \(E_+ = \frac{R}{\g-1}\th_+ + \frac{1}{2}u_+^2\).
As is common, when dealing with the full Euler system (and BNSF system), it is simpler to handle $\theta$ variable rather than the conserved variable $E$.
Hence, we use (\(\vbar,\ubar,\thbar\)) for the Riemann shock connecting the two end states \((v_-,u_-,\th_-)\) and \((v_+,u_+,\th_+)\) satisfying \eqref{end-con} as follows:
\begin{equation} \label{shock-0}
(\vbar,\ubar,\thbar)(x-\s t)= \left\{
\begin{aligned}
(v_-,u_-,\th_-) \quad \text{if } x-\s t<0, \\
(v_+,u_+,\th_+) \quad \text{if } x-\s t>0.
\end{aligned} \right.
\end{equation}
If \(v_->v_+\), (\(\vbar,\ubar,\thbar\))(\(x-\s t\)) is a \(1\)-shock wave with velocity \(\s=- \sqrt{-\frac{p_+-p_-}{v_+-v_-}}<0\), where \(p_\pm \coloneqq p(v_\pm, \th_\pm)\).
If \(v_-<v_+\), it is a \(3\)-shock wave with \(\s= \sqrt{-\frac{p_+-p_-}{v_+-v_-}}>0\).

In our asymptotic analysis, as a viscous counterpart of the Riemann shock, we will consider a viscous shock wave (\(\vtn, \utn, \thtn\))(\(x-\s t\)) connecting the two constant states satisfying the conditions in \eqref{end-con}, which is a smooth and monotone traveling wave solution to \eqref{inveq}:
\begin{align}
\left\{
\begin{aligned} \label{shock_0}
    &-\hspace{-0.2mm}\s (\vtn)'-(\utn)' = \nu\Big( \t(\thtn)\frac{(\vtn)'}{\vtn}\Big)', \\
    &-\hspace{-0.2mm}\s (\utn)'+p(\vtn,\thtn)' = \nu\Big(\m(\thtn)\frac{(\utn)'}{\vtn}\Big)', \\
    &-\hspace{-0.2mm}\s \Big(\frac{R}{\g-1}\thtn + \frac{1}{2}(\utn)^2\Big)'+(p(\vtn,\thtn)\utn)' = 
    \nu\Big(\k(\thtn)\frac{(\thtn)'}{\vtn}\Big)'+\nu \Big(\m(\thtn)\frac{\utn (\utn)'}{\vtn}\Big)', \\
    &\lim_{\x\to\pm\infty} (\vtn,\utn,\thtn)(\x)=(v_\pm,u_\pm,\th_\pm).
\end{aligned}
\right.
\end{align}
Notice that the existence and uniqueness of the viscous shocks are stated in Lemma \ref{lem-VS}.

\subsection{Hypotheses on Brenner coefficient}
In this article, we consider the case \(\b = 2\) in \eqref{mkbeta}, i.e., \(\m\) and \(\k\) are proportional to \(\th^2\). 
To discuss the Brenner coefficient \(\t(\th)=\k(\th)/c_p\), we need to observe the fact that the specific heat capacity \(c_p\), is often assumed to be constant at moderate temperatures but degenerates as the temperature approaches the absolute zero, due to loss of available degrees of freedom: since \(c_p\) is defined as \(c_p=\th\frac{dS}{d\th}\), and the entropy $S$ cannot blow up as \(\th \to 0\) by the third law of thermodynamics, \(c_p \to 0\) as \(\th \to 0\) (see \cite{fermi2012thermodynamics,landau2013statistical}).
Furthermore, it is well-known from Debye's law (see \cite{kittel1980thermal,kittel2018introduction}) that the specific heat degenerates proportionally to \(\th^3\) at low temperatures. 
Consequently, the Brenner coefficient could be large as $1/\theta$ near \(\th=0\), while it behaves like $\th^2$ at moderate temperatures.

To simplify the analysis, we assume that $\t(\th)$ converges to a constant state as \(\th\to0\) rather than diverging to infinity.
Accordingly, based on the above discussion, the Brenner coefficient is therefore assumed to satisfy
\[
\t(\th) \coloneqq \t_0 \th_0^2 + \t_1 \th^2
\]
where \(\th_0>0\) is a prescribed constant.
For the sake of simplicity, we consider a normalization \(\th_0=\t_1=\m_0=\k_0=1\) (which does not affect our analysis)
\begin{equation} \label{mu-def}
\t(\th)=\t_0 + \th^2, \qquad
\m(\th)= \th^2, \qquad \k(\th)= \th^2.
\end{equation}
We also consider the case when the left state \(\th_-\) is relatively smaller than $\tau_0$ such that
\begin{equation} \label{cold}
\th_- \le \frac{1}{3} \sqrt{\frac{R\t_0}{R + R\g + (\g-1)^2}} \,\,\Leftrightarrow \,\,
\frac{R\t_0}{R(\t_0+(\g+1)\th_-^2) + (\g-1)^2 \th_-^2} \ge 0.9.
\end{equation}
Notice that the condition \eqref{cold} could be relaxed by considering large \(\t_0\). 
The need for this condition \eqref{cold} will be explained in Section \ref{sec:idea}.

\subsection{Main results}
To measure perturbations of the Riemann shocks, we use the relative entropy associated to the entropy of \eqref{Euler} as follows: for any positive functions \(v_1,\th_1,v_2,\th_2\) and any functions \(u_1, u_2\),
\begin{equation} \label{eta_def}
\et((v_1,u_1,\th_1)|(v_2,u_2,\th_2)) \coloneqq R\Phif{v_1}{v_2}+\frac{R}{\g-1}\Phif{\th_1}{\th_2}+\frac{(u_1-u_2)^2}{2\th_2}
\end{equation}
where \(\Phi(z) \coloneqq z-1-\log z\).
Note that \(\Phi(z_1/z_2)\) is the relative functional associated with the strictly convex function \(Q(z)\coloneqq-\log z\).
That is,
\begin{align}
\begin{aligned} \label{Phi=Q}
Q(z_1|z_2)
&\coloneqq Q(z_1)-Q(z_2)-Q'(z_2)(z_1-z_2) \\
&=-\log z_1 + \log z_2 +\frac{1}{z_2}(z_1-z_2)
= \Phif{z_1}{z_2}.
\end{aligned}
\end{align}

We will consider limits of solutions to the Brenner-Navier-Stokes-Fourier system for the first components \(z_1\), i.e., \(v_1\) and \(\th_1\). However, we obtain uniform bounds at most in \(L^1\) for the solutions.
Hence, the limits could be measures on \(\RR^+\times\RR\).
Intriguingly, this could be physical, and it is related to the possible appearance of a cavitation.
For this reason, we need to generalize the notion of relative entropy to measures defined on \(\RR^+ \times \RR\).
Fortunately, it is enough to extend the definition only for the case when we compare a measure \(dv\) (or \(d\th\)) with a simple function \(\vbar\)(or \(\thbar\)) which takes only two values \(v_-\) and \(v_+\)(or \(\th_-\) and \(\th_+\)).
Let \(v_a\) be the Radon-Nikodym derivative of \(dv\) with respect to the Lebesgue measure and \(dv_s\) be its singular part, i.e., \(dv=v_a dtdx+dv_s\).
Then, we define the relative functional as 
\begin{align} \label{dQ}
d\Phif{v}{\vbar} \coloneqq \Phif{v_a}{\vbar}dtdx + \frac{1}{\VBar(t, x)} dv_s(t,x),
\end{align}
where \(\VBar\) is given by
\[
\VBar(t,x)=
\begin{cases}
\max(v_-,v_+) & \text{for } (t,x)\in \overline{\O_M} (\eqqcolon \text{the closure of }\O_M), \\
\min(v_-,v_+) & \text{for } (t,x)\notin \overline{\O_M},
\end{cases}
\]
and \(\O_M=\{(t,x)|\vbar(t,x)=\max(v_-,v_+)\}\).
We use \(\VBar\), not \(\vbar\), because it has to be defined at every point to deal with \(dv_s\).
Notice that in this case, the relative entropy is a measure itself.
Furthermore, if \(v\in L^{\infty}(\RR^+;L^\infty(\RR)+\Mcal(\RR))\) and its Radon-Nikodym derivative \(v_a\) is away from \(0\), then \(d\Phi(v/\vbar)\) does belong to \(L^{\infty}(\RR^+;L^\infty(\RR)+\Mcal(\RR))\), where \(\Mcal\) denotes the space of nonnegative Radon measures.
One last thing we mention here is that for the case of \(\th\), we extend the definition of the relative functional to measures in the same manner.

For the global-in-time existence of solutions to \eqref{inveq}, we introduce the function space:
\begin{multline*}
\Xcal_T \coloneqq
\{(v,u,\th) \mid 
v-\underline{v}, u-\underline{u}, \th-\underline{\th} \in L^\infty(0,T;H^1(\RR))\cap L^2(0,T;H^2(\RR)), \\
v^{-1}, \theta^{-1} \in L^\infty((0,T)\times\RR) \}
\end{multline*}
where \(\underline{v}, \underline{u}\), and \(\underline{\th}\) are smooth monotone functions such that
\begin{align} \label{sm-end}
\underline{v}(x)=v_{\pm}, \quad \underline{u}(x)=u_{\pm}, \quad \underline{\th}(x)=\th_{\pm} \quad \text{for } \pm x \ge 1.
\end{align}

The main theorem is on stability and uniqueness of the entropy shocks to \eqref{Euler}:
\begin{theorem}\label{thm_inviscid}
Let \(R>0,\g>1,\t_0>0\) be any constants.
For each \(\n>0\), consider the system \eqref{inveq} with \eqref{pressure} and \eqref{mu-def}.
Then, for any given constant state \((v_-, u_-, \th_-)\in \RR^+\times \RR\times \RR^+\) satisfying \eqref{cold}, there exists a constant \(\e_0>0\) such that for any \(\e<\e_0\) and any \((v_+, u_+, \th_+)\in \RR^+\times \RR\times \RR^+\) satisfying \eqref{end-con} with \(\abs{v_+-v_-} = \e\), the following holds. \\
Let  (\(\vbar,\ubar,\thbar\)) be a Riemann shock connecting the two end states \((v_-, u_-, \th_-)\) and \((v_+, u_+, \th_+)\), and \((\vtn, \utn, \thtn)\) be the associated viscous shock  as a solution of \eqref{shock_0}. \\
Then, for a given initial datum \(U^0=(v^0, u^0, \th^0)\) of \eqref{Euler} satisfying 
\begin{equation}\label{basic_ini}
\Ecal_0 := \int_\RR \et((v^0, u^0, \th^0)|(\vbar, \ubar, \thbar))dx < \infty,
\end{equation}
the following is true. \\
(i) (Well-prepared initial data) There exists a sequence of smooth functions \(\{(v_0^\nu, u_0^\nu, \th_0^\nu)\}_{\nu>0}\) on \(\RR\) such that
\begin{equation} \label{ini_conv}
\begin{aligned}
&\lim_{\n\to 0}v_0^\nu = v^0, \quad \lim_{\n\to 0}u_0^\nu = u^0, \quad \lim_{\n\to 0}\th_0^\nu = \th^0, \quad a.e., \quad v_0^\nu, \th_0^\nu > 0, \\
&\lim_{\n\to 0}\int_\RR \Big(R\Phif{v_0^\nu}{\vtn} + \frac{R}{\g-1}\Phif{\th_0^\nu}{\thtn} + \frac{(u_0^\nu-\utn)^2}{2\thtn}\Big) dx = \Ecal_0.
\end{aligned}
\end{equation}
(ii) For any given \(T>0\), let \(\{(\vn, \un, \thn)\}_{\n>0}\) be a sequence of solutions in \(\Xcal_T\) to \eqref{inveq} with the initial datum \((v_0^\nu, u_0^\nu, \th_0^\nu)\) as above. 
Then there exist inviscid limits \(v_\infty, u_\infty\), and \(\th_\infty\) such that as \(\n\to 0\) (up to a subsequence),
\begin{equation}\label{wconv}
\vn\wc v_\infty, \quad \un\wc u_\infty, \quad \thn\wc \th_\infty \quad in \,\, \Mcal_{loc}((0, T)\times \RR),
\end{equation}
where \(v_\infty\) and \(\th_\infty\) lie in \(L^\infty(0, T, L^\infty(\RR)+\Mcal(\RR))\) and \(\Mcal_{loc}((0, T)\times \RR)\) is the space of locally bounded Radon measures. 
In addition, there exist a shift \(X_\infty\in BV((0, T))\) and a constant \(C>0\) such that \(d\Phi(v_\infty/\vbar)\), \(d\Phi(\th_\infty/\thbar)\in L^\infty(0, T;\Mcal(\RR))\), and for a.e. \(t\in (0, T)\), 
\begin{align}
\begin{aligned} \label{uni-est}
&R\int_{x\in\RR} d\Phi(v_\infty/\vbar(x-X_\infty(\cdot)))(t) 
+\frac{R}{\g-1}\int_{x\in\RR} d\Phi(\th_\infty/\thbar(x-X_\infty(\cdot)))(t) \\
&\qquad
+\int_{\RR} \frac{\abs{u_\infty(t,x) - \ubar(x-X_\infty(t))}^2}{2\thbar(x-X_\infty(t))}dx \le C\Ecal_0.
\end{aligned}
\end{align}
Moreover, the shift \(X_\infty\) satisfies 
\begin{equation}\label{X-control}
\abs{X_\infty(t)-\s t} \le \frac{C(T)}{\abs{v_--v_+}}(\sqrt{\Ecal_0}+\Ecal_0^3).
\end{equation}
Therefore, entropy shocks \eqref{shock_0} (with small amplitude) of the full Euler system \eqref{Euler} are stable and unique in the class of weak inviscid limits of solutions to the Brenner-Navier-Stokes-Fourier system \eqref{inveq}. 
\end{theorem}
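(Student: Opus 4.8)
The plan is to argue in four stages: construction of well-prepared approximate data together with a global viscous solution theory; a relative-entropy estimate ($a$-contraction with shift) whose constants are uniform in the viscosity $\nu$; passage to the inviscid limit; and extraction of the shift bound together with the stability and uniqueness conclusions.

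For part~(i), I would construct $(v_0^\nu,u_0^\nu,\th_0^\nu)$ by first modifying $U^0$ far from the shock so that it agrees with the smooth monotone end states $\underline v,\underline u,\underline\th$ of \eqref{sm-end} outside a large ball, then mollifying, and finally enforcing strict positivity of $v$ and $\th$ by a harmless truncation. Since the viscous shock of Lemma~\ref{lem-VS} converges boundedly and in $L^1_{loc}$ to the Riemann shock as $\nu\to0$, dominated convergence gives the a.e.\ convergence and the relative-entropy convergence in \eqref{ini_conv}. Global solvability of \eqref{inveq} in $\Xcal_T$ on any fixed $[0,T]$, including the uniform-in-$x$ lower bounds for $v^\nu$ and $\th^\nu$, follows from local well-posedness in $H^1$ combined with a priori energy and pointwise estimates; here it matters that the BNSF continuity equation carries the diffusion $\nu(\t(\th)v_x/v)_x$, which makes the $v$-equation parabolic (unlike Navier--Stokes--Fourier), and that the choice $\b=2$ in \eqref{mu-def} keeps the nonlinear viscous terms structurally tractable.

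The core of the proof is a contraction estimate with $\nu$-independent constants. Following the method of $a$-contraction with shifts, I would introduce a piecewise-constant weight $a=a^\nu$ with values $a_\mp^\nu=1+O(\e)$ on the two sides of the viscous layer and a Lipschitz shift $t\mapsto X^\nu(t)$ governed by an ODE of the schematic form $\e\,(\dot X^\nu-\s)=-F^\nu[(\vn,\un,\thn)(t,\cdot)-(\vtn,\utn,\thtn)(\cdot-X^\nu)]$, with $F^\nu$ a bounded linear functional chosen precisely so that the terms linear in the perturbation are annihilated. Differentiating
\[
\Ycal^\nu(t):=\int_\RR a^\nu\!\big(x-X^\nu(t)\big)\,\et\!\big((\vn,\un,\thn)(t,x)\,\big|\,(\vtn,\utn,\thtn)(x-X^\nu(t))\big)\,dx
\]
produces a negative definite diffusive dissipation, hyperbolic flux terms weighted by $(a^\nu)'$ and by the exponentially localized profile derivatives $(\vtn)',(\utn)',(\thtn)'$, the shift contribution, and viscous cross terms; one must show their sum is bounded by a small fraction of the dissipation plus a remainder that integrates to $o(1)$, \emph{without any smallness assumption on} $(\vn,\un,\thn)-(\vtn,\utn,\thtn)(\cdot-X^\nu)$. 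This absence of smallness is the genuine obstacle: large perturbations drive $v^\nu$ and $\th^\nu$ into extreme regimes --- $v^\nu$ arbitrarily small (incipient cavitation), $\th^\nu$ near zero so that $\m,\k,\t$ degenerate, or $\th^\nu$ large so that the pressure flux is dangerous --- and the classical bookkeeping of the method fails. The resolution is a sharper use of the localization supplied by the profile derivatives: on the region where $(\vtn)',(\utn)',(\thtn)'$ are non-negligible, the relative entropy already pins $(\vn,\un,\thn)$ near the bounded, non-degenerate shock states, so the dangerous terms are quadratically small there; off that region, the localizing weight and the parabolic dissipation absorb the remaining nonlinear contributions. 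Making this dichotomy quantitative is the improvement advertised over the standard method, and it is exactly here that the cold-state condition \eqref{cold} is used, to secure the positivity (a Poincar\'e-type inequality along the profile) of the leading quadratic form in the localized terms. The outcome is $\Ycal^\nu(t)\le C\,\Ecal_0$ for $t\in(0,T)$ with $C$ independent of $\nu$, together with the uniform bound $\|\dot X^\nu-\s\|_{L^1(0,T)}\le C(T)\,\e^{-1}(\sqrt{\Ecal_0}+\Ecal_0^{3})$.

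Finally, since $a^\nu$ is bounded above and below uniformly in $\nu$, the bound $\Ycal^\nu\le C\Ecal_0$ yields uniform $L^1$ control of $v^\nu-\vbar(\cdot-X^\nu)$ and $\th^\nu-\thbar(\cdot-X^\nu)$ and uniform $L^2$ control of $(u^\nu-\ubar(\cdot-X^\nu))/\sqrt{\thbar}$; hence, up to a subsequence, $\vn\wc v_\infty$, $\un\wc u_\infty$, $\thn\wc\th_\infty$ in $\Mcal_{loc}((0,T)\times\RR)$ with $v_\infty,\th_\infty\in L^\infty(0,T;L^\infty(\RR)+\Mcal(\RR))$, and by Helly's theorem $X^\nu\to X_\infty$ in $BV(0,T)$, hence a.e. Using weak-$*$ lower semicontinuity of the convex relative functionals --- extended to measures via \eqref{dQ} --- the strong convergence of the viscous shock to the Riemann shock, and the uniform comparability $a^\nu=1+O(\e)$, one passes the contraction inequality to the limit to obtain \eqref{uni-est}; the shift estimate \eqref{X-control} follows by integrating the ODE for $X^\nu$ and using the convergence of the initial energies. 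Stability is then immediate, since the relative entropy on the left of \eqref{uni-est} controls the $L^2$ deviation of the limit from the shifted shock; and uniqueness follows by specializing to $\Ecal_0=0$, which forces $v_\infty,u_\infty,\th_\infty$ to coincide with the shifted Riemann shock, so that the shock is the unique weak inviscid limit of BNSF solutions issued from its own data.
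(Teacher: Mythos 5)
Your overall strategy is aligned with the paper's: a contraction estimate by the method of \(a\)-contraction with shifts that is valid for arbitrarily large perturbations (equivalently, uniform in \(\nu\) after the scaling \(v(t,x)=v^\nu(\nu t,\nu x)\)), exploitation of the localization given by the profile derivatives, the cold condition \eqref{cold} entering through a Poincar\'e-type positivity, and then weak compactness plus lower semicontinuity of the relative functionals to get \eqref{wconv} and \eqref{uni-est}. However, there is a genuine gap in your derivation of the shift bound \eqref{X-control}, and this is precisely the step that yields uniqueness. You propose to obtain it ``by integrating the ODE for \(X^\nu\)''. The shift ODE that makes the contraction work (see \eqref{X-def}) necessarily contains an \(O(1)\) forcing term (the ``\(+1\)'' needed to dominate the bad terms when \(|Y|\ge\e^2\)), so the resulting bound \eqref{est-shift} is of the form \(|\dot X|\le C\e^{-2}(f+1)\); integrating it after rescaling gives at best \(|X_\nu(t)-\s t|\lesssim \e^{-2}\big(\tfrac{\l}{\e}\Ecal_0+T\big)\), a bound that does \emph{not} vanish as \(\Ecal_0\to0\). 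Consequently this route cannot produce the estimate \(\,|X_\infty(t)-\s t|\le \frac{C(T)}{|v_--v_+|}(\sqrt{\Ecal_0}+\Ecal_0^3)\), and without a bound vanishing with \(\Ecal_0\) the uniqueness conclusion (the case \(\Ecal_0=0\)) collapses. The paper instead derives \eqref{X-control} by testing the mass equation of \eqref{inveq} in weak form against suitable cutoffs (the identity \eqref{weak-eq} and the decomposition \(J_1,\dots,J_5\) in Section 6.2): the Rankine--Hugoniot condition isolates the leading term \((X_\nu(t)-\s t)(v_--v_+)\), the remaining terms are bounded by \(\sqrt{\Ecal_0}\)-type quantities via the uniform relative-entropy estimate \eqref{ineq-m}, and the Brenner diffusion term requires an \(L^\infty\) bound on \(\th^\nu\) extracted from the dissipation (estimate \eqref{pw-th-final}); note that, because of Brenner's correction, the mass equation is no longer linear, so neither a limit equation for \((v_\infty,u_\infty)\) nor the ODE itself can be used here. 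Your proposal is missing this argument entirely.

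Two secondary points. First, your weight ``piecewise constant with values \(1+O(\e)\)'' is not adequate for the viscous, large-perturbation setting: the paper's weight \eqref{weight-a} is a smooth monotone function following the profile with total variation \(\l\) satisfying \(\e\ll\l\ll1\), and the scale separation \(\l/\e\gg1\) (so \(|a'|\sim\tfrac{\l}{\e}|\vt'|\)) is essential to generate hyperbolic good terms strong enough to absorb the bad and parabolic terms; a variation of size \(O(\e)\) would not close the estimates, and your claim that the relative entropy ``pins'' the solution near the shock states in the layer is exactly what must be \emph{proved} via the truncation and the constraint \(|Y|\le\e^2\), not assumed. Second, you assert global solvability in \(\Xcal_T\) from standard energy arguments; the paper explicitly treats this as open (it is assumed in the statement and deferred to ongoing work), so you should either assume it, as the theorem does, or not claim it follows routinely.
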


\begin{remark}
(1) Theorem \ref{thm_inviscid} establishes the uniqueness and stability of weak Riemann shocks in the class of  inviscid limits of solutions to the Brenner-Navier-Stokes-Fourier system.
For the uniqueness, if \(\Ecal_0=0\), then it holds from \eqref{uni-est} and \eqref{X-control} that for a.e. \(t\in(0,T)\), \(X_\infty(t)=\s t\) and hence,
\begin{align*}
&R\int_{\RR} \Phi(v_a(t,x)/\vbar(x-\s t)) dx
+\frac{R}{\g-1}\int_{\RR} \Phi(\th_a(t,x)/\thbar(x-\s t)) dx \\
&\qquad
+\int_{\RR} \frac{\abs{u_\infty(t,x) - \ubar(x-\s t)}^2}{2\thbar(x-\s t)} dx = 0
\end{align*}
where \(dv_\infty=v_a dt dx+dv_s\) and \(d\th_\infty=\th_a dt dx + d\th_s\), and the singular parts \(v_s\) and \(\th_s\) vanish.
Thus, we obtain the uniqueness of the Riemann shock as 
\[
v_\infty(t,x) = \vbar(x -\s t), \, \th_\infty(t,x) = \thbar(x -\s t), \, u_\infty(t,x) = \ubar(x -\s t) \,\, \text{ a.e. } (t,x) \in [0,T]\times\RR.
\] 
(2) The smallness of the shock amplitude does not play any role in the vanishing viscosity limit process.
The small amplitude restriction arises from Theorem \ref{thm_main} and is also used to guarantee the existence of viscous shock waves.\\
(3) Considering the function space \(\Xcal_T\) for solutions $(\vn, \un, \thn)$ is important to ensure the existence of shifts as solutions to the ODE \eqref{X-def}. 
However, until recently, nothing is known about global existence of large strong solutions to \eqref{inveq}. For the existence of desired strong solutions belonging to \(\Xcal_T\), we refer to the ongoing paper \cite{EEK}.
\end{remark}

As described in Section \ref{sec:idea}, the main part of the proof of Theorem \ref{thm_inviscid} is to derive contraction estimates on any large perturbations of viscous shocks to \eqref{inveq} with a fixed $\nu=1$: for simplicity, we handle the equivalent non-divergence form as:
\begin{equation}\label{main}
\left\{
\begin{aligned}
    &v_t-u_x = \Big(\t(\th)\frac{v_x}{v}\Big)_x, \\
    &u_t+p(v,\th)_x = \Big(\mu(\th)\frac{u_x}{v}\Big)_x, \\
    &\frac{R}{\g-1}\th_t+p(v,\th)u_x = 
    \Big(\k(\th)\frac{\th_x}{v}\Big)_x+\m(\th)\frac{(u_x)^2}{v},
\end{aligned}\right.
\end{equation}
and the corresponding viscous shock system equivalent to \eqref{shock_0} with a fixed \(\nu=1\) is as:
\begin{equation}\label{VS}
\left\{
\begin{aligned}
    &-\s \vtil'-\util' = \Big(\t(\thtil)\frac{\vtil'}{\vtil}\Big)', \\
    &-\s \util'+p(\vtil,\thtil)' = \Big(\mu(\thtil)\frac{\util'}{\vtil}\Big)', \\
    &-\s \frac{R}{\g-1}\thtil'+p(\vtil,\thtil)\util' = 
    \Big(\k(\thtil)\frac{\thtil'}{\vtil}\Big)'+\m(\thtil)\frac{(\util')^2}{\vtil}.
\end{aligned}
\right.
\end{equation}
In Theorem \ref{thm_main}, the contraction is measured by the relative entropy (as in \eqref{eta_def} or \eqref{relative_e}).

\begin{theorem}\label{thm_main}
Let \(R>0\), \(\g>1\), \(\t_0>0\) be any constants.
Consider the system \eqref{main} with \eqref{pressure} and \eqref{mu-def}. 
For a given constant state \((v_-,u_-,\th_-)\in\RR^+\times\RR\times\RR^+\) satisfying \eqref{cold}, 
there exist constants \(\e_0, \d_0>0\) such that the following is true.\\
For any \(\e<\e_0\), \(\d_0^{-1}\e<\l<\d_0\), and any \((v_+,u_+,\th_+)\in\RR^+\times\RR\times\RR^+\) satisfying \eqref{end-con} with \(\abs{v_+-v_-} = \e\), 
there exists a smooth monotone function \(a\colon \RR\to \RR^+\) with \(\lim_{x\to\pm\infty} a(x)=1+a_{\pm}\) for some constants \(a_-\) and \(a_+\) with \(\abs{a_+-a_-}=\l\) such that the followings hold.\\
Let \(\Ut\coloneqq (\vt,\ut,\tht)\) be the viscous shock connecting \((v_-,u_-,\th_-)\) and \((v_+,u_+,\th_+)\) as a solution of \eqref{shock_0} with \(\n=1\).
For a given \(T>0\), let \(U\coloneqq (v,u,\th)\) be a solution in \(\Xcal_T\) to \eqref{main} with a initial datum \(U_0\coloneqq (v_0,u_0,\th_0)\) satisfying \(\int_\RR \et(U_0|\Ut) dx<\infty\). 
Then there exist a shift \(X\in W^{1,1}((0,T))\) and a constant \(C>0\) (independent of \(\d_0\), \(\e/\l\) and \(T\)) such that 
\begin{align}
\begin{aligned}\label{cont_main}
&\int_\RR (a\tht)(x) \et\big(U(t,x+X(t))| \Ut(x)\big) dx \\
&\qquad +\d_0\frac{\e}{\l} \int_0^T \int_\RR  \abs{\s a'(x)} \Phi\left(v(s,x+X(s))/\vt(x)\right) dx ds \\
&\qquad +\d_0 \int_0^T \int_\RR a(x)
(v(\t_0+\th^2))(s, x+X(s))\abs{\rd_x\Big(\frac{1}{v(s, x+X(s))}-\frac{1}{\vt(x)}\Big)}^2 dx ds \\
&\qquad +\d_0 \int_0^T \int_\RR a(x)
\frac{1}{v}(s, x+X(s))\abs{\rd_x\left(\th(s, x+X(s))-\tht(x)\right)}^2 dx ds  \\
&\le \int_\RR (a\tht)(x) \et\big(U_0(x)|\Ut(x)\big) dx, \\
\end{aligned}
\end{align}
and 
\begin{align}
\begin{aligned} \label{est-shift}
&\abs{\dot{X}(t)}\le \frac{C}{\e^2}\left(f(t)+1 \right), \quad \text{for a.e. } t\in[0,T], \\
&\text{for some positive function \(f\) satisfying} \quad \norm{f}_{L^1(0,T)} \le\frac{\l}{\d_0\e}\int_\RR \et(U_0|\Ut) dx.
\end{aligned}
\end{align}
\end{theorem}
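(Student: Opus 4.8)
The plan is to prove \eqref{cont_main} by a sharpened $a$-contraction-with-shift analysis of \eqref{main}, and to extract \eqref{est-shift} from the ODE that defines the shift. First I would pass to the frame that follows both the viscous profile $\Ut$ of \eqref{VS} and the unknown shift, i.e.\ keep $\Ut$ stationary and test the perturbation $U(t,\cdot+X(t))-\Ut$; equivalently one studies $\int (a\tht)(x)\,\et\!\big(U(t,x+X(t))\,|\,\Ut(x)\big)\,dx$. Here $a$ is the usual monotone $a$-contraction weight with $a(\pm\infty)=1+a_\pm$, $\abs{a_+-a_-}=\l$, whose monotonicity direction is dictated by the shock family; the additional factor $\tht$ is a weight adapted to the system's structure, chosen so that the viscous-flux cross terms in the relative-entropy balance stay controllable. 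The shift $X$ is defined as the solution of a scalar ODE $\dot X(t)=F\big[U(t,\cdot+X(t))\big]$ for an explicit functional $F$ that is affine in $U$ and localized by $\Ut'$ around the shock layer; since $U-\underline U\in L^\infty(0,T;H^1)$ and $v^{-1},\th^{-1}\in L^\infty$, the right-hand side is Carath\'eodory with enough integrability to produce a unique $X\in W^{1,1}(0,T)$ — this is exactly why the class $\Xcal_T$ is imposed. I would then record the relative-entropy and relative-flux identities for the entropy $-S$ of \eqref{Euler}, which are the input for the time derivative below.

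\textbf{The weighted relative-entropy identity and the choice of shift.} Differentiating in $t$ and using \eqref{main}, \eqref{VS} and the ODE for $X$, I expect an identity of the schematic form
\[
\frac{d}{dt}\!\int (a\tht)\,\et\big(U(t,\cdot+X)\,|\,\Ut\big)\,dx \;=\; \dot X\,\Ycal(U)\;+\;\Bcal\;-\;\Gcal_{\mathrm{loc}}\;-\;\Gcal_{\mathrm{diff}},
\]
where $\Gcal_{\mathrm{diff}}\ge 0$ is the sum of the last two parabolic integrals in \eqref{cont_main}, $\Gcal_{\mathrm{loc}}\ge 0$ is the good localization term $\d_0\frac{\e}{\l}\int\abs{\s a'}\Phi(v/\vt)$ produced by the sign of $\s a'$ together with the monotonicity of the profile, $\Ycal(U)$ is a functional that is (to leading order) linear in the perturbation, and $\Bcal$ collects all remaining terms: hyperbolic quadratic terms carrying a factor $a'$ or $\Ut'$ (hence of size $\lesssim\l$ or $\lesssim\e$), together with genuinely higher-order and cross terms. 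I would fix the multiplicative constant in $F$ so that $\dot X\,\Ycal(U)$ cancels the most dangerous part of $\Bcal$ — the part that is only linear in the perturbation and not pointwise small — by the usual completion of the square. The theorem then reduces to the pointwise-in-$t$ inequality $\Bcal_{\mathrm{res}}\le\Gcal_{\mathrm{loc}}+\Gcal_{\mathrm{diff}}$ for the residual bad terms $\Bcal_{\mathrm{res}}$.

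\textbf{The functional inequality: small versus extreme regimes (the crux).} This is the heart of the argument, and where the improvement over the classical $a$-contraction is needed. I would split space, at each time, according to whether the perturbation $(v-\vt,\,\th-\tht,\,u-\ut)$ is pointwise small relative to the shock strength or not. On the \emph{small} set, $\Phi(v/\vt),\Phi(\th/\tht)$ are comparable to their quadratic linearizations, $\Bcal_{\mathrm{res}}$ is controlled by a sharp quadratic form, and it is absorbed into $\Gcal_{\mathrm{loc}}+\Gcal_{\mathrm{diff}}$ by a Poincar\'e-type inequality on the layer; here the smallness of $\e$, of $\l$, and above all of $\e/\l$ (the hypothesis $\d_0^{-1}\e<\l<\d_0$) is precisely the slack that makes the constants close — this part follows the by-now standard sharp computation. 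On the \emph{extreme} set, where $v$ or $\th$ may be arbitrarily near $0$ or arbitrarily large, linearization is useless and every previously ``higher-order'' term must be re-estimated. There I would exploit three mechanisms: (i) the super-linear growth of $\Phi$, so that $\Gcal_{\mathrm{loc}}$, carrying $\Phi(v/\vt)$, genuinely controls large deviations of $v$ near the layer; (ii) the non-degenerate part $\t_0$ of the Brenner coefficient, which keeps the dissipation $\int a\,v(\t_0+\th^2)\,\abs{\rd_x(1/v-1/\vt)}^2$ uniformly parabolic in the variable $1/v$ even as $\th\to 0$, together with the temperature dissipation $\int a\,\tfrac1v\,\abs{\rd_x(\th-\tht)}^2$; and (iii) a refined description of the localizing effect of $\Ut'$ and $a'$ — tracking how much of $\Phi(v/\vt)$ they capture and at which spatial scale — which is the technical novelty and is what removes the need for any smallness of the perturbation. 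Chaining these with nonlinear Poincar\'e / Gagliardo--Nirenberg-type estimates, and invoking \eqref{cold} to guarantee that in the cold region the $\t_0$-contribution dominates all temperature-dependent error terms, one closes $\Bcal_{\mathrm{res}}\le\Gcal_{\mathrm{loc}}+\Gcal_{\mathrm{diff}}$ on the extreme set as well, completing \eqref{cont_main}.

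\textbf{The shift estimate, and the main obstacle.} Finally, \eqref{est-shift} is read off from the explicit form of $F$: its denominator is bounded below by $c\e^2$ (it is built from $\Ut'$-weighted quantities tested against the $O(\e)$-amplitude profile), while its numerator is bounded by $1$ plus a function $f(t)$ assembled from the relative entropy and its dissipation along the layer; integrating \eqref{cont_main} in time and dividing by the small coefficient $\d_0\e/\l$ gives $\norm{f}_{L^1(0,T)}\le\frac{\l}{\d_0\e}\int_\RR\et(U_0|\Ut)\,dx$, which is \eqref{est-shift}. The main obstacle throughout is the extreme regime of Step 3: in contrast to small-BV or small-energy frameworks there is no a priori pointwise bound on $v$ and $\th$, so the coercivity must be squeezed out of the two (partly degenerate) parabolic terms and the super-linearity of $\Phi$ localized by $a'$ alone; the hypotheses \eqref{cold} on $\th_-$ and $\d_0^{-1}\e<\l$ are exactly the margins that make this balancing possible, and the sharpened treatment of the profile-derivative localization is the ingredient that renders the whole scheme robust.
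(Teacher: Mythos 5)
Your overall skeleton (weighted relative entropy with weight $a\tht$, an inside/outside-truncation dichotomy, the role of the non-degenerate $\t_0$ and of \eqref{cold}, exponential-tail localization by $\vt'$ and $a'$, and reading \eqref{est-shift} off the shift ODE plus the time-integrated entropy inequality) matches the paper's strategy. But there is a genuine gap in how you define and use the shift, and it is exactly the point where the large-perturbation argument lives or dies. You take $\dot X=F[U(\cdot+X)]$ with $F$ affine in $U$ and a fixed multiplicative constant chosen so that $\dot X\,\Ycal(U)$ \emph{cancels} the linear-in-perturbation part of $\Bcal$, and you then claim the theorem reduces to the pointwise-in-time inequality $\Bcal_{\mathrm{res}}\le\Gcal_{\mathrm{loc}}+\Gcal_{\mathrm{diff}}$. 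For arbitrary (large) perturbations this inequality is false, even if you keep the \emph{full} hyperbolic good terms rather than the $\d_0\e/\l$ fraction: after the rescaling used inside the truncation, the residual terms contain $(1+\d)\int_0^1 W_1^2\,dy+\tfrac23\int_0^1 W_1^3\,dy$ (plus $\d\int|W_1|^3$), and for a constant profile $W_1\equiv c>0$ the diffusion term $\int_0^1 y(1-y)|\rd_y W_1|^2\,dy$ vanishes while these terms are strictly positive; no localized good term can absorb them. That is precisely why the paper proves a nonlinear Poincar\'e inequality \emph{with constraint} (Proposition \ref{prop_nl_Poincare}), whose saving term $-\tfrac1\d\big(\int_0^1W_1^2\,dy+2\int_0^1W_1\,dy\big)^2$ is not a good term of the entropy balance at all: it is manufactured by the shift.

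Concretely, the paper's shift is not affine in $U$; it is the cutoff ODE \eqref{X-def},
\[
\dot X(t)=\Phi_\e\big(Y(U^X)\big)\Big(2\,|\Jcal^{bad}(U^X)|+2\,|\Jcal^{para}(U^X)|+1\Big),
\]
which yields a dichotomy your construction does not reproduce: when $|Y(U^X)|\ge\e^2$ the factor $\Phi_\e$ saturates and $\dot X\,Y$ dominates \emph{all} bad terms outright (no functional inequality is needed in that regime), while when $|Y(U^X)|\le\e^2$ one gains the negative quadratic $-\e^{-4}Y^2$, which after normalization becomes exactly the constraint term above and, together with the hypothesis $|Y(U)|\le\e^2$ entering Proposition \ref{prop:main}, is what lets the cubic terms of large $W_1$ be controlled. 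Your ``completion of the square'' cancellation of the linear part is the inviscid/small-perturbation recipe; it produces neither the $-\e^{-4}Y^2$ term nor the constraint, and it offers nothing in the regime where $Y$ is large, so the scheme cannot close as described. Note also that the specific cutoff structure is reused for \eqref{est-shift}: $|\dot X|\le\e^{-2}(2|\Jcal^{bad}|+2|\Jcal^{para}|+1)$ because $|\Phi_\e|\le\e^{-2}$, and the $L^1_t$ bound on $f$ comes from the fact that $|\Jcal^{bad}|$ and $|\Jcal^{para}|$ are themselves integrable in time by the entropy inequality (after re-expressing $|\Jcal^{bad}|$ through $\Bcal_{\d_3}$, $\Bcal_1^+$ and $\Gcal_u^+$), not merely from a denominator bounded below by $c\e^2$. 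To repair your proposal you would need to replace your affine $F$ by (or prove equivalence with) this two-regime, cutoff-type shift and restate your key step as the paper's Proposition \ref{prop:main}, i.e.\ with the $-\e^{-4}Y^2$ term on the left and under the constraint $|Y(U)|\le\e^2$.
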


Note that it is enough to prove Theorem \ref{thm_main} for \(3\)-shocks. For \(1\)-shocks, the change of variables \(x \to -x, u \to -u, \s_\e\to -\s_\e\) gives the corresponding result. 
Thus, from now on, we consider a 3-shock \((\vt,\ut,\tht)\), i.e., \(v_-<v_+, u_->u_+, \th_->\th_+, \) and 
\(
\s_\e = \sqrt{-\frac{p_+-p_-}{v_+-v_-}} > 0.
\)

\begin{remark}
To prove Theorem \ref{thm_main}, we employ the method of \(a\)-contraction with shifts, that was first developed in \cite{KV16,Vasseur16} for the study of stability of Riemann shocks, and extended to the study for stability of large perturbations of viscous shock to viscous conservation laws as in  \cite{Kang19,KO,Kang-V-1,KV21,KV-Inven,KVW} (see also \cite{PaulJeffrey,Kang18} for the non-contraction property induced by the method).
However, for the proof of Theorem \ref{thm_main}, we significantly improve the method of \(a\)-contraction with shifts.
In particular, we refine the nonlinear Poincar\'e inequality with constraint and introduce a more delicate analysis of the localizing effect given by viscous shock derivatives, as in Section \ref{sec:idea}.
This improvement exhibits robustness that enables it to be applicable in a broader range of contexts.
\end{remark}

\begin{remark}
Theorem \ref{thm_main} provides a contraction estimate for a single viscous shock.
However, a series of papers concerning the isentropic Euler system \cite{KV-Inven,KV-JDE,ChenKangVasseur24arxiv} demonstrate that such contraction estimates can play a central role in proving the stability of Riemann solutions composed of two weak shocks.
Furthermore, these estimates are crucial even when combined with front tracking algorithm to obtain the stability of general small BV solutions.

While extending this from a single shock to small BV solutions in the full Euler system will undoubtedly present new challenges due to its more complicated structure, Theorem \ref{thm_main} represents a cornerstone towards proving the stability of small BV solutions in the class of vanishing physical viscosity limits.
\end{remark}

\section{Ideas of the proof}\label{sec:idea}
\setcounter{equation}{0}

The key ingredient for the proof of Theorem \ref{thm_inviscid} is to obtain a stability estimate uniform with respect to the strength of the viscosity $\nu$. This can be obtained from a contraction estimate of large perturbations only for the case of $\nu=1$, as presented in Theorem \ref{thm_main}.
Indeed, for $U^\nu:=(\vn, \un, \thn)$ solution to \eqref{inveq}, consider the following scaling:
\[
v(t,x)=v^{\nu}(\nu t,\nu x), \qquad u(t,x)=u^{\nu}(\nu t,\nu x),\quad \theta(t,x)=\thn(\nu t,\nu x).
\]
Note that $(v, u, \theta)$ is a solution to \eqref{main} (or equivalently \eqref{inveq} with \(\n=1\)).  
Once we prove the contraction estimate \eqref{cont_main} on the solution $(v, u, \theta)$, thanks to the scaling, we still have the contraction estimate for any $\nu>0$ as follows:
\[
\int_\bbr (a\tht)(x/\nu) \et\big(U^\nu (t,x+X_\nu(t))| \Ut^\nu(x)\big) dx \le \int_\bbr (a\tht)(x/\nu) \et\big(U_0^\nu (x)| \Ut^\nu(x)\big) dx,
\]
where $X_\nu(t):= \nu X(t/\nu)$.
Since $X_\nu'(t)= X'(t/\nu)$, we may use the bound of shifts \eqref{est-shift} to obtain the convergence of $X_\nu$ in $L^1(0,T)$ by the compactness of BV. 
Note that, even if the above perturbation (say $\mathcal{E}^\nu$) is small, the corresponding perturbation $\mathcal{E}$ at the normalized level $\nu=1$ is big, because of $\mathcal{E}=\mathcal{E}^\nu/\nu$.
This is why we should obtain the contraction estimate for any large solutions to \eqref{inveq} with $\nu=1$.  

The above contraction estimate would imply a weak compactness of $U^\nu$, by the well-prepared  initial data \eqref{ini_conv} and $\frac{\theta_\pm}{2}\le a\tht \le 2 \theta_\pm$.
In addition, since $U\mapsto \eta(U|\Ubar)$ is strictly convex and $\Ut^\nu \to \bar U$, we could have the stability estimates \eqref{uni-est} as desired. 

We first prove Theorem \ref{thm_main} by utilizing a carefully refined version of the method of \(a\)-contraction with shifts. 
In Sections \ref{subsec:new1}-\ref{subsec:new4}, we demonstrate the main aspects of our proof that distinguish it from the proof in \cite{KV-Inven} for the isentropic case. On the top of that, the proof of Theorem \ref{thm_inviscid} requires a more delicate approach as in Section \ref{subsec:new5}.

\subsection{Evolution of weighted relative entropy} \label{subsec:new1}
From Lemma \ref{lem-rel}, the time derivative of the weighted relative entropy with the BNSF structure involves three diffusion terms. Note that since one of the main bad terms is \((p(v,\th)-p(\vtil,\thtil))(u-\util)\) and the pressure \(p(v,\th)=R\th/v\) is inversely proportional to the specific volume \(v\), it would be better to express the diffusion on \(v\) variable in the terms of its reciprocal \(1/v\) as \(\Dcal_v\).
In addition, in Lemma \ref{lem-rel}, the diffusion on \(u\) variable is obtained after the original \(u\) diffusion from the momentum equation cancels out with the parabolic bad term from the energy equation.

\subsection{Maximization w.r.t. the velocity variable} \label{subsec:new2}
We should control all the bad terms by the good terms.
Thanks to the \(a\)-contraction framework and the BNSF structure, we now have both hyperbolic good term localized (by the weight derivative) and diffusion term for each variable \(v,u\) and \(\th\), in contrast to the isentropic case in \cite{KV-Inven}.
However, as in Lemma \ref{lem-max}, we still maximize the main hyperbolic bad terms $\int a' (p(v,\th)-p(\vtil,\thtil)) (u-\tilde u) d\x$ with respect to \(u-\util\) to eliminate the dependence on \(u\), in a region that $v$ is not too small or stays near the shock $\tilde v$. 
A motivation for this maximization is from the estimates near the shock as outlined below.

\subsection{Estimates inside truncation} \label{subsec:new3}
The choice of \(\t,\m\) and \(\k\), particularly the necessity of \(\t_0\), is strongly related to estimates near the shock waves $\tilde v$ and $\tilde\theta$.
For the inside of the truncation (denoted by \(\Ubar \coloneqq (\vbar,u,\thbar)\) as in Section \ref{section_hyperbolic} where only $v$ and $\theta$ are truncated), i.e., near $\tilde v$ and $\tilde\theta$, we need to control all leading-order hyperbolic bad terms on \(v\) and \(\th\) variables by the associated good terms \(\Gcal_v,\Gcal_\th,\Dcal_v,\Dcal_\th\).
However, as defined in \eqref{d1vdef}, only \(\Dcal_v^1\) among all the diffusion terms is not intertwined with the other variables, in this case \(u\) and \(\th\), and so it satisfies the monotonicity property \(\Dcal_v^1(\vbar) \le \Dcal_v^1(v)\).
Thus, we should control the hyperbolic bad terms by the three good terms \(\Gcal_v,\Gcal_\th, \Dcal_v^1\).
This is where the constant part \(\t_0\) is essential.
(We could not use other diffusion terms \(\Dcal_v^2\) and \(\Dcal_\th\) at this point because these do not satisfy the monotonicity property.)

In the proof of Proposition \ref{prop:main3}, from the estimate for \((\Ical_1 - \Gcal_v - \Gcal_\th)\), we obtain a new good term \(T(W_1,W_2)\) after a certain normalization with the rescaled variables \(W_1\coloneqq \frac{\l}{\e}v_-^2\left(\frac{1}{v}-\frac{1}{\vtil}\right)\) and \(W_2\coloneqq \frac{\l}{\e}\frac{v_-}{\th_-}(\th-\thtil)\).
Notice that \(T(W_1,W_2)\) has the form of a perfect square of a linear combination of \(W_1\) and \(W_2\).
This good term plays an important role in rewriting all terms of \(W_2\) in terms of \(W_1\) as in \eqref{transquad} and \eqref{transcubic}.
Thanks to that, all the terms are represented in terms of $W_1$ variables only.
Then, the estimate for all the terms boils down to the nonlinear Poincar\'e inequality, as an improved version of the previous one \cite{KV21} (in which, the diffusion coefficient requires $1$ instead of $0.9$):
\begin{eqnarray*}
&&-\frac{1}{\d}\Big(\int_0^1W_1^2\,dy+2\int_0^1 W_1\,dy\Big)^2+(1+\d)\int_0^1 W_1^2\,dy\\
&&\qquad\qquad+\frac{2}{3}\int_0^1 W_1^3\,dy +\d \int_0^1 |W_1|^3\,dy  -(0.9-\d)\int_0^1 y(1-y)|\rd_y W_1|^2\,dy\leq 0.
\end{eqnarray*}
This improvement is crucial, since we do not use all diffusion terms fully but only $\Dcal_v^1$. Indeed, except for the common factor after the normalization, the coefficient of $\Dcal_v^1$ becomes 
\[
\frac{R\t_0}{R(\t_0+(\g+1)\th_-^2) + (\g-1)^2\th_-^2},
\]
which is less than $1$ but is greater than $0.9$ under the assumption \eqref{cold}.

\subsection{Estimates outside truncation} \label{subsec:new4}

Note that the relative entropy functional \(\Phi(v/w)\) grows linearly as \(v\to\infty\) and it grows as \((-\log)\) when \(v\to0+\), provided that \(w\) is bounded.
Due to these (not fast enough) growth rates, we may encounter severe difficulties when dealing with the outside of the truncation.
To remedy this, we may use the diffusion terms, and in this case, it is necessary to exploit the exponential tails of the derivative of the weight (or the shocks) as in \eqref{tail} (or \eqref{der-scale}), and Lemma \ref{lemma_pushing} and Lemma \ref{lemma_Linfty} allow us to do so.
To be specific, Lemma \ref{lemma_pushing} and Lemma \ref{lemma_Linfty} prevent the bad terms from losing the power of the weight derivative. 
Some applications of these lemmas are elaborated below.

As an example, consider $\int |\th-\thbar|^2 d\x$ arising in the estimates for $\abs{\Bcal_1^+(U) - \Ical_1(\vbar,\thbar)}$ which is quadratic in \(\th\) (as in the proof of \eqref{bo1p}).
It could not be controlled by the hyperbolic good term $\Gcal_\th$ for big values of \(\th\).
Hence, we may use $\Dcal_\th$ to control it based on the pointwise estimate \eqref{pwth1}.
In order to use \eqref{pwth1}, we need to show that
\[
\int a'\Big(\int_{\x_0}^\x v\one{|\th-\thbar|>0}d\z\Big)d\x \ll 1.
\]
However, even though we have $v\one{|\th-\thbar|>0} \lesssim \Phi(\frac{v}{\vt})+\Phi(\frac{\th}{\tht})$, showing this smallness is not obvious because \eqref{locE} is the only small quantity we can use.
To resolve this, we crucially utilize the fact that the inner integral is calculated over the interval between \(\x\) and \(\x_0\).
Thanks to the sharp inequality of Lemma \ref{lemma_pushing} together with \eqref{locE}, we can make the above integral small.

Yet the bad terms in \(\Bcal^v(U), \Bcal^u(U)\) and \(\Bcal^\th(U)\), which arise from the parabolic parts, cannot be controlled using the strategy above. 
The reason is that some terms should be controlled by a certain term with high power of \(\th\), which cannot be bounded by \eqref{locE} alone. 
For instance, in the estimates of $\Bcal^u$, we need to control the inequality \eqref{ineq-p4}:
\[
\int_\RR \abs{a'}^2 \frac{\th^2}{v}(\th-\thtil)^2 d\x. 
\]
To prove the inequality \eqref{ineq-p4}, we first establish an \(L^\infty\) bound on \(\abs{a'}\frac{\th^3}{v}\) by combining the pointwise estimates and the sharp inequality from Lemma \ref{lemma_Linfty}.
With this, we could reduce the power of \(\th\) in the integrand which can be bounded by \eqref{locE} now.

\subsection{Estimates for shift in the inviscid limit} \label{subsec:new5}

To ensure the uniqueness of the entropy shock, we need to control the size of the shift \(X(t)\) as stated in \eqref{X-control}.
Note that the strategy used in \cite{KV-Inven} is not available because we can only guarantee weak convergence with the uniform estimate we have, but due to Brenner's correction, the mass equation is no longer a linear equation.
Therefore, there is no equation that the weak limits \((v_\infty, u_\infty, \th_\infty)\) satisfy.  
Thus, we need a different approach.

We first obtain estimates for the viscous solutions \((\vn, \un, \thn)\) from the integral representation \eqref{weak-eq}, which is uniform in the limit \(\n\to 0\).
Then, from the \(L^1\) convergence (hence the pointwise up to subsequence) of \(X_\n\) to \(X_\infty\), we obtain the desired estimate for \(X_\infty\) which guarantees the uniqueness of the solution.
Note that in the process of obtaining the estimates for the viscous solution, we need to handle the \(\th\) dependence of the Brenner coefficient \(\t(\th)\).
To this end, we require an \(L^\infty\) bound on \(\th\), which is established by the contraction estimates as in \eqref{pw-th-final}.

\section{Preliminaries} \label{sec:pre} 
\setcounter{equation}{0}

For simplicity, we rewrite the system (\ref{main}) into the following system, based on the change of variable associated with the speed of propagation of the shock \((t, x)\mapsto (t, \x=x-\s_\e t)\): 
\begin{equation}\label{NS}
\left\{
\begin{aligned}
    &v_t - \s_\e v_\x - u_\x = \Big(\t(\th)\frac{v_\x}{v}\Big)_\x
    \eqqcolon Pv,\\
    &u_t - \s_\e u_\x + p(v, \th)_\x = \Big(\m(\th)\frac{u_\x}{v}\Big)_\x 
    \eqqcolon Pu, \\
    &\frac{R}{\g-1}(\th_t-\s_\e\th_\x) + p(v, \th)u_\x = \Big(\k(\th)\frac{\th_\x}{v}\Big)_\x + \m(\th)\frac{(u_\x)^2}{v}
    \eqqcolon P\th.
\end{aligned}
\right.
\end{equation}
On the other hand, the viscous shock wave $\Ut^{-X}(t,\xi):=\Ut (\xi -X(t))$ shifted by \(-X(t)\) (to be defined in (\ref{X-def})) satisfies the following system of equations: 
\begin{equation}\label{til-system}
\left\{ 
\begin{aligned}
    &(\vt^{-X})_t-\s_\e (\vt^{-X})_\x+\dot{X}(t)(\vt^{-X})_\x-(\ut^{-X})_\x 
    = \Big(\t(\tht^{-X})\frac{(\vt^{-X})_\x}{\vt^{-X}}\Big)_\x 
    \hspace{-2mm}\eqqcolon P\vt^{-X}, \\
    &(\ut^{-X})_t-\s_\e (\ut^{-X})_\x+\dot{X}(t)(\ut^{-X})_\x+p(\vt^{-X},\tht^{-X})_\x 
    = \Big(\mu(\tht^{-X})\frac{(\ut^{-X})_\x}{\vt^{-X}}\Big)_\x
    \hspace{-2mm}\eqqcolon P\ut^{-X}, \\
    &\frac{R}{\g-1}(\tht^{-X})_t- \frac{R\s_\e}{\g-1}(\tht^{-X})_\x+\frac{R}{\g-1}\dot{X}(t)(\tht^{-X})_\x+p(\vt^{-X},\tht^{-X})(\ut^{-X})_\x \\
    &\hspace{4.8cm}
    = \Big(\k(\tht^{-X})\frac{(\tht^{-X})_\x}{\vt^{-X}}\Big)_\x+\m(\tht^{-X})\frac{((\ut^{-X})_\x)^2}{\vt^{-X}}  \eqqcolon P\tht^{-X}.
\end{aligned}
\right.
\end{equation}

\subsection{Global and local estimates on the relative quantities}
In this subsection, we provide a set of useful inequalities on the function \(\Phi\), and these inequalities demonstrate the locally quadratic structure of \(\Phi\) and others.
\begin{lemma} \label{lem-pro}
For given constants \(v_-,\th_->0\), there exist positive constants \(c_1, c_2, c_3, c_4\) such that the following inequalities hold. \\
\begin{enumerate}
    \item For any \(w\in[\frac{1}{2}v_-,2v_-]\),
    \begin{equation} \label{rel_Phi_v}
    \begin{aligned}
    c_1\abs{v-w}^2 &\le \Phif{v}{w} \le c_1^{-1} \abs{v-w}^2 \quad 
    &&\text{for all } v \in \Big(\frac{1}{3}v_-,3v_-\Big), \\
    c_2 \abs{v-w} &\le \Phif{v}{w} \quad 
    &&\text{for all } v \in \RR^+\setminus\Big(\frac{1}{3}v_-,3v_-\Big).
    \end{aligned}
    \end{equation}
    \item For any \(w\in[\frac{1}{2}\th_-,2\th_-]\),
    \begin{equation} \label{rel_Phi_th}
    \begin{aligned}
    c_3\abs{v-w}^2 &\le \Phif{v}{w} \le c_3^{-1} \abs{v-w}^2, \quad 
    &&\text{for all } v \in \Big(\frac{1}{3}\th_-,3\th_-\Big) \\
    c_4\abs{v-w} &\le \Phif{v}{w} \quad 
    &&\text{for all } v \in \RR^+\setminus\Big(\frac{1}{3}\th_-,3\th_-\Big).
    \end{aligned}
    \end{equation}
    \item Moreover, if \(0<w\le u \le v\) or \(0<v \le u \le w\), then
    \begin{equation} \label{Phi-sim}
    \Phif{v}{w} \ge \Phif{u}{w},
    \end{equation} 
    and for any \(\d_* \in (0,w/2)\), there exists a constant \(c_5>0\) such that if, in addition, \(w\in[\frac{1}{2}\min(v_-,\th_-),2\max(v_-,\th_-)]\), \(\abs{w-v} > \d_*\) and \(\abs{w-u} \le \d_*\), then
    \begin{equation} \label{rel_Phi1}
    \Phif{v}{w}-\Phif{u}{w} \ge c_5 \abs{v-u}.
    \end{equation}
\end{enumerate}
\end{lemma}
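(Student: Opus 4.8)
Every statement is an elementary one-variable fact about the strictly convex function $\Phi(z)=z-1-\log z$, for which $\Phi(1)=\Phi'(1)=0$, $\Phi'(z)=1-1/z$, $\Phi''(z)=1/z^2>0$, and $\Phi(1-t)>\Phi(1+t)$ for $t\in(0,1)$. The plan is to reduce each item to monotonicity/convexity of $\Phi$, keeping careful track that the constants remain uniform over the admissible ranges of the base point $w$.

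For the quadratic two-sided bounds in (1)--(2), I would consider $q(z):=\Phi(z)/(z-1)^2$, which extends continuously across $z=1$ with value $1/2$ and is continuous and strictly positive on all of $(0,\infty)$, hence bounded above and below by positive constants on any fixed compact interval. When $v\in(\tfrac13 v_-,3v_-)$ and $w\in[\tfrac12 v_-,2v_-]$ the ratio $z=v/w$ lies in the fixed interval $(\tfrac16,6)$, so $\Phif{v}{w}$ is comparable to $(z-1)^2=|v-w|^2/w^2$; since $w$ is itself comparable to $v_-$, this yields \eqref{rel_Phi_v} with $c_1=c_1(v_-)$, and \eqref{rel_Phi_th} is the identical argument with $\th_-$. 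For the one-sided linear bounds, if $v\notin(\tfrac13 v_-,3v_-)$ then $z=v/w$ satisfies $z\le 2/3$ or $z\ge 3/2$: in the former case $\Phi(z)\ge\Phi(2/3)>0$ while $|v-w|\le w\le 2v_-$ is bounded; in the latter case $(z-1)^{-1}\Phi(z)=1-\log z/(z-1)$ is increasing on $(1,\infty)$ and hence $\ge 1-2\log(3/2)>0$ on $[3/2,\infty)$, giving $\Phif{v}{w}\ge c\,(v/w-1)$. Either way $\Phif{v}{w}\ge c_2|v-w|$, and taking the minimum of the constants produced gives the claimed $c_2$.

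For part (3), I would fix $w>0$ and set $g(x):=\Phif{x}{w}$, so that $g'(x)=\tfrac1w-\tfrac1x$ and $g''>0$: thus $g$ strictly decreases on $(0,w]$ and strictly increases on $[w,\infty)$, with minimum $g(w)=0$. The monotonicity \eqref{Phi-sim} is then immediate from this sign pattern. For \eqref{rel_Phi1} I use that the standing hypothesis of (3) places $u$ between $w$ and $v$; treating the case $w\le u\le v$ (the case $v\le u\le w$ is symmetric), the hypotheses force $w\le u\le w+\d_*<v$, and I would split $g(v)-g(u)=\int_u^{w+\d_*}g'+\int_{w+\d_*}^v g'$, bound $g'(s)\ge\frac{s-w}{w(w+\d_*)}$ on the first interval and $g'(s)\ge\frac{\d_*}{w(w+\d_*)}$ on the second, and then use $\d_*^2-(u-w)^2\ge\d_*\big(\d_*-(u-w)\big)$ together with the identity $(\d_*-(u-w))+(v-w-\d_*)=v-u$ to conclude $g(v)-g(u)\ge\frac{\d_*}{2w(w+\d_*)}\,|v-u|$. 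Since $\d_*<w/2$ and $w\le 2\max(v_-,\th_-)$, the prefactor is bounded below uniformly, which gives the desired $c_5$.

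The only genuinely delicate point is \eqref{rel_Phi1}. Because $\Phi$ is asymmetric about its minimum, one cannot prove it simply by comparing $\Phi(v/w)$ and $\Phi(u/w)$ with the value of $\Phi$ at the endpoint $w\pm\d_*$ on the relevant side: such a comparison can have the wrong sign. What rescues the argument is exactly the standing hypothesis inherited from \eqref{Phi-sim}, which keeps $u$ on the same side of $w$ as $v$ and between them, so that $g'$ has a constant sign on $[u,v]$ and the linear lower bound follows by honestly integrating $g'$ there. Everything else is routine bookkeeping of constants in terms of $v_-$ and $\th_-$.
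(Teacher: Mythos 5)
Your proposal is correct, and it follows essentially the same elementary route as the paper, which omits the details by deferring to the convexity/fundamental-theorem-of-calculus argument of \cite[Lemma 2.4]{KV21}: two-sided quadratic bounds from the local quadratic behavior of \(\Phi\) near \(1\) on a compact range of \(z=v/w\), linear lower bounds from monotonicity of \(\Phi\) away from \(1\), and \eqref{Phi-sim}--\eqref{rel_Phi1} by integrating \(\partial_x\Phi(x/w)\) with a uniform lower bound on the derivative past \(w\pm\d_*\). Your packaging (the ratio \(\Phi(z)/(z-1)^2\), monotonicity of \(\Phi(z)/(z-1)\), and splitting the integral at \(w+\d_*\) rather than at the midpoint) differs only cosmetically from the paper's bookkeeping and yields the same uniform constants.
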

\begin{proof}
The proof follows essentially the same reasoning as in \cite[Lemma 2.4]{KV21}, despite the differences between the definitions of \(Q(v)\).
It relies on the locally quadratic nature of relative entropy and the fundamental theorem of calculus, so we omit the details.
\end{proof}

\begin{lemma} \label{lem:local}
For given constants \(v_-,\th_->0\), there exists a positive constant \(\d_*\) such that
for any \(\d \in (0,\d_*) \), the following is true. \\
1) For any \(v,w\in\RR^+\) satisfying \(\abs{1/v-1/w} \le \d\) and \(\abs{1/w-1/v_-} \le \d\),
\begin{equation} \label{Phi-est1-v}
\frac{1}{2}\left(\frac{v}{w}-1\right)^2 - \frac{1}{3} \left(\frac{v}{w}-1\right)^3 \le \Phif{v}{w} \le \frac{1}{2}\left(\frac{v}{w}-1\right)^2.
\end{equation}
2) For any \(v,w\in\RR^+\) satisfying \(\abs{v-w} \le \d\) and \(\abs{w-\th_-} \le \d\),
\begin{equation} \label{Phi-est1-th}
\frac{1}{2}\left(\frac{v}{w}-1\right)^2 - \frac{1}{3} \left(\frac{v}{w}-1\right)^3 \le \Phif{v}{w} \le \frac{1}{2}\left(\frac{v}{w}-1\right)^2.
\end{equation}
\end{lemma}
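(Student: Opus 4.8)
The plan is to reduce both assertions to a single elementary one-variable estimate for $\Phi(z)=z-1-\log z$ near $z=1$, and then to prove that estimate by Taylor's formula with an explicit remainder. First I would set $z:=v/w$ and check that the smallness hypotheses confine $z$ to an arbitrarily small neighbourhood of $1$. In part 1), $|1/w-1/v_-|\le\d$ pins $1/w$, hence $w$, into a fixed compact subinterval of $\RR^+$ once $\d<\tfrac{1}{2v_-}$, and then $z-1=(v-w)/w$ together with $|v-w|=vw\,|1/v-1/w|$ gives $|z-1|\le C(v_-)\,\d$. In part 2), $|w-\th_-|\le\d$ gives $w\ge\th_-/2$ for $\d$ small, so $|z-1|=|v-w|/w\le 2\d/\th_-$. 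Hence, after choosing $\d_*$ small (depending only on $v_-$, resp.\ $\th_-$), it suffices to prove that
\[
\tfrac{1}{2}(z-1)^2-\tfrac{1}{3}(z-1)^3\ \le\ \Phi(z)\ \le\ \tfrac{1}{2}(z-1)^2
\]
whenever $|z-1|$ is below a universal threshold $\eta_*$, and then pick $\d_*$ so that the hypotheses force $|z-1|\le\eta_*$.

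For the scalar claim I would use $\Phi(1)=\Phi'(1)=0$, $\Phi''(z)=z^{-2}$, $\Phi'''(z)=-2z^{-3}$ and the integral remainder $\Phi(z)=(z-1)^2\int_0^1(1-t)\,\Phi''\!\big(1+t(z-1)\big)\,dt$. Since $\int_0^1(1-t)\,dt=\tfrac{1}{2}$ and $\Phi''(1)=1$, this gives $\Phi(z)=\tfrac{1}{2}(z-1)^2$ to leading order; inserting $\Phi''\!\big(1+t(z-1)\big)=1-2t(z-1)+O\big((z-1)^2\big)$ and $\int_0^1 t(1-t)\,dt=\tfrac{1}{6}$ produces the cubic correction $-\tfrac{1}{3}(z-1)^3$ with exactly the stated constant. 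Equivalently, in the spirit of the (omitted) proof of Lemma~\ref{lem-pro} and of \cite[Lemma 2.4]{KV21}, one may use only the monotonicity of $\Phi''$ on $\RR^+$ inside this integral to obtain first the crude quadratic two-sided bound $\tfrac{(z-1)^2}{2(1+\eta)^2}\le\Phi(z)\le\tfrac{(z-1)^2}{2(1-\eta)^2}$ on $\{|z-1|\le\eta\}$, and then sharpen to the stated $\tfrac{1}{2}(z-1)^2$ and $\tfrac{1}{2}(z-1)^2-\tfrac{1}{3}(z-1)^3$ by one further Taylor step on $\Phi''$, tracking the sign of the cubic remainder. Together with Step 1 this fixes $\d_*$ in terms of $v_-$ (resp.\ $\th_-$) and yields \eqref{Phi-est1-v} (resp.\ \eqref{Phi-est1-th}).

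I do not expect a substantive obstacle: this is a purely local scalar estimate, and the thermodynamic constants $R,\g$ play no role. The two points needing care are (i) making the reduction in Step 1 quantitative and \emph{uniform} over all admissible pairs $(v,w)$, so that one threshold $\d_*$ works simultaneously for all of them, and (ii) the sign bookkeeping in the Taylor remainder, so that the cubic term ends up with exactly the coefficient $-\tfrac{1}{3}$ and the quadratic term with exactly $\tfrac{1}{2}$ — the same care already implicit in the proof of Lemma~\ref{lem-pro}.
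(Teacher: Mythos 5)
Your argument is essentially the paper's own proof: the paper likewise reduces to the single estimate \eqref{Phi-est1-v}, Taylor-expands \(\Phi\) about \(z=1\) using \(\Phi''(z)=1/z^2\), \(\Phi^{(3)}(z)=-2/z^3\), and concludes for \(\d\) sufficiently small, so your only additions are the quantitative reduction \(z=v/w\), \(|z-1|\le C\d\) and the explicit integral remainder. The one caveat — shared with the paper, since it concerns the statement rather than your method — is that \(\Phi(z)-\tfrac12(z-1)^2\) has derivative \(-(z-1)^2/z\le 0\), so the upper bound \(\Phi(z)\le\tfrac12(z-1)^2\) is exact only for \(z\ge1\) (for \(v<w\) the cubic correction is positive and the two stated bounds even contradict each other); thus "tracking the sign of the cubic remainder," as you propose, really yields the two-sided expansion \(\Phi(z)=\tfrac12(z-1)^2+O(|z-1|^3)\), which is the form actually used later in the paper.
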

\begin{proof}
Clearly, it is enough to show \eqref{Phi-est1-v}.
Note that the function \(\Phi(z)\) is composed of a polynomial and the natural logarithm, and so it has a Taylor expansion centered at \(1\) and its radius of convergence is \(1\).
Since \(\Phi'(z)=1-1/z\), \(\Phi''(z)=1/z^2\), \(\Phi^{(3)}(z)=-2/z^3\) and \(\Phi^{(4)}(z)=6/z^4\), we have the following expansion of \(\Phi\):
\[
\Phi(z)= \frac{1}{2}(z-1)^2 - \frac{1}{3}(z-1)^3 + \frac{1}{4}(z-1)^4 + l.o.t..
\]
Then, for sufficiently small \(\d\), the conclusion holds immediately.
\end{proof}

\begin{lemma} \label{lem-quad}
For given constants \(v_-,\th_->0\), there exist positive constants \(C\) and \(\d_*\) such that for any \(\d \in (0,\d_*)\), the followings hold:
\begin{enumerate}
    \item For any \(v,w\in\RR^+\) with \(\abs{1/w-1/v_-}\le\d\), if they satisfy either \(\Phi(v/w)<\d\) or \(\abs{1/v-1/w}<\d\), then
    \begin{equation} \label{vquad}
    \abs{v-w}^2 \le C \Phif{v}{w}.
    \end{equation}
    \item For any \(v,w\in\RR^+\) with \(\abs{w-\th_-}\le\d\), if they satisfy either \(\Phi(v/w)<\d\) or \(\abs{v-w}<\d\), then
    \begin{equation} \label{thquad}
    \abs{v-w}^2 \le C \Phif{v}{w}.
    \end{equation}
\end{enumerate}
\end{lemma}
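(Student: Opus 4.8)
The plan is to use the smallness hypotheses to push $v$ into the window $(\tfrac13 v_-,3v_-)$ (respectively $(\tfrac13\th_-,3\th_-)$), where Lemma~\ref{lem-pro} already furnishes a two-sided locally quadratic estimate for $\Phi$; the desired inequality is then immediate. I would carry out item~(1) in full and only indicate the identical modifications for item~(2).

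First I would fix $v_-,\th_->0$, call $c_1,c_2>0$ the constants of Lemma~\ref{lem-pro}(1) and $c_3,c_4>0$ those of Lemma~\ref{lem-pro}(2), and choose $\d_*>0$ small enough that three things hold: (a) $\abs{1/w-1/v_-}\le\d_*$ forces $w\in[\tfrac12 v_-,2v_-]$; (b) $\abs{1/w-1/v_-}\le\d_*$ and $\abs{1/v-1/w}\le\d_*$ together force $v\in(\tfrac13 v_-,3v_-)$; (c) $\d_*<\tfrac16 c_2 v_-$. Each is a transparent smallness requirement since $v_->0$ is fixed (for instance $\d_*\le\tfrac1{3v_-}$ already handles (a) and (b)), and I would impose the analogous conditions with $\th_-$, $\abs{v-w}$ and $c_4$ in place of $v_-$, $\abs{1/v-1/w}$ and $c_2$.

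Now let $\d\in(0,\d_*)$ and $v,w\in\RR^+$ with $\abs{1/w-1/v_-}\le\d$; by~(a), $w\in[\tfrac12 v_-,2v_-]$. If $\abs{1/v-1/w}<\d$, then (b) gives $v\in(\tfrac13 v_-,3v_-)$. If instead $\Phi(v/w)<\d$, I argue that $v\in(\tfrac13 v_-,3v_-)$ as well: otherwise, the second estimate of~\eqref{rel_Phi_v} yields $\Phi(v/w)\ge c_2\abs{v-w}$, while $v\le\tfrac13 v_-$ forces $\abs{v-w}\ge\tfrac12 v_- -\tfrac13 v_- =\tfrac16 v_-$ and $v\ge 3v_-$ forces $\abs{v-w}\ge 3v_- -2v_- =v_-\ge\tfrac16 v_-$, so in either case $\Phi(v/w)\ge\tfrac16 c_2 v_->\d_*>\d$, a contradiction. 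In both cases $v\in(\tfrac13 v_-,3v_-)$ and $w\in[\tfrac12 v_-,2v_-]$, so the first estimate of~\eqref{rel_Phi_v} gives $\abs{v-w}^2\le c_1^{-1}\Phi(v/w)$, which is~\eqref{vquad}. Item~(2) is the same argument with $\th_-$ in place of $v_-$, the hypothesis $\abs{v-w}<\d$ in place of $\abs{1/v-1/w}<\d$, and \eqref{rel_Phi_th} in place of~\eqref{rel_Phi_v}, yielding~\eqref{thquad}; taking $C:=\max(c_1^{-1},c_3^{-1})$ works for both parts.

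There is no genuine analytic obstacle here; it is a short quantitative/continuity argument. The one point I would be careful about is that the hypothesis $\Phi(v/w)<\d$ cannot by itself be converted into closeness of $v$ and $w$ through the quadratic bound: one genuinely needs the \emph{linear} lower bound of Lemma~\ref{lem-pro} valid on $\RR^+\setminus(\tfrac13 v_-,3v_-)$ to exclude $v$ lying far from $v_-$, after which the quadratic estimate applies on the remaining region.
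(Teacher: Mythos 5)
Your proof is correct and follows essentially the same route as the paper: use the linear lower bound in Lemma \ref{lem-pro} outside the window \((\tfrac13 v_-,3v_-)\) to convert \(\Phi(v/w)<\d\) into smallness of \(\abs{v-w}\), then conclude with the locally quadratic estimate \eqref{rel_Phi_v} (resp. \eqref{rel_Phi_th}). The only blemish is the parenthetical "\(\d_*\le\tfrac{1}{3v_-}\) already handles (a) and (b)": with \(w\) as small as \(\tfrac12 v_-\), condition (b) actually requires \(\d_*<\tfrac{1}{6v_-}\), but since (a)--(c) are stated as abstract smallness requirements this does not affect the argument.
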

\begin{proof}
It suffices to show \eqref{vquad} only.
For small enough \(\d_*\), it holds that
\[
\min \{c_1 \abs{v-w}^2,c_2 \abs{v-w}\}\le \Phif{v}{w},
\]
and so if \(\Phi(v/w)<\d<\d_*\ll 1\), we have \(\abs{v-w}\ll 1\). 
Then again, for sufficiently small \(\d_*\), from either \(\abs{v-w}\ll 1\) or \(\abs{1/v-1/w}<\d\), we use \eqref{rel_Phi_v} to obtain the desired result.
\end{proof}

\section{Proof of Theorem \ref{thm_main}}\label{section_theo}
\setcounter{equation}{0}

Throughout this and the succeeding section, \(C\) denotes a positive constant which may differ from line to line, but which stays independent on \(\e\) (the shock strength) and \(\l\) (the total variation of the function \(a\)). We will consider two smallness conditions, one on \(\e\), and the other on \(\e/\l\). In the argument, \(\e\) will be far smaller than \(\e/\l\).

\subsection{Properties of small shock waves}
In this subsection, as mentioned above, we state the existence and uniqueness of the viscous shock wave of \eqref{inveq} with small amplitude \(\e\) and present useful properties of the 3-shock waves \((\vt, \ut, \tht)\).
Note that the following properties hold for not only the Brenner-Navier-Stokes-Fourier system but also the Navier-Stokes-Fourier system. (See \cite[Lemma 2.3]{KVW-NSF})
\begin{lemma} \label{lem-VS}
(i) (Existence and Uniqueness) For a given left-end state \((v_-,u_-,\th_-)\in\RR^+\times\RR\times\RR^+\), there exists a constant \(\e_0>0\) such that for any right-end state \((v_+,u_+,\th_+)\in\RR^+\times\RR\times\RR^+\) satisfying \eqref{end-con} and \(\e\coloneqq\abs{v_--v_+}<\e_0\), there is a unique traveling wave solution \((\vt,\ut,\tht):\RR\to\RR^+\times\RR\times\RR^+\) to \eqref{inveq}, as a monotone profile satisfying \eqref{shock_0}.\\
(ii) (Properties) Let \((\vt, \ut, \tht)\) be the 3-shock wave with amplitude \(\e = \abs{v_+-v_-} \hspace{-0.5mm}\sim \abs{u_+-u_-} \hspace{-0.5mm}\sim |\th_+-\th_-|\). Without loss of generality, we may assume that \((\vt, \ut, \tht)\) satisfies \(\vt(0)=\frac{v_-+v_+}{2}\).
Then, there exist positive constants \(C\) such that the followings are true. 
\begin{align}
\begin{aligned} \label{tail}
&\vtil' >0, \quad \util' <0, \quad \thtil' <0 \text{ for all } \x \in \RR, \\
&\abs{(\vtil(\x)-v_-, \util(\x)-u_-, \thtil(\x)-\th_-)} \le C\e e^{-C\e\abs{\x}} \text{ for all } \x \le 0, \\
&\abs{(\vtil(\x)-v_+, \util(\x)-u_+, \thtil(\x)-\th_+)} \le C\e e^{-C\e\abs{\x}} \text{ for all } \x \ge 0, \\
&\abs{(\vtil'(\x), \util'(\x), \thtil'(\x))} \le C\e^2e^{-C\e\abs{\x}} \text{ for all } \x\in \RR, \\
&\abs{(\vtil''(\x), \util''(\x), \thtil''(\x))} \le C\e\abs{(\vtil'(\x), \util'(\x), \thtil'(\x))} \text{ for all } \x\in \RR.
\end{aligned}
\end{align}
Moreover, we have
\begin{equation} \label{lower-v}
\inf_{\x\in \left[-\frac{1}{\e},\frac{1}{\e}\right]} \abs{\vt'(\x)} \ge C\e^2.
\end{equation}
It also holds that \(\abs{\vt'} \sim \abs{\ut'} \sim |\tht'|\) for all \(\x\in\RR\). More explicitly, we have the following:
\begin{align}
\abs{\util'(\x) + \s_*\vtil'(\x)}
+\Big|\thtil'(\x) + \frac{(\g-1)p_-}{R}\vtil'(\x)\Big| &\le C\e\abs{\vtil'(\x)} 
\label{ratio-vu}
\end{align}
where
\[
\s_* \coloneqq \sqrt{\frac{\g p_-}{v_-}} = \frac{\sqrt{R\g  \th_-}}{v_-}
\]
which satisfies
\begin{equation} \label{sm1}
\abs{\s_\e-\s_*} \le C \e.
\end{equation}
In addition, the following estimate holds:
\begin{equation} \label{tail-0}
C^{-1}(v_+-\vt)(\vt-v_-) \le \vt' \le C(v_+-\vt)(\vt-v_-).
\end{equation}
\end{lemma}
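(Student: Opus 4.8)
**The plan is to prove Lemma \ref{lem-VS} by reducing the viscous shock system to a scalar ODE for the specific volume, following the classical approach for shock profiles.**

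\medskip

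\noindent\textbf{Step 1: Reduction to an algebraic system along the profile.}
First I would integrate the traveling-wave system \eqref{shock_0} once in $\xi$, using the end-state limits. This turns the three second-order equations into three first-order relations of the form (Brenner-term)$=$(flux difference), where the flux differences are governed by the Rankine--Hugoniot relations \eqref{end-con}. Solving the $u$-equation and the energy equation, I expect to express $\utn-u_-$ and $\thtn-\th_-$ as smooth functions of $\vtn$ on the Hugoniot curve through $(v_-,u_-,\th_-)$, plus a correction proportional to the viscous terms that is $O(\e)$ relative to the leading behaviour. The point is that, near the left state, the shock curve is a smooth one-parameter family and one can parametrize everything by $v$.

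\medskip

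\noindent\textbf{Step 2: Scalar ODE for $\vtn$ and existence of a monotone profile.}
Substituting back, the first equation (the $v$-equation, which contains the Brenner coefficient $\t(\thtn)$) becomes a scalar first-order ODE $(\vtn)' = F(\vtn)$ where $F$ vanishes exactly at $v=v_-$ and $v=v_+$ and is one-signed in between, with $F$ behaving like $(v_+-v)(v-v_-)$ up to a positive smooth factor — this is precisely the content of \eqref{tail-0}. Since $v_-$ and $v_+$ are non-degenerate (hyperbolic) rest points for small $\e$ (the Lax condition ensures the eigenvalues of the linearization have the right signs), standard ODE theory on the line gives a unique (up to translation) heteroclinic orbit connecting them; normalizing $\vtn(0)=(v_-+v_+)/2$ fixes the translation. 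Monotonicity of $\vtn$ is immediate from $F$ being one-signed, and then $\utn$, $\thtn$ inherit monotonicity from Step 1. The normalization $\nu=1$ versus general $\nu$ is handled by the scaling $\xi\mapsto\xi/\nu$, so I would prove everything for $\nu=1$.

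\medskip

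\noindent\textbf{Step 3: Exponential decay and derivative bounds.}
The estimates in \eqref{tail} follow by linearizing the scalar ODE at each rest point: near $v_\pm$ one has $(\vtn)'\approx \mp c\,\e\,(\vtn-v_\pm)$ for a positive constant $c$, which yields the decay rate $e^{-C\e|\xi|}$; the factor $\e$ in the rate comes from $|v_+-v_-|=\e$. Differentiating the ODE gives $|(\vtn)''|\lesssim \e |(\vtn)'|$ and $|(\vtn)'|\lesssim \e^2 e^{-C\e|\xi|}$ (the $\e^2$ because $(\vtn)'\sim \e\cdot(\text{amplitude }\e)$). The lower bound \eqref{lower-v} on $|\vtn'|$ over $[-1/\e,1/\e]$ follows because on that interval $\vtn$ stays quantitatively away from both $v_\pm$, so $F(\vtn)\gtrsim \e^2$. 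The comparison $|\vtn'|\sim|\utn'|\sim|\thtn'|$ and the sharper relation \eqref{ratio-vu} come from differentiating the Step-1 relations and Taylor-expanding in $\e$ around the acoustic eigenvector; the leading coefficients $-\s_*$ and $-(\g-1)p_-/R$ are read off from the linearization of the Euler flux at $(v_-,u_-,\th_-)$, i.e. from the structure of the 3-characteristic field, and \eqref{sm1} is the first-order expansion of the Rankine--Hugoniot shock speed.

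\medskip

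\noindent\textbf{Main obstacle.}
The delicate point is Step 1--2: one must verify that the Brenner modification — the presence of $\t(\th)=\t_0+\th^2$ multiplying $v_\x/v$ in the first equation rather than the usual NSF structure — does not destroy the reduction to a well-behaved scalar ODE, in particular that the reduced $F$ retains the sign and non-degeneracy properties uniformly as $\e\to 0$, and that the implicit-function-theorem solve in Step 1 is valid (the relevant Jacobian is non-degenerate precisely because $\s_\e$ is bounded away from the other characteristic speeds for weak shocks). Since the excerpt states these properties hold for both BNSF and NSF and cites \cite[Lemma 2.3]{KVW-NSF}, I would carry out the reduction carefully once and then invoke the genuine-nonlinearity/Lax structure to conclude; the remaining estimates are then routine ODE perturbation arguments.
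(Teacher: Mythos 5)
Note first that the paper does not actually carry out this proof: it invokes \cite{EEKO24} (where $\t,\m,\k$ are constants) and defers the details to \cite{EE-DBNSF}, so your plan is being measured against the standard argument of those references rather than an in-paper proof. Your overall route --- integrate \eqref{shock_0} once, reduce to an effectively scalar equation for $\vt$ with the quadratic structure \eqref{tail-0}, then read off the exponential tails, \eqref{lower-v}, \eqref{ratio-vu} and \eqref{sm1} from linearization and Taylor expansion in $\e$ --- is indeed the standard one for this family of results.

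There is, however, a concrete gap in Steps 1--2 as written. For the usual NSF system the once-integrated mass equation is \emph{algebraic} (no viscosity in the $v$-equation), and that is precisely what allows one to eliminate $u$ and drop dimension. For BNSF the Brenner term makes all three once-integrated equations differential: the first reads $-\s_\e(\vt-v_-)-(\ut-u_-)=\t(\tht)\vt'/\vt$, and similarly the momentum and energy relations contain $\ut'$ and $\tht'$. So ``solving the $u$-equation and the energy equation to express $\ut-u_-$ and $\tht-\th_-$ as smooth functions of $\vt$'' is not an implicit-function-theorem solve: the ``corrections proportional to the viscous terms'' involve derivatives of the unknown orbit itself. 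The profile problem is therefore a genuinely three-dimensional ODE system, and your appeal to ``standard ODE theory on the line'' plus hyperbolicity of the rest points does not by itself produce the heteroclinic connection --- in dimension larger than one, hyperbolic rest points do not guarantee that the unstable manifold of $(v_-,u_-,\th_-)$ meets the stable manifold of $(v_+,u_+,\th_+)$. The rigorous version of your reduction, and what the cited works do, is a center-manifold (slow-manifold) argument for weak shocks: near the rest states, which degenerate as $\e\to0$, the connecting orbit lies on a one-dimensional invariant manifold on which the dynamics is scalar, the derivative terms are a posteriori $O(\e^2)$, and then \eqref{tail-0}, the tails, and your Step 3 estimates follow essentially as you describe. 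Your ``main obstacle'' paragraph gestures at this, but non-degeneracy of an IFT Jacobian is not the issue; the dimension reduction itself is the step that requires a genuine dynamical-systems argument, and as stated your Step 1 would fail without it.
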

\begin{proof}
The existence and above estimates for viscous shocks have been established in \cite{EEKO24} when  \(\t,\m,\k\) are constants.
Following the same arguments in the proof of \cite{EEKO24}, Lemma \ref{lem-VS} can be proved, and so we omit the details and refer to \cite{EE-DBNSF}.
\end{proof}

The estimates \eqref{tail-0} imply not only the estimates \eqref{tail} on the exponential tail of the viscous shock but also the following two Lemmas that are useful outside truncation as described in Section \ref{sec:idea}.  

\begin{lemma}\label{lemma_pushing}
There is a constant \(C>0\) such that for a smooth function \(f\colon \RR\to \RR\) with 
\(
\int_\RR \vtil'(\x) \abs{f(\x)} d\x <\infty,
\)
we have the following: for any \(\x_0\in[-1/\e, 1/\e]\), 
\[
\abs{\int_\RR \vtil'(\x) \int_{\x_0}^{\x} f(\z) d\z d\x} 
\le \frac{C}{\e}\int_\RR \vtil'(\z) \abs{f(\z)} d\z.
\]
\end{lemma}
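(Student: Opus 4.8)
The plan is to exploit the key structural feature: the inner integral $\int_{\x_0}^\x f(\z)\,d\z$ is taken over an interval with one endpoint pinned at $\x_0\in[-1/\e,1/\e]$, so by Fubini the whole double integral collapses to a weighted single integral of $f$, where the weight on each $\z$ is the "tail mass" of $\vtil'$ on the side of $\z$ away from $\x_0$. First I would write, for $\x>\x_0$, $\int_{\x_0}^\x f(\z)\,d\z = \int_\RR f(\z)\,\one{\x_0<\z<\x}\,d\z$, and similarly for $\x<\x_0$ with a sign, so that
\[
\int_\RR \vtil'(\x)\int_{\x_0}^\x f(\z)\,d\z\,d\x = \int_\RR f(\z)\,K(\z)\,d\z,
\qquad
K(\z) := \int_{\z}^{+\infty}\vtil'(\x)\,d\x\ \ (\z>\x_0),\quad
K(\z) := -\int_{-\infty}^{\z}\vtil'(\x)\,d\x\ \ (\z<\x_0).
\]
(The swap of order of integration is justified by the hypothesis $\int \vtil'|f|<\infty$ together with $|K(\z)|\le \mathrm{osc}\,\vtil = v_+-v_- = \e$.) Taking absolute values gives $\bigl|\int_\RR \vtil'(\x)\int_{\x_0}^\x f\,d\x\bigr|\le \int_\RR |f(\z)|\,|K(\z)|\,d\z$, so it remains to bound $|K(\z)|\le \frac{C}{\e}\,\vtil'(\z)$ pointwise.

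The heart of the matter is therefore the pointwise comparison $|K(\z)|\lesssim \e^{-1}\vtil'(\z)$, which I would get from the sharp two-sided bound \eqref{tail-0}, i.e. $C^{-1}(v_+-\vt)(\vt-v_-)\le \vtil'\le C(v_+-\vt)(\vt-v_-)$, rewritten after the substitution $s=\vtil(\x)$ (using $\vtil'>0$ so $\vtil$ is a diffeomorphism onto $(v_-,v_+)$). Indeed, for $\z>\x_0$,
\[
K(\z)=\int_{\z}^{\infty}\vtil'(\x)\,d\x = v_+-\vtil(\z),
\]
so the claim becomes $v_+-\vtil(\z)\le \frac{C}{\e}(v_+-\vtil(\z))(\vtil(\z)-v_-)$, i.e. $\vtil(\z)-v_-\ge \frac{\e}{C}$; and for $\z<\x_0$, $K(\z)=-(\vtil(\z)-v_-)$, so the claim becomes $v_+-\vtil(\z)\ge \frac{\e}{C}$. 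Thus what must be shown is exactly: for $\z\ge \x_0$ one has $\vtil(\z)-v_-\gtrsim\e$, and for $\z\le\x_0$ one has $v_+-\vtil(\z)\gtrsim\e$. Since $\vtil$ is increasing it suffices to prove $\vtil(\x_0)-v_-\gtrsim\e$ and $v_+-\vtil(\x_0)\gtrsim\e$, uniformly over $\x_0\in[-1/\e,1/\e]$.

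This last point — the main (though modest) obstacle — is where $\x_0\in[-1/\e,1/\e]$ and the lower bound \eqref{lower-v} enter. By the normalization $\vtil(0)=\tfrac{v_-+v_+}{2}$ and \eqref{lower-v}, which gives $\vtil'(\x)\ge C\e^2$ on $[-1/\e,1/\e]$, we have for $\x_0\in[0,1/\e]$ that $\vtil(\x_0)\ge \vtil(0)=\tfrac{v_-+v_+}{2}$, hence $\vtil(\x_0)-v_-\ge \tfrac{\e}{2}$; and
\[
v_+-\vtil(\x_0)=\int_{\x_0}^{1/\e}\vtil' + (v_+-\vtil(1/\e))\ \ge\ \int_{\x_0}^{1/\e}\vtil'(\x)\,d\x,
\]
which is not directly $\gtrsim\e$ at $\x_0$ close to $1/\e$; so instead I use \eqref{tail-0} together with $\vtil(1/\e)\le v_+$ more carefully — better, note $v_+-\vtil(1/\e) = \int_{1/\e}^\infty \vtil' \le C\e\,e^{-C}\le$ a small multiple of $\e$, and $v_+-\vtil(\x_0)\ge v_+-\vtil(1/\e)$, which is the wrong direction. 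The clean fix: split at $\x_0=0$. For $\x_0\in[0,1/\e]$, bound $K$ on the region $\z>\x_0$ by $v_+-\vtil(\z)$ and use \eqref{tail-0} in the form $v_+-\vtil(\z)\le \frac{C}{\e}\vtil'(\z)\cdot\frac{1}{\vtil(\z)-v_-}\le \frac{C}{\e}\vtil'(\z)\cdot\frac{2}{v_+-v_-}$ wait — this reintroduces an $\e^{-1}$ which is fine, giving $|K(\z)|\le \frac{C}{\e^2}\vtil'(\z)$, a worse power. To land exactly $\e^{-1}$, I instead argue: since $\z\ge\x_0\ge 0$ implies $\vtil(\z)-v_-\ge \vtil(0)-v_- = \tfrac{\e}{2}$, the bound \eqref{tail-0} yields directly $\vtil'(\z)\ge C^{-1}(v_+-\vtil(\z))\cdot\tfrac{\e}{2}$, i.e. $K(\z)=v_+-\vtil(\z)\le \tfrac{2C}{\e}\vtil'(\z)$; and for $\z<\x_0$ with $\x_0\in[0,1/\e]$ we have $|K(\z)|=\vtil(\z)-v_-\le v_+-v_-=\e\le \tfrac{1}{C\e}\cdot C\e^2\le\tfrac{1}{C\e}\vtil'(\z)$ using \eqref{lower-v} when $\z\in[-1/\e,1/\e]$, while for $\z<-1/\e$ we have $\vtil(\z)-v_-\le C\e\,e^{-C\e|\z|}$ and $\vtil'(\z)\ge C^{-1}e^{-C\e|\z|}\cdot\e\cdot(v_+-\vtil(\z))\gtrsim \e\,e^{-C\e|\z|}$ wait that needs $v_+-\vtil(\z)\gtrsim 1$, which holds since $\z<-1/\e<0$ gives $\vtil(\z)<\vtil(0)=\tfrac{v_-+v_+}{2}$ so $v_+-\vtil(\z)\ge\tfrac{\e}{2}$ — hmm that only gives $\gtrsim\e$, so $\vtil'(\z)\gtrsim \e^2 e^{-C\e|\z|}$ and then $|K(\z)|\le C\e e^{-C\e|\z|}\le \tfrac{C}{\e}\vtil'(\z)$. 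Good. The case $\x_0\in[-1/\e,0]$ is symmetric. Assembling the pointwise bound $|K(\z)|\le \tfrac{C}{\e}\vtil'(\z)$ for all $\z$ and all $\x_0\in[-1/\e,1/\e]$, and plugging into $\bigl|\int \vtil'\int_{\x_0}^\x f\bigr|\le \int |f(\z)||K(\z)|\,d\z\le \tfrac{C}{\e}\int_\RR \vtil'(\z)|f(\z)|\,d\z$, completes the proof. I expect the only genuine care needed is in the bookkeeping of constants so that the final power of $\e$ is exactly $-1$ (not worse), which as sketched is arranged by using the normalization $\vtil(0)=\tfrac{v_-+v_+}{2}$ to keep $\vtil-v_-$ and $v_+-\vtil$ bounded below by $\tfrac{\e}{2}$ on the relevant half-line, rather than naively applying \eqref{tail-0} twice.
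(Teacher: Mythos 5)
Your argument is essentially the paper's: both proofs apply Fubini/Tonelli to reduce the double integral to $\int |f(\z)|K(\z)\,d\z$ with $K$ the tail mass of $\vtil'$ on the side of $\z$ away from $\x_0$, and then use the two-sided bound \eqref{tail-0} to compare $v_+-\vtil(\z)$ (resp. $\vtil(\z)-v_-$) with $\e^{-1}\vtil'(\z)$. The only difference is how the uniform lower bounds of order $\e$ are obtained: the paper runs a Gronwall argument on \eqref{tail-0} to get $\vtil(\z)-v_-\ge \tfrac{\e}{2}e^{-C\e|\z|}$ for $\z\le 0$ (and symmetrically), evaluated at $\x_0\in[-1/\e,1/\e]$, whereas you use the normalization $\vtil(0)=\tfrac{v_-+v_+}{2}$, monotonicity, and \eqref{lower-v} on $[-1/\e,1/\e]$ — an equally valid and arguably more elementary route. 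One blemish: in your case $\z<-1/\e$ you implicitly invoke a \emph{lower} exponential bound $\vtil(\z)-v_-\gtrsim \e\,e^{-C\e|\z|}$, which is not contained in \eqref{tail} (those are upper bounds) and would itself require the Gronwall step; but the case is handled in one line without it, since $\z\le 0$ gives $v_+-\vtil(\z)\ge\tfrac{\e}{2}$, whence \eqref{tail-0} yields $\vtil'(\z)\ge C^{-1}\tfrac{\e}{2}\,(\vtil(\z)-v_-)$, i.e. $|K(\z)|=\vtil(\z)-v_-\le \tfrac{2C}{\e}\vtil'(\z)$ directly (this same observation also covers all of $\z\le 0$, so \eqref{lower-v} is only needed on $(0,\x_0]$). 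With that one-line repair the proof is complete and correct.
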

\begin{proof}
Using Fubini's theorem, we convert the given integral as follows:
\begin{align*}
\abs{\int_\RR \vt'(\x)\int_{\x_0}^\x f(\z) d\z d\x}
&\le \int_\RR\int_\RR \vt'(\z)\abs{f(\z)} \frac{\vt'(\x)}{\vt'(\z)}
\one{\{\x_0\ge\z\ge\x\}\cup\{\x\ge\z\ge\x_0\}} d\x d\z \\
&= \int_{-\infty}^{\x_0} \vt'(\z)\abs{f(\z)}
\int_{-\infty}^{\z} \frac{\vt'(\x)}{\vt'(\z)}d\x d\z 
+ \int_{\x_0}^{\infty} \vt'(\z)\abs{f(\z)}
\int_{\z}^{\infty} \frac{\vt'(\x)}{\vt'(\z)}d\x d\z \\
&= \int_{-\infty}^{\x_0} \vt'(\z)\abs{f(\z)}\frac{\vt(\z)-v_-}{\vt'(\z)}d\z 
+ \int_{\x_0}^{\infty} \vt'(\z)\abs{f(\z)}\frac{v_+-\vt(\z)}{\vt'(\z)}d\z.
\end{align*}
We analyze the two fractions in the integrands:
\(\frac{\vt(\z)-v_-}{\vt'(\z)}\) and
\(\frac{v_+-\vt(\z)}{\vt'(\z)}\).
Then, we will bound the first one for \(\z\le \x_0\), and the second one for \(\z\ge\x_0\).
Note that \(\vt\) is increasing, so these two are all positive. 
From \eqref{tail-0} and \(\vt(0) = \frac{v_-+v_+}{2}\), we have 
\[
(\vt(\z)-v_-) \ge \frac{C^{-1}\vt'(\z)}{v_+-\vt(\z)}
\ge \frac{C^{-1}}{\e} (\vt(\z)-v_-)' \text{ for } \z\le 0,
\]\[
(v_+-\vt(\z)) \ge \frac{C^{-1}\vt'(\z)}{\vt(\z)-v_-}
\ge -\frac{C^{-1}}{\e} (v_+-\vt(\z))' \text{ for } \z\ge 0,
\]
so we use Gronwall's inequality to obtain that 
\begin{align*}
\vt(\z)-v_- \ge e^{-C\e\abs{\z}}(\vt(0)-v_-) = \frac{\e}{2}e^{-C\e\abs{\z}} \text{ for } \z\le 0, \\
v_+-\vt(\z) \ge e^{-C\e\abs{\z}}(v_+-\vt(0)) = \frac{\e}{2}e^{-C\e\abs{\z}} \text{ for } \z\ge 0. 
\end{align*}
Especially, since \(\x_0\in [-1/\e, 1/\e]\), we have 
\[
\vt(\x_0)-v_- \ge \frac{\e}{2}e^{-C}, \qquad
v_+-\vt(\x_0) \ge \frac{\e}{2}e^{-C}.
\]
These lead us to the followings:
\[
\frac{\vt(\z)-v_-}{\vt'(\z)} 
\le \frac{C}{v_+-\vt(\z)} \le \frac{C}{v_+-\vt(\x_0)} 
\le \frac{2Ce^C}{\e} \text{ for } \z\le \x_0,
\]\[
\frac{v_+-\vt(\z)}{\vt'(\z)} 
\le \frac{C}{\vt(\z)-v_-} \le \frac{C}{\vt(\x_0)-v_-} 
\le \frac{2Ce^C}{\e} \text{ for } \z\ge \x_0.
\]
Therefore, the integral above can be bounded as follows:
\begin{align*}
\abs{\int_\RR \vt'(\x)\int_{\x_0}^\x f(\z) d\z d\x}
&\le \int_{-\infty}^{\x_0} \vt'(\z) \abs{f(\z)}
\frac{\vt(\z)-v_-}{\vt'(\z)}d\z
+ \int_{\x_0}^{\infty} \vt'(\z)\abs{f(\z)}
\frac{v_+-\vt(\z)}{\vt'(\z)}d\z \\
&\le \frac{C}{\e}\int_{-\infty}^{\x_0} \vt'(\z)\abs{f(\z)}d\z
+ \frac{C}{\e}\int_{\x_0}^{\infty} \vt'(\z)\abs{f(\z)}d\z 
\end{align*}
which completes the proof.
\end{proof}

\begin{lemma}\label{lemma_Linfty}
There is a constant \(C>0\) such that for any \(\x_0\in[-1/\e, 1/\e]\) and a smooth function \(f\colon \RR\to \RR\) with \(\int_\RR \vt'(\x)\abs{f(\x)}d\x < \infty\), the following holds for every \(\x\in\RR\):  
\[
\abs{\vt'(\x)\int_{\x_0}^{\x} f(\z)d\z} \le C\int_\RR \vt'(\z)\abs{f(\z)}d\z.
\]
\end{lemma}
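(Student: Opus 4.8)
The plan is to reduce the claimed pointwise estimate to a comparison of the viscous shock derivative $\vt'$ at two points. Writing the left-hand side as $\bigl|\vt'(\x)\int_{\x_0}^{\x} f(\z)\,d\z\bigr| \le \int_{\x_0}^{\x} \vt'(\x)\,|f(\z)|\,d\z$ (the integral being read as $\int_{\x}^{\x_0}$ when $\x<\x_0$), it suffices to prove the weight comparison
\[
\vt'(\x) \le C\,\vt'(\z)\qquad\text{for every }\z\text{ between }\x_0\text{ and }\x,
\]
with $C$ independent of $\e$: once this holds, the above integral is at most $C\int_\RR \vt'(\z)|f(\z)|\,d\z$ since $\vt'>0$, which is the assertion. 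I would assume without loss of generality that $\x\ge\x_0$, the opposite case being symmetric.

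To prove the comparison I would split into cases according to the location of $\x$ and $\z$ relative to the core interval $[-1/\e,1/\e]$; here the hypothesis $\x_0\in[-1/\e,1/\e]$ is decisive, since it anchors one endpoint of $[\x_0,\x]$ inside the core, on which $\vt'\asymp\e^2$ by \eqref{tail} and \eqref{lower-v}. If $\x\le 1/\e$, then $\z\in[\x_0,\x]\subset[-1/\e,1/\e]$, so $\vt'(\z)\ge C^{-1}\e^2$ while $\vt'(\x)\le C\e^2$, giving $\vt'(\x)\le C^2\vt'(\z)$. If $\x>1/\e$ but $\z\le 1/\e$, then still $\z\in[\x_0,1/\e]\subset[-1/\e,1/\e]$ and the same two bounds apply. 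The only remaining case is $1/\e<\z\le\x$, where both points lie in the right tail; here I would use \eqref{tail-0} together with $\vt$ increasing and $\vt(0)=\tfrac{v_-+v_+}{2}$: since $v_+-\vt$ is decreasing and $\vt-v_-\ge\e/2$ on $[0,\infty)$,
\[
\vt'(\x)\le C(v_+-\vt(\x))(\vt(\x)-v_-)\le C\,\e\,(v_+-\vt(\z))\le 2C(v_+-\vt(\z))(\vt(\z)-v_-)\le 2C^2\vt'(\z).
\]
Combining the cases yields the comparison, hence the lemma. For $\x<\x_0$ the analogous decomposition uses the left tail $(-\infty,-1/\e)$, on which $v_+-\vt\ge\e/2$ and $\vt-v_-$ is small, and \eqref{tail-0} enters in the same way.

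The main obstacle — really the only point needing care — is that $\vt'$ is bump-shaped (increasing, then decreasing), so $\vt'(\x)\le C\vt'(\z)$ cannot hold for all $\z\le\x$; what rescues the estimate is precisely that $\x_0$ is constrained to the core interval, so $[\x_0,\x]$ either stays inside the core, where the two-sided bound $\vt'\asymp\e^2$ trivializes the ratio, or exits into a single tail, where \eqref{tail-0} furnishes monotone control of $\vt'$ in the favorable direction. Everything else is bookkeeping with estimates already recorded in Lemma~\ref{lem-VS} and exploited in the proof of Lemma~\ref{lemma_pushing}; no additional input is needed.
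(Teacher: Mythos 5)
Your proposal is correct and follows essentially the same route as the paper: reduce to the uniform bound $\vt'(\x)\le C\,\vt'(\z)$ for $\z$ between $\x_0$ and $\x$, handle $\z$ in the core interval $[-1/\e,1/\e]$ via the two-sided bounds \eqref{tail} and \eqref{lower-v}, and handle the tail case via \eqref{tail-0} together with the monotonicity of $\vt$ and $\vt(0)=\tfrac{v_-+v_+}{2}$. The only difference is cosmetic bookkeeping in how the cases are enumerated.
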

\begin{proof}
For any \(\x\in \RR\), we have 
\[
\abs{\vt'(\x)\int_{\x_0}^{\x}f(\z)d\z} 
\le \int_{\x_0}^\x \frac{\vt'(\x)}{\vt'(\z)}\vt'(\z)\abs{f(\z)}d\z
\le \sup \frac{\vt'(\x)}{\vt'(\z)}\one{\z\in [\x_0, \x]\cup [\x, \x_0]} \cdot \int_\RR \vt'(\z)\abs{f(\z)}d\z.
\]
Hence, it suffices to show that \(\vt'(\x)/\vt'(\z)\) has an upper bound, for \(\x\) and \(\z\) satisfying \(\x_0 \le \z \le \x\) or \(\x \le \z \le \x_0\). 
Notice that under this condition, we are in one of the following three cases: i) \(\z\in [-1/\e, 1/\e]\), ii) \(1/\e \le \z \le \x\), or iii) \(\x \le \z \le -1/\e\). 
From \eqref{tail-0}, we have 
\[
\frac{\vt'(\x)}{\vt'(\z)} \le C\frac{(v_+-\vt(\x))(\vt(\x)-v_-)}{(v_+-\vt(\z))(\vt(\z)-v_-)}
\]
If \(\z\in [-1/\e, 1/\e]\), then \eqref{tail} and \eqref{lower-v} give \(\frac{\vt'(\x)}{\vt'(\z)} \le Ce^{-C\e\abs{\x}} \le C\). 
If \(1/\e \le \z \le \x\), then \(\e/2 \le \vt(\z)-v_- \le \vt(\x)-v_- \le \e\), and hence, since \(\vt\) is increasing, we have 
\[
\frac{\vt'(\x)}{\vt'(\z)} \le C\frac{v_+-\vt(\x)}{v_+-\vt(\z)} \le C.
\]
Similarly, if \(\x \le \z \le -1/\e\), then \(\e/2 \le v_+-\vt(\z) \le v_+-\vt(\x) \le \e\), and so it holds that
\[
\frac{\vt'(\x)}{\vt'(\z)} \le C\frac{\vt(\x)-v_-}{\vt(\z)-v_-} \le C.
\]
This gives an upper bound of \(\vt'(\x)/\vt'(\z)\) as we desired. This completes the proof. 
\end{proof}

\subsection{Relative entropy method}
Our analysis is based on the relative entropy.
The method is purely nonlinear, and allows us to handle rough and large perturbations.
The relative entropy method was first introduced by Dafermos \cite{Dafermos79} and Diperna \cite{Diperna79} to prove the $L^2$ stability and uniqueness of Lipschitz solutions to the hyperbolic conservation laws endowed with a convex entropy.

The system \eqref{Euler} admits a convex entropy \(\et(U)\coloneqq -R\log v - \frac{R}{\g-1}\log \th\) which comes from the Gibbs relation \(-\th d\eta = de+pdv\). (See \cite[Appendix A]{KVW-NSF})
Then, its derivative is
\begin{equation}\label{nablae}
\nabla\eta(U)=\begin{pmatrix} \displaystyle -\frac{R}{v} & \displaystyle \frac{u}{\th} & \displaystyle -\frac{1}{\th} \end{pmatrix}^T.
\end{equation}
In addition, the relative entropy is given as follows:
\begin{equation} \label{relative_e}
\begin{aligned}
\eta(U|\Ut) &= \eta(U)-\eta(\Ut) - \nabla\eta(\Ut) (U-\Ut)
= R\Phif{v}{\vt} + \frac{R}{\g-1}\Phif{\th}{\tht} + \frac{\abs{u-\ut}^2}{2\tht}.
\end{aligned}
\end{equation}

We consider a weighted relative entropy between the solution \(U\) of \eqref{NS} and the viscous shock $\Ut\coloneqq (\vt, \ut, \tht)$ to \eqref{VS} up to a shift \(X(t)\):
\[
a(\x)\tht(\x)\eta\big(U(t,\x+X(t))|\Ut(\x)\big),
\]
where $a$ is a smooth weight function to be determined later in \eqref{weight-a}.
Note that we put the additional weight $\tilde\theta$ as above, that is motivated by the Gibbs relation (see also \cite{DV}).  \\

In Lemma \ref{lem-rel}, we will derive a quadratic structure on $\frac{d}{dt}\int_{\RR} a(\xi)\eta\big(U(t,\xi+X(t))|\Ut(\xi)\big) d\xi$.\\
For that, we introduce the following notation: for any $f : \RR^+\times \RR\to \RR$ and the shift $X(t)$, 
\[
f^{\pm X}(t, \xi)\coloneqq f(t,\xi\pm X(t)).
\]
We also introduce the function space:
\begin{multline*}
\Xcal\coloneqq \{(v,u,\th) \mid  v^{-1}, v, \th \in L^{\infty}(\RR),~ u-\ut \in L^2(\RR), \\
\rd_\x \Big(\frac{1}{v}-\frac{1}{\vt}\Big), \rd_\x (u-\ut), \rd_\x (\th-\tht) \in L^2(\RR) \},
\end{multline*}
on which the functionals \(Y, \Jcal^{bad},\Jcal^{para}, \Jcal^{good}, \Dcal\) in \eqref{ybg-first} are well-defined for all \(t\in (0,T)\).

\begin{lemma}\label{lem-rel}
Let $a\colon\RR\to\RR^+$ be any positive smooth bounded function whose derivative is bounded and integrable. 
Let $\Ut\coloneqq (\vt, \ut, \tht)$ be the viscous shock in \eqref{shock_0} of size \(\e = \abs{v_+-v_-}\). 
For any solution $U\in \Xcal_T$ to \eqref{NS} with \eqref{pressure} and \eqref{mu-def}, and any absolutely continuous shift $X\colon[0,T]\to\RR$, the following holds. 
\begin{multline}\label{ineq-0}
\frac{d}{dt}\int_{\RR} (a\tht)(\x)\eta(U^X(t,\xi)|\Ut(\xi)) d\xi \\
=\dot X(t) Y(U^X) + \Jcal^{bad}(U^X) + \Jcal^{para}(U^X) - \Jcal^{good}(U^X) - \Dcal(U^X),
\end{multline}
where
\begin{equation}
\begin{aligned}\label{ybg-first}
&Y(U) \coloneqq -\int_\RR a' \tht \et(U|\Ut) \,d\x
- \int_\RR a\Big(R\Phif{v}{\vt}\,d\x
+ \frac{R}{\g-1}\Phif{\th}{\tht}\Big) \tht' \,d\x\\
&\qquad\qquad\qquad
+ \int_\RR a\Big(\frac{R\tht}{\vt^2}(v-\vt)\vt'+ \frac{R}{\g-1}\frac{1}{\tht}(\th-\tht)\tht' + (u-\ut) \ut' \Big) \,d\x\\
&\Jcal^{bad}(U) \coloneqq \int_\RR a' (p-\pt)(u-\ut) \,d\x
- \s_\e\int_\RR a\Big(R\Phif{v}{\vt}+\frac{R}{\g-1}\Phif{\th}{\tht}\Big)\tht' \,d\x \\
&\qquad\qquad\qquad
+ \int_\RR a\frac{1}{\vt}(p-\pt)(v-\vt)\ut' \,d\x\\
&\Jcal^{good}(U) \coloneqq R\s_\e\int_\RR a' \tht\Phif{v}{\vt} \,d\x
+ \frac{R\s_\e}{\g-1}\int_\RR a'\tht\Phif{\th}{\tht} \,d\x
+ \frac{\s_\e}{2}\int_\RR a' (u-\ut)^2 \,d\x\\
&\Dcal(U) \coloneqq 
R \int_\RR a\tht v(\t_0+\th^2)\abs{\Big(\frac{1}{v}-\frac{1}{\vt}\Big)_\x}^2 d\x
+ \int_\RR a\tht\frac{\th}{v}\abs{(u-\ut)_\x}^2 d\x
+ \int_\RR a\frac{\tht}{v}\abs{(\th-\tht)_\x}^2  d\x\\
&\Jcal^{para}(U) \coloneqq \Bcal^v(U) + \Bcal^u(U) + \Bcal^\th(U),
\end{aligned}
\end{equation}
where \(\pt \coloneqq p(\vt,\tht)=R\tht/\vt\).
Here, \(\Jcal^{para}(U)\) is the collection of parabolic bad terms, of which components are defined by 
\begin{equation}
\begin{aligned}\label{RHS-v}
\Bcal^v(U) &\coloneqq 
-R\int_\RR (a\tht)' v(\t_0+\th^2)\Big(\frac{1}{v}-\frac{1}{\vt}\Big) \Big(\frac{1}{v}-\frac{1}{\vt}\Big)_\x d\x\\
&\qquad
-R\int_\RR a\tht(v(\t_0+\th^2)-\vt(\t_0+\tht^2))\Big(\frac{1}{v}-\frac{1}{\vt}\Big)_\x \Big(\frac{1}{\vt}\Big)' d\x\\
&\qquad
-R\int_\RR (a\tht)' (v(\t_0+\th^2)-\vt(\t_0+\tht^2))\Big(\frac{1}{v}-\frac{1}{\vt}\Big) \Big(\frac{1}{\vt}\Big)' d\x\\
&\qquad
-R\int_\RR a\tht v\Big(\frac{1}{v}-\frac{1}{\vt}\Big)^2
\Big((\t_0+\tht^2)\frac{\vt'}{\vt}\Big)' d\x,
\end{aligned}
\end{equation}
\begin{equation}
\begin{aligned}\label{RHS-u}
\Bcal^u(U) &\coloneqq 
- \int_\RR a' \frac{\th^2}{v}(u-\ut) (u-\ut)_\x \,d\x
+2 \int_\RR a \frac{\th}{v}(\th-\tht)(u-\ut)_\x \ut' d\x \\
&\qquad
+2 \int_\RR a(u-\ut) \frac{\th}{v}(\th-\tht)_\x \ut'd\x
+ \int_\RR a(u-\ut) \th^2 \Big(\frac{1}{v}-\frac{1}{\vt}\Big)_\x \ut' d\x\\
&\qquad
+ \int_\RR a(u-\ut) \Big(\frac{\th^2}{v}-\frac{\tht^2}{\vt}\Big) \ut'' d\x
+2 \int_\RR a(u-\ut) \Big(\frac{\th}{v}-\frac{\tht}{\vt}\Big) \tht' \ut' d\x\\
&\qquad
+ \int_\RR a (\th-\tht)\Big(\frac{\th}{v}-\frac{\tht}{\vt}\Big) (\ut')^2 d\x
+ \int_\RR a(u-\ut) (\th^2-\tht^2) \Big(\frac{1}{\vt}\Big)' \ut' d\x,
\end{aligned}
\end{equation}
\begin{equation}
\begin{aligned}\label{RHS-th}
\Bcal^\th(U) &\coloneqq
- \int_\RR a' \frac{\th(\th-\tht)}{v} (\th-\tht)_\x d\x
+3 \int_\RR a \frac{\th-\tht}{v} (\th-\tht)_\x \tht' d\x \\
&\qquad
+ \int_\RR a (\th-\tht)\th \Big(\frac{1}{v}-\frac{1}{\vt}\Big)_\x \tht'd\x
+ \int_\RR a \frac{(\th-\tht)^2}{v} \tht'' d\x\\
&\qquad
+ \int_\RR a \tht(\th-\tht)\Big(\frac{1}{v}-\frac{1}{\vt}\Big) \tht'' d\x
+2 \int_\RR a (\th-\tht) \Big(\frac{1}{v}-\frac{1}{\vt}\Big) (\tht')^2 d\x\\
&\qquad
+ \int_\RR a (\th-\tht)^2 \Big(\frac{1}{\vt}\Big)' \tht'd\x.
\end{aligned}
\end{equation}
\end{lemma}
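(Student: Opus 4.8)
The plan is to run the relative entropy computation of Dafermos--DiPerna type, adapted to the BNSF structure and to the weight $a\tht$; the argument is explicit but long, so I describe only how I would organize it. I would set $G(t):=\int_\RR (a\tht)(\x)\,\eta\big(U^X(t,\x)\,|\,\Ut(\x)\big)\,d\x$ and, after the change of variable $y=\x+X(t)$, rewrite it as $G(t)=\int_\RR (a\tht)^{-X}(t,y)\,\eta\big(U(t,y)\,|\,\Ut^{-X}(t,y)\big)\,dy$, so that we compare the genuine solution $U$ of \eqref{NS} with the shifted profile $\Ut^{-X}$ solving \eqref{til-system}, carrying the shifted weight $(a\tht)^{-X}$. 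Differentiating in $t$ splits $\tfrac{dG}{dt}$ into a term coming from $\rd_t[(a\tht)^{-X}]=-\dot X\,\rd_y[(a\tht)^{-X}]$ and a term coming from
\[
\rd_t\eta(U|\Ut^{-X})=\big[\nabla\eta(U)-\nabla\eta(\Ut^{-X})\big]\rd_t U-\big[D^2\eta(\Ut^{-X})\,\rd_t\Ut^{-X}\big](U-\Ut^{-X}).
\]
Into the latter I would substitute the \emph{conservative} form of \eqref{NS} for $\rd_t U$ (the conservative energy equation, obtained by adding $u\times(\text{momentum})$ to the temperature equation, which is what pairs with $\rd_E\eta=-1/\th$), and \eqref{til-system} for $\rd_t\Ut^{-X}$; since $\Ut$ solves the profile ODE \eqref{VS}, the latter collapses to the pure translation $\rd_t\Ut^{-X}=-\dot X\,\rd_\x\Ut^{-X}$.

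\textbf{Shift and hyperbolic terms.} The $\dot X$-contributions then come from $\rd_t[(a\tht)^{-X}]$ and from $-[D^2\eta(\Ut^{-X})\rd_t\Ut^{-X}](U-\Ut^{-X})=\dot X\,[D^2\eta(\Ut^{-X})\rd_\x\Ut^{-X}](U-\Ut^{-X})$. Using $\nabla\eta(U)=(-R/v,\ u/\th,\ -1/\th)^T$ and the explicit Hessian, one computes $[D^2\eta(\Ut)\Ut'](U-\Ut)=\tfrac{R\vt'}{\vt^2}(v-\vt)+\tfrac{\ut'}{\tht}(u-\ut)+\tfrac{R}{\g-1}\tfrac{\tht'}{\tht^2}(\th-\tht)+\tfrac{\tht'}{2\tht^2}(u-\ut)^2$; multiplying by $a\tht$ and adding $-\int(a\tht)'\eta(U|\Ut)$, the two $(u-\ut)^2\tht'$ pieces cancel --- this cancellation is exactly what the weight $\tht$ is there for --- leaving precisely $\dot X\,Y(U^X)$ with $Y$ as in \eqref{ybg-first}. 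For the flux-plus-transport contribution $-\int(a\tht)\,[\nabla\eta(U)-\nabla\eta(\Ut^{-X})]\,\rd_\x\big(F(U)-\s_\e U\big)$, I would integrate by parts (boundary terms at $\x\to\pm\infty$ vanish by \eqref{tail} and $U\in\Xcal_T$) and reorganize using \eqref{VS} and the relative-entropy-flux identity --- in Lagrangian mass coordinates the Euler entropy flux is zero, so the relative flux reduces to $(u-\ut)(p-\pt)/\tht$ plus $\s_\e$-weighted relative entropy. The $\s_\e a'$ pieces then assemble into $-\Jcal^{good}(U^X)=-\s_\e\int a'\tht\,\eta(U|\Ut)\,d\x$, while $\int a'(p-\pt)(u-\ut)\,d\x$ together with the $\s_\e a\,\tht'$ and $a\tfrac1{\vt}(p-\pt)(v-\vt)\ut'$ terms form $\Jcal^{bad}(U^X)$.

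\textbf{Parabolic terms.} For the viscous contribution $\int(a\tht)\,[\nabla\eta(U)-\nabla\eta(\Ut^{-X})]\cdot\rd_\x\mathrm{Visc}(U)\,d\x$, with $\mathrm{Visc}(U)=\big(\t(\th)\tfrac{v_\x}{v},\ \mu(\th)\tfrac{u_\x}{v},\ \k(\th)\tfrac{\th_\x}{v}+\mu(\th)\tfrac{u u_\x}{v}\big)$, I would integrate by parts once and rearrange variable by variable. Three algebraic simplifications isolate the nonnegative diffusion $\Dcal$: (i) writing the $v$-diffusion through $1/v$ and completing the square produces $R\int a\tht\,v(\t_0+\th^2)\,|\rd_\x(1/v-1/\vt)|^2$; (ii) the frictional-heating term $\mu(\th)\tfrac{u u_\x}{v}$ in the energy equation, paired with $-1/\th$, cancels the cross term coming from the momentum-equation diffusion paired with $(u-\ut)/\th$, leaving $\int a\tht\tfrac{\th}{v}|(u-\ut)_\x|^2$; (iii) the heat diffusion produces $\int a\tfrac{\tht}{v}|(\th-\tht)_\x|^2$. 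Every leftover term then carries a factor $(a\tht)'$, a shock derivative among $\vt',\vt'',\ut',\ut'',\tht',\tht''$, or a profile difference such as $v(\t_0+\th^2)-\vt(\t_0+\tht^2)$ or $\tfrac{\th^2}{v}-\tfrac{\tht^2}{\vt}$; grouping these by the underlying variable ($1/v$, $u$, $\th$) should reproduce $\Bcal^v,\Bcal^u,\Bcal^\th$ exactly as in \eqref{RHS-v}--\eqref{RHS-th}. Adding the shift, hyperbolic, good, diffusion and parabolic-bad groups yields \eqref{ineq-0}.

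\textbf{Main obstacle.} I do not expect any conceptual difficulty: the content is bookkeeping. The delicate points are performing roughly a dozen integrations by parts correctly while distributing derivatives across the weight $a\tht$, the profile $\Ut$, and the nonlinear coefficients $\t(\th),\mu(\th),\k(\th)$; verifying that every boundary term at infinity vanishes --- this is precisely where the $\Xcal_T$-regularity and the exponential tails \eqref{tail} are used, and where differentiation under the integral is justified; securing the two nontrivial cancellations (the $(u-\ut)^2\tht'$ cancellation behind the clean form of $Y$, and the momentum-diffusion/frictional-heating cancellation behind the clean $u$-part of $\Dcal$); and, most error-prone of all, matching the resulting expressions term-for-term, with all signs, against \eqref{ybg-first}, \eqref{RHS-v}, \eqref{RHS-u} and \eqref{RHS-th}.
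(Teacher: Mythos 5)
Your proposal is correct and follows essentially the same route as the paper: the paper's proof performs the same shift change of variables and obtains the shift, hyperbolic and good terms by invoking the analogous computation for the Navier--Stokes--Fourier system (\cite[Lemma 4.3]{KVW-NSF}, whose content is precisely your direct calculation, including the $(u-\ut)^2\tht'$ cancellation produced by the weight $\tht$ and the identity $\Jcal^{good}(U)=\s_\e\int_\RR a'\tht\,\eta(U|\Ut)\,d\x$), and then computes only the parabolic contributions explicitly in Appendix \ref{appendix_RHS}, with the same momentum-diffusion/frictional-heating cancellation yielding the $u$-part of $\Dcal$ and the same grouping of the leftovers into $\Bcal^v,\Bcal^u,\Bcal^\th$. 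The only difference is presentational: you would redo the hyperbolic bookkeeping by hand rather than citing it.
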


\begin{remark}
In what follows, we will define the weight function $a$ such that $\s_\e a' >0$. 
Therefore, $-\Jcal^{good}$ consists of three good terms, while $\Jcal^{bad}$ consists of bad terms. 
\end{remark}

\begin{proof}
See Appendix \ref{appendix_RHS}.
\end{proof}

\subsection{Construction of the weight function}
We define the weight function $a$ by
\begin{equation}\label{weight-a}
a(\xi)=1 + \lambda \frac{\vt(\x)-v_-}{[v]},
\end{equation}
where \([v]\coloneqq v_+ - v_- \).
The weight function \(a\) possesses some useful properties. First of all, it is positive and increasing, and satisfies \(1 \le a \le 1+\l \).
From \([v]=\e\), we can compare the scale of derivatives of the weight function and viscous shocks. For \(\vt\), we have
\begin{equation}\label{der-a}
a'(\x)= \frac{\l}{\e} \vt'(\x),
\end{equation}
and from \eqref{ratio-vu} and \eqref{tail}, we conclude that
\begin{equation}\label{der-scale}
\begin{gathered}
|a'| \sim \frac{\l}{\e}|\vt'| \sim \frac{\l}{\e}|\ut'| \sim \frac{\l}{\e} |\tht'|, \quad \text{and} \quad
\abs{a'(\x)} \le C\e\l e^{-C\e\abs{\x}} \quad \text{ for all } \x\in\RR.
\end{gathered}
\end{equation}

\subsection{Maximization in terms of $u-\ut$}\label{sec:mini}

In order to estimate the right-hand side of \eqref{ineq-0}, we will use Proposition \ref{prop:main3}, i.e., a sharp estimate with respect to \(1/v-1/\vt\) and \(\th-\tht\) when \(\abs{1/v-1/\vt} \ll 1\) and \(|\th-\thtil| \ll 1\), for which we need to rewrite $\Jcal^{bad}$ on the right-hand side of \eqref{ineq-0} only in terms of \(v\) and \(\th\) near \(\vt\) and \(\tht\), respectively, by separating \(u-\ut\) from the first term of $\Jcal^{bad}$. 
Therefore, we will rewrite $\Jcal^{bad}$ into the maximized representation in terms of \(u-\ut\) in the following lemma. 
However, we will keep all terms of $\Jcal^{bad}$ in a region \(\{1/v>1/\vt + \d\}\) for small values of $v$, since we use the estimate \eqref{bo1m} to control the first term of \(\Jcal^{bad}\) in that region.     
Notice that we have not set the value of \(\d\) yet.

\begin{lemma}\label{lem-max}
Let $a\colon \RR\to\RR^+$ be as in \eqref{weight-a}, and $\Ut=(\vt, \ut, \tht)$ be the viscous shock in \eqref{shock_0} of size \(\e = \abs{v_+-v_-}\). Let $\d$ be any positive constant.
Then, for any $U\in \Xcal$, 
\begin{equation}\label{ineq-1}
\Jcal^{bad} (U) -\Jcal^{good} (U)= \Bcal_\d(U)- \Gcal_\d(U),
\end{equation}
where
\begin{equation}
\begin{aligned}\label{badgood}
\Bcal_\d(U) &\coloneqq \frac{1}{2\s_\e}\int_\RR a'(p-\pt)^2\one{1/v\le 1/\vt+\d} d\x 
+\int_\RR a'(p-\pt)(u-\ut)\one{1/v>1/\vt+\d} d\x\\
&\qquad
-\int_\RR av(p-\pt)\Big(\frac{1}{v}-\frac{1}{\vt}\Big)\ut'd\x
-\s_\e\int_\RR a\Big(R\Phif{v}{\vt}+\frac{R}{\g-1}\Phif{\th}{\tht}\Big)\tht'd\x, \\
\Gcal_\d(U) &\coloneqq \frac{\s_\e}{2}\int_\RR a'\Big(u-\ut-\frac{p-\pt}{\s_\e}\Big)^2\one{1/v\le 1/\vt+\d}d\x
+\frac{\s_\e}{2}\int_\RR a'(u-\ut)^2 \one{1/v>1/\vt+\d}d\x \\
&\qquad
+R\s_\e\int_\RR a'\tht\Phif{v}{\vtil}d\x + \frac{R\s_\e}{\g-1}\int_\RR a'\tht\Phif{\th}{\tht}d\x.
\end{aligned}
\end{equation}
\begin{remark}\label{rem:0}
Since $\s_\e a' >0$, $-\Gcal_\d$ consists of four good terms. 
\end{remark}
\end{lemma}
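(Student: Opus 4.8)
The plan is to prove \eqref{ineq-1} as a purely algebraic rearrangement of the explicit formulas for $\Jcal^{bad}$ and $\Jcal^{good}$ in \eqref{ybg-first}, the only genuinely nontrivial move being a completion of the square in the velocity fluctuation $u-\ut$ on part of the domain. I expect no serious obstacle here; it is a bookkeeping computation, so the main task is to organize the matching of terms cleanly.

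First I would match the terms that already agree verbatim. The term $-\s_\e\int_\RR a\big(R\Phi(v/\vt)+\tfrac{R}{\g-1}\Phi(\th/\tht)\big)\tht'\,d\x$ of $\Jcal^{bad}$ is exactly the last term of $\Bcal_\d$; and the two entropy terms $R\s_\e\int_\RR a'\tht\Phi(v/\vt)\,d\x$ and $\tfrac{R\s_\e}{\g-1}\int_\RR a'\tht\Phi(\th/\tht)\,d\x$ of $\Jcal^{good}$ are the last two terms of $\Gcal_\d$. For the remaining pressure–volume term of $\Jcal^{bad}$ I would use the elementary identity $\tfrac{v-\vt}{\vt}=-v\big(\tfrac1v-\tfrac1\vt\big)$ to rewrite $\int_\RR a\tfrac{1}{\vt}(p-\pt)(v-\vt)\ut'\,d\x=-\int_\RR a\, v(p-\pt)\big(\tfrac1v-\tfrac1\vt\big)\ut'\,d\x$, which is the third term of $\Bcal_\d$. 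All of these rearrangements of integrals are legitimate because $U\in\Xcal$ and $a',\vt',\ut',\tht'$ decay exponentially by \eqref{tail}, \eqref{der-scale}, so every integrand is absolutely integrable.

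The heart of the matter is then the pair
\[
\int_\RR a'(p-\pt)(u-\ut)\,d\x-\frac{\s_\e}{2}\int_\RR a'(u-\ut)^2\,d\x,
\]
coming from the first term of $\Jcal^{bad}$ and minus the last term of $\Jcal^{good}$. I would split $\RR$ into $\{1/v\le 1/\vt+\d\}$ and $\{1/v>1/\vt+\d\}$. On the latter set I keep the two integrands unchanged, so they become the second term of $\Bcal_\d$ and minus the second term of $\Gcal_\d$. On the former set I complete the square pointwise,
\[
a'(p-\pt)(u-\ut)-\frac{\s_\e}{2}a'(u-\ut)^2=\frac{1}{2\s_\e}a'(p-\pt)^2-\frac{\s_\e}{2}a'\Big(u-\ut-\frac{p-\pt}{\s_\e}\Big)^2,
\]
valid since $\s_\e\neq 0$; integrating this over $\{1/v\le 1/\vt+\d\}$ produces exactly the first term of $\Bcal_\d$ together with minus the first term of $\Gcal_\d$. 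Adding the contributions of the two regions to the three verbatim matches above yields \eqref{ineq-1}.

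Two structural points are worth flagging even though they present no difficulty. The decomposition is engineered so that all four pieces of $\Gcal_\d$ are nonnegative: this uses that $a$ is constructed in \eqref{weight-a} with $a'=\tfrac{\l}{\e}\vt'>0$ and that $\s_\e>0$ for a $3$-shock, so $\s_\e a'>0$; hence the above step is genuinely the pointwise maximization of the $u-\ut$ quadratic, attained at $u-\ut=(p-\pt)/\s_\e$, which is the advertised "maximization in terms of $u-\ut$." Second, the reason one does not maximize on the small-volume set $\{1/v>1/\vt+\d\}$ as well is that the sharp pointwise control of $p-\pt$ exploited later (via \eqref{bo1m}) is only available where $v$ stays comparable to $\vt$; this is precisely why the cutoff parameter $\d$ is introduced at this stage with its value left unspecified.
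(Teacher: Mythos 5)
Your proposal is correct and is essentially the paper's own argument: the paper also matches the untouched terms, rewrites $\tfrac{v-\vt}{\vt}=-v\big(\tfrac1v-\tfrac1{\vt}\big)$, splits the first term of $\Jcal^{bad}$ and the $u$-good term along $\{1/v\le 1/\vt+\d\}$, and completes the square in $u-\ut$ via $\alpha x^2+\beta x=\alpha\big(x+\tfrac{\beta}{2\alpha}\big)^2-\tfrac{\beta^2}{4\alpha}$ on that set. Nothing further is needed.
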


\begin{proof}
The proof is very simple and general, using the identity \(\alpha x^2+ \beta x =\alpha(x+\frac{\beta}{2\alpha})^2-\frac{\beta^2}{4\alpha}\), and is essentially the same as \cite[Lemma 4.3]{KV-Inven}.
Thus, we omit it.
\end{proof}

\subsection{Main proposition}
The main proposition for the proof of Theorem \ref{thm_main} is as follows.

\begin{proposition}\label{prop:main}
There exist $\e_0,\d_0, \d_3\in(0,1/2)$ such that for any $\e<\e_0$ and $\d_0^{-1}\e<\lambda<\d_0$, the following is true.\\
For any $U\in\Xcal \cap \{U \mid \abs{Y(U)}\le\e^2 \}$,
\begin{equation}
\begin{aligned}\label{prop:est}
\Rcal(U)&\coloneqq -\frac{1}{\e^4}Y^2(U) +\Bcal_{\d_3}(U)+\d_0\frac{\e}{\l} \abs{\Bcal_{\d_3}(U)} + \d_0\frac{\e}{\l}\Bcal_1^+(U) + \Jcal^{para}(U) + \d_0\abs{\Jcal^{para}(U)} \\
&\qquad -\Gcal_u^-(U)-\frac{1}{2}\Gcal_u^+(U) -\Big(1-\d_0\frac{\e}{\l}\Big)(\Gcal_v(U)+\Gcal_\th(U)) -(1-\d_0)\Dcal(U) \le 0,
\end{aligned}
\end{equation}
where \(Y\), \(\Bcal_{\d_3}\), \(\Jcal^{para}\), and \(\Dcal\) are as in \eqref{ybg-first} and \eqref{badgood}, \(\Bcal_1^+\) denotes 
\[
\Bcal_1^+(U) \coloneqq \frac{1}{2\s_\e}\int_\RR a'(p-\pt)^2 \one{1/v \le 1/\vt + \d_3} d\x,
\]
and \(\Gcal_u^-, \Gcal_u^+, \Gcal_v\) and \(\Gcal_\th\) denote the four terms of \(\Gcal_{\d_3}\) as follows:
\begin{equation}
\begin{aligned}\label{ggd}
&\Gcal_u^-(U)\coloneqq \frac{\s_\e}{2}\int_\RR a'(u-\ut)^2 \one{1/v>1/\vt+\d_3} d\x,\\
&\Gcal_u^+(U)\coloneqq\frac{\s_\e}{2}\int_\RR a'\Big(u-\ut-\frac{p-\pt}{\s_\e}\Big)^2\one{1/v\le 1/\vt+\d_3} d\x,\\
&\Gcal_v(U)\coloneqq R\s_\e\int_\RR a'\tht\Phif{v}{\vt} d\x,
\qquad \Gcal_\th(U)\coloneqq \frac{R\s_\e}{\g-1}\int_\RR a'\tht\Phif{\th}{\tht} d\x.\\
\end{aligned}
\end{equation}

\end{proposition}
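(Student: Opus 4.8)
The plan is to prove the functional inequality \eqref{prop:est} by splitting $\Rcal(U)$ into a contribution localized near the shock (the ``inside truncation'') and a remainder (the ``outside truncation''): the former is controlled by a sharp nonlinear Poincar\'e inequality, the latter by the diffusion $\Dcal$ together with $\Gcal_v,\Gcal_\th$ and the shift term $\tfrac1{\e^4}Y^2$. Following Section~\ref{section_hyperbolic}, I would first fix a small truncation level and replace $v,\th$ by truncated quantities, again denoted $\vbar,\thbar$, which coincide with $v,\th$ where $|1/v-1/\vt|$ and $|\th-\tht|$ lie below that level and are frozen near $\vt,\tht$ otherwise; set $\Ubar\coloneqq(\vbar,u,\thbar)$. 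Each hyperbolic term in $\Bcal_{\d_3}$ and in $\Gcal_v,\Gcal_\th$ is then split as its value at $\Ubar$ (a functional of the truncated variables only) plus an error supported where the truncation is active; the latter errors, together with $\Jcal^{para}(U)$, $\d_0\tfrac{\e}{\l}|\Bcal_{\d_3}(U)|$, $\d_0\tfrac{\e}{\l}\Bcal_1^+(U)$, $\d_0|\Jcal^{para}(U)|$ and the $\{1/v>1/\vt+\d_3\}$ piece of $\Bcal_{\d_3}$, make up the ``outside'' part. Of the diffusion terms, only $\Dcal_v^1$ from \eqref{d1vdef} — the portion of $\Dcal_v$ built from the constant Brenner coefficient $\t_0$, which does not couple to $u$ or $\th$ and hence obeys the monotonicity $\Dcal_v^1(\vbar)\le\Dcal_v^1(v)$ — is used on the inside.

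For the inside, the leading term is $\Ical_1(\vbar,\thbar)$ minus the truncated versions of $\Gcal_v$ and $\Gcal_\th$, where $\Ical_1$ is the truncated version of the maximized pressure term $\Bcal_1^+$. Here I would argue as in Proposition~\ref{prop:main3}: rescale to $W_1\coloneqq\tfrac{\l}{\e}v_-^2\bigl(\tfrac1v-\tfrac1\vt\bigr)$ and $W_2\coloneqq\tfrac{\l}{\e}\tfrac{v_-}{\th_-}(\th-\tht)$, so that after a normalization the quantity becomes a functional of $W_1,W_2$ plus a perfect-square good term $T(W_1,W_2)$; using $T(W_1,W_2)$ to express the whole $W_2$-dependence through $W_1$ as in \eqref{transquad}--\eqref{transcubic}, the estimate reduces to the one-dimensional nonlinear Poincar\'e inequality with diffusion constant
\[
\frac{R\t_0}{R(\t_0+(\g+1)\th_-^2)+(\g-1)^2\th_-^2},
\]
which lies in $(0.9,1)$ precisely under \eqref{cold}. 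The improved Poincar\'e inequality — the refinement of \cite{KV21} with $0.9$ in place of $1$ — then gives a strictly negative bound on the inside part, with a margin that also absorbs the $O(\e/\l)$ and $O(\d_0)$ corrections: the lower-order pieces of $\Bcal_{\d_3}$ and $\Jcal^{para}$ on this region, the terms $\d_0\tfrac{\e}{\l}\Bcal_1^+$, $\d_0\tfrac{\e}{\l}|\Bcal_{\d_3}|$, $\d_0|\Jcal^{para}|$, and the difference between $\Gcal_v,\Gcal_\th$ and their truncations.

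For the outside, where the truncation is active, $\Phi(v/\vt)$ and $\Phi(\th/\tht)$ are bounded below by a positive constant, so $\Gcal_v,\Gcal_\th$ directly absorb every error of at most quadratic growth in $v,\th$. For the errors with faster growth — chiefly $|\Bcal_1^+(U)-\Ical_1(\vbar,\thbar)|$, which is quadratic in $\th$ (see \eqref{bo1p}), and the parabolic terms $\Bcal^v,\Bcal^u,\Bcal^\th$ — I would use $\Dcal_v,\Dcal_\th$ together with the exponential tails \eqref{tail} and \eqref{der-scale}: pointwise estimates such as \eqref{pwth1} for $\th-\thbar$ and for $1/v-1/\vt$ reduce these integrals to expressions of the form $\int a'\bigl(\int_{\x_0}^{\x}v\,\one{\cdot}\,d\z\bigr)d\x$, which are made small by Lemma~\ref{lemma_pushing} and the local energy \eqref{locE}; and the genuinely high powers of $\th$, for instance the term $\int_{\RR}|a'|^2\tfrac{\th^2}{v}(\th-\tht)^2\,d\x$ in \eqref{ineq-p4} appearing in $\Bcal^u$, are handled by first extracting an $L^\infty$ bound on $|a'|\tfrac{\th^3}{v}$ via Lemma~\ref{lemma_Linfty} and thereby lowering the power of $\th$ to one controllable by \eqref{locE}. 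The outcome is that all outside contributions are bounded by an arbitrarily small constant times $\Dcal+\Gcal_v+\Gcal_\th+\tfrac1{\e^4}Y^2$, where the a priori bound $|Y(U)|\le\e^2$ is used, via Young's inequality against $\tfrac1{\e^4}Y^2$, to close the few remaining bad terms that couple to the velocity through $Y$; the $\{1/v>1/\vt+\d_3\}$ piece of $\Bcal_{\d_3}$ is absorbed through \eqref{bo1m} together with $\Gcal_u^-$ and $\Gcal_v$, and $-\tfrac12\Gcal_u^+$ handles the $(u-\ut-\tfrac{p-\pt}{\s_\e})$-contributions left by the maximization of Lemma~\ref{lem-max}. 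Combining the inside and outside estimates and choosing $\e_0,\d_0,\d_3$ small enough yields $\Rcal(U)\le0$.

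The main obstacle is the outside part. Since $v$ and $\th$ are not assumed bounded or bounded away from zero, the parabolic terms carry high powers of $\th$ and negative powers of $v$ that the merely linear/logarithmic growth of $\Phi$ — hence of the hyperbolic good terms — cannot absorb, so the whole argument hinges on never losing the correct power of the weight derivative $a'$ (equivalently of $\vt'$) when a parabolic bad term is traded against $\Dcal$; this is exactly what the sharp inequalities of Lemmas~\ref{lemma_pushing} and~\ref{lemma_Linfty} supply, and making every exponent balance in $\Bcal^v,\Bcal^u,\Bcal^\th$ is the delicate part. By contrast the inside part, though computationally heavy, reduces once and for all — through the passage to $W_1$ — to a one-dimensional optimization already packaged in Proposition~\ref{prop:main3}.
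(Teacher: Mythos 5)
Your overall architecture matches the paper's: a truncation at level \(\d_3\), the inside handled by the rescaled variables \(W_1,W_2\), the square \(T(W_1,W_2)\) and the improved nonlinear Poincar\'e inequality of Proposition \ref{prop_nl_Poincare} with the \(0.9\) constant tied to \eqref{cold}, and the outside handled by \(\Dcal\), the exponential tails, and Lemmas \ref{lemma_pushing} and \ref{lemma_Linfty} (this is exactly Propositions \ref{prop:main3}, \ref{prop_hyperbolic_out} and \ref{prop_parabolic_out}). However, there is a genuine gap in how you dispose of the term \(-\frac{1}{\e^4}Y^2(U)\). You allocate it to the outside part and claim the inside part is "strictly negative" from the Poincar\'e inequality alone; that is false. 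The inside functional without a constraint term is not sign-definite: if \(W_1\) is a small positive constant the weighted diffusion \(\int_0^1 y(1-y)|\rd_y W_1|^2\,dy\) vanishes while \((1+\d)\int W_1^2 + \frac23\int W_1^3\) is strictly positive, so the only saving term in \(\Rcal_{\e,\d}\) is \(-\frac{1}{\e\d}|\Ical_g^Y(\vbar,\thbar)|^2\). Hence the \(Y^2\) term is not a Young's-inequality reservoir for outside errors; it must be converted into that constraint term. This conversion is a substantial piece of the proof you have omitted: one decomposes \(Y=Y_g+Y_b+Y_l+Y_s\), shows \(|Y_g(U)-\Ical_g^Y(\vbar,\thbar)|^2+|Y_b(U)|^2+|Y_l(U)|^2+|Y_s(U)|^2\) is negligible against the good terms (Proposition \ref{prop_shift_out}), and verifies the hypothesis \(|\Ical_g^Y(\vbar_k,\thbar_k)|\le C_2\e^2/\l\) uniformly in the truncation level \emph{before} \(\d_3\) is fixed (Lemma \ref{lemmeC2}), since \(\d_3\) itself depends on \(C_2\).

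Relatedly, your proposal omits the dichotomy on the size of the diffusion, and without it the argument does not close. The estimates of Proposition \ref{prop_shift_out} (in particular \eqref{Y-a1} and hence \eqref{Y-conc}) are only valid under \(\Dcal_v(U)+\Dcal_\th(U)\le 10C^*\frac{\e^2}{\l}\): the squared differences produce terms of the form \(\frac{1}{\e}\big(\frac{\e}{\l}\Dcal\big)^2\), which can only be absorbed into \(\d\,\Dcal\) when \(\Dcal\) is already of size \(\e^2/\l\). The paper therefore first treats the regime \(\Dcal(U)\ge 10C^*\frac{\e^2}{\l}\), where \eqref{bo} and \eqref{para-tot} show the diffusion alone dominates every bad term and the \(Y^2\) term is not needed at all, and only in the complementary regime performs the transfer of \(-\frac{1}{\e^4}Y^2\) into \(-\frac{1}{\e\d_3}|\Ical_g^Y(\vbar,\thbar)|^2\) and applies Proposition \ref{prop:main3}. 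Your outline would need both this case split and the \(Y\)-decomposition estimates to become a proof.
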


\subsection{Proof of Theorem~\ref{thm_main} from Proposition \ref{prop:main}}
In this subsection, we show how Proposition \ref{prop:main} implies Theorem \ref{thm_main}.\\

For any fixed $\e>0$, we consider a continuous function $\Phi_\e$ defined by
\begin{equation}\label{Phi-d}
\Phi_\e (y)=
\left\{ \begin{array}{ll}
      \frac{1}{\e^2},\quad \mbox{if}~ y\le -\e^2, \\
      -\frac{1}{\e^4}y,\quad \mbox{if} ~ \abs{y}\le \e^2, \\
       -\frac{1}{\e^2},\quad \mbox{if}  ~y\ge \e^2. \end{array} \right.
\end{equation}
Let $\e_0,\d_0, \d_3$ be the constants in Proposition \ref{prop:main}. Then, let $\e, \l$ be any constants such that $0<\e<\e_0$ and $\d_0^{-1}\e<\l<\d_0<1/2$.\\

We define a shift function $X(t)$ as a solution of the following nonlinear ODE:
\begin{equation}\label{X-def}
\left\{ \begin{array}{ll}
    \dot X(t) = \Phi_\e (Y(U^X)) \Big(2|\Jcal^{bad}(U^X)|+2\abs{\Jcal^{para}(U^X)}+1\Big),\\
X(0)=0, \end{array} \right.
\end{equation}
where $Y$, $\Jcal^{bad}$ and \(\Jcal^{para}\) are as in \eqref{ybg-first}.\\
Then, for the solution \(U\in \Xcal_T\), there exists a unique absolutely continuous shift $X$ on $[0,T]$. 
Indeed, since \(\vt', \ut', \tht', a'\) are bounded, smooth and integrable, using $U\in \Xcal_T$ with the change of variables \(\x\mapsto \x-X(t)\) as in \eqref{move-X}, we find that there exist \(a, b\in L^2(0,T)\) such that
\[
\sup_{x\in\RR} \abs{F(t,x)} \le a(t)\quad\mbox{and}\quad \sup_{x\in\RR} \abs{\rd_x F(t,x)} \le b(t),\quad \forall t\in[0,T],
\]
where $F(t,X)$ denotes the right-hand side of the ODE in \eqref{X-def}. For more details on the existence and uniqueness theory of the ODE, we refer to \cite[Lemma A.1]{CKKV}. \\


Based on \eqref{ineq-0} and \eqref{X-def}, to get the contraction estimate \eqref{cont_main}, it is enough to prove that for almost every time \(t>0\), 
\begin{multline}\label{contem0}
\Phi_\e(Y(U^X))\left(2|\Jcal^{bad}(U^X)|+2\abs{\Jcal^{para}(U^X)}+1\right)Y(U^X) \\
+\Jcal^{bad}(U^X)+\Jcal^{para}(U^X)-\Jcal^{good}(U^X)-\Dcal(U^X) \le 0.
\end{multline}
For any \(U\in\Xcal\), we define 
\begin{multline*}
\Fcal(U) \coloneqq \Phi_\e(Y(U))\left(2|\Jcal^{bad}(U)|+2\abs{\Jcal^{para}(U)}+1\right)Y(U) \\
+\Jcal^{bad}(U)+\Jcal^{para}(U)-\Jcal^{good}(U)-\Dcal(U).
\end{multline*}
From \eqref{Phi-d}, we have 
\begin{equation}\label{XY}
\Phi_\e(Y)\left(2|\Jcal^{bad}|+2\abs{\Jcal^{para}}+1\right)Y
\le \begin{cases}
    -2|\Jcal^{bad}| -2\abs{\Jcal^{para}}, & \text{ if } \abs{Y}\ge \e^2, \\
    -\frac{1}{\e^4}Y^2, & \text{ if } \abs{Y}\le \e^2.
\end{cases}
\end{equation}
Hence, for all \(U\in\Xcal\) satisfying \(\abs{Y(U)}\ge \e^2\), we have 
\[
\Fcal(U) \le -|\Jcal^{bad}(U)|-\abs{\Jcal^{para}(U)}-\Jcal^{good}(U)-\Dcal(U)
\le 0. 
\]
Using \eqref{ineq-1}, \eqref{XY} and Proposition \ref{prop:main}, we find that for all \(U\in\Xcal\) satisfying \(\abs{Y(U)}\le\e^2\), 
\begin{align*}
\Fcal(U)&\le -\d_0\frac{\e}{\l}\left(\abs{\Bcal_{\d_3}(U)}+\Bcal_1^+(U)\right)
-\d_0\abs{\Jcal^{para}(U)} \\
&\qquad-\frac{1}{2}\Gcal_u^+(U)
-\d_0\frac{\e}{\l}(\Gcal_v(U)+\Gcal_\th(U))-\d_0\Dcal(U) \le 0.
\end{align*}
Since \(\d_0<1/2\), these two estimates together with the definition of \(\Jcal^{good}\) show that for every \(U\in\Xcal\), 
\begin{multline*}
\Fcal(U) \le -|\Jcal^{bad}(U)|\one{\abs{Y(U)}\ge \e^2}
-\d_0\frac{\e}{\l}\left(\abs{\Bcal_{\d_3}(U)}+\Bcal_1^+(U)\right)\one{\abs{Y(U)}\le \e^2} \\
-\d_0\abs{\Jcal^{para}(U)}
-\frac{1}{2}\Gcal_u^+(U)\one{\abs{Y(U)}\le \e^2}
-\d_0\frac{\e}{\l}(\Gcal_v(U)+\Gcal_\th(U))-\d_0\Dcal(U).
\end{multline*}
Thus, using the above estimate with \(U = U^X\), together with \eqref{ineq-0} and \eqref{contem0}, we find that for a.e. \(t>0\), 
\begin{equation} \label{111}
\begin{aligned}
&\frac{d}{dt}\int_\RR a\et(U^X|\Ut)d\x + \d_0\frac{\e}{\l}(\Gcal_v(U^X)+\Gcal_\th(U^X))+\d_0\Dcal(U^X) \\
&= \Fcal(U^X)+ \d_0\frac{\e}{\l}(\Gcal_v(U^X)+\Gcal_\th(U^X))+\d_0\Dcal(U^X) \\
&\le -|\Jcal^{bad}(U^X)|\one{\abs{Y(U^X)}\ge \e^2} - \d_0\abs{\Jcal^{para}(U^X)}
 -\frac{1}{2}\Gcal_u^+(U^X)\one{\abs{Y(U^X)}\le \e^2} \\
&\qquad-\d_0\frac{\e}{\l}\left(\abs{\Bcal_{\d_3}(U^X)}+\Bcal_1^+(U^X)\right)\one{\abs{Y(U^X)}\le \e^2}
\le  0.
\end{aligned}
\end{equation}
Therefore, we have 
\begin{equation}\label{cont-pre}
\int_\RR a\et(U^X|\Ut)d\x + \d_0\frac{\e}{\l}\int_0^t \Gcal_v(U^X)+\Gcal_\th(U^X) ds
+ \d_0\int_0^t \Dcal(U^X) ds
\le \int_\RR a\et(U_0|\Ut)d\x < \infty,
\end{equation}
which completes \eqref{cont_main}. 

To estimate \(\dot{X}\), we first observe that \eqref{Phi-d} and \eqref{X-def} yield 
\begin{equation}\label{contx}
|\dot{X}|\le \frac{1}{\e^2}\left(2|\Jcal^{bad}(U^X)|+2\abs{\Jcal^{para}(U^X)}+1\right), \qquad \text{ for a.e. } t\in (0, T).
\end{equation}
It follows from \eqref{111} and \(1 \le a\le 3/2\) by \(\d_0<1/2\) that 
\begin{multline}\label{jb-cont}
\int_0^T \Big(|\Jcal^{bad}(U^X)|\one{\abs{Y(U^X)}\ge \e^2} +\d_0\abs{\Jcal^{para}(U^X)}
+\Gcal_u^+(U^X)\one{\abs{Y(U^X)}\le \e^2} \\
+ \d_0\frac{\e}{\l}\left(\abs{\Bcal_{\d_3}(U^X)}+\Bcal_1^+(U^X)\right)\one{\abs{Y(U^X)}\le \e^2}\Big)dt 
\le 2\int_\RR \et(U_0|\Ut) d\x.
\end{multline}
To estimate \(|\Jcal^{bad}(U^X)|\) globally in time, using \eqref{ineq-1}, the definitions of \(\Jcal^{good}\) and \(\Gcal_{\d_3}\), \(\abs{a'}\le C\d_0\) and \(\d_0\le1/2\le a\), we find that
\begin{align*}
\begin{aligned}
&|\Jcal^{bad}(U^X)|=|\Jcal^{bad}(U^X)| \one{\abs{Y(U^X)}\ge \e^2} +|\Jcal^{bad}(U^X)| \one{\abs{Y(U^X)}\le \e^2}  \\
&=|\Jcal^{bad}(U^X)| \one{\abs{Y(U^X)}\ge \e^2} + |\Jcal^{good}(U^X) + \Bcal_{\d_3}(U^X)-\Gcal_{\d_3}(U^X)| \one{\abs{Y(U^X)}\le \e^2}\\
&\le |\Jcal^{bad}(U^X)|\one{\abs{Y(U^X)}\ge \e^2} + \abs{\Bcal_{\d_3}(U^X)}\one{\abs{Y(U^X)}\le \e^2} \\
&\qquad + \frac{\s_\e}{2}\int_{\{1/v\le 1/\vt+\d_3\}} a' \Big(\big(u^X-\ut \big)^2 - \Big(u^X-\ut-\frac{p^X-\pt}{\s_\e}\Big)^2\Big) d\x \, \one{\abs{Y(U^X)}\le \e^2} \\
&\le |\Jcal^{bad}(U^X)|\one{\abs{Y(U^X)}\ge \e^2} + \abs{\Bcal_{\d_3}(U^X)}\one{\abs{Y(U^X)}\le \e^2} \\
&\qquad  +C \int_{\{1/v\le 1/\vt+\d_3\}} a' \Big( \Big(u^X-\ut-\frac{p^X-\pt}{\s_\e}\Big)^2 + \big(p^X-\pt\big)^2 \Big) d\x \, \one{\abs{Y(U^X)}\le \e^2} \\
&\le |\Jcal^{bad}(U^X)|\one{\abs{Y(U^X)}\ge \e^2} + \Big(\abs{\Bcal_{\d_3}(U^X)} + C\Bcal_1^+(U^X) + C\Gcal_u^+(U^X)\Big)\one{\abs{Y(U^X)}\le \e^2}.
\end{aligned}
\end{align*}
This together with \eqref{cont-pre} and \eqref{contx} implies that
\begin{multline*}
|\dot {X}| \le \frac{1}{\e^2}\Big[ 2\Big( |\Jcal^{bad}(U^X)|\one{\abs{Y(U^X)}\ge \e^2}\Big)
+ 2\abs{\Jcal^{para}(U^X)} + 1\\
+ 2\left(\abs{\Bcal_{\d_3}(U^X)}+ C \Bcal_1^+(U^X) + C\Gcal_u^+(U^X)\right)\one{\abs{Y(U^X)}\le \e^2} 
\Big].
\end{multline*}
Moreover, it follows from \eqref{jb-cont} with \(\e<\e_0\) and \(\d_0^{-1}\e < \l < \d_0 < 1/2\) that
\begin{multline*}
\int_0^{T} \Big(|\Jcal^{bad}(U^X)|\one{\abs{Y(U^X)}\ge \e^2} + \abs{\Jcal^{para}(U^X)} \\
+ \left(\abs{\Bcal_{\d_3}(U^X)}+ \Bcal_1^+(U^X) + \Gcal_u^+(U^X)\right)\one{\abs{Y(U^X)}\le \e^2}\Big)dt 
\le C\frac{\l}{\d_0\e}\int_\RR \et(U_0|\Ut) d\x,
\end{multline*}
for some constant \(C>0\) does not depend on \(\d_0,\e/\l\) and \(T\).
This ends the proof of \eqref{est-shift}. \qed


\vskip0.1cm The next section is dedicated to the proof of Proposition \ref{prop:main}.

\section{Proof of Proposition \ref{prop:main}}
\setcounter{equation}{0}
\subsection{A nonlinear Poincar\'e type inequality}
For any \(\d>0\), and any function \(W\in L^2(0, 1)\) such that \(\sqrt{y(1-y)}\rd_y W\in L^2(0, 1)\), we define 
\begin{align*}
\Rcal_\d(W) &\coloneqq -\frac{1}{\d}\Big(\int_0^1 W^2 dy + 2\int_0^1 W dy\Big)^2 
+ (1+\d)\int_0^1 W^2 dy \\
&\qquad
+ \frac{2}{3}\int_0^1 W^3 dy + \d\int_0^1\abs{W}^3 dy - (0.9-\d)\int_0^1 y(1-y)\abs{\rd_y W}^2 dy.
\end{align*}
We have improved \cite[Proposition 3.3]{KV21} by the following version, as mentioned in Section \ref{sec:idea}.
\begin{proposition}\label{prop_nl_Poincare}
For a given \(C_1 >0\), there exists \(\d_2>0\), such that for any \(\d\le\d_2\) the following is true. \\
For any \(W\in L^2(0, 1)\) such that \(\sqrt{y(1-y)}\rd_y W\in L^2(0, 1)\), if \(\int_0^1 \abs{W(y)}^2 dy\le C_1\), then 
\[
\Rcal_\d(W) \le 0. 
\]
\end{proposition}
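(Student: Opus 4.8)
The plan is to adapt the proof of \cite[Proposition 3.3]{KV21}, but with a sharpened bookkeeping of constants that brings the coefficient of the diffusion term down from $1$ to $0.9$. Write $D(W):=\int_0^1 y(1-y)|\rd_y W|^2\,dy$ and $\bar W:=\int_0^1 W\,dy$. First I would record three facts about the degenerate interval $(0,1)$:
(a) the sharp Poincar\'e inequality $\int_0^1(W-\bar W)^2\,dy\le \tfrac12 D(W)$, which holds because the operator $-\rd_y\big(y(1-y)\rd_y\,\cdot\,\big)$ has spectral gap $2$ (extremal $W=1-2y$);
(b) a weighted Gagliardo--Nirenberg inequality of the form $\int_0^1|W|^3\,dy\le C\,\|W\|_{L^2}^2\big(\|W\|_{L^2}^2+D(W)\big)^{1/2}$;
and (c) interior bounds $\|W\|_{L^\infty(\epsilon,1-\epsilon)}\le C_\epsilon\big(\|W\|_{L^2}^2+D(W)\big)^{1/2}$ together with a logarithmic control of the endpoint tails of $W$. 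These are proved as in \cite{KV21}, and it is the gap between $0.9$ and the critical value $\tfrac12$ appearing in (a) that the whole argument will spend; (c) absorbs the degeneracy of the weight at $y\in\{0,1\}$.

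\textbf{Two elementary regimes.} Given $C_1$, fix a threshold $M=M(C_1)$. If $D(W)\ge M$, then by (b)--(c) every positive term of $\Rcal_\d(W)$ is at most $C(C_1)\big(1+D(W)^{1/2}\big)$, while $-(0.9-\d)D(W)$ dominates once $D(W)\ge M(C_1)$ and $\d<\tfrac12$; hence $\Rcal_\d(W)\le0$. For the remaining range $\|W\|_{L^2}^2\le C_1$, $D(W)\le M(C_1)$, I would split according to a small parameter $\eta_0$ independent of $\d$. When $\|W\|_{L^2}^2\le\eta_0$, write $W=\bar W+V$ with $\int_0^1 V\,dy=0$; combining the penalty term $-\tfrac1\d\big(\|W\|_{L^2}^2+2\bar W\big)^2$, the Poincar\'e inequality (a) in the form $D(V)\ge 2\|V\|_{L^2}^2$ (which already produces the favorable coefficient $-0.8+3\d<0$ in front of $\|V\|_{L^2}^2$ because $0.9>\tfrac12$), and the fact that the cubic contributions are $o(\|W\|_{L^2}^2)$ by (b), one checks directly that $\Rcal_\d(W)<0$.

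\textbf{The compact regime.} When $\eta_0\le\|W\|_{L^2}^2\le C_1$ and $D(W)\le M(C_1)$, I would argue by contradiction: if the conclusion failed on this set there would be $\d_n\downarrow0$ and $W_n$ in it with $\Rcal_{\d_n}(W_n)>0$. The weighted $H^1$ bound together with interior $H^1$-compactness and the uniform endpoint tail estimates from (c) give, along a subsequence, $W_n\to W_\ast$ strongly in $L^2(0,1)\cap L^3(0,1)$ with $\|W_\ast\|_{L^2}^2\ge\eta_0$, so $W_\ast\ne0$. From $\Rcal_{\d_n}(W_n)>0$ and the boundedness of the other terms, $\big(\|W_n\|_{L^2}^2+2\bar W_n\big)^2\le \d_n\,C(C_1,M)\to0$, so $W_\ast$ satisfies the constraint $\|W_\ast\|_{L^2}^2+2\int_0^1 W_\ast\,dy=0$; dropping the nonpositive penalty term and using weak lower semicontinuity of $D$, passing to the limit yields $\|W_\ast\|_{L^2}^2+\tfrac23\int_0^1 W_\ast^3\,dy-0.9\,D(W_\ast)\ge0$. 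Everything then reduces to the strict \emph{limiting inequality}: for every $W\ne0$ with $\|W\|_{L^2}^2+2\int_0^1 W\,dy=0$,
\[
\|W\|_{L^2}^2+\tfrac23\int_0^1 W^3\,dy-0.9\,D(W)<0 .
\]

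\textbf{The limiting inequality, and the main difficulty.} For $W=\bar W+V$ with $\int_0^1 V\,dy=0$, the constraint reads $(\bar W+1)^2+\|V\|_{L^2}^2=1$, so $\bar W\in[-2,0]$ and $\|V\|_{L^2}^2=-\bar W(\bar W+2)$. Splitting $D(V)=2\|V\|_{L^2}^2+\big(D(V)-2\|V\|_{L^2}^2\big)$ and expanding $V$ in the eigenbasis of $-\rd_y(y(1-y)\rd_y\,\cdot\,)$, the only component of $V$ not controlled quadratically by $D(V)-2\|V\|_{L^2}^2$ is the first mode $\propto(1-2y)$, whose cube integrates to zero; hence $\int_0^1 V^3\,dy$ can be estimated, via (b) applied to the higher modes and explicit low-mode identities, by a quantity absorbable into $0.9\big(D(V)-2\|V\|_{L^2}^2\big)$ plus lower-order corrections. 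The left-hand side of the limiting inequality is then $\le f(\bar W)+(\text{errors})$, where
\[
f(\beta)=-\tfrac43\beta^3+(2\cdot0.9-4)\beta^2+(4\cdot0.9-2)\beta=-\tfrac43\beta^3-2.2\beta^2+1.6\beta .
\]
Since $f(\beta)=\beta\big(-\tfrac43\beta^2-2.2\beta+1.6\big)$ and the concave factor is positive at $\beta=-2$ (value $\tfrac23$) and at $\beta=0$ (value $1.6$, positive precisely because $0.9>\tfrac12$), we get $f\le0$ on $[-2,0]$ with equality only at $\beta=0$, where $f'(0)=1.6>0$ so $f$ vanishes only linearly and dominates the $O(|\bar W|^{3/2})$ error near $\beta=0$; this contradicts the limiting inequality and closes the argument. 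I expect the genuinely delicate point to be this last step --- estimating $\int_0^1 V^3\,dy$ sharply enough, which forces one to use the weighted Gagliardo--Nirenberg inequality together with the precise spectral structure near the degenerate endpoints, so that it can be absorbed while the diffusion coefficient is kept as low as $0.9$; this is exactly the margin that \cite{KV21}, working with coefficient $1$, did not need to track.
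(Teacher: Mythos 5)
There is a genuine gap, and it sits exactly at the step you yourself flag as delicate: the claim that $\int_0^1 V^3\,dy$ (with $V=W-\bar W$) can be ``absorbed into $0.9\,(D(V)-2\|V\|_{L^2}^2)$ plus lower-order corrections'', so that the constrained limiting inequality reduces to $f(\beta)\le 0$ with $f(\beta)=-\tfrac43\beta^3-2.2\beta^2+1.6\beta$. That absorption is the entire content of the proposition and cannot be waved through. If you carry out your own reduction using the sharp weighted inequality $\int_0^1|W-\bar W|^3\,dy\le\th\,\|W-\bar W\|_{L^2}\int_0^1 y(1-y)|\rd_yW|^2\,dy$ with $\th=\sqrt{5-\pi^2/3}$ (the constant inherited from \cite{KV21}, and the only known constant sharp enough here, since $\tfrac23\th\approx0.88<0.9$), then on the constraint set $\|V\|_{L^2}^2=-\beta^2-2\beta$ the inequality you actually need is not $f(\beta)\le0$ but
\[
1.6\beta-2.2\beta^2-\tfrac43\beta^3+\tfrac43\,\th\,(-\beta^2-2\beta)^{3/2}\le 0,\qquad \beta\in[-2,0),
\]
whose margin is razor-thin (at $\beta=-0.3$ it is below $10^{-2}$); this is precisely the paper's Lemma B.2, which requires a careful calculus argument including the exact value $h(x_*)=\frac{x_*(3x_*+1)(3x_*+8)}{15(x_*+1)}$ at the interior critical point. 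Your spectral argument does not neutralize the cubic term: writing $V=c_1\phi_1+V_\perp$, the vanishing of $\int\phi_1^3$ kills only one of four terms, while the cross terms $3c_1^2\int\phi_1^2V_\perp$ and $3c_1\int\phi_1V_\perp^2$ are not lower order because $|c_1|$ can be of order one on the constraint set and the excess diffusion controls $V_\perp$ only with the fixed factor $0.9(\lambda_2-2)$; with a generic Gagliardo--Nirenberg constant as in your item (b) there is no hope of closing an inequality whose true margin is of size $10^{-2}$ or less. Likewise, your estimate near $\beta=0$ (error $O(|\bar W|^{3/2})$ dominated by $1.6|\beta|$) tacitly uses $\int|V|^3\lesssim\|V\|_{L^2}^3$, which fails in the degenerate-weight space; the correct bound reintroduces $D(V)$ and returns you to the tight one-variable inequality above.

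For comparison, the paper avoids compactness altogether: it reduces $\Rcal_\d(W)$, via the computations (3.13)--(3.17) of \cite{KV21}, to a two-variable polynomial inequality in $(Z_1,Z_2)=\big(\bar W,\|W-\bar W\|_{L^2}\big)$ under the constraint $|Z_1^2+Z_2^2+2Z_1|\le\d_1$ (Proposition B.1), and the whole ``$1\to0.9$'' improvement is the explicit verification that $h<0$ on $[-2,0)$. If you keep your compactness framework, you would still have to prove exactly that inequality with the sharp constant $\th$, so nothing is gained; and you would additionally need to justify the strong $L^2\cap L^3$ compactness of sets with bounded $\|W\|_{L^2}$ and $D(W)$ near the degenerate endpoints, as well as the strictness of the limiting inequality, both of which are asserted but not proved in your sketch.
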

\begin{proof}
See Appendix \ref{appendix_prop_nl_Poincare}.
\end{proof}

\subsection{Inside truncation}\label{section-expansion}

We define the following functionals:
\begin{equation} \label{note-in}
\begin{split}
&\begin{aligned} 
&\Ical_g^Y(v,\th) \coloneqq
-R \int_\RR a' \tht \Phif{v}{\vt}d\x -\frac{R}{\g-1} \int_\RR a'\tht \Phif{\th}{\tht} d\x
- \frac{1}{2\s_\e^2}\int_\RR a' (p-\pt)^2 d\x \\
&\hspace{20mm}
+R \int_\RR a \frac{\tht}{\vt^2}(v-\vt) \vt' d\x + \frac{R}{\g-1}\int_\RR a \frac{\th-\tht}{\tht} \tht' d\x
+ \frac{1}{\s_\e} \int_\RR a (p-\pt) \ut' d\x \\
&\hspace{20mm}
-R \int_\RR a \Phif{v}{\vt}\tht' d\x - \frac{R}{\g-1}\int_\RR a \Phif{\th}{\tht} \tht' d\x, 
\end{aligned} \\
&\begin{aligned}
&\Ical_1(v,\th) \coloneqq 
\frac{1}{2\s_\e} \int_\RR a' (p-\pt)^2 d\x,
&&\Ical_2(v,\th) \coloneqq
\int_\RR a (p-\pt)\frac{(v-\vt)}{\vt} \ut' d\x, \\
&\Ical_3(v,\th) \coloneqq
-R\s_\e \int_\RR a \Phif{v}{\vt}\tht' d\x, 
&&\Ical_4(v,\th) \coloneqq
-\frac{R\s_\e}{\g-1} \int_\RR  a \Phif{\th}{\tht} \tht' d\x,\\
&\Gcal_v(v) \coloneqq
R\s_\e \int_\RR a' \tht \Phif{v}{\vt} d\x,
&&\Gcal_\th(\th) \coloneqq
\frac{R\s_\e}{\g-1} \int_\RR a' \tht \Phif{\th}{\tht} d\x,
\end{aligned}\\
&\Dcal_v^1(v) \coloneqq
R \t_0 \int_\RR a \tht v \abs{\Big(\frac{1}{v}-\frac{1}{\vt}\Big)_\x}^2 d\x.
\end{split}
\end{equation}
Note that all of these quantities depend only on \(v\) and \(\th\), not on \(u\).

\begin{proposition}\label{prop:main3}
For any constant \(C_2>0\), there exist \(\e_0, \d_3>0\), such that for any \(\e, \l\in(0,\e_0)\) and \(\d\in(0,\d_3)\) with \(\e/\l\in (0, \e_0)\), the following is true.\\
For any function \(v\colon\RR\to \RR^+\) and \(\th\colon\RR\to \RR^+\) such that \(\Dcal_v^1(v)+\Gcal_v(v)+\Gcal_\th(\th)\)
is finite, if
\begin{equation}\label{assYp}
\abs{\Ical_g^Y(v, \th)} \le C_2 \frac{\e^2}{\l},\qquad  \norm{\frac{1}{v}-\frac{1}{\vt}}_{L^\infty(\RR)}\le \d_3, \qquad \|\th-\tht\|_{L^\infty(\RR)}\le \d_3,
\end{equation}
then
\begin{align}
\begin{aligned}\label{redelta}
\Rcal_{\e,\d}(v, \th)&\coloneqq -\frac{1}{\e\d}\abs{\Ical_g^Y(v, \th)}^2 +\Ical_1(v, \th)+\d\left(\frac{\e}{\l}\right)\abs{\Ical_1(v, \th)}\\
&\quad\quad+(\Ical_2(v, \th)+\Ical_3(v, \th)+\Ical_4(v, \th))+\d(\abs{\Ical_2(v, \th)}+\abs{\Ical_3(v, \th)}+\abs{\Ical_4(v, \th)}) \\
&\quad\quad-\Big(1-\d\Big(\frac{\e}{\l}\Big)\Big)(\Gcal_v(v)+\Gcal_\th(\th))-(1-\d)\Dcal_v^1(v)\le 0.
\end{aligned}
\end{align}
\end{proposition}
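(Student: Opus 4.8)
The plan is to reduce \eqref{redelta} to the nonlinear Poincar\'e inequality of Proposition~\ref{prop_nl_Poincare} by renormalizing the perturbation. Since every functional in \eqref{note-in} depends only on $(v,\th)$ and carries the weight derivative $a'$, I first change variables from $\x$ to $y\coloneqq(\vt(\x)-v_-)/[v]\in(0,1)$; by \eqref{weight-a}--\eqref{der-a} this turns $a'\,d\x$ into $\l\,dy$, and by Lemma~\ref{lem-VS} the shock profiles and their derivatives become, in the variable $y$, explicit functions of $y$ up to $O(\e)$ errors. I then set
\[
W_1\coloneqq\frac{\l}{\e}\,v_-^2\Big(\frac1v-\frac1{\vt}\Big),\qquad W_2\coloneqq\frac{\l}{\e}\,\frac{v_-}{\th_-}(\th-\tht),
\]
which under \eqref{assYp} may be pointwise large but will turn out to be $L^2(0,1)$-bounded. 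Using the local expansions of Lemma~\ref{lem:local} for $\Phi$, each of $\Ical_1,\Gcal_v,\Gcal_\th$ is rewritten as an integral over $(0,1)$ of a quadratic-plus-cubic polynomial in $(W_1,W_2)$ with remainder controlled by $\e$, $\e/\l$ and $\d_3$; $\Dcal_v^1$ becomes a positive constant times $\int_0^1 y(1-y)|\rd_yW_1|^2\,dy$ plus lower order; and $\Ical_2,\Ical_3,\Ical_4$ (and their absolute values) are lower order relative to $\Gcal_v+\Gcal_\th$.

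The next step is to produce and exploit the additional good term. Expanding $\Ical_1-\Gcal_v-\Gcal_\th$ and completing the square yields, after the normalization, a nonnegative term $T(W_1,W_2)$ which is an integral over $(0,1)$ of the square of a fixed linear combination $\alpha W_1+\beta W_2$ (with $\alpha,\beta$ depending on $R,\g,\th_-,v_-$), plus a leftover that is quadratic in $W_1$ alone. I would then use $T$ to eliminate $W_2$: in every remaining quadratic and cubic occurrence one replaces $W_2$ by $-(\alpha/\beta)W_1$ up to an error absorbed into $T$ (using, for the cubic corrections, the $L^2$ bound on $W_1$ obtained below). After this step, \emph{all} the terms in $\Rcal_{\e,\d}$ --- including $\Ical_g^Y$, $\Ical_2,\Ical_3,\Ical_4$, and the cubic contributions --- are expressed through $W_1$ only.

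In particular $\Ical_g^Y$ becomes, to leading order, a positive multiple of $\int_0^1 W_1^2\,dy+2\int_0^1 W_1\,dy$; since $\int_0^1 W_1^2+2\int_0^1 W_1\ge\|W_1\|_{L^2(0,1)}^2-2\|W_1\|_{L^2(0,1)}$ by Cauchy--Schwarz, the hypothesis $|\Ical_g^Y|\le C_2\e^2/\l$ (after absorbing the lower-order corrections) forces $\|W_1\|_{L^2(0,1)}^2\le C_1$ for a constant $C_1=C_1(C_2)$, and this is the constant with which Proposition~\ref{prop_nl_Poincare} is applied. Moreover $-\frac1{\e\d}|\Ical_g^Y|^2$ supplies the term $-\frac1{\d}\big(\int_0^1 W_1^2+2\int_0^1 W_1\big)^2$. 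Gathering all contributions, $\Rcal_{\e,\d}(v,\th)$ is bounded above by a positive multiple of $\Rcal_{\d'}(W_1)$ (for a parameter $\d'$ that is small once $\d$ and $\e/\l$ are small) plus error terms that vanish as $\e,\e/\l,\d\to0$, where the coefficient multiplying $\int_0^1 y(1-y)|\rd_yW_1|^2\,dy$ is
\[
\frac{R\t_0}{R(\t_0+(\g+1)\th_-^2)+(\g-1)^2\th_-^2}\in[0.9,1)
\]
by \eqref{cold}. Since this coefficient is at least $0.9$, Proposition~\ref{prop_nl_Poincare} then gives $\Rcal_{\d'}(W_1)\le0$, and choosing $\e_0,\d_3$ small enough to absorb the errors yields \eqref{redelta}.

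The main obstacle is this last bookkeeping: one must verify that, after substituting out $W_2$ through $T$, the surviving quadratic terms assemble into exactly $(1+\d')\int_0^1 W_1^2\,dy$, the cubic ones into $\tfrac23\int_0^1 W_1^3\,dy$ plus a remainder dominated by $\d'\int_0^1|W_1|^3\,dy$, and the diffusion term retains a coefficient no smaller than $0.9$ --- so that the residual expression is dominated by $\Rcal_{\d'}(W_1)$ rather than by something with a degraded constant. This is exactly where \eqref{cold} and the sharpened value $0.9$ in Proposition~\ref{prop_nl_Poincare} (versus the $1$ required in \cite{KV21}) become indispensable, because only $\Dcal_v^1$ --- not the entangled diffusions $\Dcal_v^2$ or $\Dcal_\th$ --- has the monotonicity needed to be used inside the truncation.
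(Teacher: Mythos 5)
Your proposal follows essentially the same route as the paper's own proof: the change of variables $y=(\vt(\x)-v_-)/[v]$ with the rescaled perturbations $W_1,W_2$, the extraction of the square $T(W_1,W_2)$ from $\Ical_1-\Gcal_v-\Gcal_\th$, the elimination of $W_2$ through $T$, the $L^2$ bound forced by the hypothesis on $\Ical_g^Y$, the Jacobian estimate that gives the diffusion coefficient at least $0.9$ under \eqref{cold}, and the sharpened nonlinear Poincar\'e inequality of Proposition \ref{prop_nl_Poincare}. Two local points in your sketch should be corrected, though neither changes the architecture. First, the cubic conversion errors cannot be closed with the $L^2$ bound: after replacing $W_2$ by $(\g-1)W_1$ one is left with terms like $\frac{\e}{\l}\int_0^1 W_1^4\,dy$, which are absorbed into $\d\int_0^1|W_1|^3\,dy$ only via the pointwise bound $\norm{W_1}_{L^\infty}\le C\d_3\,\l/\e$ supplied by the truncation hypothesis \eqref{assYp} (this is exactly \eqref{transcubic}); the $L^2$ bound \eqref{L2normW1W2} --- which must be obtained for \emph{both} $W_1$ and $W_2$, from the positive-definite quadratic form in the expansion \eqref{insidey} of $\Ical_g^Y$ \emph{before} $W_2$ is eliminated --- is instead what you need to control the error $\abs{a-b}^2\le C\frac{\e}{\l}T(W_1,W_2)$ when converting $-\frac{1}{\e\d}\abs{\Ical_g^Y}^2$ into $-\frac{1}{\d}\big(\int_0^1 W_1^2\,dy+2\int_0^1 W_1\,dy\big)^2$, and to invoke Proposition \ref{prop_nl_Poincare}. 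Second, calling $\Ical_2,\Ical_3,\Ical_4$ ``lower order relative to $\Gcal_v+\Gcal_\th$'' is true only at the raw scale ($\e^3/\l^2$ versus $\e^2/\l$); they can neither be dropped nor absorbed into $-(1-\d\,\e/\l)(\Gcal_v+\Gcal_\th)$, because all of $\Gcal_v+\Gcal_\th$ is consumed in the cancellation against $\Ical_1$ that produces $-T$. After that cancellation, $\Ical_2+\Ical_3+\Ical_4$ are precisely what generates the quadratic contribution $\frac{\g^2+\g}{2}\int_0^1 W_1^2\,dy$, i.e.\ the $(1+\d)\int_0^1 W_1^2\,dy$ term of the Poincar\'e functional; your final ``bookkeeping'' paragraph implicitly keeps them, but the earlier sentence should not suggest they are negligible. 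With these repairs your argument coincides with the paper's.
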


\begin{proof}
To ensure the existence of a viscous shock, we choose \(\e_0\) to be smaller than the \(\e_0\) in Lemma \ref{lem-VS}, and we impose on \(\d_3\) that \(\d_3 < \d_*\) where \(\d_*\) is the constant in Lemma \ref{lem:local}.

We rewrite the above functionals  with respect to the following variables
\[
    y \coloneqq \frac{\vt(\x)-v_-}{[v]}, \,\, w_1 \coloneqq v_-^2 \Big(\frac{1}{v}-\frac{1}{\vt}\Big) \circ y^{-1}, \,\, w_2 \coloneqq \frac{v_-}{\th_-}(\th-\tht)\circ y^{-1}, \,\,
    W_1 \coloneqq \frac{\l}{\e} w_1, \,\, W_2 \coloneqq \frac{\l}{\e} w_2.
\]
Notice that \(\vt(\x)\) is increasing in \(\x\), we use a change of variable \(\x\in\RR \mapsto y\in[0,1]\).
Then, it follows from \eqref{weight-a} that \(a=1+\l y\) and
\begin{equation} \label{change-d}
a'(\x)=\frac{\l}{[v]}\vt'(\x)=\frac{\l}{\e}\vt'(\x), \qquad \frac{dy}{d\x}=\frac{\vt'}{[v]}, \qquad \abs{a-1}\le \l.
\end{equation}

\(\bullet\) \textbf{Change of variable for \(\Ical_g^Y\):} 
We refer to the eight terms of \(\Ical_g^Y\) as \(I_1\) through \(I_8\), and analyze them individually in what follows.
Using \eqref{assYp} and \eqref{change-d} with \eqref{Phi-est1-v} and \eqref{Phi-est1-th},
since \(|\tht(\x)-\th_-| \le C \e\) for all \(\x\), we have
\begin{align*}
\abs{Y_1 + Y_2 + \frac{\l R \th_-}{2v_-^2} \int_0^1 w_1^2 + \frac{w_2^2}{\g-1} dy} \le C (\e_0+\d_3) \l \Big(\int_0^1 w_1^2 dy + \int_0^1 w_2^2 dy\Big).
\end{align*}
For \(Y_3\), it is required to rewrite \(p-\pt\) as a linear combination of \(\left(\frac{1}{v}-\frac{1}{\vt}\right)\) and \(\th-\tht\):
\[
p-\pt = \frac{R}{v}(\th-\tht) + R\tht \Big(\frac{1}{v}-\frac{1}{\vt}\Big),
\]
which together with \eqref{sm1} gives
\begin{align*}
\abs{Y_3+\frac{\l R\th_-}{2\g v_-^2} \int_0^1 (w_1+w_2)^2 dy} \le C (\e_0+\d_3)\l \Big(\int_0^1 w_1^2 dy+\int_0^1 w_2^2 dy\Big).
\end{align*}
We then compute the linear terms \(Y_4,Y_5\) and \(Y_6\) with \eqref{ratio-vu}: using \eqref{change-d} and for \(Y_6\), using the same linear combination for \(p-\ptil\) as \(Y_3\), we find
\begin{align*}
\abs{Y_4 + Y_5 + Y_6 + \frac{2\e R \th_-}{v_-^2}\int_0^1 w_1+w_2 dy} \le C(\e_0+\d_3) \e \Big(\int_0^1 \abs{w_1}dy + \int_0^1 \abs{w_2}dy\Big).
\end{align*}
It remains to compute \(Y_7\) and \(Y_8\): using \eqref{Phi-est1-v}, \eqref{Phi-est1-th}, and \eqref{ratio-vu} again, we obtain
\begin{align*}
\abs{Y_7+Y_8+\frac{\e R\th_-}{2v_-^3}\int_0^1 (\g-1)w_1^2+w_2^2 dy} \le C(\e_0+\d_3)\e \Big(\int_0^1 w_1^2 dy+\int_0^1 w_2^2 dy\Big).
\end{align*}
Then, combining all the terms on \(\Ical_g^Y\) and writing it in terms of \(W_1\) and \(W_2\), we obtain
\begin{align}
\begin{aligned} \label{insidey}
&\abs{\frac{\l}{\e^2}\frac{v_-^2}{R\th_-}\Ical_g^Y 
+ \frac{1}{2} \int_0^1 W_1^2 +\frac{W_2^2}{\g-1} dy
+ \frac{1}{2\g} \int_0^1 (W_1+W_2)^2 dy + 2\int_0^1 (W_1+W_2) dy} \\
&\qquad \le
C(\e_0+\d_3) \left( \int_0^1 (W_1^2 + W_2^2) dy + \int_0^1 \abs{W_1} + \abs{W_2} dy\right).
\end{aligned}
\end{align}
Notice that \(Y_7\) and \(Y_8\) have smaller scale compared to \(Y_1, Y_2\) and \(Y_3\).

\(\bullet\) \textbf{Change of variable for \(\Ical_1 - \Gcal_v - \Gcal_\th\):}
To handle this, we write down altogether:
\begin{equation} \label{main-cancel}
\Ical_1 -\Gcal_v -\Gcal_\th 
= \frac{1}{2\s_\e} \int_\RR a' \Big[(p-\pt)^2 -2R\s_\e^2 \tht \Phif{v}{\vt} -\frac{2R\s_\e^2 \tht}{\g-1}\Phif{\th}{\tht}\Big] d\x
\end{equation}
Here, we need to express \((p-\pt)\) in the following way:
\[
p-\pt=\frac{R}{\vt}(\th-\tht)+R\tht\Big(\frac{1}{v}-\frac{1}{\vt}\Big)+R\Big(\frac{1}{v}-\frac{1}{\vt}\Big)(\th-\tht),
\]
and so, we get
\begin{equation}
\begin{aligned}\label{b1expansion}
&(p-\pt)^2
=\frac{R^2}{\vt^2}(\th-\tht)^2
+R^2\tht^2\Big(\frac{1}{v}-\frac{1}{\vt}\Big)^2
+\frac{2R^2\tht}{\vt}\Big(\frac{1}{v}-\frac{1}{\vt}\Big)(\th-\tht) \\
&\,
+\frac{2R^2}{\vt}\Big(\frac{1}{v}-\frac{1}{\vt}\Big)(\th-\tht)^2
+2R^2\tht\Big(\frac{1}{v}-\frac{1}{\vt}\Big)^2(\th-\tht)
+C\d_3 \Big(\Big|\frac{1}{v} - \frac{1}{\vt}\Big|^3 + |\th - \tht|^3 \Big).
\end{aligned}
\end{equation}
On the other hand, from \eqref{Phi-est1-v}-\eqref{Phi-est1-th}, we have
\begin{equation}
\begin{aligned}\label{gexpansion}
&2R\s_\e^2 \tht \Phif{v}{\vt} +\frac{2R\s_\e^2 \tht}{\g-1}\Phif{\th}{\tht} \\
&= R\s_\e^2 \vt^2 \tht \Big(\frac{1}{v}-\frac{1}{\vt}\Big)^2 
+\frac{2}{3} R\s_\e^2 \vt^3 \tht \Big(\frac{1}{v}-\frac{1}{\vt}\Big)^3
-2R\s_\e^2 \vt^3 \tht\Big(\frac{1}{v}-\frac{1}{\vt}\Big)^3 \\
&\qquad
+\frac{R\s_\e^2}{\g-1}\frac{1}{\tht}(\th-\tht)^2 
-\frac{2R\s_\e^2}{3(\g-1)}\frac{1}{\tht^2}(\th-\tht)^3
+C\d_3 \Big(\Big|\frac{1}{v} - \frac{1}{\vt}\Big|^3+|\th-\tht|^3 \Big).
\end{aligned}
\end{equation}
Then, we gather all the quadratic terms in \eqref{main-cancel} with \(\Ocal(\e)\) error:
\[
- \frac{R^2\th_-^2}{2\s_*v_-^4} \Big((\g-1)w_1^2 - 2 w_1 w_2 + \frac{1}{\g-1} w_2^2\Big) +C\e \left(w_1^2 + w_2^2 \right).
\]
We also gather all the cubic terms in \eqref{main-cancel}:
\[
\frac{R^2 \g \th_-^2}{2\s_*v_-^5} \Big( \frac{2}{\g}w_1w_2^2 + \frac{2}{\g}w_1^2w_2 + \frac{4}{3}w_1^3 + \frac{2}{3(\g-1)}w_2^3 \Big)
+ C(\e_0+\d_3) \left(\abs{w_1}^3+\abs{w_3^3}\right),
\]
which implies
\begin{align}
\begin{aligned} \label{insideg}
&\frac{\l^2}{\e^3} \frac{v_-^3}{\s_*R\th_-}(\Ical_1 - \Gcal_v - \Gcal_\th) 
\le \int_0^1 \Big(\frac{2}{3}W_1^3 + \frac{1}{\g}W_1^2 W_2 + \frac{1}{\g}W_1 W_2^2 + \frac{1}{3(\g-1)}W_2^3\Big)dy \\ 
&\qquad\qquad\qquad\qquad\qquad
+ C \d_3 \int_0^1 (W_1^2 + W_2^2) dy + C(\e_0+\d_3) \int_0^1 (\abs{W_1}^3 +\abs{W_2}^3) dy \\
&\qquad\qquad\qquad\qquad\qquad
-\frac{\l}{\e}\frac{v_-}{2\g(\g-1)} \int_0^1 ((\g-1)W_1-W_2)^2 dy.
\end{aligned}
\end{align}
Note that the last term of \eqref{insideg} has a greater scale (as $\l/\e\gg1$) than the other terms, which we name as 
\[
T(W_1, W_2) \coloneqq \frac{\l}{\e}\frac{v_-}{2\g(\g-1)}\int_0^1 ((\g-1)W_1-W_2)^2 dy.
\]
At the end of this proof, we will use the good term $-T(W_1,W_2)$ to convert all terms including $W_2$ into terms written in $W_1$ only.\\
Due to the perturbation of coefficient in the definition of \(\Rcal_{\e, \d}\), we also need the following simple estimate, which is a direct consequence of the expansion \eqref{b1expansion} and \eqref{gexpansion}:
\begin{equation}\label{insideg-perturb}
\frac{\l^2}{\e^3}\abs{\Ical_1} + \frac{\l^2}{\e^3}\abs{\Gcal_v} + \frac{\l^2}{\e^3}\abs{\Gcal_\th}
\le C\frac{\l}{\e}\abs{\int_0^1 (W_1^2+W_2^2) dy}.
\end{equation}

\(\bullet\) \textbf{Change of variable for \(\Ical_2,\Ical_3,\Ical_4\):}
For \(\Ical_2\), we first rewrite it as follows:
\[
\Ical_2(v,\th) = R \int_\RR a \Big[\frac{1}{v}(\th-\tht) +\tht\Big(\frac{1}{v}-\frac{1}{\vt}\Big) \Big] \frac{(v-\vt)}{\vt} \ut' d\x.
\]
Hence, using \eqref{assYp}, \eqref{change-d}, and \eqref{ratio-vu}, we get
\begin{align} \label{I2}
\abs{\Ical_2 - \frac{\e \s_* R \th_-}{v_-^3}\int_0^1 (w_1^2+w_1 w_2) dy}
\le C(\e_0+\d_3)\e \Big(\int_0^1 w_1^2 dy + \int_0^1 w_2^2 dy \Big).
\end{align}
Moreover, \eqref{Phi-est1-v}, \eqref{Phi-est1-th}, \eqref{ratio-vu}, and \eqref{sm1} give
\begin{align} \label{I34}
\abs{\Ical_3 +\Ical_4 - \frac{\e \s_* R \th_-}{2 v_-^3}\int_0^1 (\g-1)w_1^2 + w_2^2 dy}
\le C(\e_0+\d_3)\e \Big(\int_0^1 w_1^2 dy +\int_0^1 w_2^2 dy \Big).
\end{align}
Thus, summing up \eqref{I2} and \eqref{I34}, we obtain
\begin{align}
\begin{aligned} \label{insidei}
\frac{\l^2}{\e^3}\frac{v_-^3}{\s_* R\th_-}(\Ical_2+\Ical_3+\Ical_4)
&\le \int_0^1 (W_1^2+W_1W_2) dy + \frac{\g-1}{2}\int_0^1 W_1^2 dy + \frac{1}{2} \int_0^1 W_2^2 dy \\
&\qquad
+ C(\e_0+\d_3)\Big(\int_0^1 W_1^2 dy + \int_0^1 W_2^2 dy\Big).
\end{aligned}
\end{align}

\(\bullet\) \textbf{Change of variable for \(\Dcal_v^1\):}
By the change of variable, we obtain that 
\begin{align*}
\Dcal_v^1 &\ge (1-C(\e_0 + \d_3))R\t_0\frac{\th_-}{v_-^3}\int_\RR (1+\l y)\abs{\rd_\x w_1}^2 d\x \\
&\ge (1-C(\e_0 + \d_3))R\t_0\frac{\th_-}{v_-^3}\int_0^1\abs{\rd_y w_1}^2 \left(\frac{dy}{d\x}\right)dy.
\end{align*}
To estimate $\frac{dy}{d\x}$, we use the system \eqref{shock_0} for viscous shocks to show Lemma \ref{sharp-d}, as in \cite{KVW-NSF}.
Thanks to Lemma \ref{sharp-d} below, we use the condition \eqref{cold} to have 
\begin{align*}
\Dcal_v^1 
&\ge \frac{\e}{2}\frac{R\t_0\th_-}{v_-^3}\frac{\s_*\g(\g+1)R}{R(\t_0+(\g+1)\th_-^2) + (\g-1)^2\th_-^2}(1-C(\e_0 + \d_3))
\int_0^1 y(1-y)\abs{\rd_y w_1}^2 dy \\
&\ge 0.9\e \frac{\s_*R\th_-}{v_-^3}\frac{\g^2+\g}{2}(1-C(\e_0 + \d_3))
\int_0^1 y(1-y)\abs{\rd_y w_1}^2 dy.
\end{align*}
This is where the condition \eqref{cold} is crucially used. After rescaling, we get
\begin{equation}\label{insided}
-\frac{\l^2}{\e^3}\frac{v_-^3}{\s_*R\th_-}\Dcal_v^1 \le -(0.9-C(\e_0+\d_3))\frac{\g^2+\g}{2}\int_0^1 y(1-y)\abs{\rd_y W_1}^2 dy.
\end{equation}

\(\bullet\) \textbf{Control on \(W_1\) and \(W_2\):} To use Proposition \ref{prop_nl_Poincare}, we need \(L^2\) bounds of \(W_1\) and \(W_2\). For this, we use \eqref{assYp} and \eqref{insidey} to observe that 
\begin{align*}
&\frac{1}{2} \int_0^1 W_1^2 dy + \frac{1}{2(\g-1)} \int_0^1 W_2^2 dy + \frac{1}{2\g} \int_0^1 (W_1+W_2)^2 dy \\
&\le
C_2\frac{v_-^2}{R\th_-} + 2\abs{\int_0^1 (W_1+W_2) dy}
+C(\e_0+\d_3) \Big( \int_0^1 (W_1^2 + W_2^2) dy + \int_0^1 (\abs{W_1} + \abs{W_2}) dy\Big).
\end{align*}
Using 
\[
\abs{ \int_0^1 (W_1 + W_2) dy} \le \int_0^1 \abs{W_1} dy + \int_0^1 \abs{W_2} dy
\le \frac{1}{8}\int_0^1 W_1^2 dy + \frac{1}{8(\g-1)}\int_0^1 W_2^2 dy + C,
\]
we obtain that for any small enough \(\e_0\) and \(\d_3\),
\[
\frac{1}{2} \int_0^1 W_1^2 dy + \frac{1}{2(\g-1)} \int_0^1 W_2^2 dy 
\le C + \frac{1}{3}\int_0^1 W_1^2 dy + \frac{1}{3(\g-1)}\int_0^1 W_2^2 dy.
\]
Hence, there is a constant \(C_1>0\) depending on \(C_2\), but not on \(\e\) and \(\e/\l\), such that 
\begin{equation}\label{L2normW1W2}
\int_0^1 W_1^2 dy + \int_0^1 W_2^2 dy \le C_1.
\end{equation}
Note that we do not expect any smallness on this constant. 

\(\bullet\) \textbf{Control on the \(\abs{\Ical_g^Y}^2\) term:}
We have 
\[
-\frac{\l^2}{\e^3}\frac{v_-^3}{\s_*R\th_-}\frac{\abs{\Ical_g^Y}^2}{\e\d_3}
= -\frac{R\th_-}{\d_3\s_*v_-}\abs{\frac{\l}{\e^2}\frac{v_-^2}{R\th_-}\Ical_g^Y}^2.
\]
From the inequality \(-a^2 \le -\frac{b^2}{2} + \abs{b-a}^2\) with 
\[
a = \frac{\l}{\e^2}\frac{v_-^2}{R\th_-}\Ical_g^Y, \quad
b = -\frac{1}{2} \int_0^1 W_1^2+\frac{W_2^2}{\g-1} dy - \frac{1}{2\g} \int_0^1 (W_1+W_2)^2 dy - 2\int_0^1 (W_1+W_2) dy,
\]
and using \eqref{insidey}, we obtain that 
\begin{align*}
-\frac{\l^2}{\e^3}\frac{v_-^3}{\s_*R\th_-}\frac{\abs{\Ical_g^Y}^2}{\e\d_3}
&\le -\frac{R\th_-}{2\d_3\s_*v_-}\abs{\frac{1}{2}\int_0^1 W_1^2+ \frac{W_2^2}{\g-1}+ \frac{1}{\g}(W_1+W_2)^2 dy + 2\int_0^1 W_1+W_2 dy}^2 \\
&\qquad
+ \frac{C}{\d_3}(\e_0+\d_3)^2\Big(\int_0^1 (W_1^2 +W_2^2)dy + \int_0^1 (\abs{W_1}+\abs{W_2})dy\Big)^2.
\end{align*}
Thus, using \eqref{L2normW1W2} and taking \(\e_0<\d_3\), we obtain that 
\begin{equation}
\begin{aligned}\label{insidey2}
-\frac{\l^2}{\e^3}\frac{v_-^3}{\s_*R\th_-}\frac{\abs{\Ical_g^Y}^2}{\e\d_3}
&\le -\frac{R\th_-}{2\d_3\s_*v_-}\abs{\frac{1}{2}\int_0^1 W_1^2+ \frac{W_2^2}{\g-1}+ \frac{1}{\g}(W_1+W_2)^2 dy + 2\int_0^1 W_1+W_2 dy}^2 \\
&\qquad
+ C\d_3\int_0^1 (W_1^2+W_2^2)dy.
\end{aligned}
\end{equation}

\(\bullet\) \textbf{Representation in \(W_1\):}
We here use $T(W_1,W_2)$ to represent all the previous terms in $W_1$ based on the following inequalities.\\
For any large constant $C'$ to be determined independently from  \(\e, \l\) or \(\d_3\), 
\begin{equation}
\begin{aligned}\label{transquad}
\int_0^1 W_1W_2 dy &= (\g-1)\int_0^1 W_1^2 dy - \int_0^1 W_1((\g-1)W_1-W_2) dy \\
&\le (\g-1)\int_0^1 W_1^2 dy + CC'\frac{\e}{\l}\int_0^1 W_1^2 dy
+ \frac{1}{C'}T(W_1, W_2) \\
\int_0^1 W_2^2 dy
&\le (\g-1)^2\int_0^1 W_1^2 dy + CC'\frac{\e}{\l}\int_0^1 (W_1^2+W_2^2) dy
+ \frac{1}{2C'}T(W_1, W_2) \\
\int_0^1 W_2^2 dy 
&\le \left(1 + CC'\frac{\e}{\l}\right)(\g-1)^2\int_0^1 W_1^2 dy
+ \frac{1}{C'}T(W_1, W_2).
\end{aligned}
\end{equation}
To represent the cubic term in $W_1$, we need an additional argument. 
Note that we have an \(L^\infty\) bound of \(W_1\) and \(W_2\) from the assumption \eqref{assYp}: 
\begin{align*}
\norm{W_1}_{L^\infty} = \frac{\l}{\e}\norm{w_1}_{L^\infty} \le C\frac{\l}{\e}\d_3, \qquad
\norm{W_2}_{L^\infty} = \frac{\l}{\e}\norm{w_2}_{L^\infty} \le C\frac{\l}{\e}\d_3.
\end{align*}
Therefore, using the same idea of replacing \(W_2\) by \((\g-1)W_1-((\g-1)W_1-W_2)\), we rewrite all terms in $W_1$ as follows.
\begin{equation}
\begin{aligned}\label{transcubic}
\int_0^1 W_1^2W_2 dy 
&\le (\g-1)\int_0^1 W_1^3 dy + CC'\frac{\e}{\l}\int_0^1 W_1^4 dy + \frac{1}{C'}T(W_1, W_2) \\
&\le (\g-1)\int_0^1 W_1^3 dy + CC'\d_3\int_0^1 \abs{W_1}^3 dy + \frac{1}{C'}T(W_1, W_2) \\
\int_0^1 \abs{W_2}^3 dy 
&\le (\g-1)^3\int_0^1 \abs{W_1}^3 dy + CC'\d_3\int_0^1 \abs{W_1}^3\hspace{-1mm}+\abs{W_2}^3 dy + \frac{1}{2C'}T(W_1, W_2) \\
\int_0^1 \abs{W_2}^3 dy &\le \left(1+CC'\d_3\right)(\g-1)^3\int_0^1 \abs{W_1}^3 dy + \frac{1}{C'}T(W_1, W_2)\\
\int_0^1 W_1W_2^2 dy 
&\le (\g-1)^2\int_0^1 W_1^3 dy + CC'\d_3\int_0^1 \abs{W_1}^3 dy + \frac{1}{C'}T(W_1, W_2) \\
\int_0^1 W_2^3 dy 
&\le (\g-1)^3\int_0^1 W_1^3 dy + CC'\d_3\int_0^1 \abs{W_1}^3 dy + \frac{1}{C'}T(W_1, W_2).
\end{aligned}
\end{equation}
This allows us to rewrite all terms in \eqref{insideg} and \eqref{insidei}: 
\begin{equation}
\begin{aligned}\label{insidegi2}
&\frac{\l^2}{\e^3}\frac{v_-^3}{\s_*R\th_-}(\Ical_1-\Gcal_v-\Gcal_\th) \le \frac{\g^2+\g}{3}\int_0^1 W_1^3 dy + C\d_3\int_0^1 W_1^2 dy \\
&\qquad\qquad\qquad\qquad\qquad\qquad
+ CC'(\e_0+\d_3)\int_0^1 \abs{W_1}^3 dy
+ \frac{C}{C'}T(W_1, W_2) - T(W_1, W_2), \\
&\frac{\l^2}{\e^3}\frac{v_-^3}{\s_*R\th_-}(\Ical_2+\Ical_3+\Ical_4)
\le \frac{\g^2+\g}{2}\int_0^1 W_1^2 dy + CC'(\e_0+\d_3)\int_0^1 W_1^2 dy + \frac{C}{C'}T(W_1, W_2).
\end{aligned}
\end{equation}
For \eqref{insidey}, we use the inequality \(-a^2 \le -\frac{1}{2}b^2 + \abs{a-b}^2\) once again with 
\begin{align*}
a = \frac{1}{2} \int_0^1 W_1^2+\frac{W_2^2}{(\g-1)}+\frac{1}{\g} (W_1+W_2)^2 dy  + 2\int_0^1 W_1+W_2 dy, \quad b = \g\int_0^1 W_1^2+2W_1 dy. 
\end{align*}
Factoring out \((\g-1)W_1-W_2\) from \(a-b\), we use H\"older's inequality with \eqref{L2normW1W2} to get
\[
\abs{a-b}^2
\le C\frac{\e}{\l}T(W_1, W_2).
\]
Using this to \eqref{insidey2} together with \eqref{transquad}, we obtain that 
\begin{equation}\label{insidey3}
-\frac{\l^2}{\e^3}\frac{v_-^3}{\s_*R\th_-}\frac{\abs{\Ical_g^Y}^2}{\e\d_3}
\le -\frac{\g^2R\th_-}{4\d_3\s_*v_-}\abs{\int_0^1 W_1^2+2W_1 dy}^2 + CC'\d_3\int_0^1 W_1^2dy + \frac{C}{\d_3}\frac{\e}{\l}T(W_1, W_2).
\end{equation}

\(\bullet\) \textbf{Conclusion:}
For any \(\d \in (0,\d_3)\), we have 
\begin{align*}
\Rcal_{\e,\d}(v, \th)&\le -\frac{1}{\e\d_3}\abs{\Ical_g^Y(v, \th)}^2 +\Ical_1(v, \th)+\d_3\left(\frac{\e}{\l}\right)\abs{\Ical_1(v, \th)}\\
&\quad\quad+(\Ical_2(v, \th)+\Ical_3(v, \th)+\Ical_4(v, \th))+\d_3(\abs{\Ical_2(v, \th)}+\abs{\Ical_3(v, \th)}+\abs{\Ical_4(v, \th)}) \\
&\quad\quad-\left(1-\d_3\left(\frac{\e}{\l}\right)\right)(\Gcal_v(v)+\Gcal_\th(\th))-(1-\d_3)\Dcal_v^1(v).
\end{align*}
From \eqref{insideg-perturb}, \eqref{insided}, \eqref{insidegi2}, and \eqref{insidey3}, we obtain that 
\begin{align*}
&\frac{\l^2}{\e^3}\frac{v_-^3}{\s_*R\th_-}\Rcal_{\e, \d}(v, \th)
\le -\frac{\g^2R\th_-}{4\d_3\s_*v_-}\abs{\int_0^1 (W_1^2+2W_1)dy}^2
+ \frac{\g^2+\g}{2}\int_0^1 W_1^2 + \frac{2}{3}W_1^3 dy \\
&\qquad
+ CC'(\e_0+\d_3)\int_0^1 W_1^2dy + CC'(\e_0+\d_3)\int_0^1 \abs{W_1}^3dy \\
&\qquad
-(0.9-C(\e_0+\d_3))\frac{\g^2+\g}{2}\int_0^1 y(1-y)\abs{\rd_y W_1}^2 dy 
+ \Big(\frac{C}{C'}+\frac{C}{\d_3}\frac{\e}{\l} - 1\Big)T(W_1, W_2).
\end{align*}
We here fix \(C'\) large enough so that \(\frac{C}{C'} + C\frac{\e_0}{\d_3} \le 1\) with smallness \(\e_0\ll\delta_3\). 
In this case, the last term in the inequality above is negative.  
Then, factoring out \(\frac{\g^2+\g}{2}\), we obtain that
\begin{align*}
&\frac{\l^2}{\e^3}\frac{2}{\g^2+\g}\frac{v_-^3}{\s_*R\th_-}\Rcal_{\e, \d}(v, \th) \\
&\le -\frac{R\g \th_-}{2(\g+1)\d_3\s_*v_-}\abs{\int_0^1 (W_1^2+2W_1)dy}^2
+(1+CC'(\e_0+\d_3))\int_0^1 W_1^2 dy + \frac{2}{3}\int_0^1 W_1^3 dy \\
&\qquad
+ CC'(\e_0+\d_3)\int_0^1 \abs{W_1}^3dy 
-(0.9-C(\e_0+\d_3))\int_0^1 y(1-y)\abs{\rd_y W_1}^2 dy.
\end{align*}
Then, let us fix the value of \(\d_2\) in Proposition \ref{prop_nl_Poincare}, with \(C_1\) given in \eqref{L2normW1W2}. 
Taking \(\d_3\) and \(\e_0\) small enough so that 
\[
\frac{2(\g+1)\d_3\s_*v_-}{R\g \th_-} \le \d_2, \quad \text{ and } \quad CC'(\e_0+\d_3) \le \d_2, 
\]
we obtain from Proposition \ref{prop_nl_Poincare} that 
\begin{align*}
\frac{\l^2}{\e^3}\frac{2}{\g^2+\g}\frac{v_-^3}{\s_*R\th_-}\Rcal_{\e, \d}(v, \th)
&\le -\frac{1}{\d_2}\abs{\int_0^1 (W_1^2+2W_1)dy}^2
+(1+\d_2)\int_0^1 W_1^2 dy + \frac{2}{3}\int_0^1 W_1^3 dy \\
&\qquad
+ \d_2\int_0^1 \abs{W_1}^3dy 
-(0.9-\d_2)\int_0^1 y(1-y)\abs{\rd_y W_1}^2 dy \le 0,
\end{align*}
which completes the proof.
\end{proof}

The following lemma provides a Jacobian estimate, used to estimate the diffusion term.
\begin{lemma}
It holds that 
\begin{equation} \label{sharp-d}
\bigg| \frac{1}{y(1-y)}\frac{dy}{d\x}
- \frac{\e}{2}\frac{\s_* \g(\g+1)R}{R(\t_0+ (\g+1)\th_-^2) + (\g-1)^2\th_-^2} \bigg| \le C \e^2.
\end{equation}
\end{lemma}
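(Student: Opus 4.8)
The plan is to extract the leading-order behavior of $\frac{dy}{d\xi} = \frac{\vt'(\xi)}{[v]}$ near the shock from the viscous shock system \eqref{shock_0} (with $\nu=1$), tracking how the $\mathcal{O}(\e)$-scaled profile relates the derivatives $\vt',\ut',\tht'$ to a single shape function that, up to $\mathcal{O}(\e^2)$ error, is proportional to $y(1-y)$.

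First, I would introduce the normalized variable $y = (\vt(\xi)-v_-)/[v] \in (0,1)$ and note from \eqref{tail-0} that $\vt'(\xi) \sim [v]\, y(1-y)$, so that $\frac{1}{y(1-y)}\frac{dy}{d\xi} = \frac{\vt'(\xi)}{[v]\,y(1-y)}$ is bounded and I need only identify its leading constant. The strategy is to integrate the three equations of \eqref{shock_0} once from $-\infty$, using the Rankine–Hugoniot relations \eqref{end-con} to kill the constants of integration, obtaining a first-order profile ODE system of the form $\t(\tht)\frac{\vt'}{\vt} = -\s_\e(\vt - v_-) - (\ut - u_-)$, etc. Then, using the small-amplitude expansions from Lemma \ref{lem-VS} — in particular \eqref{ratio-vu}, which says $\ut' = -\s_*\vt' + \mathcal{O}(\e|\vt'|)$ and $\tht' = -\frac{(\g-1)p_-}{R}\vt' + \mathcal{O}(\e|\vt'|)$, together with \eqref{sm1} — I would substitute into the first (mass) equation, evaluated at $\xi$, to express $\vt'$ in terms of $(\vt - v_-)$ and $(v_+ - \vt)$. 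The key algebraic point is that after linearizing the quadratic terms $-\s_\e(\vt-v_-)-(\ut-u_-)$ around the constant states and writing them as a product $(\vt-v_-)(v_+-\vt)$ times a constant (this is exactly the structure behind \eqref{tail-0}), one gets $\vt' = \big(C_* + \mathcal{O}(\e)\big)(\vt-v_-)(v_+-\vt) = \big(C_* + \mathcal{O}(\e)\big)[v]^2 y(1-y)$ for an explicit constant $C_*$ determined by the linearized coefficients of $\t,\m,\k$ at $\th_-$ and the values $p_-,\s_*,\g$.

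Concretely, I would compute $C_*$ by combining: (i) the left side $\t(\tht)\vt'/\vt = (\t_0 + \tht^2)\vt'/\vt = \big(\frac{\t_0+\th_-^2}{v_-} + \mathcal{O}(\e)\big)\vt'$; (ii) the right side, where after using \eqref{ratio-vu} the coefficient of the quadratic-in-$(\vt-v_-)$ term is assembled from $\s_\e^2$-type contributions and the pressure derivative $p_v,p_\th$ at $(v_-,\th_-)$ — this is where the denominator $R(\t_0 + (\g+1)\th_-^2) + (\g-1)^2\th_-^2$ arises, the $\t_0$ coming from the Brenner term in the mass equation and the $(\g+1)\th_-^2$, $(\g-1)^2\th_-^2$ from the $\m,\k$ contributions channeled through the momentum and energy equations. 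Dividing by $[v] = \e$ and matching with the claimed constant $\frac{\e}{2}\cdot\frac{\s_*\g(\g+1)R}{R(\t_0+(\g+1)\th_-^2)+(\g-1)^2\th_-^2}$ gives the result, with all the neglected terms being $\mathcal{O}(\e^2)$ by \eqref{tail} (the $\vt''$-type corrections are $\mathcal{O}(\e|\vt'|)$, hence after division by $\vt'$ they are $\mathcal{O}(\e)$, and the nonlinear remainders in the Taylor expansions of $\t,\m,\k,p$ contribute another factor $\e$).

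The main obstacle is the bookkeeping in step (ii): one must carefully combine all three integrated profile equations — not just the mass equation in isolation — because $\ut'$ and $\tht'$ are themselves determined by the full system, and the precise coefficient requires substituting the $\mathcal{O}(\e)$-accurate relations \eqref{ratio-vu} and then expanding the quadratic shock relations $-\s_\e(\vt-v_-)-(\ut-u_-)$ and $p(\vt,\tht)-p_- - \s_\e(\ut-u_-)$ to second order in the amplitude. Getting the constant $\frac{\g(\g+1)}{2}$ and the exact form of the denominator right is delicate; I would double-check it against the known Navier–Stokes–Fourier case ($\t_0 = 0$, treated in \cite{KVW-NSF}) where the analogous Jacobian estimate should reduce to the established one, and this consistency check essentially pins down the computation. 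Everything else — the boundedness of $\frac{1}{y(1-y)}\frac{dy}{d\xi}$, the $\mathcal{O}(\e^2)$ control of remainders — follows routinely from Lemma \ref{lem-VS}.
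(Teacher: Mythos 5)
Your proposal is correct and takes essentially the same route as the paper, which proves \eqref{sharp-d} by the Jacobian computation of \cite{KVW-NSF} applied to the integrated viscous shock system \eqref{shock_0} (details deferred to \cite{EE-DBNSF}): combining the three once-integrated profile equations with \eqref{ratio-vu}, \eqref{sm1} and a second-order expansion of the Hugoniot relations (in particular $\s_\e^2-\s_*^2=-\tfrac{\g(\g+1)p_-}{2v_-^2}(v_+-v_-)+O(\e^2)$) indeed yields $\vt'=\frac{\s_*\g(\g+1)R}{2\,[R\t(\th_-)+R\g\m(\th_-)+(\g-1)^2\k(\th_-)]}(\vt-v_-)(v_+-\vt)+O(\e^3)$, which is exactly the stated constant since $R\t(\th_-)+R\g\m(\th_-)+(\g-1)^2\k(\th_-)=R(\t_0+(\g+1)\th_-^2)+(\g-1)^2\th_-^2$. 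One minor correction to your consistency check: the Navier--Stokes--Fourier comparison corresponds to dropping the whole Brenner coefficient ($\t\equiv 0$, i.e.\ no diffusion in the mass equation), not merely setting $\t_0=0$, since $\t(\th_-)=\t_0+\th_-^2$ still contributes $R\th_-^2$ to the denominator.
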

\begin{proof}
This can be proved similarly to \cite{KVW-NSF}, using the viscous shock equations \eqref{shock_0}.
Thus, we omit the details here and refer to \cite{EE-DBNSF}.
\end{proof}


\subsection{Truncation and its properties}
In order to prove Proposition \ref{prop:main} based on Proposition \ref{prop:main3}, we need to show that all bad terms for values for \(v\) and \(\th\) such that \(\abs{1/v-1/\vt}\ge \d_3\) and \(|\th-\tht|\ge\d_3\) 
could be controlled by the remaining good terms.
However, the value of \(\d_3\) is itself conditioned to the constant \(C_2\) in Proposition \ref{prop:main3}. 
Therefore, we need first to find a uniform bound on \(\Ical_g^Y\) which is not yet conditioned on the level of the truncation size \(k\).

We consider a truncation on \(\abs{1/v-1/\vt}\) and \(|\th-\tht|\) with a constant \(k>0\). Later we will consider the case \(k=\d_3\) as in Proposition \ref{prop:main3}. But for now, we consider the general case \(k\) to estimate the constant \(C_2\). For that, let \(\psi_k\) be a continuous function defined by
\begin{equation} \label{psi}
\psi_k(y)=\inf \left(k,\sup(-k,y)\right).
\end{equation}
We then define the functions \(\vbar_k\) and \(\thbar_k\) uniquely as
\begin{align}
1/\vbar_k-1/\vt =\psi_k\left(1/v-1/\vt\right), \qquad
\thbar_k - \tht =\psi_k (\th-\tht). \label{trunc-v-def}
\end{align}
We have the following lemma.

\begin{lemma} \label{lemmeC2}
For a fixed \((v_-,u_-,\th_-)\in \RR^+ \times \RR \times \RR^+ \), there exist \(C_2,k_0,\e_0,\d_0>0\) such that for any \(\e < \e_0, \e/\l < \d_0\) with \(\l<1/2\), the following is true whenever \(\abs{Y(U)}\le\e^2 :\)
\begin{align}
& \int_\RR a' \Big( \Phif{v}{\vt}+\Phif{\th}{\tht}+(u-\ut)^2 \Big) d\x \le C\frac{\e^2}{\l}, \label{locE} \\
& \abs{\Ical_g^Y(\vbar_k,\thbar_k)} \le C_2 \frac{\e^2}{\l}, \qquad \text{for every } k\le k_0. \label{lbis}
\end{align}
\end{lemma}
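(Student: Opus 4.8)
\textbf{Proof plan for Lemma \ref{lemmeC2}.}

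The plan is to derive \eqref{locE} first, since it is the single global smallness estimate at our disposal, and then to bootstrap from it to the bound \eqref{lbis} on $\Ical_g^Y$ evaluated at the truncated profiles. For \eqref{locE}, I would start from the hypothesis $|Y(U)|\le \e^2$ and examine the expression for $Y(U)$ in \eqref{ybg-first}. The leading contribution is the term $-\int_\RR a'\tht\,\et(U|\Ut)\,d\x$, which is comparable (using $\tfrac12\th_\pm \le \tht \le 2\th_\pm$ and $\s_\e a'>0$) to $\int_\RR a'\big(\Phi(v/\vt)+\Phi(\th/\tht)+(u-\ut)^2\big)\,d\x$ up to harmless constants. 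The remaining terms of $Y(U)$ — those carrying a factor $\vt'$, $\tht'$ or $\ut'$ — are lower-order: by \eqref{der-scale} we have $|\vt'|,|\tht'|,|\ut'|\sim \tfrac{\e}{\l}|a'|$, so each such term is bounded by $\tfrac{\e}{\l}$ times a quantity of the form $\int a' |v-\vt|$, $\int a'|\th-\tht|$ or $\int a'|u-\ut|$, which by Young's inequality and \eqref{rel_Phi_v}--\eqref{rel_Phi_th} (or \eqref{vquad}--\eqref{thquad}) is absorbed into $\tfrac{\e}{\l}\cdot\tfrac12\int a'\,\et(U|\Ut)$ plus a term of size $\tfrac{\e}{\l}\int a' \lesssim \tfrac{\e}{\l}\cdot C\e\l = C\e^2$ (using $\int_\RR |a'|\lesssim \e\l$ from \eqref{der-scale}). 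Rearranging and using $\e/\l<\d_0$ small gives $\int_\RR a'\,\et(U|\Ut)\,d\x \le C(|Y(U)| + \e^2)\le C\e^2$; dividing through by the lower bound $\tfrac12\min(\th_-,\th_+)$ on $a'\tht / a'$ yields \eqref{locE}. (One must check the $\l$ on the right: since $a'\sim \tfrac{\l}{\e}\vt'$, the natural scaling makes $\int a'\,\et \sim \tfrac{\l}{\e}\Ecal_{\mathrm{loc}}$, hence the bound $C\e^2/\l$ as stated.)

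For \eqref{lbis}, I would go term by term through the eight pieces $Y_1,\dots,Y_8$ of $\Ical_g^Y$ in \eqref{note-in}, now with $(v,\th)$ replaced by $(\vbar_k,\thbar_k)$. The key structural point is the monotonicity property \eqref{Phi-sim}: because $1/\vbar_k - 1/\vt = \psi_k(1/v-1/\vt)$ lies between $0$ and $1/v-1/\vt$ with the same sign (and similarly for $\thbar_k$), we have $\Phi(\vbar_k/\vt)\le \Phi(v/\vt)$ and $\Phi(\thbar_k/\tht)\le \Phi(\th/\tht)$ pointwise; likewise $|\vbar_k-\vt|\le|v-\vt|$ and $|\thbar_k-\tht|\le|\th-\tht|$. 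Hence the quadratic-type terms $Y_1$, $Y_2$, $Y_3$ (note $p(\vbar_k,\thbar_k)-\pt$ is a linear combination of $1/\vbar_k-1/\vt$ and $\thbar_k-\tht$) are controlled directly by $\int a'\,\et(U|\Ut)$, i.e.\ by \eqref{locE}, giving a contribution $\le C\e^2/\l$. The linear terms $Y_4,Y_5,Y_6$ carry an extra derivative factor $\vt'$ or $\ut'$, so by \eqref{der-scale} they are $\lesssim \tfrac{\e}{\l}\int a'(|\vbar_k-\vt|+|\thbar_k-\tht|+|u-\ut|)\lesssim \tfrac{\e}{\l}\big(\int a' + \int a'\,\et(U|\Ut)\big)$, again $\le C\e^2/\l$ after Young. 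The terms $Y_7,Y_8$ carry $\tht'$ and are handled the same way, and in fact have even smaller scale as noted after \eqref{insidey}. Summing, $|\Ical_g^Y(\vbar_k,\thbar_k)|\le C_2\e^2/\l$ with a constant $C_2$ depending only on $(v_-,u_-,\th_-)$ (through $R$, $\g$, $\th_-$, $v_-$, $\t_0$ and the implied constants in Lemmas \ref{lem-pro}, \ref{lem-VS}), and crucially \emph{not} on $k$ — this $k$-independence is exactly what is needed to later pick $\d_3=\d_3(C_2)$ in Proposition \ref{prop:main3} and only then set $k=\d_3$.

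The main obstacle I anticipate is bookkeeping the scales correctly so that every error genuinely lands at order $\e^2/\l$ (not $\e/\l$ or $\e$): one has to use simultaneously that $\int_\RR|a'|\,d\x\lesssim \e\l$ (from the exponential tail in \eqref{der-scale}), that $|\vt'|\sim\tfrac{\e}{\l}|a'|$, and that cross terms of the shape $\int a'\,|v-\vt|\cdot(\text{bounded})$ must be split via Young's inequality into an $\et$-part (absorbed by \eqref{locE}) and a pure $\int|a'|$-part (which is $O(\e\l)$, hence $O(\e^2)$ after the accompanying $\e/\l$ factor, hence $O(\e^2/\l)$ once one is careful — this last step is where the precise powers matter most). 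A secondary subtlety is that $v$ and $\th$ are only controlled in the weak norm given by $\et$, so for the regions where $v$ is very large or very small one must use the linear-growth lower bounds \eqref{rel_Phi_v}--\eqref{rel_Phi_th} rather than the quadratic ones; but since $\vbar_k,\thbar_k$ are truncations, $1/\vbar_k$ and $\thbar_k$ stay within an $O(k)$-neighborhood of $1/\vt$ and $\tht$, so after the truncation these profiles are automatically in the locally-quadratic regime and \eqref{vquad}, \eqref{thquad} apply cleanly. I would present \eqref{locE} as Step 1 and \eqref{lbis} as Step 2, each a short paragraph of the kind sketched above.
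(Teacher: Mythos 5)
Your overall architecture---first extract \eqref{locE} from $|Y(U)|\le\e^2$ by treating the localized quadratic term $-\int_\RR a'\tht\,\et(U|\Ut)\,d\x$ as the leading part of $Y$ and absorbing the shock-derivative terms, then deduce \eqref{lbis} from \eqref{locE} via the monotonicity $\Phi(\vbar_k/\vt)\le\Phi(v/\vt)$, $\Phi(\thbar_k/\tht)\le\Phi(\th/\tht)$ and the fact that the truncated profiles sit in the locally quadratic regime---is exactly the paper's route (the paper compresses it to a citation of the analogous isentropic lemma: quadratic structure on compact sets with H\"older, linear structure on the complement, and the scale difference \eqref{der-scale}). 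However, there is a genuine quantitative error in your bookkeeping: $\int_\RR a'\,d\x=\l$, the total variation of $a$ (the tail bound $|a'|\le C\e\l e^{-C\e|\x|}$ has height $\e\l$ but width $1/\e$), not $\lesssim\e\l$ as you assert twice. With the correct value, your unweighted Young splitting of the linear terms of $Y$ gives only $\tfrac{\e}{\l}\int a'\lesssim\e$, hence $\int a'\,\et(U|\Ut)\lesssim |Y(U)|+\e\approx\e$, which is \emph{weaker} than the required $C\e^2/\l$ (note $\e^2/\l=(\e/\l)\e\ll\e$ when $\e/\l<\d_0$); and your claimed intermediate bound $\int a'\et\le C\e^2$ is actually stronger than the lemma and cannot be obtained this way, since the linear terms of $Y$ genuinely contribute at order $\e^2/\l$.

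The repair is precisely the step the paper points to: estimate the linear terms by Cauchy--Schwarz where the quadratic structure \eqref{rel_Phi_v}$_1$, \eqref{rel_Phi_th}$_1$ holds, i.e.\ $\int a'|v-\vt|\,d\x\le(\int a'\,d\x)^{1/2}(\int a'|v-\vt|^2 d\x)^{1/2}\le\sqrt{\l}\,\big(C\int a'\Phi(v/\vt)\,d\x\big)^{1/2}$, and use $|v-\vt|\le C\Phi(v/\vt)$ on the complement; then with $E:=\int a'\tht\,\et(U|\Ut)\,d\x$ the shock-derivative terms are bounded by $C\tfrac{\e}{\l}\sqrt{\l}\sqrt{E}+C\tfrac{\e}{\l}E$, and a weighted Young inequality yields $E\le C\e^2/\l$, which is \eqref{locE}. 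The same correction is needed in your Step 2: for the truncated profiles the quadratic structure holds everywhere, so $\int a'(|\vbar_k-\vt|+|\thbar_k-\tht|)\,d\x\le\sqrt{\l}\cdot C\sqrt{\e^2/\l}=C\e$, and after the factor $\e/\l$ coming from $\vt',\ut',\tht'$ one lands exactly at $C_2\e^2/\l$; your version, resting on $(\e/\l)\int a'\lesssim\e^2$, again uses the false value of $\int a'$. (A minor point: $\Ical_g^Y$ has no $u$-dependence, so $|u-\ut|$ should not appear in your bound for $Y_4,Y_5,Y_6$; the term $Y_6$ involves only $p(\vbar_k,\thbar_k)-\pt$, controlled by $|1/\vbar_k-1/\vt|+|\thbar_k-\tht|$.)
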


\begin{proof}
Since $Y(U)$ consists of the localized quadratic perturbations as in \eqref{locE} and the localized linear perturbation, the assumption \(\abs{Y(U)}\le\e^2\) would imply the smallness \eqref{locE}, which implies \eqref{lbis}.
The proof follows the same argument as in the proof of \cite[Lemma 4.4]{KV-Inven}, which utilizes the quadratic structures \eqref{rel_Phi_v}\(_1\) and \eqref{rel_Phi_th}\(_1\) on compact sets with H\"older's inequality, and the linear structures \eqref{rel_Phi_v}\(_2\) and \eqref{rel_Phi_th}\(_2\) on their complements with the scale difference \eqref{der-scale}.
Thus, we omit the details.
\end{proof}

We now fix the constant \(\d_3\) of Proposition \ref{prop:main3} associated to the constant \(C_2\) of Lemma \ref{lemmeC2}.
Without loss of generality, we may assume that \(\d_3<k_0\) (since Proposition \ref{prop:main3} holds for any smaller \(\d_3\).)
From now on, we set
\[
\vbar \coloneqq \vbar_{\d_3}, \quad \thbar \coloneqq \thbar_{\d_3}, \quad \Ubar \coloneqq \left(\vbar,u,\thbar\right), \quad \Bcal \coloneqq \Bcal_{\d_3}, \quad \Gcal \coloneqq \Gcal_{\d_3}.
\]
Note from Lemma \ref{lemmeC2} that
\begin{equation} \label{YC2}
\abs{\Ical_g^Y(\vbar,\thbar)} \le C_2 \frac{\e^2}{\l}.
\end{equation}

For the sake of technicality, we consider one-sided truncations : more specifically, we will control the bad terms in different ways for each case of small or big values of \(v\) and \(\th\). Thus, we define \(\vbar_s\) and \(\vbar_b\) as follows:
\begin{equation} \label{trunc-v-sb}
1/\vbar_s - 1/\vt \coloneqq \psi^b_{\d_3} (1/v-1/\vt), \quad
1/\vbar_b - 1/\vt \coloneqq \psi^s_{\d_3} (1/v-1/\vt),
\end{equation}
where \(\psi^b_{\d_3}\) and \(\psi^s_{\d_3}\) are one-sided truncations, i.e.,
\[
\psi^b_{\d_3}(y) \coloneqq \sup(-\d_3,y), \quad
\psi^s_{\d_3}(y) \coloneqq \inf(\d_3,y).
\]
Then, we also define \(\thbar_s\) and \(\thbar_b\)
\begin{equation} \label{trunc-th-sb}
\thbar_s - \tht \coloneqq \psi^s_{\d_3} (\th-\tht), \quad
\thbar_b - \tht \coloneqq \psi^b_{\d_3} (\th-\tht).
\end{equation}
Notice that the function \(\vbar_s\) and \(\thbar_s\) (resp. \(\vbar_b\) and \(\thbar_b\)) represent the truncation of big (resp. small) values of \(v\) and \(\th\).
Then, by definitions of truncation \eqref{trunc-v-def} and \eqref{trunc-v-sb}, we obtain
\begin{align}
\begin{aligned} \label{eq_D}
\Dcal_v^1(v)
&=R\t_0\int_\RR a v\tht \abs{\Big(\frac{1}{v}-\frac{1}{\vt} \Big)_\x}^2(\one{\abs{1/v-1/\vt}\le\d_3}+\one{1/v-1/\vt>\d_3}+\one{1/v-1/\vt<-\d_3}) d\x \\
&=\Dcal_v^1(\vbar)
+ R\t_0\int_\RR a v\tht \abs{\Big(\frac{1}{v}-\frac{1}{\vbar_b} \Big)_\x}^2 d\x
+ R\t_0\int_\RR a v\tht \abs{\Big(\frac{1}{v}-\frac{1}{\vbar_s} \Big)_\x}^2 d\x \\
&\ge
R\t_0\int_\RR a v\tht \abs{\Big(\frac{1}{v}-\frac{1}{\vbar_b} \Big)_\x}^2 d\x
+ R\t_0\int_\RR a v\tht \abs{\Big(\frac{1}{v}-\frac{1}{\vbar_s} \Big)_\x}^2 d\x.
\end{aligned}
\end{align}
Especially, this implies the monotonicity property:
\begin{equation}\label{keyD}
D_v^1(v) \ge D_v^1(\vbar).
\end{equation}
On the other hand, since it holds from \eqref{trunc-v-def} that \(\Phi(v/\vt) \ge \Phi(\vbar/\vt)\) and \(\Phi(\th/\tht) \ge \Phi(\thbar/\tht)\),
using \eqref{locE} yields the following:
\begin{equation} \label{l2-v}
0 \le R\s_\e \int_\RR a' \Big( \Phif{v}{\vt}-\Phif{\vbar}{\vt} \Big) d\x
= \Gcal_v(U)-\Gcal_v(\Ubar) \le \Gcal_v(U) \le C\frac{\e^2}{\l}
\end{equation}
and
\begin{equation} \label{l2-th}
0 \le \frac{R\s_\e}{\g-1} \int_\RR a' \Big( \Phif{\th}{\tht}-\Phif{\thbar}{\tht} \Big) d\x
= \Gcal_\th(U)-\Gcal_\th(\Ubar) \le \Gcal_\th(U) \le C\frac{\e^2}{\l}.
\end{equation}

Additionally, we need to consider a truncation regarding the values of \(u\) for technical issues as well. Hence, we define \(\ubar\) as follows:
\[
\ubar-\ut \coloneqq \psi_{\d_3}(u-\ut).
\]
Notice that the truncation size does not necessarily have to be \(\d_3\), but we use \(\d_3\) just to give a sense of unity.
It is also worth mentioning that for the case of \(u\), it is not necessary to define \(\ubar_s\) and \(\ubar_b\).
Moreover, most importantly, \(\Ubar\) still stands for \((\vbar,u,\thbar)\), not \((\vbar,\ubar,\thbar)\).
The truncation \(\ubar\) will be only used to control parabolic bad terms in Section \ref{section-parabolic}.

In what follows, for simplicity, we use the following notation:
\[
\O \coloneqq \{\x \mid \left(1/v-1/\vt\right) \le \d_3 \}.
\]
This domain is for big values of $v$ or inside truncation.

\subsection{Control of hyperbolic terms outside truncation}\label{section_hyperbolic}
For the hyperbolic bad terms $\Bcal_{\delta_3}$ in \eqref{badgood}, we will use the following notations:
\begin{equation}\label{bad0}
\Bcal_{\delta_3} = \Bcal^+_1 + \Bcal^-_1 + \Bcal_2 + \Bcal_3 + \Bcal_4
\end{equation}
where
\begin{align*}
&\Bcal_1^-(U) \coloneqq \int_{\O^c} a'(p-\pt)(u-\ut) d\x, &
&\Bcal_1^+(U) = \frac{1}{2\s_\e}\int_\O a'(p-\pt)^2 d\x, \\
&\Bcal_2(U) \coloneqq \int_\RR av(p-\pt)\Big(\frac{1}{v}-\frac{1}{\vt}\Big)\ut' d\x, \\
&\Bcal_3(U) \coloneqq -R\s_\e\int_\RR a\Phif{v}{\vt}\tht' d\x, &
&\Bcal_4(U) \coloneqq -\frac{R}{\g-1}\s_\e\int_\RR a\Phif{\th}{\tht}\tht' d\x.
\end{align*}
For the diffusion terms \(\Dcal(U)\) in \eqref{ybg-first}, we will use the following notations:
\[
\Dcal_v(U) \coloneqq \Dcal_v^1(U) + \Dcal_v^2(U), \,\,
\Dcal_u(U) \coloneqq \int_\RR a\tht\frac{\th}{v}\abs{(u-\ut)_\x}^2 d\x, \,\,
\Dcal_\th(U) \coloneqq \int_\RR a\frac{\tht}{v}\abs{(\th-\tht)_\x}^2 d\x,
\]
where
\beq\label{d1vdef}
\Dcal_v^1(U) \coloneqq 
R\t_0\int_\RR a\tht v\abs{\Big(\frac{1}{v}-\frac{1}{\vt}\Big)_\x}^2 d\x, \,\,\,\,
\Dcal_v^2(U) \coloneqq 
R\int_\RR a\tht v\th^2\abs{\Big(\frac{1}{v}-\frac{1}{\vt}\Big)_\x}^2 d\x.
\eeq
We also recall the notations \(\Gcal_u^-(U)\), \(\Gcal_v(U)\), \(\Gcal_\th(U)\) in \eqref{ggd} for the good terms.
In Proposition \ref{prop_hyperbolic_out}, we will show that a small amount of good terms controls the bad terms outside truncation.

\begin{proposition}\label{prop_hyperbolic_out}
There exist constants $\e_0, \deo, C, C^*>0$ (in particular, $C$ depends on the constant $\d_3$) such that for any $\e<\e_0$ and $\d_0^{-1}\e<\lambda<\d_0<1/2$, the following statements hold true.
For any $U$ such that $\abs{Y(U)}\le \e^2$,
\begin{align}
&\abs{\Bcal_1^+(U)-\Ical_1(\vbar, \thbar)}
\le C\Big(\frac{\e}{\l}(\Dcal_v(U)+\Dcal_\th(U))+ \e^2\Gcal_v(U)\Big),\quad \mbox{for  $\Ical_1$ in \eqref{note-in}},
\label{bo1p}\\
&\abs{\Bcal_1^-(U)}
\le C\Big(\sqrt{\frac{\e}{\l}}\Gcal_u^-(U) + \sqrt{\frac{\e}{\l}}(\Dcal_v(U)+\Dcal_\th(U)) + \e(\Gcal_v(U)+\Gcal_\th(U))\Big) 
\label{bo1m}\\
&\begin{aligned}\label{bo2}
&\abs{\Bcal_2(U)-\Bcal_2(\Ubar)}
\le C\frac{\e}{\l}\Big(\frac{\e}{\l}(\Dcal_v(U)+\Dcal_\th(U)) + \e^2(\Gcal_v(U)+\Gcal_\th(U)) \\
&\hspace{60mm}
+ (\Gcal_v(U)-\Gcal_v(\Ubar)+\Gcal_\th(U)-\Gcal_\th(\Ubar))\Big)
\end{aligned}
\\
&\abs{\Bcal_3(U)-\Bcal_3(\Ubar)}\le C\frac{\e}{\l}(\Gcal_v(U)-\Gcal_v(\Ubar))
\label{bo3}\\
&\abs{\Bcal_4(U)-\Bcal_4(\Ubar)}\le C\frac{\e}{\l}(\Gcal_\th(U)-\Gcal_\th(\Ubar))
\label{bo4}\\
&\abs{\Bcal_{\d_3}(U)}+\abs{\Bcal_1^+(U)}\le C^*\Big(\frac{\e^2}{\l} + \sqrt{\frac{\e}{\l}}(\Dcal_v(U)+\Dcal_\th(U))\Big)
\label{bo}
\end{align}
\end{proposition}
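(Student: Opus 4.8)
\textbf{Plan of proof for Proposition \ref{prop_hyperbolic_out}.}
The strategy is to treat each of the six estimates separately, but all of them rest on two common tools already prepared in the excerpt: the local/global structure of $\Phi$ from Lemmas \ref{lem-pro}, \ref{lem:local}, \ref{lem-quad}, and the smallness quantity \eqref{locE}, together with the ``no-loss-of-weight'' Lemmas \ref{lemma_pushing} and \ref{lemma_Linfty}. In every integrand I first split the domain of integration according to whether we are inside the truncation ($|1/v-1/\vt|\le\d_3$, $|\th-\tht|\le\d_3$) or outside it, and outside I further separate the regime where $v$ (or $\th$) is large from the regime where it is small, using the one-sided truncations $\vbar_s,\vbar_b,\thbar_s,\thbar_b$ defined in \eqref{trunc-v-sb}--\eqref{trunc-th-sb}. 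Inside the truncation the relative entropy is locally quadratic by Lemma \ref{lem:local}, so those contributions are exactly the ``$\Ical_j$-type'' terms appearing on the right-hand sides; outside, $\Phi(v/\vt)$ is only linearly coercive as $v\to\infty$ and logarithmically coercive as $v\to0^+$, which is where the diffusion terms $\Dcal_v,\Dcal_\th$ and the exponential weight tails \eqref{der-scale} must be brought in.

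I would carry out the estimates in the order in which they build on each other. First \eqref{bo1p}: expand $(p-\pt)^2$ as in \eqref{b1expansion}, identify the piece coming from $\Ubar=(\vbar,u,\thbar)$ with $\Ical_1(\vbar,\thbar)$ exactly by definition \eqref{trunc-v-def}, and bound the remainder $\Bcal_1^+(U)-\Ical_1(\vbar,\thbar)$, which lives on $\{v \text{ large}\}\cap\O$ plus the region $\{|\th-\tht|>\d_3\}$. On the large-$v$ part I use $\Phi(v/\vt)\gtrsim|v-\vt|$ from \eqref{rel_Phi_v}$_2$ to absorb one factor into $\Gcal_v$; on the $\th$ part, as indicated in Section \ref{subsec:new4}, I bound $\int|\th-\thbar|^2\,d\x$ by $\Dcal_\th$ via the pointwise estimate referenced there (``\eqref{pwth1}''), for which the key sub-step is to verify $\int a'\big(\int_{\x_0}^{\x} v\,\one{|\th-\thbar|>0}\,d\z\big)d\x\ll1$ — and this is precisely what Lemma \ref{lemma_pushing} delivers when combined with \eqref{locE}. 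Next \eqref{bo1m}: here one factor of $p-\pt$ on $\O^c$ (small $v$) is paired with $u-\ut$, so I apply Young's inequality to split into $\Gcal_u^-$ (weight $|a'|$, already localized) and a term in $(p-\pt)^2\one{\O^c}$; the latter is controlled by $\sqrt{\e/\l}(\Dcal_v+\Dcal_\th)$ using the $L^\infty$-type bound from Lemma \ref{lemma_Linfty} to trade the high power of $\th$ (coming from $p=R\th/v$ with $v$ small, hence $\t_0+\th^2$ large in $\Dcal_v^2$) against the diffusion. Then \eqref{bo3} and \eqref{bo4} are the easiest: $\Bcal_3,\Bcal_4$ differ from their truncated versions only on $\{|1/v-1/\vt|>\d_3\}$ (resp. $\{|\th-\tht|>\d_3\}$), where $|\tht'|\lesssim \e|a'|$ by \eqref{der-scale}, so the difference is directly $\lesssim (\e/\l)(\Gcal_v(U)-\Gcal_v(\Ubar))$ using $\Phi(v/\vt)-\Phi(\vbar/\vt)\ge0$. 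Estimate \eqref{bo2} is handled the same way after writing $v(p-\pt)(1/v-1/\vt)$ in terms of $1/v-1/\vt$ and $\th-\tht$ and again using $|\ut'|\lesssim\e|a'|$, $|\ut'|\lesssim(\e/\l)^{1/2}|a'|^{1/2}\cdot(\e^{3/2}\l^{-1/2})$-type splittings to produce the diffusion and $\Gcal$ remainders; the extra $\e/\l$ prefactor comes from the support of $\ut'$ being contained in (essentially) $[-1/\e,1/\e]$ where $|a'|\sim(\l/\e)|\vt'|$.

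Finally \eqref{bo} is a consequence of the previous five: combine \eqref{bo1p}--\eqref{bo4} with $\Bcal_{\d_3}=\Bcal_1^++\Bcal_1^-+\Bcal_2+\Bcal_3+\Bcal_4$ from \eqref{bad0}, bound the ``inside'' pieces $\Ical_1(\vbar,\thbar),\Bcal_2(\Ubar),\Bcal_3(\Ubar),\Bcal_4(\Ubar)$ using \eqref{YC2} together with the expansions in $W_1,W_2$ from the proof of Proposition \ref{prop:main3} (which give $|\Ical_1|+|\Bcal_j(\Ubar)|\lesssim \e^2/\l$ once \eqref{L2normW1W2} is in force), and absorb the $\Gcal_v(U)-\Gcal_v(\Ubar)\le \Gcal_v(U)\le C\e^2/\l$ terms via \eqref{l2-v}--\eqref{l2-th}. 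The diffusion remainders assemble into $\sqrt{\e/\l}(\Dcal_v+\Dcal_\th)$, exactly matching the claimed form with $C^*$ independent of $\d_3$.

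\textbf{Main obstacle.} The genuinely delicate step is \eqref{bo1p} (and the analogous $\th$-control feeding \eqref{bo2}): controlling $\int|\th-\thbar|^2\,d\x$ and related quantities by $\Dcal_\th$ requires a pointwise bound on $\th-\thbar$ obtained by integrating $\rd_\x\th$, and the naive Cauchy–Schwarz loses the localizing factor $a'$ (equivalently $\vt'$). The resolution — the sharp Fubini rearrangement of Lemma \ref{lemma_pushing}, exploiting that the inner integral runs only between $\x$ and the fixed base point $\x_0\in[-1/\e,1/\e]$ — is what makes the constant come out with the right power of $\e/\l$ rather than $1$; getting the bookkeeping of these powers consistent across all six estimates, while keeping $C^*$ independent of $\d_3$ in \eqref{bo}, is the crux of the argument.
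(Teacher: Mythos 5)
Your overall skeleton (a good base point $\x_0\in[-1/\e,1/\e]$ from \eqref{locE}, pointwise bounds by the fundamental theorem of calculus, and Lemma \ref{lemma_pushing} to keep the localizing weight) matches the paper, and your treatment of \eqref{bo2}--\eqref{bo4} via $|\tht'|,|\ut'|\lesssim(\e/\l)|a'|$ and \eqref{rel_Phi1} is essentially the paper's. But there are two genuine gaps. First, in \eqref{bo1p} the difference $\Bcal_1^+(U)-\Ical_1(\vbar,\thbar)$ contains, besides the quadratic piece, the \emph{linear} cross terms $\int_\O a'|\tfrac{\th}{v}-\tfrac{\thbar}{\vbar}|\,d\x$ and the domain-mismatch term $\int_{\O^c}a'(\pbar-\pt)^2\,d\x$ (note $\Ical_1$ integrates over all of $\RR$, not over $\O$). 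Your plan to absorb the large-$v$ linear excess into $\Gcal_v$ via \eqref{rel_Phi_v}$_2$ only produces $C\,\Gcal_v(U)$ (and $C\,\Gcal_\th(U)$) with a $\d_3$-dependent constant and \emph{no smallness}, whereas \eqref{bo1p} requires the prefactor $\e^2$ in front of $\Gcal_v(U)$; this smallness is not cosmetic, since in Step 2 of the proof of Proposition \ref{prop:main} the only good term available against $\Gcal_v(\Ubar)$ carries the coefficient $(\d_3-\d_0)\tfrac{\e}{\l}$, so a bound by $C\,\Gcal_v(U)$ cannot be absorbed. The paper's device here is the half-truncation trick \eqref{half-truncation}, \eqref{nonlinear-v1}--\eqref{nonlinear-v2}: the linear excess beyond level $\d_3$ is dominated by the \emph{square} of the excess beyond level $\d_3/2$, and those squares are then bounded by $\tfrac{\e}{\l}(\Dcal_v+\Dcal_\th)+\e^2\Gcal_v$ through \eqref{half-ineq} (leading to \eqref{ineq-p-linear} and \eqref{mainbad-one}). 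Your sketch contains no substitute for this step.

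Second, for \eqref{bo1m} your proposal to apply Young's inequality and reduce to $\int_{\O^c}a'(p-\pt)^2\,d\x$ fails: on $\O^c$ one has $p=R\th/v$ with $v$ arbitrarily small, $\Phi(v/\vt)$ grows only like $-\log v$, and $\Dcal_v$ degenerates with a factor $v$, so the quantity $\int_{\O^c}a'\th^2 v^{-2}\,d\x$ is simply not controlled by the admissible right-hand side (this is precisely why the maximization in Lemma \ref{lem-max} is performed only on $\O$ and the product form $(p-\pt)(u-\ut)$ is kept on $\O^c$). Even routing the resulting square through the quartic quantity $\int a'v^2\th^2(1/v-1/\vbar_b)^4\,d\x$ would give a bound quadratic in $\Dcal_v+\Dcal_\th$, which destroys \eqref{bo}: that estimate must be \emph{linear} in the diffusion because it is used in Step 1 of the proof of Proposition \ref{prop:main}, where $\Dcal(U)$ is large and is the absorbing term. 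The paper instead keeps the factor $u-\ut$, splits $\O^c$ into the strip $\{\d_3<1/v-1/\vt<2\d_3\}$ and its complement, applies Cauchy--Schwarz pairing the singular factor with $\int a'(u-\ut)^2\lesssim\e^2/\l$ from \eqref{locE}, and controls the quartic weight via the pointwise bounds \eqref{pwv1}, \eqref{pwv2'} together with \eqref{int-xi2}; some version of this pairing (or another mechanism producing a bound linear in $\Dcal$) is indispensable and is missing from your argument.
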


In the following lemma, we first present pointwise estimates. The proof of Proposition \ref{prop_hyperbolic_out} will be presented at the end of this subsection.
\begin{lemma}\label{lemma_hyp_pw}
Under the same assumptions as Proposition \ref{prop_hyperbolic_out}, we have
\begin{align}
&\abs{\sqrt{v}\Big(\frac{1}{v}-\frac{1}{\vbar_b}\Big)(\x)} 
\le C\bigg(\sqrt{\int_{\x_0}^\x \one{\abs{1/v-1/\vbar_b}>0}d\z} \sqrt{\Dcal_v(U)} + \sqrt{\frac{\e^2}{\l}}\sqrt{\Gcal_v(U)}\bigg) \label{pwv1}\\
&\begin{aligned}
&\abs{\sqrt{v}\th\Big(\frac{1}{v}-\frac{1}{\vbar_b}\Big)(\x)} 
\le C\bigg(\sqrt{\int_{\x_0}^\x \one{\abs{1/v-1/\vbar_b}>0}d\z}\left(\sqrt{\Dcal_v(U)+\Dcal_\th(U)}\right) \\
&\hspace{50mm} + \frac{\e}{\l}(\Gcal_v(U) + \Gcal_\th(U))
+ \sqrt{\frac{\e^2}{\l}}\sqrt{\int_\RR a'v\Big(\frac{1}{v}-\frac{1}{\vbar_b}\Big)^2 d\x}\bigg) \label{pwv2}
\end{aligned}\\
&\abs{\Big(\frac{1}{v}-\frac{1}{\vbar_s}\Big)}
\le C\sqrt{\int_{\x_0}^\x \one{\abs{1/v-1/\vbar_s}>0}d\z}\sqrt{\Dcal_v(U)} \label{pwv3'} \\
&\begin{aligned}
&\abs{\th\Big(\frac{1}{v}-\frac{1}{\vbar_s}\Big)}
\le C\bigg(\sqrt{\int_{\x_0}^\x \one{\abs{1/v-1/\vbar_s}>0}d\z}\sqrt{\Dcal_v(U)} \\
&\hspace{50mm}+\sqrt{\Dcal_\th(U)\int_{\x_0}^{\x}v\one{\abs{1/v-1/\vbar_s}>0}d\z} + \frac{\e}{\l}\Gcal_v(U)\bigg) \label{pwv3}
\end{aligned}\\
&\abs{(\th-\thbar)(\x)} 
\le C\sqrt{\Dcal_\th(U)}\sqrt{\int_{\x_0}^{\x}v\one{|\th-\thbar|>0}d\z} \label{pwth1} \\
&\abs{\frac{1}{\sqrt{v}}(\th-\thbar)(\x)}
\le C\bigg(\sqrt{\int_{\x_0}^\x \one{\abs{\th-\thbar}>0}d\z}\sqrt{\Dcal_v(U) + \Dcal_\th(U)} 
+ \int_\RR \sqrt{v}|\th-\thbar|\vt'd\x\bigg) \label{pwth2}
\end{align}
\end{lemma}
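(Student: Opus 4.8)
The plan is to prove all six pointwise bounds by the same mechanism: write the quantity at $\x$ as an integral from a fixed anchor point $\x_0\in[-1/\e,1/\e]$ of its $\x$-derivative, split the integrand into a factor that is controlled by a diffusion term (after inserting $\sqrt{a\tht v(\t_0+\th^2)}$ or $\sqrt{a\tht/v}$ as appropriate and using $a\ge 1$, $\tht\sim\th_-$) and a factor that measures the size of the support, and then apply Cauchy--Schwarz. Concretely, for \eqref{pwv1} I would start from
\[
\sqrt{v}\Big(\tfrac1v-\tfrac1{\vbar_b}\Big)(\x)=\sqrt{v}(\x)\int_{\x_0}^\x \rd_\z\Big(\tfrac1v-\tfrac1{\vbar_b}\Big)\,d\z
+\Big(\sqrt{v}\big(\tfrac1v-\tfrac1{\vbar_b}\big)\Big)(\x_0),
\]
where the boundary term at $\x_0$ is handled because $\x_0$ lies in the region where $v$ is comparable to $\vt$ and the truncated difference is comparable to $\Phi(v/\vt)^{1/2}$, which is controlled by $\Gcal_v$ via \eqref{der-a} and \eqref{lower-v} (this is the source of the $\sqrt{\e^2/\l}\sqrt{\Gcal_v}$ term). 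For the integral term I insert $1=\tfrac{1}{\sqrt{v(\t_0+\th^2)}}\cdot\sqrt{v(\t_0+\th^2)}\cdot\one{\abs{1/v-1/\vbar_b}>0}$, bound $1/\sqrt{v(\t_0+\th^2)}\le C$ using the pointwise lower bounds on $v$ and $\t_0>0$, and apply Cauchy--Schwarz to pull out $\big(\int_{\x_0}^\x\one{\abs{1/v-1/\vbar_b}>0}d\z\big)^{1/2}$ against $\big(\int_\RR a\tht v(\t_0+\th^2)|(\tfrac1v-\tfrac1{\vbar_b})_\x|^2 d\z\big)^{1/2}\le C\sqrt{\Dcal_v(U)}$ by \eqref{eq_D}. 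The same skeleton, with the choice of weight adapted, gives \eqref{pwv3'} (use $\Dcal_v^1\ge\Dcal_v^1(\vbar)$ and the fact that $\vbar_s$-truncation is only active where $v$ is small, so $1/v$ is bounded below by $1/\vt$, eliminating the $\sqrt{v}$) and \eqref{pwth1} (anchor at $\x_0$, use $\Dcal_\th$, and keep the weight $\sqrt{v}$ inside so the support integral is $\int v\,\one{|\th-\thbar|>0}$).

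For the mixed quantities \eqref{pwv2}, \eqref{pwv3}, \eqref{pwth2} the extra factor of $\th$ (or $1/\sqrt{v}$) forces a product rule: e.g. for \eqref{pwv2} I would write $\rd_\z\big(\sqrt{v}\th(\tfrac1v-\tfrac1{\vbar_b})\big)$ and expand into a term with $(\tfrac1v-\tfrac1{\vbar_b})_\x$ (controlled by $\Dcal_v$ as above, now carrying an extra $\th\lesssim$ bounded-times-$\th$ that must be absorbed into $\Dcal_v^2$), a term with $\th_\x$ (controlled by $\Dcal_\th$, producing the $\sqrt{\Dcal_v+\Dcal_\th}$ combination), and a term with $(\sqrt v)_\x=\tfrac12 v^{-1/2}v_\x$ which, rewritten via $v_\x=-v^2(\tfrac1v)_\x$ and $(\tfrac1v)_\x=(\tfrac1v-\tfrac1\vt)_\x+(\tfrac1\vt)_\x$, splits into a diffusion piece and a shock-derivative piece; the latter, together with the $\x_0$-boundary term, yields the $\frac{\e}{\l}(\Gcal_v+\Gcal_\th)$ and $\sqrt{\e^2/\l}\big(\int a'v(\tfrac1v-\tfrac1{\vbar_b})^2\big)^{1/2}$ contributions after using \eqref{der-a}, \eqref{tail}, and \eqref{locE}. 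For \eqref{pwth2} the same expansion of $\rd_\z\big(v^{-1/2}(\th-\thbar)\big)$ is used, and the non-integral piece is estimated by the $L^1$ quantity $\int_\RR\sqrt v|\th-\thbar|\vt'\,d\x$ exactly because $|v^{-1/2}(\th-\thbar)|(\x_0)$ and the $(\sqrt v)_\x$-contribution can both be dominated by $\sup$ of $v^{-1/2}(\th-\thbar)$ times the exponential tail of $\vt'$, which after integrating is the stated $L^1$ norm.

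The routine part is the bookkeeping of which diffusion term absorbs which factor of $\th$ or $1/v$; the genuinely delicate point, and the one I would be most careful about, is the treatment of the anchor/boundary contributions at $\x_0$ and of the $(\sqrt v)_\x$ cross terms: these are what produce the non-integral (Gronwall-type and $L^1$-type) pieces on the right-hand sides, and getting them with the correct power $\sqrt{\e^2/\l}$ of the weight rather than a worse power requires combining the lower bound \eqref{lower-v} on $\vt'$ near $\x_0$, the scaling $a'=\tfrac{\l}{\e}\vt'$ from \eqref{der-a}, and the smallness \eqref{locE} of the localized entropy. Everything else reduces to Cauchy--Schwarz and the pointwise bounds already available from $U\in\Xcal$ together with Lemma \ref{lem-VS} and \eqref{eq_D}.
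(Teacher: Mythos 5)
Your overall skeleton (anchor at $\x_0\in[-1/\e,1/\e]$, fundamental theorem of calculus, Cauchy--Schwarz against the diffusion terms, product rule for the mixed quantities) is the paper's skeleton, but two steps as you describe them do not work. First, the anchor point: under $\abs{Y(U)}\le\e^2$ the paper uses \eqref{locE} together with \eqref{lower-v} and \eqref{der-a} (so $\inf_{[-1/\e,1/\e]}a'\gtrsim\e\l$) to produce a point $\x_0$ where $\et(U|\Ut)(\x_0)\le C(\e/\l)^2$, hence by Lemma \ref{lem-quad} $\abs{(1/v-1/\vt)(\x_0)}+\abs{(\th-\tht)(\x_0)}\le C\e/\l\le\d_3$; consequently \emph{all truncated differences vanish at $\x_0$} and there is no boundary term. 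You instead retain a boundary term $\big(\sqrt{v}(1/v-1/\vbar_b)\big)(\x_0)$ and claim it is controlled by $\sqrt{\e^2/\l}\,\sqrt{\Gcal_v(U)}$; that does not follow from your sketch — bounding a single point value of $\Phi(v/\vt)^{1/2}$ through the integral quantity $\Gcal_v$ and the lower bound on $a'$ on $[-1/\e,1/\e]$ only gives a factor of order $(\e\l)^{-1/2}$, far worse than $\e/\sqrt{\l}$ — and in fact the $\sqrt{\e^2/\l}\sqrt{\Gcal_v}$ term in \eqref{pwv1} has a different origin (the $\vt'$ cross term), so your accounting of where the good terms come from is misplaced.

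Second, and more seriously, your main estimate for \eqref{pwv1} breaks: after pulling $\sqrt{v}(\x)$ outside you are left with $\int_{\x_0}^\x\abs{(1/v-1/\vbar_b)_\z}\,d\z$, and to reach $\Dcal_v$ you insert $\sqrt{v(\t_0+\th^2)}$ and its reciprocal and claim $1/\sqrt{v(\t_0+\th^2)}\le C$ ``using the pointwise lower bounds on $v$''. No such uniform bound exists: on the support of the $\vbar_b$-truncation one has $1/v>1/\vt+\d_3$, which bounds $v$ from \emph{above}, and $v$ may be arbitrarily small there; the bound $v^{-1}\in L^\infty$ from $\Xcal_T$ is solution-dependent, whereas the constant in the lemma must depend only on $\d_3$. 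The paper's proof keeps $\sqrt{v}$ inside via the product rule, so Cauchy--Schwarz pairs $\sqrt{v}\,\abs{(1/v-1/\vbar_b)_\x}$ directly with the $v(\t_0+\th^2)$ weight of $\Dcal_v$ (no lower bound on $v$ needed); the cross term $\sqrt{v}\,v(1/v-1/\vbar_b)(1/v)_\x$ is then handled with $v\abs{1/v-1/\vbar_b}\le\one{\abs{1/v-1/\vbar_b}>0}$ and the splitting $(1/v)_\x=(1/v-1/\vt)_\x-\vt'/\vt^2$, and it is the $\vt'$ piece — estimated via $v\one{\abs{1/v-1/\vbar_b}>0}\le C\Phi(v/\vt)$, \eqref{der-a} and \eqref{locE} — that yields $\sqrt{\e^2/\l}\sqrt{\Gcal_v(U)}$. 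Your product-rule outline for \eqref{pwv2}, \eqref{pwv3} and \eqref{pwth2} is broadly in the paper's spirit, but it inherits both defects: those estimates likewise rely on the vanishing of the truncated quantities at $\x_0$ and on keeping the correct $v$-weights attached to the diffusion factors rather than bounding $1/\sqrt{v}$ pointwise.
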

\begin{proof}
From \eqref{lower-v} and \eqref{der-a},
\begin{align*}
\frac{\e}{2} \int_{-1/\e}^{1/\e} \tht \eta(U|\Ut) d\x
&\le \frac{\e}{2\inf_{[-1/\e,1/\e]}\abs{a'}} \int_\RR a'  \tht \eta(U|\Ut) d\x \le C \frac{\e}{\l\e}\frac{\e^2}{\l} = C \left(\frac{\e}{\l}\right)^2.
\end{align*}
Hence, there exists \(\x_0\in[-1/\e,1/\e]\) such that
\[
\eta(U(\x_0)|\Ut(\x_0)) \le C \left(\frac{\e}{\l}\right)^2
\]
which implies 
\[
\Phif{v}{\vt}(\x_0)+\Phif{\th}{\tht}(\x_0)+(u-\ut)^2(\x_0) \le C \left(\frac{\e}{\l}\right)^2.
\]
For \(\d_0\) small enough, we use Lemma \ref{lem-quad} to find
\[
\abs{\left(1/v-1/\vt\right)(\x_0)}+|(\th-\tht)(\x_0)|+\abs{(u-\ut)(\x_0)} \le C \frac{\e}{\l}.
\]
Thus, if \(\d_0\) is small enough so that \(C\e/\l \le \d_3\), then we obtain from the definitions of \(\vbar,\thbar\) and \(\ubar\) that
\[
(v-\vbar)(\x_0)=(\th-\thbar)(\x_0)=(u-\ubar)(\x_0)=0.
\]

On the other hand, from the definition of truncations, it holds that \(1/v>1/\vt+\d_3\) for any \(\x\) with \(\abs{(1/v-1/\vbar_b)(\x)}> 0\).
Then, using \eqref{rel_Phi_v}, there is a lower bound of \(\Phi(v/\vt)\) on the region \(\{\abs{(1/v-1/\vbar_b)(\x)}> 0\}\), which depends only on \(\d_3\). 
Similarly, on \(\{\abs{(1/v-1/\vbar_s)(\x)}> 0\}\), \(\Phi(v/\vt)\) is bounded from below, and on \(\{|\th-\thbar|>0\}\), \(\Phi(\th/\tht)\) is bounded from below as well. 
Say \(\a>0\) is a common lower bound, which depends on \(\d_3\): that is,
\begin{align}
\begin{aligned} \label{Qconst}
\one{\abs{1/v-1/\vbar_b}>0} \le \frac{\Phi(v/\vt)}{\a}, \quad
\one{\abs{1/v-1/\vbar_s}>0} \le \frac{\Phi(v/\vt)}{\a}, \quad
\one{|\th-\thbar|>0} \le \frac{\Phi(\th/\tht)}{\a}.
\end{aligned}
\end{align}
In addition, since \(\frac{\Phi(v/\vt)}{v}\) is increasing along \(v\) on \((\vt,\infty)\), 
\(v\one{\abs{1/v-1/\vbar_s}>0}\) can be bounded by \(\Phi(v/\vt)\) as well:
\begin{align} \label{Qlin}
v \one{\abs{1/v-1/\vbar_s}>0} = v \one{1/v <1/\vt -\d_3} \le C \Phif{v}{\vt}
\end{align}
for any constant \(C\) greater than \(\frac{v}{\Phi(v/\vt)}\) for \(v\) with \(1/v=1/\vt -\d_3\).
Obviously, the following holds as well:
\begin{align} \label{Qlin'}
\th \one{\th>\tht+\d_3} \le C \Phif{\th}{\tht}.
\end{align}

\bpf{pwv1}
For any \(\x\in\RR\), we use the fundamental theorem of calculus to obtain
\[
\abs{\sqrt{v}\Big(\frac{1}{v}-\frac{1}{\vbar_b}\Big)(\x)}
\le C\int_{\x_0}^{\x} \bigg[\abs{\sqrt{v}\Big(\frac{1}{v}-\frac{1}{\vbar_b}\Big)_\x} 
+ \abs{\sqrt{v}v\Big(\frac{1}{v}-\frac{1}{\vbar_b}\Big)\Big(\frac{1}{v}\Big)_\x}\bigg] d\x.
\]
For the first term, we can apply H\"older's inequality and \eqref{eq_D}: 
\[
\int_{\x_0}^{\x} \abs{\sqrt{v}\Big(\frac{1}{v}-\frac{1}{\vbar_b}\Big)_\x}d\x
\le \sqrt{\int_{\x_0}^\x \one{\abs{1/v-1/\vbar_b}>0}d\z}\sqrt{\Dcal_v(U)}.
\]
For the second term, since \(v\abs{\frac{1}{v}-\frac{1}{\vbar_b}} \le \one{\abs{1/v-1/\vbar_b}>0}\), H\"older inequality implies
\begin{align*}
\int_{\x_0}^\x \abs{\sqrt{v}v\Big(\frac{1}{v}-\frac{1}{\vbar_b}\Big)\Big(\frac{1}{v}\Big)_\x} d\x 
&\le \int_{\x_0}^{\x} \bigg(\abs{\sqrt{v}\Big(\frac{1}{v}-\frac{1}{\vtil}\Big)_\x} + \abs{\sqrt{v}\frac{\vtil'}{\vtil^2}}\bigg)\one{\abs{1/v-1/\vbar_b}>0} d\x \\
&\hspace{-5mm}\le \sqrt{\int_{\x_0}^\x \one{\abs{1/v-1/\vbar_b}>0} d\z}\sqrt{\Dcal_v(U)}
+ C\int_{\x_0}^{\x} \sqrt{v}\one{\abs{1/v-1/\vbar_b}>0}\vtil' d\x.
\end{align*}
Further, the second integral here can be bounded by the hyperbolic good term \(\Gcal_v(U)\), from \(v\one{\abs{1/v-1/\vbar_b}>0}\le \Phi(v/\vtil)\) and \eqref{der-a}, we have 
\[
\int_{\x_0}^{\x} \sqrt{v}\one{\abs{1/v-1/\vbar_b}>0}\vtil' d\x
\le \sqrt{\int_{\x_0}^{\x} \vtil' d\x}\sqrt{\int_{\x_0}^\x v\one{\abs{1/v-1/\vbar_b}>0}\vtil' d\x}
\le \sqrt{\frac{\e^2}{\l}}\sqrt{\Gcal_v(U)}.
\]
Summing up the inequalities above, we obtain the desired conclusion: 
\[
\abs{\sqrt{v}\Big(\frac{1}{v}-\frac{1}{\vbar_b}\Big)(\x)} 
\le C\sqrt{\int_{\x_0}^\x \one{\abs{1/v-1/\vbar_b}>0} d\z}\sqrt{\Dcal_v(U)} + C\sqrt{\frac{\e^2}{\l}}\sqrt{\Gcal_v(U)}.
\]

\bpf{pwv2}
Note by \eqref{Qconst} and \eqref{Qlin'} that 
\[
\th \one{1/v>\vtil+\d_3} \le C \one{1/v>\vtil+\d_3} + \th \one{\th>\thtil+\d_3} \le C \Phif{v}{\vt} + C\Phif{\th}{\tht}.
\]
Hence, this with \eqref{der-scale} gives that for any \(\x\in\RR\),
\begin{align*}
&\abs{\sqrt{v}\th\Big(\frac{1}{v}-\frac{1}{\vbar_b}\Big)(\x)} 
\le C\int_{\x_0}^{\x} \bigg[\abs{\sqrt{v}\th\Big(\frac{1}{v}-\frac{1}{\vbar_b}\Big)_\x}
+ \abs{\sqrt{v}\Big(\frac{1}{v}-\frac{1}{\vbar_b}\Big)(\th-\tht)_\x}
+ \abs{\sqrt{v}\Big(\frac{1}{v}-\frac{1}{\vbar_b}\Big)\tht'} \\
&\hspace{50mm}
+ \abs{\sqrt{v}v\Big(\frac{1}{v}-\frac{1}{\vbar_b}\Big)\th\Big(\frac{1}{v}-\frac{1}{\vt}\Big)_\x}
+ \abs{\sqrt{v}v\Big(\frac{1}{v}-\frac{1}{\vbar_b}\Big)\th\vt'}\bigg]d\x \\
&\hspace{25mm}\le 
C\int_{\x_0}^{\x} \bigg[\abs{\sqrt{v}\th\Big(\frac{1}{v}-\frac{1}{\vt}\Big)_\x} 
+ \abs{\frac{1}{\sqrt{v}}(\th-\tht)_\x}
+ \abs{\th \vt'}\bigg]\one{\abs{1/v-1/\vbar_b}>0} d\x \\
&\hspace{25mm}\qquad\qquad
+ C\sqrt{\frac{\e^2}{\l}}\sqrt{\int_\RR a'v\Big(\frac{1}{v}-\frac{1}{\vbar_b}\Big)^2 d\x} \\
&\hspace{25mm}\le 
C\sqrt{\int_{\x_0}^\x \one{\abs{1/v-1/\vbar_b}>0} d\z}\left(\sqrt{\Dcal_v(U)} + \sqrt{\Dcal_\th(U)}\right)
+ C\frac{\e}{\l}(\Gcal_\th(U) + \Gcal_v(U)) \\
&\hspace{25mm}\qquad\qquad
+ C\sqrt{\frac{\e^2}{\l}}\sqrt{\int_\RR a'v\Big(\frac{1}{v}-\frac{1}{\vbar_b}\Big)^2 d\x}.
\end{align*}

\bpf{pwv3'} For any \(\x\in\RR\),
\begin{align*}
\abs{\Big(\frac{1}{v}-\frac{1}{\vbar_s}\Big)(\x)}
&\le C\int_{\x_0}^{\x}\abs{\sqrt{v}\Big(\frac{1}{v}-\frac{1}{\vbar_s}\Big)_\x} d\x
\le C \sqrt{\int_{\x_0}^\x \one{\abs{1/v-1/\vbar_s}>0} d\z}\sqrt{\Dcal_v(U)}.
\end{align*}

\bpf{pwv3} For any \(\x\in\RR\), \eqref{Qconst} and \eqref{der-scale} yield that
\begin{align*}
\abs{\th\Big(\frac{1}{v}-\frac{1}{\vbar_s}\Big)(\x)}
&\le C\int_{\x_0}^{\x}\bigg[\abs{\th\Big(\frac{1}{v}-\frac{1}{\vbar_s}\Big)_\x}
+ \abs{\Big(\frac{1}{v}-\frac{1}{\vbar_s}\Big)(\th-\tht)_\x}
+ \abs{\Big(\frac{1}{v}-\frac{1}{\vbar_s}\Big)\tht'}\bigg]d\x \\
&\le C\int_{\x_0}^{\x}\bigg[\abs{\sqrt{v}\th\Big(\frac{1}{v}-\frac{1}{\vbar_s}\Big)_\x} + \abs{\frac{1}{\sqrt{v}}(\th-\tht)_\x\cdot \sqrt{v}} + \abs{\tht'}\bigg] \one{\abs{1/v-1/\vbar_s}>0} d\x \\
&\hspace{-26mm}\le C\sqrt{\Dcal_v(U)} \sqrt{\int_{\x_0}^\x \one{\abs{1/v-1/\vbar_s}>0} d\z}
+ C\sqrt{\Dcal_\th(U)}\sqrt{\int_{\x_0}^{\x}v\one{\abs{1/v-1/\vbar_s}>0}d\x} 
+ C\frac{\e}{\l}\Gcal_v(U).
\end{align*}

\bpf{pwth1} For any \(\x\in\RR\),
\[
\abs{(\th-\thbar)(\x)}
\le C\int_{\x_0}^{\x} \frac{1}{\sqrt{v}}(\th-\thbar)_\x\cdot \sqrt{v}\one{|\th-\thbar|>0} d\x
\le C\sqrt{\Dcal_\th(U)}\sqrt{\int_{\x_0}^{\x}v\one{|\th-\thbar|>0} d\x}.
\]

\bpf{pwth2} For any \(\x\in\RR\),
\begin{align*}
\abs{\frac{1}{\sqrt{v}}(\th-\thbar)(\x)}
&\le C\int_{\x_0}^{\x} \bigg[\abs{\frac{1}{\sqrt{v}}(\th-\thbar)_\x} +\abs{\sqrt{v}(\th-\thbar)\Big(\frac{1}{v}-\frac{1}{\vt}\Big)_\x} +\abs{\sqrt{v}(\th-\thbar)\vt'}\bigg] d\x \\
&\hspace{-26mm}\le C\sqrt{\int_{\x_0}^\x \one{\abs{\th-\thbar}>0} d\z}\sqrt{\Dcal_\th(U)}
+ C\sqrt{\int_{\x_0}^\x \one{\abs{\th-\thbar}>0} d\z}\sqrt{\Dcal_v(U)} 
+ C\int_\RR \sqrt{v} |\th-\thbar| \vt'd\x.
\end{align*}
\end{proof}

From Lemma \ref{lemma_hyp_pw}, we have the following.
\begin{lemma}\label{lemma_hyp_ineq}
Under the same assumptions as Proposition \ref{prop_hyperbolic_out}, there is a constant \(C>0\) depends only on \(\d_3\) such that the following inequalities are true: 
\begin{align}
&\int_\RR a'v\Big(\frac{1}{v}-\frac{1}{\vbar_b}\Big)^2 d\x
\le C\Big(\frac{\e}{\l}\Dcal_v(U) + \e^2\Gcal_v(U)\Big)
\label{ineq-b1} \\
&\begin{aligned}
&\abs{\sqrt{v}\th\Big(\frac{1}{v}-\frac{1}{\vbar_b}\Big)(\x)} 
\le C\bigg(\sqrt{\int_{\x_0}^\x \one{\abs{1/v-1/\vbar_b}>0} d\z}\sqrt{\Dcal_v(U)+ \Dcal_\th(U)}  \\
&\hspace{50mm} + \sqrt{\frac{\e^2}{\l}}\sqrt{\Dcal_v(U)}
+\sqrt{\frac{\e^2}{\l}}\sqrt{\Gcal_v(U)+\Gcal_\th(U)}\bigg) 
\end{aligned}
\label{pwv2'}\\
&\int_\RR a'v\th^2\Big(\frac{1}{v}-\frac{1}{\vbar_b}\Big)^2 d\x
\le C\Big(\frac{\e}{\l}(\Dcal_v(U)+\Dcal_\th(U))+ \e^2(\Gcal_v(U)+\Gcal_\th(U))\Big)
\label{ineq-b2} \\
&\int_\RR a'(\th-\thbar)^2 d\x
\le C\frac{\e}{\l}\Dcal_\th(U)
\label{ineq-b3} \\
&\abs{\frac{1}{\sqrt{v}}(\th-\thbar)(\x)}
\le C\bigg(\sqrt{\Dcal_v(U)+\Dcal_\th(U)}\sqrt{\int_{\x_0}^\x \one{\abs{\th-\thbar}>0}d\z}+\sqrt{\frac{\e^2}{\l}}\sqrt{\Dcal_\th(U)}\bigg)
\label{pwth2'} \\
&\int_\RR a'\Big(\frac{1}{v}-\frac{1}{\vbar_s}\Big)^2 d\x
\le C \frac{\e}{\l} \Dcal_v(U)
\label{ineq-b4'} \\
&\int_\RR a'\th^2\Big(\frac{1}{v}-\frac{1}{\vbar_s}\Big)^2 d\x
\le C\Big(\frac{\e}{\l} (\Dcal_v(U)+\Dcal_\th(U)) + \e^2 \Gcal_v(U)\Big)
\label{ineq-b4}
\end{align}
\end{lemma}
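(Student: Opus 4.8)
The plan is to obtain each inequality of Lemma~\ref{lemma_hyp_ineq} from the corresponding pointwise bound of Lemma~\ref{lemma_hyp_pw} by squaring it, multiplying by the weight derivative $a'$, and integrating over $\RR$. Three facts drive every estimate. First, $a'=\frac{\l}{\e}\vtil'$ by \eqref{der-a}, and since $\vtil$ is monotone from $v_-$ to $v_+$ with $\abs{v_+-v_-}=\e$ we have $\int_\RR\vtil'\,d\x=\e$, hence $\int_\RR a'\,d\x=\l$. Second, Lemma~\ref{lemma_pushing} together with $a'=\frac{\l}{\e}\vtil'$ gives, for any $\x_0\in[-1/\e,1/\e]$, the ``pushing'' bound $\int_\RR a'\bigl(\int_{\x_0}^{\x}\abs{f}\,d\z\bigr)\,d\x\le C\frac{\l}{\e^{2}}\int_\RR\vtil'\abs{f}\,d\z$. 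Third, each of $\one{\abs{1/v-1/\vbar_b}>0}$, $\one{\abs{1/v-1/\vbar_s}>0}$, $\one{\abs{\th-\thbar}>0}$, and also $v\one{\abs{1/v-1/\vbar_s}>0}$ and $v\one{\abs{\th-\thbar}>0}$, is bounded by $C\bigl(\Phif{v}{\vt}+\Phif{\th}{\tht}\bigr)$ with $C=C(\d_3)$, using \eqref{Qconst}, \eqref{Qlin}, \eqref{Qlin'} and the linear growth in \eqref{rel_Phi_v}; once such a factor is traded against $\vtil'$, the smallness \eqref{locE} --- which yields $\int_\RR a'\bigl(\Phif{v}{\vt}+\Phif{\th}{\tht}\bigr)\,d\x\le C\e^{2}/\l$ and hence $\int_\RR\vtil'\bigl(\Phif{v}{\vt}+\Phif{\th}{\tht}\bigr)\,d\x\le C\e^{3}/\l^{2}$ --- becomes available.

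First I would prove \eqref{ineq-b1}, \eqref{ineq-b3} and \eqref{ineq-b4'} by squaring \eqref{pwv1}, \eqref{pwth1}, \eqref{pwv3'} respectively and integrating against $a'$. The algebraic term (present only in \eqref{pwv1}) contributes $C\frac{\e^{2}}{\l}\Gcal_v\int_\RR a'\,d\x=C\e^{2}\Gcal_v$, while each nested-integral term is handled by the second fact followed by the third: for instance in \eqref{ineq-b1}, $\int_\RR a'\bigl(\int_{\x_0}^{\x}\one{\abs{1/v-1/\vbar_b}>0}\,d\z\bigr)\,d\x\le C\frac{\l}{\e^{2}}\int_\RR\vtil'\Phif{v}{\vt}\,d\x\le C\frac{\l}{\e^{2}}\cdot\frac{\e^{3}}{\l^{2}}=C\frac{\e}{\l}$, which multiplied by $\Dcal_v$ gives the claimed term, the extra factor $v$ in \eqref{pwth1} being absorbed via $v\one{\abs{\th-\thbar}>0}\le C(\Phif{v}{\vt}+\Phif{\th}{\tht})$, and in the $\vbar_s$-truncation via \eqref{Qlin}. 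Next I would upgrade the pointwise bounds: plugging \eqref{ineq-b1} into the last term of \eqref{pwv2} gives \eqref{pwv2'}, after noting that $\sqrt{\e^{2}/\l}\cdot\sqrt{\e/\l}\le\sqrt{\e^{2}/\l}$ and $\sqrt{\e^{2}/\l}\cdot\e\le\sqrt{\e^{2}/\l}$ (from $\e/\l<\d_0<1$, $\e<1$) and that $\frac{\e}{\l}(\Gcal_v+\Gcal_\th)\le C\sqrt{\e^{2}/\l}\,\sqrt{\Gcal_v+\Gcal_\th}$ since $\Gcal_v,\Gcal_\th\le C\e^{2}/\l$; plugging \eqref{ineq-b3} into the last term of \eqref{pwth2} gives \eqref{pwth2'}, after bounding $\int_\RR\sqrt v\,\abs{\th-\thbar}\,\vtil'\,d\x$ by Cauchy--Schwarz as $\bigl(\int_\RR v\one{\abs{\th-\thbar}>0}\vtil'\bigr)^{1/2}\bigl(\int_\RR(\th-\thbar)^{2}\vtil'\bigr)^{1/2}$, using \eqref{Qlin} and \eqref{locE} for the first factor and $\vtil'=\frac{\e}{\l}a'$ with \eqref{ineq-b3} for the second, which gives $C\frac{\e^{5/2}}{\l^{2}}\sqrt{\Dcal_\th}\le\sqrt{\e^{2}/\l}\,\sqrt{\Dcal_\th}$.

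With \eqref{pwv2'} and \eqref{pwth2'} in hand I would finish \eqref{ineq-b2} and \eqref{ineq-b4}. For \eqref{ineq-b2}, square \eqref{pwv2'} and integrate against $a'$: the nested term gives $C\frac{\e}{\l}(\Dcal_v+\Dcal_\th)$ exactly as above, and the algebraic terms give $\frac{\e^{2}}{\l}\int_\RR a'\,d\x=\e^{2}$ times $\Dcal_v$ or $\Gcal_v+\Gcal_\th$, with $\e^{2}\Dcal_v\le\frac{\e}{\l}\Dcal_v$ since $\e\l<1$. For \eqref{ineq-b4}, square \eqref{pwv3} and integrate: the first nested term gives $C\frac{\e}{\l}\Dcal_v$ as for \eqref{ineq-b4'}; the second, $\Dcal_\th\int_\RR a'\bigl(\int_{\x_0}^{\x}v\one{\abs{1/v-1/\vbar_s}>0}\,d\z\bigr)\,d\x$, is treated by the pushing bound with $f=v\one{\abs{1/v-1/\vbar_s}>0}\le C\Phif{v}{\vt}$ (by \eqref{Qlin}) together with \eqref{locE}, giving $C\frac{\e}{\l}\Dcal_\th$; and the $\frac{\e}{\l}\Gcal_v$ term, squared and integrated, gives $\frac{\e^{2}}{\l^{2}}\Gcal_v^{2}\int_\RR a'\,d\x=\frac{\e^{2}}{\l}\Gcal_v^{2}\le C\frac{\e^{4}}{\l^{2}}\Gcal_v\le\e^{2}\Gcal_v$ using $\Gcal_v\le C\e^{2}/\l$ and $\e/\l<1$.

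The main obstacle is precisely the nested-integral terms $\int_\RR a'\bigl(\int_{\x_0}^{\x}f\,d\z\bigr)\,d\x$: the naive estimate that replaces $\int_{\x_0}^{\x}f$ by $\int_\RR f$ discards the localizing weight $a'$, and $\int_\RR f\,d\z$ alone is not controlled by the only small quantity available, namely \eqref{locE}. What makes the argument go through is that the inner integral runs over the interval between $\x$ and $\x_0\in[-1/\e,1/\e]$, so that Lemma~\ref{lemma_pushing} converts it into $\frac{C}{\e}\int_\RR\vtil'\abs{f}\,d\z$, which --- once $\abs{f}$ is dominated by $\Phif{v}{\vt}+\Phif{\th}{\tht}$ through \eqref{Qconst}--\eqref{Qlin} --- is finally within reach of \eqref{locE}. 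Everything else is bookkeeping of the powers of $\e$ and $\e/\l$, handled by $\e<\e_0$, $\e/\l<\d_0$ (so that $\e\l<1$ and $(\e/\l)^{k}\le1$), and the a priori bounds $\Gcal_v,\Gcal_\th\le C\e^{2}/\l$ coming from \eqref{locE}.
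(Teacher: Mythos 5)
Your proposal is correct and follows essentially the same route as the paper's proof: square the pointwise bounds of Lemma \ref{lemma_hyp_pw}, integrate against $a'$, treat the nested integrals via Lemma \ref{lemma_pushing} together with the indicator bounds \eqref{Qconst}, \eqref{Qlin}, \eqref{Qlin'} and the smallness \eqref{locE}, and feed \eqref{ineq-b1}, \eqref{ineq-b3} back into \eqref{pwv2} and \eqref{pwth2} to get \eqref{pwv2'} and \eqref{pwth2'}. The bookkeeping of the powers of $\e$ and $\e/\l$ matches the paper's estimates.
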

\begin{proof} Here, we use the pointwise estimates derived in \eqref{lemma_hyp_pw}. 

\bpf{ineq-b1}
Using \eqref{pwv1}, \eqref{Qconst}, \eqref{locE} and Lemma \ref{lemma_pushing} yields that 
\begin{align*}
\int_\RR a'v\Big(\frac{1}{v}-\frac{1}{\vbar_b}\Big)^2 d\x
&\le C \Dcal_v(U)\int_\RR a' \int_{\x_0}^\x \one{\abs{1/v-1/\vbar_b}>0} d\z d\x
+C\frac{\e^2}{\l}\Gcal_v(U)\int_\RR a' d\x \\
&\le C\frac{\e}{\l}\Dcal_v(U) + \e^2 \Gcal_v(U).
\end{align*}

\bpf{pwv2'} 
Thanks to \eqref{pwv2} with \eqref{locE}, we improve \eqref{pwv2} to \eqref{pwv2'}.

\bpf{ineq-b2}
Using \eqref{pwv2'}, \eqref{Qconst}, \eqref{locE} and Lemma \ref{lemma_pushing}, we proceed as follows:
\begin{equation*}
\begin{aligned}
\int_\RR a'v\th^2\Big(\frac{1}{v}-\frac{1}{\vbar_b}\Big)^2 d\x 
&\le C(\Dcal_v(U)+\Dcal_\th(U))\int_\RR a'\int_{\x_0}^\x \one{\abs{1/v-1/\vbar_b}>0} d\z d\x \\
&\qquad + C\e^2\Dcal_v(U) + C\e^2(\Gcal_v(U)+\Gcal_\th(U)) \\
&\le
C\frac{\e}{\l}(\Dcal_v(U)+\Dcal_\th(U))+ C\e^2(\Gcal_v(U)+\Gcal_\th(U)).
\end{aligned}
\end{equation*}

\bpf{ineq-b3} Note by \eqref{Qconst} and \eqref{Qlin} that 
\begin{equation}\label{vlocE}
v\one{|\th-\thbar|>0}
\le v\one{1/v<1/\vt-\d_3} + C\one{|\th-\thbar|>0}
\le C\Phif{v}{\vt} + C\Phif{\th}{\tht}.
\end{equation}
Using \eqref{pwth1} together with Lemma \ref{lemma_pushing} and \eqref{locE}, we have
\begin{equation*}
\begin{aligned}
\int_\RR a'(\th-\thbar)^2 d\x
&\le C\Dcal_\th(U)\int_\RR a'\Big(\int_{\x_0}^\x v\one{|\th-\thbar|>0}d\z\Big)d\x \\
&\le C\Dcal_\th(U)\frac{1}{\e}\int_\RR a'\Big(\Phif{v}{\vt}+\Phif{\th}{\tht}\Big) d\x 
\le C\frac{\e}{\l}\Dcal_\th(U).
\end{aligned}
\end{equation*}

\bpf{pwth2'}
We use \eqref{ineq-b3} with \eqref{vlocE} and \eqref{locE} to find that
\begin{align*}
&\int_\RR \sqrt{v} |\th-\thbar|\vt'd\x
\le \Big(\int_\RR v\one{|\th-\thbar|>0}\vt'd\x\Big)^{1/2}\Big(\int_\RR (\th-\thbar)^2\vt'd\x\Big)^{1/2} \\
&\le C\frac{\e}{\l}\Big(\int_\RR a' \Phif{v}{\vt} d\x
+ \int_\RR a' \Phif{\th}{\tht} d\x\Big)^{1/2} 
\Big(\int_\RR a'(\th-\thbar)^2d\x\Big)^{1/2} 
\le C\sqrt{\frac{\e^2}{\l}}\sqrt{\Dcal_\th(U)}.
\end{align*}
This combined with \eqref{pwth2} gives \eqref{pwth2'} as we desired.

\bpf{ineq-b4'} We use \eqref{pwv3'} with \eqref{Qconst}, \eqref{locE}, and Lemma \ref{lemma_pushing} to proceed as follows.
\[
\int_\RR a'\Big(\frac{1}{v}-\frac{1}{\vbar_s}\Big)^2 d\x 
\le C\Dcal_v(U)\int_\RR \int_{\x_0}^{\x} \one{\abs{1/v-1/\vbar_x}>0}d\z d\x
\le C\frac{\e}{\l}\Dcal_v(U).
\]

\bpf{ineq-b4} We use \eqref{pwv3} with \eqref{Qconst}, \eqref{Qlin}, \eqref{locE} and Lemma \ref{lemma_pushing} find that
\begin{equation*}
\begin{aligned}
\int_\RR a'\th^2\Big(\frac{1}{v}-\frac{1}{\vbar_s}\Big)^2 d\x 
&\le C\Dcal_v(U)\int_\RR a' \int_{\x_0}^\x \one{1/v-1/\vbar_s}d\z d\x \\
&\qquad
+C\Dcal_\th(U)\int_\RR a' \int_{\x_0}^\x v \one{1/v-1/\vbar_s}d\z d\x
+\frac{\e^2}{\l}\Gcal_v(U)^2 \\
&\le C\frac{\e}{\l} (\Dcal_v(U)+\Dcal_\th(U)) + C\e^2 \Gcal_v(U).
\end{aligned}
\end{equation*}
This completes the proof of Lemma \ref{lemma_hyp_ineq}.
\end{proof}

\subsubsection{Proof of Proposition \ref{prop_hyperbolic_out}.}
\bpf{bo1p} From \(p = R\th/v\), we have
\begin{align*}
\abs{\Bcal_1^+(U)-\Ical_1(\vbar, \thbar)}
&\le \frac{1}{2\s_\e}\int_\O a'\abs{(p-\ptil)^2 - (\pbar-\ptil)^2} d\x 
+ \frac{1}{2\s_\e}\int_{\O^c} a'(\pbar-\ptil)^2 d\x \\ 
&\le C\int_\O a'\abs{p-\pbar}(\abs{p-\pbar}+2\abs{\pbar-\ptil}) d\x
+ \frac{1}{2\s_\e}\int_{\O^c} a'(\pbar-\ptil)^2 d\x \\ 
&\le C\int_\O a'\Big(\frac{\th}{v}-\frac{\thbar}{\vbar}\Big)^2 d\x
+ C\d_3\int_\O a'\abs{\frac{\th}{v}-\frac{\thbar}{\vbar}} d\x
+ C\d_3^2\int_{\O^c} a'd\x.
\end{align*}
The last term appears because \(\Ical_1(\vbar, \thbar)\) is defined as an integral on the whole real line \(\RR\), but \(\Bcal_1^+\) is defined as an integral on \(\O\).
Note that \(\vbar = \vbar_s\) on \(\O\), so for the first term, \eqref{ineq-b3} and \eqref{ineq-b4} give the bound
\begin{align*}
\int_\O a'\Big(\frac{\th}{v}-\frac{\thbar}{\vbar}\Big)^2 d\x
&\le C\int_\O a'\th^2\Big(\frac{1}{v}-\frac{1}{\vbar_s}\Big)^2 d\x
+ C\int_\O a'(\th-\thbar)^2 d\x \\
&\le C\Big(\frac{\e}{\l}(\Dcal_v(U)+\Dcal_\th(U)) + \e^2\Gcal_v(U)\Big).
\end{align*}
For the second and last term, we temporarily use the truncation of \(v\) and \(\th\) by \(\d_3/2\) instead of \(\d_3\), which is denoted by \((\vbar_s)_{\d_3/2}\) and \((\thbar)_{\d_3/2}\) respectively, i.e.,
\begin{equation}\label{half-truncation}
\begin{aligned}
1/((\vbar_s)_{\d_3/2})-1/\vt \coloneqq \psi^b_{\d_3/2}(1/v-1/\vt), \qquad
(\thbar)_{\d_3/2} - \tht \coloneqq \psi_{\d_3/2}(\th-\tht),
\end{aligned}
\end{equation}
as we did in \eqref{trunc-v-sb} and \eqref{trunc-v-def}.
Since \((y-\d_3/2)_+\ge \frac{\d_3}{2}\) whenever \((y-\d_3)_+>0\), we have 
\[
(y-\d_3)_+ \le (y-\d_3/2)_+\one{y-d_3>0} \le (y-\d_3/2)_+\Big(\frac{(y-\d_3/2)_+}{\d_3/2}\Big) = \frac{2}{\d_3}(y-\d_3/2)_+^2.
\]
This gives the following pointwise inequalities:
\begin{equation}\label{nonlinear-v1}
\abs{\frac{1}{v}-\frac{1}{\vbar_s}}\one{1/v\le 1/\vt+\d_3} \le C\Big(\frac{1}{v}-\frac{1}{(\vbar_s)_{\d_3/2}}\Big)^2,
\end{equation}
\begin{equation}\label{nonlinear-th}
|\th-\thbar| \le C (\th-(\thbar)_{\d_3/2})^2,
\end{equation}
\begin{equation}\label{nonlinear-v2}
\one{1/v>1/\vt+\d_3} \le C v\Big(\frac{1}{v}-\frac{1}{(\vbar_b)_{\d_3/2}}\Big)^2.
\end{equation}
Notice from \eqref{ineq-b1}, \eqref{ineq-b3}, \eqref{ineq-b4'}, and \eqref{ineq-b4} in Lemma \ref{lemma_hyp_ineq} that 
\begin{equation} \label{half-ineq}
\begin{aligned}
\int_\RR a'v\Big(\frac{1}{v}-\frac{1}{(\vbar_b)_{\d_3/2}}\Big)^2 d\x
&\le C\Big(\frac{\e}{\l}\Dcal_v(U) + \e^2\Gcal_v(U)\Big), \\
\int_\RR a'(\th-(\thbar)_{\d_3/2})^2 d\x
&\le C\frac{\e}{\l}\Dcal_\th(U), \\
\int_\RR a'\Big(\frac{1}{v}-\frac{1}{(\vbar_s)_{\d_3/2}}\Big)^2 d\x
&\le C \frac{\e}{\l}\Dcal_v(U), \\
\int_\RR a'\th^2\Big(\frac{1}{v}-\frac{1}{(\vbar_s)_{\d_3/2}}\Big)^2 d\x
&\le C\Big(\frac{\e}{\l}(\Dcal_v(U)+\Dcal_\th(U)) + \e^2\Gcal_v(U)\Big).
\end{aligned}
\end{equation}
These together with \eqref{nonlinear-v1} and \eqref{nonlinear-th} yield
\begin{equation}\label{ineq-p-linear}
\begin{aligned}
\int_\O a'\abs{\frac{\th}{v}-\frac{\thbar}{\vbar}} d\x
&\le C\int_\O a'\th\abs{\frac{1}{v}-\frac{1}{\vbar_s}} d\x 
+ C\int_\O a' |\th-\thbar| d\x \\
&\le C\int_\O a'(1+\th^2)\Big(\frac{1}{v}-\frac{1}{(\vbar_s)_{\d_3/2}}\Big)^2 d\x 
+ C\int_\O a'(\th-(\thbar)_{\d_3/2})^2 d\x \\
&\le C\Big(\frac{\e}{\l}(\Dcal_v(U)+\Dcal_\th(U)) + \e^2\Gcal_v(U)\Big).
\end{aligned}
\end{equation}
For the last term, we use \eqref{nonlinear-v2} and \eqref{half-ineq}\(_1\) to obtain that 
\begin{equation}
\label{mainbad-one}
\int_{\O^c} a'd\x
\le C\int_{\O^c} a'v\Big(\frac{1}{v}-\frac{1}{(\vbar_b)_{\d_3/2}}\Big)^2 d\x
\le C\Big(\frac{\e}{\l}\Dcal_v(U) + \e^2\Gcal_v(U)\Big).
\end{equation}
Summing up, we obtain the desired bound \eqref{bo1p}.

\bpf{bo1m}
We first observe
\begin{equation} \label{B1-decompose}
\abs{\Bcal_1^-(U)} 
\le C\int_{\O^c} a'\th\abs{\frac{1}{v}-\frac{1}{\vt}}\abs{u-\ut} d\x
+ C\int_{\O^c} a'\frac{1}{\vt} |\th-\tht| \abs{u-\ut} d\x.
\end{equation}
We estimate each term individually. Thus, we separate the first term into two parts. 
\[
\int_{\O^c} a'\th\abs{\frac{1}{v}-\frac{1}{\vt}}\abs{u-\ut} d\x 
\le \underbrace{\int_{\O^c} a'\th\abs{\frac{1}{v}-\frac{1}{\vbar}}\abs{u-\ut} d\x}_{\eqqcolon J_1}
+ \underbrace{\int_{\O^c} a'\th\abs{\frac{1}{\vbar}-\frac{1}{\vt}}\abs{u-\ut} d\x}_{\eqqcolon J_2}.
\]
To estimate \(J_1\), we split it again into two parts: 
\[
J_1 = \underbrace{\int_{\O^c} a'\th\abs{\frac{1}{v}-\frac{1}{\vbar}}\abs{u-\ut} \one{\d_3<1/v-1/\vt<2\d_3} d\x}_{\eqqcolon J_{11}}
+ \underbrace{\int_{\O^c} a'\th\abs{\frac{1}{v}-\frac{1}{\vbar}}\abs{u-\ut} \one{1/v-1/\vt\ge 2\d_3} d\x}_{\eqqcolon J_{12}}.
\]
Since \(\vbar = \vbar_b\) on \(\O^c\) and \(1-v/\vbar\) has positive lower bound on \(\{1/v-1/\vt\ge 2\d_3\}\), we have
\begin{align*}
J_{12} &\le 
\bigg(\int_{\O^c} a'\th^2\Big(\frac{1}{v}-\frac{1}{\vbar}\Big)^2\one{1/v-1/\vt\ge 2\d_3} d\x\bigg)^{1/2}
\Big(\int_{\O^c} a'(u-\ut)^2d\x\Big)^{1/2} \\
&\le C\bigg(\int_{\O^c} a'v^2\th^2\Big(\frac{1}{v}-\frac{1}{\vbar_b}\Big)^4 d\x\bigg)^{1/2}
\Big(\int_{\O^c} a'(u-\ut)^2d\x\Big)^{1/2}.
\end{align*}
On the other hand, using \((z+1)^2 \le C e^{\frac{C}{2}\abs{z}}\), it follows from \eqref{Qconst} and \eqref{locE} that for sufficiently small \(\d_0\), we have
\begin{equation}
\begin{aligned}\label{int-xi2}
&\int_\RR a'\Big(\abs{\x}+\frac{1}{\e}\Big)^2\one{1/v>1/\vtil+\d} d\x \\
&= \int_{\abs{\x}\ge\frac{1}{\e}\sqrt{\frac{\l}{\e}}} a'\Big(\abs{\x}+\frac{1}{\e}\Big)^2\one{1/v>1/\vtil+\d} d\x 
+ \int_{\abs{\x}<\frac{1}{\e}\sqrt{\frac{\l}{\e}}} a'\Big(\abs{\x}+\frac{1}{\e}\Big)^2\one{1/v>1/\vtil+\d} d\x \\
&\le C\frac{\l}{\e^2} \int_{\abs{z}\ge\sqrt{\frac{\l}{\e}}} e^{-C\abs{z}}(\abs{z}+1)^2 dz + C\frac{\l}{\e^3}\int_\RR a'\one{1/v>1/\vtil+\d} d\x \\
&\le C\frac{\l}{\e^2}e^{-\frac{C}{2}\sqrt{\frac{\l}{\e}}} + \frac{C}{\e}\le \frac{C}{\e}.
\end{aligned}
\end{equation}
Then, from \eqref{pwv1}, \eqref{pwv2'}, \eqref{int-xi2} with \(\int_{\x_0}^\x 1 d\z \le \abs{\x}+\abs{\x_0}\le\abs{\x}+\frac{1}{\e}\), we have
\begin{align*}
&\int_{\O^c} a'v^2\th^2\Big(\frac{1}{v}-\frac{1}{\vbar_b}\Big)^4 d\x
\le C\int_{\O^c} a'v^2\Big(\frac{1}{v}-\frac{1}{\vbar_b}\Big)^4 d\x
+ C\int_{\O^c} a'v^2\th^4\Big(\frac{1}{v}-\frac{1}{\vbar_b}\Big)^4 d\x \\
&\le C\left(\Dcal_v(U)+\Dcal_\th(U)\right)^2\int_{\O^c} a'\Big(\abs{\x}+\frac{1}{\e}\Big)^2\one{1/v>1/\vtil+\d} d\x
+ C\frac{\e^4}{\l^2}\left(\Dcal_v(U) + \Gcal_v(U) + \Gcal_\th(U)\right)^2 \\
&\le 
\frac{C}{\e}(\Dcal_v(U)+\Dcal_\th(U))^2 + C\e^2\left(\Gcal_v(U)+\Gcal_\th(U)\right)^2.
\end{align*}
Summing up, we obtain that 
\[
J_{12}\le C\sqrt{\frac{\e}{\l}}(\Dcal_v(U)+\Dcal_\th(U)) + C\e(\Gcal_v(U)+\Gcal_\th(U)).
\]
For the remaining terms, from \eqref{ineq-b3} and \eqref{mainbad-one}, we have 
\begin{align*}
J_2+J_{11} &\le C\int_{\O^c}a'\th\abs{u-\ut} d\x 
\le C\sqrt{\frac{\e}{\l}}\int_{\O^c}a'(u-\ut)^2 d\x + C\sqrt{\frac{\l}{\e}}\int_{\O^c}a'\th^2 d\x \\
&\le C\sqrt{\frac{\e}{\l}}\Gcal_u^-(U) + C\sqrt{\frac{\l}{\e}}\int_{\O^c} a'(\th-\thbar)^2 d\x
+ C\sqrt{\frac{\l}{\e}}\int_{\O^c} a' d\x \\
&\le C\Big(\sqrt{\frac{\e}{\l}}\Gcal_u^-(U) + \sqrt{\frac{\e}{\l}}(\Dcal_\th(U)+\Dcal_v(U)) + \e\Gcal_v(U)\Big).
\end{align*}
Therefore, we get
\begin{equation}\label{mainbad-v}
\int_{\O^c} a'\th\abs{\frac{1}{v}-\frac{1}{\vt}}\abs{u-\ut} d\x 
\le C\Big(\sqrt{\frac{\e}{\l}}(\Gcal_u^-(U)+\Dcal_v(U)+\Dcal_\th(U)) + \e(\Gcal_v(U)+\Gcal_\th(U))\Big).
\end{equation}

For the second term on the right-hand side of \eqref{B1-decompose}, we divide it into two parts: 
\[
\int_{\O^c} a'\frac{1}{\vt} |\th-\tht|\abs{u-\ut}d\x
\le C\underbrace{\int_{\O^c} a' |\th-\thbar|\abs{u-\ut} d\x}_{\eqqcolon J_3}
+ C\underbrace{\int_{\O^c} a' |\thbar-\tht|\abs{u-\ut} d\x}_{\eqqcolon J_4}
\]
To estimate \(J_3\), we split it once again into two parts: 
\[
J_3 = \underbrace{\int_{\O^c} a'|\th-\thbar|\abs{u-\ut}\one{\d_3<|\th-\thtil|<2\d_3} d\x}_{\eqqcolon J_{31}}
+ \underbrace{\int_{\O^c} a' |\th-\thbar|\abs{u-\ut}\one{|\th-\thtil|\ge 2\d_3} d\x}_{\eqqcolon J_{32}}.
\]
Since \(|\th-\thbar|\) and \(1/v\) have positive lower bounds on  \(\{|\th-\tht|\ge 2\d_3\}\cap \O^c\), \eqref{locE} implies
\begin{align*}
J_{32} &\le \Big(\int_{\O^c} a'(\th-\thbar)^2\one{|\th-\thtil|\ge 2\d_3}d\x\Big)^{\frac{1}{2}}\Big(\int_{\O^c} a'(u-\ut)^2d\x\Big)^{\frac{1}{2}}
\le \sqrt{\frac{\e^2}{\l}} \Big(\int_{\O^c}a'\frac{(\th-\thbar)^4}{v^2}\Big)^{\frac{1}{2}}.
\end{align*}
Using \eqref{pwth2'}, a simple variation of \eqref{int-xi2} and \(\int_{\x_0}^\x 1 d\z \le\abs{\x}+\frac{1}{\e}\), we obtain
\begin{equation}\label{mainbad-thpw}
\begin{aligned}
\int_{\O^c} a'\frac{(\th-\thbar)^4}{v^2} d\x
&\le \frac{C}{\e}(\Dcal_v(U)+\Dcal_\th(U))^2
+ C\frac{\e^4}{\l^2}\Dcal_\th(U)^2
\le \frac{C}{\e}(\Dcal_v(U)+\Dcal_\th(U))^2.
\end{aligned}
\end{equation}
This together with \eqref{locE} gives 
\[
J_{32}\le C\sqrt{\frac{\e}{\l}}\left(\Dcal_v(U)+\Dcal_\th(U)\right).
\]
For \(J_4\) and \(J_{31}\), by using \eqref{mainbad-one}, we have
\begin{align*}
J_4 + J_{31} &\le C\int_{\O^c} a'\abs{u-\ut} d\x
\le C\sqrt{\frac{\e}{\l}}\int_{\O^c} a'(u-\ut)^2 d\x + C\sqrt{\frac{\l}{\e}}\int_{\O^c} a' d\x \\
&\le C\sqrt{\frac{\e}{\l}}\Gcal_u^-(U) + C\sqrt{\frac{\e}{\l}}\Dcal_v(U) + C\e\Gcal_v(U).
\end{align*}
Summing up, these two with \eqref{mainbad-v}, we obtain the desired estimate \eqref{bo1m}.

\bpf{bo2} We consider the following decomposition: 
\[
\abs{\Bcal_2(U)-\Bcal_2(\Ubar)}
\le \underbrace{\int_\RR aR\abs{\frac{\th}{v}-\frac{\thbar}{\vbar}}\abs{\ut'} d\x}_{\eqqcolon J_1}
+ \underbrace{\int_\RR a\frac{R}{\vt} |\th-\thbar| \abs{\ut'} d\x}_{\eqqcolon J_2}
+ \underbrace{\int_\RR a\frac{\pt}{\vt}\abs{v-\vbar}\abs{\ut'} d\x}_{\eqqcolon J_3}.
\]
For \(J_1\), we split the integral into four pieces and use \eqref{der-scale} as follows: 
\begin{align*}
J_1 
&\le \underbrace{C\frac{\e}{\l}\int_\RR a'\frac{|\th-\thbar|}{\vbar} d\x}_{\eqqcolon J_{11}}
+ \underbrace{C\frac{\e}{\l}\int_\RR a'\th\abs{\frac{1}{v}-\frac{1}{\vbar}}\one{1/v>1/\vt+2\d_3} d\x}_{\eqqcolon J_{12}} \\
&\qquad
+ \underbrace{C\frac{\e}{\l}\int_\RR a' |\th-\thbar|\one{1/v\le 1/\vt+2\d_3} d\x}_{\eqqcolon J_{13}}
+ \underbrace{C\frac{\e}{\l}\int_\RR a'\thbar\abs{v-\vbar}\one{1/v\le 1/\vt+2\d_3} d\x}_{\eqqcolon J_{14}}.
\end{align*}
For \(J_{12}\), since \(\vbar=\vbar_b\) on \(\{1/v>1/\vtil+2\d_3\}\), \eqref{ineq-b1} and \eqref{ineq-b2} with Young's inequality imply
\begin{align*}
J_{12} &\le C\frac{\e}{\l}\int_\RR a'v(1+\th^2)\Big(\frac{1}{v}-\frac{1}{\vbar_b}\Big)^2 d\x 
\le C\frac{\e}{\l}\Big(\frac{\e}{\l}(\Dcal_v(U)+\Dcal_\th(U)) + \e^2(\Gcal_v(U)+\Gcal_\th(U))\Big).
\end{align*}
For the other terms, we utilize \eqref{rel_Phi1} and \eqref{der-scale} to obtain that 
\begin{align}
J_{11} + J_{13} + J_2 &\le C\frac{\e}{\l}\int_\RR a'\Big[\Phif{\th}{\tht}-\Phif{\thbar}{\tht}\Big] d\x \le C\frac{\e}{\l}(\Gcal_\th(U)-\Gcal_\th(\Ubar)), \label{ineq-Phi_rel-th} \\
J_{14} + J_3 &\le C\frac{\e}{\l}\int_\RR a'\Big[\Phif{v}{\vt}-\Phif{\vbar}{\vt}\Big] d\x \le C\frac{\e}{\l}(\Gcal_v(U)-\Gcal_v(\Ubar)).
\label{ineq-Phi_rel-v}
\end{align}
Summing up, we obtain the desired bound.

\noindent{\textbf{Proof of}} \eqref{bo3} and \eqref{bo4}\textbf{:}
With \eqref{der-scale}, we proceed as in \eqref{ineq-Phi_rel-th} and \eqref{ineq-Phi_rel-v}: 
\[
\abs{\Bcal_3(U)-\Bcal_3(\Ubar)}\le C\frac{\e}{\l}\int_\RR a'\Big[\Phif{v}{\vt}-\Phif{\vbar}{\vt}\Big] d\x \le C\frac{\e}{\l}(\Gcal_v(U)-\Gcal_v(\Ubar)),
\]\[
\abs{\Bcal_4(U)-\Bcal_4(\Ubar)}\le C\frac{\e}{\l}\int_\RR a'\Big[\Phif{\th}{\tht}-\Phif{\thbar}{\tht}\Big] d\x \le C\frac{\e}{\l}(\Gcal_\th(U)-\Gcal_\th(\Ubar)).
\]

\bpf{bo} First, using \eqref{rel_Phi_v} and \eqref{rel_Phi_th}, we obtain that 
\begin{equation} \label{barsmall}
\abs{\Ical_1(\vbar,\thbar)} + \abs{\Bcal_2(\Ubar)} + \abs{\Bcal_3(\Ubar)} + \abs{\Bcal_4(\Ubar)} \le C\int_\RR a'\Phif{v}{\vt} d\x + C\int_\RR a'\Phif{\th}{\tht} d\x \le C\frac{\e^2}{\l}.
\end{equation}
Then, from \eqref{bo1p}-\eqref{bo4} with \eqref{locE} and \eqref{l2-v}-\eqref{l2-th}, we conclude that \eqref{bo} as we desired.
This completes the proof of Proposition \ref{prop_hyperbolic_out}.
\qed

\subsection{Control of parabolic terms} \label{section-parabolic}
We will control the parabolic bad terms  \(\Bcal^v(U), \Bcal^u(U)\), and \(\Bcal^\th(U)\) of \eqref{ybg-first}. 
Recall \(\Gcal_u^+(U)\) of \eqref{ggd} and that \(\Dcal(U)=\Dcal_v(U)+\Dcal_u(U)+\Dcal_\th(U)\).

\begin{proposition}\label{prop_parabolic_out}
Under the same assumptions as Proposition \ref{prop_hyperbolic_out}, we have
\begin{align}
&\begin{aligned} \label{para-v}
\abs{\Bcal^v(U)} \le C \left( \left(\l+\frac{\e}{\l}\right)\Dcal(U) + \e(\Gcal_v(U)+\Gcal_\th(U)) \right)
\end{aligned} \\
&\begin{aligned} \label{para-th}
\abs{\Bcal^\th(U)} \le C \left( \left(\l+\frac{\e}{\l}\right)\Dcal(U) + \e(\Gcal_v(U) + \Gcal_\th(U)) \right)
\end{aligned} \\
&\begin{aligned} \label{para-u}
&\abs{\Bcal^u(U)} \le C \Big( \left(\l+\frac{\e}{\l}\right)\Dcal(U)
+ \e\Gcal(U)\Big)
\end{aligned} \\
&\begin{aligned} \label{para-total}
&\abs{\Jcal^{para}(U)} \le C \Big( \left(\l+\frac{\e}{\l}\right)\Dcal(U)
+ \e\Gcal(U) \Big)
\end{aligned}
\end{align}
We also have \(C^*>0\) which depends on \(\d_3>0\) such that 
\begin{equation}\label{para-tot}
\abs{\Jcal^{para}(U)} \le C^* \Big(\frac{\e^2}{\l} + \Big(\l+\frac{\e}{\l}\Big)\Dcal(U)\Big).
\end{equation}
\end{proposition}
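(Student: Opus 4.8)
The plan is to prove \eqref{para-v}, \eqref{para-th}, \eqref{para-u} and \eqref{para-total} by bounding each of the integrals appearing in $\Bcal^v(U)$, $\Bcal^u(U)$, $\Bcal^\th(U)$ of \eqref{RHS-v}--\eqref{RHS-th} individually, after sorting them into two families. In the first family are the terms carrying the weight derivative $a'$; for such a term I would apply Young's inequality to peel off the factor that contains a spatial derivative $\rd_\x$ of a perturbation, absorb it into one of the diffusion terms $\Dcal_v(U)$, $\Dcal_u(U)$, $\Dcal_\th(U)$, and then treat the remaining purely algebraic factor exactly as the hyperbolic bad terms were treated in Section \ref{section_hyperbolic}: by invoking the pointwise bounds of Lemma \ref{lemma_hyp_pw}, the integrated estimates of Lemma \ref{lemma_hyp_ineq} (for instance \eqref{ineq-b1}--\eqref{ineq-b3}), the two localization Lemmas \ref{lemma_pushing} and \ref{lemma_Linfty}, and \eqref{locE}, so as to bound it by a small multiple of $\Dcal(U)$ plus $\e(\Gcal_v(U)+\Gcal_\th(U))$. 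In the second family are the terms carrying the unlocalized weight $a$ times a product of viscous-shock derivatives $\vt'$, $\ut'$, $\tht'$, $\vt''$, and so on; here \eqref{tail} and \eqref{der-scale} give that each such derivative is $\le C\e^2 e^{-C\e\abs{\x}}$, that $\abs{\vt''}\le C\e\abs{\vt'}$ (and similarly for $\ut,\tht$), and that $\abs{\vt'}\sim\frac{\e}{\l}\abs{a'}$, so after a Cauchy--Schwarz splitting one copy of a shock derivative plays the role of $a'$ and the leftover factor $\e$ or $\l$ produces the prefactor $\big(\l+\frac{\e}{\l}\big)$ in front of $\Dcal(U)$.

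For $\Bcal^v(U)$ I would expand $v(\t_0+\th^2)-\vt(\t_0+\tht^2)=(\t_0+\th^2)(v-\vt)+\vt(\th-\tht)(\th+\tht)$, rewrite $v-\vt=-v\vt\big(\frac{1}{v}-\frac{1}{\vt}\big)$, and note that the integrand of $\Dcal_v(U)=R\int a\tht v(\t_0+\th^2)\abs{\big(\frac{1}{v}-\frac{1}{\vt}\big)_\x}^2\,d\x$ is precisely what Young's inequality must produce. The leading localized term of \eqref{RHS-v}, the one with $(a\tht)'$, is then controlled by $\Dcal_v(U)$ together with \eqref{ineq-b1} and \eqref{ineq-b2}, just as in the proof of \eqref{bo1p}; the remaining three terms of \eqref{RHS-v} carry $(1/\vt)'$ or $\big((\t_0+\tht^2)\frac{\vt'}{\vt}\big)'$, hence two shock-derivative factors, so their $\e$-smallness comes from $\abs{\vt''}\le C\e\abs{\vt'}$ and $\abs{\vt'}\le C\e^2 e^{-C\e\abs{\x}}$. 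This gives \eqref{para-v}. Estimate \eqref{para-th} follows in the same way, now pairing the top-derivative factors of \eqref{RHS-th} with $\Dcal_\th(U)$ and using \eqref{ineq-b3} together with \eqref{rel_Phi1} for the pieces that are only linear in $\abs{\th-\thbar}$.

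For $\Bcal^u(U)$ the terms of \eqref{RHS-u} that involve $\ut'$ or $\ut''$ but carry no $\rd_\x$ other than the single factor $(u-\ut)_\x$ belong to the second family and are routine. The delicate terms are those where, after pairing the derivative factor with $\Dcal_u(U)$ or $\Dcal_v(U)$, one is left with an integral of the type $\int_\RR \abs{a'}^2\frac{\th^2}{v}(\th-\tht)^2\,d\x$, which cannot be absorbed by \eqref{locE} alone because of the high power of $\th$. For these I would first establish, by combining the pointwise bounds of Lemma \ref{lemma_hyp_pw} with the sharp inequality of Lemma \ref{lemma_Linfty} and the exponential decay \eqref{der-scale} of $a'$, a pointwise bound controlling $\abs{a'(\x)}\,\th^3(\x)/v(\x)$; then, factoring $\abs{a'}^2\frac{\th^2}{v}(\th-\tht)^2=\big(\abs{a'}\frac{\th^3}{v}\big)\big(\abs{a'}\frac{(\th-\tht)^2}{\th}\big)$ and using $\abs{a'}\frac{(\th-\tht)^2}{\th}\lesssim a'\Phif{\th}{\tht}$ with \eqref{locE} closes the estimate; the truncation $\ubar$ introduced in Section \ref{section_hyperbolic} serves to split $u-\ut$ into a bounded truncated part and a tail controlled by $\Gcal_u^{\pm}(U)$. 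Summing $\Jcal^{para}(U)=\Bcal^v(U)+\Bcal^u(U)+\Bcal^\th(U)$ and choosing $\e_0,\d_0$ small then yields \eqref{para-u} and \eqref{para-total}. Finally, \eqref{para-tot} follows from \eqref{para-total} by discarding smallness: $\Gcal_v(U),\Gcal_\th(U)\le C\frac{\e^2}{\l}$ by \eqref{l2-v}--\eqref{l2-th} and $\Gcal_u^{\pm}(U)\le C\frac{\e^2}{\l}$ by \eqref{locE}, so $\e\Gcal(U)\le C^*\frac{\e^2}{\l}$ while the $\big(\l+\frac{\e}{\l}\big)\Dcal(U)$ term is retained, with $C^*$ depending on $\d_3$ through the constants of Proposition \ref{prop_hyperbolic_out}.

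The main obstacle will be precisely these temperature-weighted velocity terms in $\Bcal^u(U)$: since $\m(\th)=\th^2$ by \eqref{mu-def}, the parabolic error terms carry high powers of $\th$ that the localized energy \eqref{locE} cannot absorb by itself, and overcoming this forces the somewhat delicate $L^\infty$ bound on $\abs{a'}\th^3/v$, which must be assembled from the exponential tail of the weight, the pushing estimate of Lemma \ref{lemma_pushing}, the pointwise $L^\infty$ estimate of Lemma \ref{lemma_Linfty}, and the a priori smallness in \eqref{locE}. This is a genuinely new difficulty compared with the isentropic analysis, and I expect it to absorb most of the work.
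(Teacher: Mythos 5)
Your overall strategy is the paper's: split $\Bcal^v,\Bcal^u,\Bcal^\th$ term by term, use Young's inequality to feed each $\rd_\x$-factor into $\Dcal(U)$, reuse the truncation machinery of Section \ref{section_hyperbolic} for the algebraic remainders, and — for the temperature-heavy velocity terms — build an $L^\infty$ bound on $\abs{a'}\th^3/v$ from Lemma \ref{lemma_Linfty}, Lemma \ref{lemma_pushing} and \eqref{locE}; this is exactly how the paper organizes the proof (its Lemma \ref{lemma_para_ineq} packages the intermediate integrated and pointwise bounds \eqref{ineq-p1}--\eqref{trans-bound}).

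There is, however, one step that fails as written. In your treatment of $\int_\RR \abs{a'}^2\frac{\th^2}{v}(\th-\tht)^2\,d\x$ you factor the integrand as $\bigl(\abs{a'}\frac{\th^3}{v}\bigr)\bigl(\abs{a'}\frac{(\th-\tht)^2}{\th}\bigr)$ and claim $\abs{a'}\frac{(\th-\tht)^2}{\th}\lesssim a'\Phi(\th/\tht)$. This pointwise inequality is false in the low-temperature regime: as $\th\to 0$ one has $\frac{(\th-\tht)^2}{\th}\sim \tht^2/\th\to\infty$ like $1/\th$, whereas $\Phi(\th/\tht)\sim -\log\th$ grows only logarithmically, and no lower bound on $\th$ is available (handling extreme values of $\th$ is precisely the point of the analysis). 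The fix is the one the paper uses in its proof of \eqref{ineq-p4}: split according to $\th\ge\tht-\d_3$ versus $\th<\tht-\d_3$; on the first set your quadratic comparison is legitimate (there $\th$ is bounded below and one can use \eqref{rel_Phi_th} together with the $L^\infty$ bound on $a'(\th+\th^2)/v$), while on the second set one bounds $\frac{\th^2}{v}(\th-\tht)^2\le C\frac{1}{v}$ and uses the $L^\infty$ bound $a'\frac1v\le C\e\l+C\frac{\e^2}{\l}\Dcal(U)$ together with $\one{\th<\tht-\d_3}\le C\,\Phi(\th/\tht)$ and \eqref{locE}. A smaller inaccuracy: for \eqref{para-tot} you assert $\Gcal_u^{+}(U)\le C\frac{\e^2}{\l}$ "by \eqref{locE}", but $\Gcal_u^+$ contains $(p-\pt)^2$, which \eqref{locE} does not control when $v$ is small or $\th$ is large; the correct statement is \eqref{trans-bound}, $\Gcal_u^+(U)\le C\bigl(\frac{\e}{\l}\Dcal(U)+\frac{\e^2}{\l}\bigr)$, obtained through the hyperbolic estimate on $\Bcal_1^+$ — this does not change the final form of \eqref{para-tot}, since the extra $\frac{\e}{\l}\Dcal(U)$ is absorbed into the retained diffusion term, but it is needed to make the argument close.
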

Note that \eqref{para-total} is a direct consequence of \eqref{para-v}-\eqref{para-u}.
We prove them in the following subsections, by using Lemma \ref{lemma_para_ineq}.
Moreover, \eqref{para-tot} follows from \eqref{para-total} combined with \eqref{locE} and \eqref{trans-bound}, which is also stated in Lemma \ref{lemma_para_ineq}. 

Among the aforementioned bounds that include \(\Dcal(U)\), the bounds in \eqref{para-v} and \eqref{para-th} are independent of \(\Dcal_u(U)\).
This will be demonstrated in the proof, though it does not carry any significance for the subsequent analysis.

\begin{lemma} \label{lemma_para_ineq}
Under the same assumptions as Proposition \ref{prop_hyperbolic_out}, there is a constant \(C>0\) (depending only on \(\d_3>0\)) which makes the following inequalities be true:
\begin{align}
&\int_\RR a' v \Big(\frac{1}{v}-\frac{1}{\vt}\Big)^2 d\x
\le C \Big( \Gcal_v(U) + \frac{\e}{\l}\Dcal_v(U) \Big)
\label{ineq-p1} \\
&\int_\RR \abs{a'}^2 v\th^2 \Big( \frac{1}{v}-\frac{1}{\vt} \Big)^2 d\x
\le C\e\l (\Dcal(U) + \Gcal_v(U)+\Gcal_\th(U))
\label{ineq-p2} \\
&\int_\RR \abs{a'}^2 v (\th-\tht)^2 d\x
\le C \Big( \e^2 \Dcal_\th(U)+\e\l(\Gcal_v(U)+\Gcal_\th(U))\Big)
\label{ineq-p3} \\
&\abs{\frac{\sqrt{\th}}{\sqrt{v}}(\th-\thbar_s)(\x)}
\le C\Big(\sqrt{\Dcal(U)}
\sqrt{\int_{\x_0}^{\x}\th\one{\th>\tht+\d_3}d\z+\frac{\e^2}{\l}}
+ \frac{\e}{\l}(\Gcal_v(U) + \Gcal_\th(U))\Big)
\label{pwth3} \\
&\int_\RR \abs{a'}^2 \frac{\th^2}{v}(\th-\tht)^2 d\x
\le C \Big( \e^2\Dcal(U) + \e\l(\Gcal_v(U)+\Gcal_\th(U)) \Big)
\label{ineq-p4} \\
&\int_\RR a' \frac{(\th-\tht)^2}{v} d\x
\le C\Big( \Gcal_\th(U) + \frac{\e}{\l}\Dcal(U) + \e^2 \Gcal_v(U) \Big)
\label{ineq-p5} \\
&\int_\RR a' (\th-\tht)^2 d\x
\le C \Big( \Gcal_\th(U) + \frac{\e}{\l} \Dcal_\th(U) \Big)
\label{ineq-p6} \\ 
&\int_\RR a'(u-\ut)^2 d\x \le \Gcal_u^-(U) + C\Big(\Gcal_u^+(U) + \Gcal_v(U) + \Gcal_\th(U) + \frac{\e}{\l}\Dcal(U)\Big)
\label{ineq-aux-u} \\
&\int_\RR \abs{a'}^2 \frac{\th^3}{v}(u-\ut)^2 d\x
\le C \Big( \e^2\Dcal(U) + \e\l\Gcal(U) \Big)
\label{ineq-p7}\\
&\begin{aligned} \label{pwu}
&\abs{\frac{\th}{\sqrt{v}}(u-\ubar)(\x)}
\le C \bigg(\sqrt{\Dcal_u(U)\int_{\x_0}^{\x} \th \one{\abs{u-\ut}>\d_3}d\z}
+ \sqrt{\Dcal(U)}\sqrt{\int_{\x_0}^\x (u-\ubar)^2d\z} \\
&\hspace{40mm}
+\e\sqrt{\frac{\e}{\l}\Gcal(U)}
\sqrt{\int_{\x_0}^\x (u-\ubar)^2d\z} \bigg).
\end{aligned} \\
&\begin{aligned} \label{ineq-p8}
&\int_\RR a' \frac{\th^2}{v}(u-\ut)^2 d\x
\le C\Big(\Gcal(U) + \frac{\e}{\l}\Dcal(U)\Big)
\end{aligned} \\
&\int_\RR \abs{a'}^2 \frac{\th}{v} (\th-\tht)^2 d\x
\le C \Big( \e^2\Dcal(U) + \e\l (\Gcal_v(U)+\Gcal_\th(U)) \Big)
\label{ineq-p9} \\
&\Gcal_u^+(U) \le C\Big(\frac{\e}{\l}\Dcal(U)+\frac{\e^2}{\l}\Big)
\label{trans-bound}
\end{align}
\end{lemma}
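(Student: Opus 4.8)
The plan is to establish the fourteen inequalities one at a time, all from the same small toolbox. First I fix, exactly as in the opening of the proof of Lemma~\ref{lemma_hyp_pw}, an anchor point $\x_0\in[-1/\e,1/\e]$ at which $v-\vbar$, $\th-\thbar$, $u-\ubar$ all vanish and $\eta(U(\x_0)|\Ut(\x_0))\le C(\e/\l)^2$; this uses \eqref{lower-v}, \eqref{der-a}, the smallness \eqref{locE} (valid because $\abs{Y(U)}\le\e^2$) and Lemma~\ref{lem-quad}. The remaining ingredients are: the exponential localization $\abs{a'(\x)}\le C\e\l e^{-C\e\abs{\x}}$ and the comparabilities $\abs{a'}\sim\frac\l\e\abs{\vt'}\sim\frac\l\e\abs{\ut'}\sim\frac\l\e\abs{\tht'}$ from \eqref{der-scale}; the indicator bounds \eqref{Qconst}, \eqref{Qlin}, \eqref{Qlin'} and the local quadratic/linear comparisons \eqref{rel_Phi_v}, \eqref{rel_Phi_th}, \eqref{rel_Phi1}; the ``pushing'' Lemma~\ref{lemma_pushing} and the $L^\infty$ Lemma~\ref{lemma_Linfty}, which turn an inner integral $\int_{\x_0}^\x(\cdots)\,d\z$ occurring inside a pointwise bound into a weighted $L^1$ norm of $(\cdots)$ against $\vt'$; and the already-proven estimates of Lemma~\ref{lemma_hyp_ineq} (notably \eqref{ineq-b1}, \eqref{ineq-b3}, \eqref{ineq-b4'}, \eqref{ineq-b4}) together with \eqref{bo1p} and \eqref{barsmall}.

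For the estimates of the form $\int a'(\cdot)^2\,d\x$, namely \eqref{ineq-p1}, \eqref{ineq-p5}, \eqref{ineq-p6}, \eqref{ineq-aux-u}, \eqref{ineq-p8}, I would split $\RR$ into $\O=\{1/v-1/\vt\le\d_3\}$ and $\O^c$. On $\O$ the specific volume stays bounded away from $0$, so the integrand is controlled by $\Phif{v}{\vt}+\Phif{\th}{\tht}$ — using local quadraticity near the shock together with \eqref{Qlin}, \eqref{Qlin'} for the big-value part — hence by $\Gcal_v+\Gcal_\th$; on $\O^c$ the one-sided truncated functions $\vbar_b,\vbar_s,\thbar_s$ take over and Lemma~\ref{lemma_hyp_ineq} produces the $\frac\e\l\Dcal$ contributions. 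Then \eqref{ineq-aux-u} follows from $(u-\ut)^2\le 2\big(u-\ut-\tfrac{p-\pt}{\s_\e}\big)^2+\tfrac{2}{\s_\e^2}(p-\pt)^2$ restricted to $\O$ and $\O^c$, with $\int_\O a'(p-\pt)^2=2\s_\e\Bcal_1^+$ estimated by \eqref{bo1p} and \eqref{barsmall}; and \eqref{trans-bound} is then immediate, since $\Gcal_u^+\le C\int a'(u-\ut)^2\one{1/v\le1/\vt+\d_3}+C\int a'(p-\pt)^2\one{1/v\le1/\vt+\d_3}$, the first term bounded by \eqref{locE} and the second splitting into an inside-truncation piece ($\lesssim\Gcal_v+\Gcal_\th\lesssim\e^2/\l$) and a big-$\th$ piece controlled by \eqref{ineq-b3}--\eqref{ineq-b4}. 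The pointwise bounds \eqref{pwth3} and \eqref{pwu} are proved exactly like those of Lemma~\ref{lemma_hyp_pw}: write the quantity as $\int_{\x_0}^\x(\cdots)_\z\,d\z$, expand the $\z$-derivative by the product rule, bound each resulting term by Cauchy--Schwarz against $\Dcal_u,\Dcal_v,\Dcal_\th$ with the relevant truncation indicator, and send the exponential-tail remainders into $\frac\e\l\Gcal$ and, where an $\int\abs{a'}^2(\cdots)$ appears, into \eqref{ineq-p2}, \eqref{ineq-p3}.

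The genuinely delicate estimates are the $\abs{a'}^2$-weighted integrals carrying high powers of $\th$: \eqref{ineq-p2}, \eqref{ineq-p3}, \eqref{ineq-p4}, \eqref{ineq-p7}, \eqref{ineq-p9}. The crude bound $\abs{a'}^2\le C\e\l\,a'$ gains only one factor $\e\l$, which is not enough once the integrand also carries $\th^2$ or $\th^3$, since $\Phif{\th}{\tht}$ grows only linearly in $\th$ while $\th$ is unbounded. Following Section~\ref{subsec:new4}, the remedy is to first obtain an $L^\infty$ bound on the worst localized weight — for instance $\norm{a'\,\th^3/v}_{L^\infty}$ for \eqref{ineq-p4} and \eqref{ineq-p7}, and an analogous bound on $a'\,v\th^2(1/v-1/\vt)^2$-type quantities for \eqref{ineq-p2} — by running the same FTC/product-rule expansion and closing it with the \emph{sharp} inequality of Lemma~\ref{lemma_Linfty}, which is precisely what prevents the loss of a power of the weight derivative; then one factors the integrand as $\abs{a'}\cdot\big(a'\th^3/v\big)\cdot(\text{lower order in }\th)$, pulls out the sup, and closes with \eqref{locE} (or \eqref{ineq-p6}), while \eqref{ineq-p9} follows from \eqref{ineq-p4} since $\th/v\le C(1+\th^2)/v$ on the relevant range. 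I expect this last step — producing the $L^\infty$ bounds on the localized high-$\th$ weights, i.e.\ checking that $\th$ cannot outgrow the exponential tail $e^{-C\e\abs{\x}}$ of $a'$ using only $\Dcal$ and the single small quantity \eqref{locE} — to be the main obstacle; everything else is systematic bookkeeping built from the template of Lemmas~\ref{lemma_hyp_pw}--\ref{lemma_hyp_ineq}.
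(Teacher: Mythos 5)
Your proposal follows essentially the same route as the paper's proof: fix the anchor point \(\x_0\) via \eqref{locE} and Lemma \ref{lem-quad}, prove the pointwise bounds \eqref{pwth3} and \eqref{pwu} by the FTC/product-rule template of Lemma \ref{lemma_hyp_pw}, reuse Lemma \ref{lemma_hyp_ineq} together with Lemmas \ref{lemma_pushing}--\ref{lemma_Linfty} for the \(a'\)-weighted integrals, and handle the \(\abs{a'}^2\)-weighted, high-power-of-\(\th\) integrals \eqref{ineq-p2}, \eqref{ineq-p3}, \eqref{ineq-p4}, \eqref{ineq-p7}, \eqref{ineq-p9} by first extracting \(L^\infty\) bounds on localized weights such as \(a'\th^2\) and \(a'\th^3/v\) through Lemma \ref{lemma_Linfty}, exactly as the paper does in \eqref{Linfty-th1}--\eqref{Linfty-thv}. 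One small bookkeeping correction: for \eqref{ineq-aux-u} you must keep the intermediate estimate \(\abs{\Bcal_1^+(\Ubar)}\le C(\Gcal_v(U)+\Gcal_\th(U))\) (the paper's \eqref{B_1bar}) rather than invoking the final \(C\e^2/\l\) form of \eqref{barsmall}, since the right-hand side of \eqref{ineq-aux-u} contains no constant term and \(\e^2/\l\) cannot be absorbed into the good terms.
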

Note that among the bounds above that include \(\Dcal(U)\), the bound in \eqref{ineq-p8} is the only one that actually depends on \(\Dcal_u(U)\).

\begin{proof}
\bpf{ineq-p1} We decompose the left-hand side of \eqref{ineq-p1} as follows:
\begin{align*}
\int_\RR a'v\Big(\frac{1}{v}-\frac{1}{\vt}\Big)^2 d\x
\le \underbrace{\int_\O a'v\Big(\frac{1}{v}-\frac{1}{\vt}\Big)^2 d\x}_{\eqqcolon J_1} 
+ \underbrace{C\int_{\O^c} a'v\Big(\frac{1}{v}-\frac{1}{\vbar_b}\Big)^2 d\x}_{\eqqcolon J_2} 
+ \underbrace{C\int_{\O^c} a'v\Big(\frac{1}{\vbar_b}-\frac{1}{\vt}\Big)^2 d\x}_{\eqqcolon J_3}.
\end{align*}
Then, from \eqref{rel_Phi_v}, \eqref{Phi-sim} and \eqref{Qconst}, we have 
\[
J_1 + J_3 \le C\int_\RR a'\Phif{v}{\vt} d\x \le C\Gcal_v(U).
\]
This together with \eqref{ineq-b1} gives the desired result \eqref{ineq-p1}.

\bpf{ineq-p2} 
First of all, we obtain \eqref{Linfty-th1'}, an \(L^\infty\) bound of \(a'\th^2\), using Lemma \ref{lemma_Linfty}.
Since \(v\le C+Cv\one{1/v\le1/\vt+\d_3}\le C+C\Phi(v/\vt)\), it follows from Lemma \ref{lemma_Linfty} with \eqref{pwth1}, \eqref{der-a} and \eqref{locE} that
\begin{equation}\label{Linfty-th1}
a'(\th-\thbar)^2 (\x) \le C\Dcal_\th(U)a'(\x)\int_{\x_0}^\x v(\z) d\z
\le C\Dcal_\th(U) \int_\RR a'v d\x \le C\l\Dcal_\th(U). 
\end{equation}
This with \eqref{der-scale} also gives that 
\begin{equation}\label{Linfty-th1'}
a'\th^2(\x)\le Ca'(\thbar^2 + (\th-\thbar)^2)(\x) \le C\e\l + C\l\Dcal_\th(U).
\end{equation}
Now, we decompose the left-hand side of \eqref{ineq-p2}: 
\begin{align*}
&\int_\RR \abs{a'}^2 v\th^2\Big(\frac{1}{v}-\frac{1}{\vt}\Big)^2 d\x \\
&\le \underbrace{C\int_\O \abs{a'}^2 v\th^2\Big(\frac{1}{v}-\frac{1}{\vt}\Big)^2 d\x}_{\eqqcolon J_1} 
+ \underbrace{C\int_{\O^c} \abs{a'}^2 v\th^2\Big(\frac{1}{v}-\frac{1}{\vbar_b}\Big)^2 d\x}_{\eqqcolon J_2} 
+ \underbrace{C\int_{\O^c} \abs{a'}^2 v\th^2\Big(\frac{1}{\vbar_b}-\frac{1}{\vt}\Big)^2 d\x}_{\eqqcolon J_3}.
\end{align*}
Here, \eqref{rel_Phi_v}, \eqref{Phi-sim}, \eqref{locE} and \eqref{Qconst} together with the \(L^\infty\) bound \eqref{Linfty-th1'} give
\[
J_1 + J_3 \le C(\e\l + \l\Dcal_\th(U))\int_\RR a'\Phif{v}{\vt} d\x
\le C\e^2\Dcal_\th(U) + C\e\l\Gcal_v(U).
\]
This and \eqref{ineq-b2} combined with \eqref{der-scale} imply \eqref{ineq-p2}.

\bpf{ineq-p3} We decompose the left-hand side of \eqref{ineq-p3} as follows: 
\begin{align*}
\int_\RR \abs{a'}^2v(\th-\tht)^2 d\x
&\le \underbrace{\int_\RR \abs{a'}^2v(\th-\thbar)^2\one{1/v\ge 1/\vt-\d_3} d\x}_{\eqqcolon J_1}
+ \underbrace{\int_\RR \abs{a'}^2v(\thbar-\tht)^2\one{1/v\ge 1/\vt-\d_3} d\x}_{\eqqcolon J_2} \\
&\hspace{2mm}
+ \underbrace{\int_\RR \abs{a'}^2v(\th-\thbar)^2\one{1/v<1/\vt-\d_3} d\x}_{\eqqcolon J_3} 
+ \underbrace{\int_\RR \abs{a'}^2v(\thbar-\tht)^2\one{1/v<1/\vt-\d_3} d\x}_{\eqqcolon J_4}.
\end{align*}
For \(J_1\) we use \eqref{ineq-b3}, and for \(J_2\) \eqref{rel_Phi_th} and \eqref{Phi-sim} imply
\[
J_1 \le C\e\l\int_\RR a'(\th-\thbar)^2 d\x \le C\e^2\Dcal_\th(U), \qquad
J_2 \le C\e\l\Gcal_\th(U).
\] 
For \(J_3\), \eqref{Linfty-th1} with \eqref{Qlin} and \eqref{locE} gives 
\[
J_3 \le C\l\Dcal_\th(U)\int_\RR a'\Phif{v}{\vt} d\x
\le C\e^2\Dcal_\th(U).
\]
Finally, for \(J_4\), we use \eqref{der-scale} to have \(a'(\thbar-\tht)^2 \le C\e\l\d_3^2\) so that \eqref{Qlin} implies
\[
J_4 \le C\e\l\Gcal_v(U).
\]
Summing up, we obtain the desired bound. 

\bpf{pwth3} By the fundamental theorem of calculus, \eqref{der-scale}, and \eqref{Qconst}, for any \(\x\in\RR\),
\begin{align*}
\abs{\frac{\sqrt{\th}}{\sqrt{v}}(\th-\thbar_s)(\x)}
&\le C\int_{\x_0}^{\x} \bigg[\abs{\frac{\sqrt{\th}}{\sqrt{v}}(\th-\thbar_s)_\x}
+ \abs{\frac{1}{\sqrt{v\th}}(\th-\thbar_s)(\th-\thtil)_\x}
+ \abs{\frac{1}{\sqrt{v\th}}(\th-\thbar_s)\tht'} \\
&\qquad\qquad\qquad
+ \abs{\sqrt{v\th}(\th-\thbar_s)\Big(\frac{1}{v}-\frac{1}{\vtil}\Big)_\x}
+ \abs{\sqrt{v\th}(\th-\thbar_s)\vt'}\bigg] d\x \\
&\hspace{-3mm}\le C\sqrt{\Dcal(U)}\sqrt{\int_{\x_0}^{\x}\th\one{\th>\tht+\d_3}d\z}
+ C\frac{\e}{\l}\sqrt{\Gcal_\th(U)}\sqrt{\int_\RR a'\frac{\th}{v}\one{\th>\thtil+\d_3} d\x} \\
&\qquad
+ C\frac{\e}{\l}\sqrt{\int_\RR \abs{a'}^2v(\th-\thbar_s)^2d\x}\sqrt{\int_{\x_0}^\x\th\one{\th>\tht+\d_3}d\z}.
\end{align*}
To bound the integral in this estimate, we consider the inequality 
\begin{equation}\label{v-2delta}
\frac{1}{v} \le C + Cv\Big(\frac{1}{v}-\frac{1}{\vbar_b}\Big)^2.
\end{equation}
Then, this with \eqref{ineq-b2} and \eqref{Qlin'} implies 
\begin{align*}
\int_\RR a'\frac{\th}{v}\one{\th>\tht+\d_3} d\x
&\le C\int_\RR a'\th\one{\th>\tht+\d_3} d\x + C\int_\RR a' v\th^2 \Big(\frac{1}{v}-\frac{1}{\vbar_b}\Big)^2d\x \\
&\le C\left(\Gcal_\th(U) + \frac{\e}{\l}\Dcal(U) + \e^2 \Gcal_v(U)\right).
\end{align*}
Further, from the bound for \(J_1 + J_3\) in the proof of \eqref{ineq-p3}, we have 
\[
\int_\RR \abs{a'}^2v(\th-\thbar_s)^2 d\x \le C\e^2\Dcal_\th(U).
\]
This together with \eqref{locE} gives the desired bound:
\[
\abs{\frac{\sqrt{\th}}{\sqrt{v}}(\th-\thbar_s)(\x)}
\le C\bigg(\sqrt{\Dcal(U)}
\sqrt{\int_{\x_0}^{\x}\th\one{\th>\tht+\d_3}d\z+\frac{\e^2}{\l}}
+ \frac{\e}{\l}(\Gcal_v(U) + \Gcal_\th(U))\bigg).
\]

\bpf{ineq-p4} 
We first use Lemma \ref{lemma_Linfty} to establish the following pointwise estimates.\\
From \eqref{pwth3}, we use \eqref{locE} and \eqref{Qlin'} to find that for any \(\x\in\RR\),
\begin{equation}
\begin{aligned}\label{Linfty-th2}
a'\frac{\th}{v}(\th-\thbar_s)^2(\x)
&\le C\Dcal(U)\Big(\int_\RR a'\th\one{\th>\tht+\d_3}d\x + \e^3\Big)
+ C\e^2(\Gcal_v(U)+\Gcal_\th(U)) \\
&\le C\frac{\e^2}{\l}\Dcal(U) + C\e^2(\Gcal_v(U)+\Gcal_\th(U)).
\end{aligned}
\end{equation}
Moreover, \eqref{v-2delta}, \eqref{pwv1}, \eqref{Qconst}, \eqref{locE} and \eqref{der-scale} give another \(L^\infty\) bound:
\begin{equation}\label{Linfty-v}
a'\frac{1}{v}(\x) \le Ca'(\x) + C\Dcal(U)\int_{\x_0}^\x a'\Phif{v}{\vtil}(\z)d\z + Ca'(\x)\frac{\e^2}{\l}\Gcal_v(U) 
\le C\e\l + C\frac{\e^2}{\l}\Dcal(U).
\end{equation}
Combining these two results and using \eqref{locE}, we obtain several other bounds. 
\begin{align}
a'\frac{\th^3}{v}(\x) &\le Ca'\frac{1}{v}(\x) + Ca'\frac{\th}{v}(\th-\thbar_s)^2(\x)
\le C\e\l + C\frac{\e^2}{\l}\Dcal(U), 
\label{Linfty-th3v} \\
a'\frac{\th + \th^2}{v}(\x) &\le a'\frac{\th^3}{v}(\x) + a'\frac{1}{v}(\x)
\le C\e\l + C\frac{\e^2}{\l}\Dcal(U).
\label{Linfty-thv} 
\end{align}
Now we decompose the left-hand side of \eqref{ineq-p4} as follows: 
\begin{multline*}
\int_\RR \abs{a'}^2\frac{\th^2}{v}(\th-\tht)^2 d\x \\
\le \underbrace{C\int_\RR \abs{a'}^2\frac{\th^2}{v}(\th-\thbar_s)^2 d\x}_{\eqqcolon J_1}
+ \underbrace{C\int_\RR \abs{a'}^2\frac{\th^2}{v}(\thbar-\tht)^2\one{\th\ge \tht-\d_3} d\x}_{\eqqcolon J_2}
+ \underbrace{C\int_\RR \abs{a'}^2\frac{1}{v}\one{\th<\tht-\d_3} d\x}_{\eqqcolon J_3}.
\end{multline*}
Using \eqref{Linfty-th2}, \eqref{locE} and \eqref{Qlin'}, we have 
\[
J_1 \le C\frac{\e^2}{\l}\Big(\frac{\e^2}{\l}\Dcal(U) + \e^2(\Gcal_v(U)+\Gcal_\th(U))\Big).
\]
Using \eqref{Linfty-thv}, \eqref{rel_Phi_v} and \eqref{Phi-sim} for \(J_2\) and using \eqref{Linfty-v} and \eqref{Qconst} for \(J_3\), we have
\[
J_2 + J_3 \le C\left(\e\l\Gcal_\th(U) + \e^2\Dcal(U)\right).
\]
Summing up, we obtain the desired bound.

\bpf{ineq-p5} We decompose the left-hand side of \eqref{ineq-p5} with \eqref{v-2delta} as follows: 
\begin{equation*}
\int_\RR a'\frac{(\th-\tht)^2}{v} d\x 
\le \underbrace{C\int_\RR a'\frac{(\th-\thbar)^2}{v} d\x}_{\eqqcolon J_1}
+ \underbrace{C\int_\RR a'(\thbar-\tht)^2 d\x}_{\eqqcolon J_2}
+ \underbrace{C\int_\RR a'v\Big(\frac{1}{v}-\frac{1}{\vbar_b}\Big)^2 d\x}_{\eqqcolon J_3}.
\end{equation*}
For \(J_1\), using \eqref{pwth2'}, \eqref{Qconst}, \eqref{locE}, Lemma \ref{lemma_pushing} and \eqref{weight-a}, we obtain
\begin{equation}
\begin{aligned}\label{ineq-aux-th}
J_1 &\le C\Dcal(U)\int_\RR a' \int_{\x_0}^\x \one{|\th-\thbar|>0} d\z d\x
+ C\e^2 \Dcal(U) \le C\frac{\e}{\l}\Dcal(U).
\end{aligned}
\end{equation}
For \(J_2\), \eqref{rel_Phi_th} and \eqref{Phi-sim} imply \(J_2 \le C \Gcal_\th(U)\),
and for \(J_3\), \eqref{ineq-b1} provides a proper bound.
Thus, summing up, we establish the desired bound.

\bpf{ineq-p6} It follows from \(J_2\) in the proof of \eqref{ineq-p5} and \eqref{ineq-b3} that 
\[
\int_\RR a'(\th-\tht)^2 d\x
\le C\int_\RR a'(\thbar-\tht)^2 d\x + C\int_\RR a'(\th-\thbar)^2 d\x
\le C\Gcal_\th(U) + C\frac{\e}{\l}\Dcal_\th(U).
\]

\bpf{ineq-aux-u} Recall from \eqref{badgood} that the forms of hyperbolic good terms are different on the sets \(\O\) and \(\O^c\). Then, Young's inequality implies
\begin{align*}
\int_\O a'(u-\ut)^2 d\x &\le C\Gcal_u^+(U) + C\int_\O a'(p-\pt)^2 d\x \\
&\le C\Gcal_u^+(U) + C\abs{\Bcal_1^+(U)-\Bcal_1^+(\Ubar)} + C\abs{\Bcal_1^+(\Ubar)}.
\end{align*}
For the second term, we use \eqref{bo1p}, and for the last term, we use \eqref{rel_Phi_v}-\eqref{Phi-sim} to get 
\begin{equation}\label{B_1bar}
\abs{\Bcal_1^+(\Ubar)} \le C\int_\O a'\thbar^2\Big(\frac{1}{\vbar}-\frac{1}{\vt}\Big)^2 d\x 
+ C\int_\O a'\frac{(\thbar-\tht)^2}{\vt^2} d\x 
\le C\Gcal_v(U) + C\Gcal_\th(U).
\end{equation}
Summing up, we obtain the desired bound. 

\bpf{ineq-p7} We use \eqref{Linfty-th3v} with \eqref{locE} and \eqref{ineq-aux-u} to obtain that 
\begin{align*}
\int_\RR \abs{a'}^2\frac{\th^3}{v}(u-\ut)^2 d\x 
&\le C\Big(\e\l + \frac{\e^2}{\l}\Dcal(U)\Big)\int_\RR a'(u-\ut)^2 d\x 
\le C\e^2 \Dcal(U) +C\e\l\Gcal(U).
\end{align*}

\bpf{pwu} By the fundamental theorem of calculus and \eqref{der-scale}, for any \(\x\in\RR\),
\begin{align*}
\abs{\frac{\th}{\sqrt{v}}(u-\ubar)(\x)} 
&\le 
C\bigg(\sqrt{\Dcal_u(U)\int_{\x_0}^{\x}\th \one{\abs{u-\ut}>\d_3} d\z} 
+ \sqrt{\Dcal(U)\int_{\x_0}^{\x}(u-\ubar)^2d\z} \\
&\hspace{25mm}
+ \frac{\e}{\l}\sqrt{\int_\RR \abs{a'}^2\Big(\frac{1}{v}+v\th^2\Big) \one{\abs{u-\util}>\d_3}d\x}\sqrt{\int_{\x_0}^\x (u-\ubar)^2d\z}\bigg).
\end{align*}
By \eqref{v-2delta}, \eqref{ineq-aux-u}, \eqref{ineq-b1} and \eqref{der-scale}, we have
\begin{align*}
\int_\RR \abs{a'}^2\frac{1}{v}\one{\abs{u-\ut}>\d_3} d\x 
&\le C\e\l\int_\RR a'(u-\ut)^2 d\x + C\e\l\int_\RR a'v\Big(\frac{1}{v}-\frac{1}{\vbar_b}\Big)^2 d\x \\
&\le C\e\l\Gcal(U) + C\e^2\Dcal(U).
\end{align*}
Similarly, from \(v\one{\abs{u-\ut}>\d_3}\le C\Phi(v/\vt)+C(u-\ut)^2\) with \eqref{ineq-p3} and \eqref{ineq-aux-u} we have
\begin{align*}
\int_\RR \abs{a'}^2v\th^2\one{\abs{u-\ut}>\d_3} d\x
\le C\e\l\Gcal(U)+C\e^2\Dcal(U).
\end{align*}
This gives the desired inequality.

\bpf{ineq-p8} We decompose the left-hand side of \eqref{ineq-p8} into:
\[
\int_\RR a'\frac{\th^2}{v}(u-\ut)^2d\x 
\le C\int_\RR a'\frac{\th^2}{v}(u-\ubar)^2d\x 
+ C\int_\RR a'\frac{\th^2}{v}(\ubar-\ut)^2d\x
\eqqcolon J_1+J_2.
\]
For \(J_1\), we use \eqref{pwu}, \eqref{locE} and Lemma \ref{lemma_pushing} with \(\th\one{\abs{u-\ut}>\d_3}\le C\Phi(\th/\tht)+C(u-\ut)^2\):
\begin{align*}
J_1 
&\le C\Dcal_u(U)\frac{1}{\e}\int_\RR a'\th\one{\abs{u-\ut}>\d_3} d\x
+ C\Big(\Dcal_v(U)+\Dcal_\th(U)+\frac{\e^3}{\l}\Gcal(U)\Big)\frac{1}{\e}\int_\RR a'(u-\ubar)^2d\x \\
&\le C\frac{\e}{\l}\Dcal(U) + C\e^2\Gcal(U).
\end{align*}
For \(J_2\), we use 
\[
\frac{\th^2}{v}\le C + Cv\Big(\frac{1}{v}-\frac{1}{\vbar_b}\Big)^2 + C\frac{(\th-\tht)^2}{v},
\]
which together with \eqref{ineq-aux-u}, \eqref{ineq-b1}, and \eqref{ineq-p5} implies
\begin{align*}
J_2 
&\le C\Gcal(U) + C\frac{\e}{\l}\Dcal(U).
\end{align*}
Summing up, we obtain the desired inequality.

\bpf{ineq-p9} For this, we use Young's inequality with \eqref{ineq-p4}, \eqref{ineq-p5} and \eqref{der-scale}:
\begin{align*}
\int_\RR \abs{a'}^2 \frac{\th}{v}(\th-\tht)^2 d\x
&\le C\int_\RR \abs{a'}^2 \frac{\th^2}{v}(\th-\tht)^2 d\x
+ C\e\l\int_\RR \abs{a'}\frac{1}{v}(\th-\tht)^2 d\x \\
&\le C\e^2\Dcal(U) + C\e\l(\Gcal_v(U)+\Gcal_\th(U)).
\end{align*}

\bpf{trans-bound} From \eqref{bo1p}, \eqref{barsmall} and \eqref{locE}, we have
\begin{equation}
\begin{aligned}\label{Btot}
\abs{\Bcal_1^+(U)} &\le \abs{\Bcal_1^+(U)-\Ical_1(\vbar, \thbar)} + \abs{\Ical_1(\vbar, \thbar)} 
\le C\frac{\e}{\l}(\Dcal_v(U) + \Dcal_\th(U)) + C\frac{\e^2}{\l}.
\end{aligned}
\end{equation}
This with \eqref{locE} implies
\begin{align*}
\Gcal_u^+(U) &\le C\int_\O a'(p-\pt)^2 d\x + C\int_\O a'(u-\ut)^2 d\x
\le C\abs{\Bcal_1^+(U)} + C\frac{\e^2}{\l} 
\le C\frac{\e}{\l}\Dcal(U) + C\frac{\e^2}{\l}.
\end{align*}
This completes the proof of Lemma \ref{lemma_para_ineq}.
\end{proof}

\subsubsection{Proof of \eqref{para-v}}
Recall \eqref{RHS-v}. The four terms in the definition of \(\Bcal^v\) will be denoted, in order, by \(I_1\) through \(I_4\) for ease of reference.

\bci{1}
By Young's inequality, we get
\[
I_1 \le C \Big( \l \int_\RR a v (\t_0+\th^2) \abs{\Big(\frac{1}{v}-\frac{1}{\vt}\Big)_\x}^2 d\x
+ \frac{1}{\l} \int_\RR \abs{(a\tht)'}^2 v(\t_0+\th^2)\Big(\frac{1}{v}-\frac{1}{\vt}\Big)^2 d\x\Big).
\]
From \eqref{der-scale}, we have \(|(a\tht)'| \le C \abs{a'} \le C \e\l\) and
\[
I_1 \le C \Big( \l \Dcal_v(U)
+ \e \int_\RR a' v \Big(\frac{1}{v}-\frac{1}{\vt}\Big)^2 d\x
+\frac{1}{\l} \int_\RR \abs{a'}^2 v \th^2 \Big(\frac{1}{v}-\frac{1}{\vt}\Big)^2 d\x \Big).
\]
Hence, by \eqref{ineq-p1} and \eqref{ineq-p2}, we have
\begin{align}
\begin{aligned} \label{vI1}
I_1 
&\le C\big( \l\Dcal(U)+\e(\Gcal_v(U)+\Gcal_\th(U)) \big).
\end{aligned}
\end{align}

\bci{2}
This can be split as follows: 
\begin{align*}
I_2 &\le C\int_\RR \big(v\th |\th-\tht|\big) \Big(\frac{1}{v}-\frac{1}{\vt}\Big)_\x \vt' d\x
+ C\int_\RR \big( v |\th-\tht| \big) \Big(\frac{1}{v}-\frac{1}{\vt}\Big)_\x \vt' d\x \\
&\qquad
+ C\int_\RR \abs{v-\vt}\Big(\frac{1}{v}-\frac{1}{\vt}\Big)_\x \vt' d\x
\eqqcolon I_{21}+I_{22}+I_{23}.
\end{align*}
By Young's inequality together with \eqref{der-scale}, we get
\begin{align*}
I_{21} &\le C\frac{\e}{\l}\Big(\int_\RR a v\th^2 \abs{\Big(\frac{1}{v}-\frac{1}{\vt}\Big)_\x}^2 d\x 
+ \int_\RR \abs{a'}^2 v(\th-\tht)^2 d\x \Big), \\
I_{22}+I_{23} &\le C\frac{\e}{\l}\Big( \int_\RR a v \abs{\Big(\frac{1}{v}-\frac{1}{\vt}\Big)_\x}^2  d\x 
+ \int_\RR \abs{a'}^2 v (\th-\tht)^2 d\x 
+ \int_\RR \abs{a'}^2 v\Big(\frac{1}{v}-\frac{1}{\vt}\Big)^2  d\x\Big).
\end{align*}
Then, using \eqref{ineq-p1} and \eqref{ineq-p3} with \eqref{der-a} and \eqref{der-scale}, we have
\begin{align}
\begin{aligned} \label{vI2}
I_2 
&\le C \left( \frac{\e}{\l}\Dcal(U) + \e^2 (\Gcal_v(U)+\Gcal_\th(U)) \right).
\end{aligned}
\end{align}

\bci{3}
This can be split into three parts and use \eqref{der-scale} and Young's inequality:
\begin{align*}
&I_3 \le C\int_\RR a' v\th |\th-\tht|\aabs{\frac{1}{v}-\frac{1}{\vt}}\vt' d\x
+ C\int_\RR a' v\tht |\th-\tht| \aabs{\frac{1}{v}-\frac{1}{\vt}}\vt' d\x \\
&\qquad
+ C\int_\RR a' (\t_0+\tht^2)\abs{v-\vt} \aabs{\frac{1}{v}-\frac{1}{\vt}}\vt' d\x \\
&\le C\frac{\e}{\l}\Big(\int_\RR \abs{a'}^2 v\th |\th-\tht| \aabs{\frac{1}{v}-\frac{1}{\vt}} d\x
+ \int_\RR \abs{a'}^2 v |\th-\tht| \aabs{\frac{1}{v}-\frac{1}{\vt}} d\x 
+ \int_\RR \abs{a'}^2 v\aabs{\frac{1}{v}-\frac{1}{\vt}}^2 d\x\Big) \\
&\le C\frac{\e}{\l} \Big( \int_\RR \abs{a'}^2 v(\th-\tht)^2 d\x
+ \int_\RR \abs{a'}^2 v \th^2 \Big(\frac{1}{v}-\frac{1}{\vt}\Big)^2 d\x 
+ \e\l\int_\RR \abs{a'} v \Big(\frac{1}{v}-\frac{1}{\vt}\Big)^2 d\x \Big).
\end{align*}
Hence, by \eqref{ineq-p1}, \eqref{ineq-p2}, and \eqref{ineq-p3}, we have
\begin{equation}\label{vI3}
I_3 \le C\left( \e\l \Dcal(U) + \e^2(\Gcal_v(U)+\Gcal_\th(U))\right).
\end{equation}

\bci{4}
Note by \eqref{tail} and \eqref{der-scale} that
\(
((\t_0+\tht^2)\frac{\vt'}{\vt})' \le C \abs{\vt''} + C\abs{\vt'}^2 \le C \frac{\e^2}{\l} a'.
\)
Then, using \eqref{ineq-p1}, we have
\begin{align}
\begin{aligned} \label{vI4}
I_4 &\le C \frac{\e^2}{\l} \int_\RR a' v\Big(\frac{1}{v}-\frac{1}{\vt}\Big)^2 d\x 
\le C \Big( \frac{\e^3}{\l^2} \Dcal_v(U) + \frac{\e^2}{\l} \Gcal_v(U) \Big).
\end{aligned}
\end{align}

Therefore, by summing up all from \eqref{vI1} to \eqref{vI4}, we obtain
\eqref{para-v}.
\qed

\subsubsection{Proof of \eqref{para-th}}
Recall \eqref{RHS-th}. The seven terms in the definition of \(\Bcal^\th\) will be denoted, in order, by \(I_1\) through \(I_7\) for ease of reference.
These can be controlled in a similar way, by using Young's inequality and Proposition \ref{lemma_para_ineq}. 

\bci{1}
Using \eqref{ineq-p4}, we get
\begin{align}
\begin{aligned} \label{thI1}
I_1 &\le C \Big(\l \int_\RR a \frac{1}{v}\abs{(\th-\tht)_\x}^2 d\x + \frac{1}{\l} \int_\RR \abs{a'}^2 \frac{\th^2}{v}(\th-\tht)^2 d\x \Big) \\
&\le C \big(\l \Dcal(U) + \e(\Gcal_v(U)+\Gcal_\th(U)) \big).
\end{aligned}
\end{align}

\bci{2}
Using \eqref{der-scale} and \eqref{ineq-p5}, we get
\begin{align}
\begin{aligned} \label{thI2}
I_2 &\le C \Big( \frac{\e}{\l} \Dcal_\th(U) + \e^2 \int_\RR a' \frac{(\th-\tht)^2}{v} d\x \Big) 
\le C \left( \frac{\e}{\l} \Dcal(U) +\e^2 \Gcal_\th(U) +\e^4 \Gcal_v(U) \right).
\end{aligned}
\end{align}

\bci{3}
As above, using \eqref{der-scale} and \eqref{ineq-p5}, we have
\begin{align}
\begin{aligned} \label{thI3}
I_3 &\le C \Big( \frac{\e}{\l} \Dcal_v(U) + \e^2 \int_\RR a' \frac{(\th-\tht)^2}{v} d\x \Big) \le C \Big( \frac{\e}{\l} \Dcal(U) + \e^2 \Gcal_\th(U) +\e^4 \Gcal_v(U) \Big).
\end{aligned}
\end{align}

\bci{4}
Using \eqref{der-scale} and \eqref{ineq-p5}, we have
\begin{align}
\begin{aligned} \label{thI4}
    I_4 \le C \frac{\e^2}{\l} \int_\RR a' \frac{(\th-\tht)^2}{v} d\x
    \le C \frac{\e^2}{\l}\left(\frac{\e}{\l} \Dcal(U) + \Gcal_\th(U) + \e^2 \Gcal_v(U) \right).
\end{aligned}
\end{align}

\bci{5}
From \eqref{tail}, \eqref{der-scale}, \eqref{ineq-p1}, and \eqref{ineq-p5}
\begin{align}
\begin{aligned} \label{thI5}
I_5 &\le C \frac{\e^2}{\l}\Big( \int_\RR a' \frac{(\th-\tht)^2}{v} d\x
+ \int_\RR a' v \Big(\frac{1}{v}-\frac{1}{\vt}\Big)^2 d\x \Big) \\
&\le C \frac{\e^2}{\l}\Big(\frac{\e}{\l}\Dcal(U) 
+ (\Gcal_v(U)+\Gcal_\th(U)) \Big).
\end{aligned}
\end{align}

\bci{6}
Note that \((\tht')^2 \le C \frac{\e^3}{\l} a'\) holds because of \eqref{der-scale}. 
Hence, \eqref{thI5} gives
\begin{align}
\begin{aligned} \label{thI6}
I_6 &\le C\e I_5 \le C \frac{\e^3}{\l} \left( \frac{\e}{\l}\Dcal(U) 
+ (\Gcal_v(U)+\Gcal_\th(U)) \right).
\end{aligned}
\end{align}

\bci{7}
From \eqref{der-scale} and \eqref{ineq-p6}, we have
\begin{align}
\begin{aligned} \label{thI7}
I_7 \le C \frac{\e^3}{\l} \int_\RR a' (\th-\tht)^2 d\x 
\le C \frac{\e^3}{\l} \left( \frac{\e}{\l} \Dcal_\th(U) + \Gcal_\th(U) \right).
\end{aligned}
\end{align}
Therefore, by summing up all from \eqref{thI1} to \eqref{thI7}, we obtain \eqref{para-th}.
\qed

\subsubsection{Proof of \eqref{para-u}}
Recall \eqref{RHS-u}. The eight terms in the definition of \(\Bcal^u\) will be denoted, in order, by \(I_1\) through \(I_8\) for ease of reference.

\bci{1}
Using \eqref{ineq-p7}, we get
\begin{align}
\begin{aligned} \label{uI1}
I_1 &\le C \Big( \l \int_\RR a\frac{\th}{v} \abs{(u-\ut)_\x}^2 d\x + \frac{1}{\l} \int_\RR \abs{a'}^2 \frac{\th^3}{v}(u-\ut)^2 d\x \Big) \\
&\le C \left( \left(\l+\frac{\e}{\l}\right)\Dcal(U) + \e\Gcal(U) \right).
\end{aligned}
\end{align}

\bci{2}
Using \eqref{ineq-p9}, we get
\begin{align}
\begin{aligned} \label{uI2}
I_2 &\le C \Big( \frac{\e}{\l} \Dcal_u(U) + \frac{\e}{\l} \int_\RR \abs{a'}^2 \frac{\th}{v}(\th-\tht)^2 \Big) \le C \left( \frac{\e}{\l} \Dcal(U) 
+ \e^2 (\Gcal_v(U) + \Gcal_\th(U)) \right).
\end{aligned}
\end{align}

\bci{3}
Using \eqref{ineq-p8}, we get
\begin{align}
\begin{aligned} \label{uI3}
I_3 &\le C \Big( \frac{\e}{\l} \Dcal_\th(U) + \e^2 \int_\RR a' \frac{\th^2}{v}(u-\ut)^2 d\x \Big)
\le C \Big(\frac{\e}{\l} \Dcal(U) + \e^2\Gcal(U)\Big).
\end{aligned}
\end{align}

\bci{4}
Using \eqref{ineq-p8}, we get
\begin{align}
\begin{aligned} \label{uI4}
I_4 &\le C \Big( \frac{\e}{\l} \Dcal_v(U) + \e^2 \int_\RR a' \frac{\th^2}{v}(u-\ut)^2 d\x \Big)
\le C \Big(\frac{\e}{\l} \Dcal(U) + \e^2\Gcal(U)\Big).
\end{aligned}
\end{align}

\bci{5}
This can be split as follows:
\begin{align*}
I_5
&= \hspace{-1mm}\underbrace{\int_\RR a (u-\ut)\frac{\th}{v}(\th-\tht) \ut'' d\x}_{\eqqcolon I_{51}}
+ \hspace{-1mm}\underbrace{\int_\RR a (u-\ut)\tht \th \Big(\frac{1}{v}-\frac{1}{\vt}\Big) \ut'' d\x}_{\eqqcolon I_{52}} 
+ \hspace{-1mm}\underbrace{\int_\RR a (u-\ut)\frac{\tht}{\vt} (\th-\tht) \ut'' d\x}_{\eqqcolon I_{53}}.
\end{align*}
We use \eqref{tail}, \eqref{der-scale} and Young's inequality, and then \eqref{ineq-p5} and \eqref{ineq-p8} imply the following:
\begin{align*}
I_{51} 
\le C \frac{\e^2}{\l} \Big( \int_\RR a' \frac{(\th-\tht)^2}{v} d\x + \int_\RR a' \frac{\th^2}{v}(u-\ut)^2 d\x \Big)
\le C \frac{\e^2}{\l} \Big(\frac{\e}{\l}\Dcal(U)+\Gcal(U)\Big).
\end{align*}
Similarly, but using \eqref{ineq-p1} and \eqref{ineq-p8}, \eqref{ineq-p6} and \eqref{ineq-aux-u}, respectively, \(I_{52}\) and \(I_{53}\) can be controlled by the same bound. 
Thus, summing up, we obtain
\begin{align} \label{uI5}
I_5 \le C \frac{\e^2}{\l} \left(\frac{\e}{\l}\Dcal(U)+\Gcal(U)\right).
\end{align}

\bci{6}
This can be split as follows:
\begin{equation*}
I_6 = \underbrace{2\int_\RR a (u-\ut)\th \Big(\frac{1}{v}-\frac{1}{\vt}\Big) \tht' \ut' d\x}_{\eqqcolon I_{61}}
+ \underbrace{2\int_\RR a (u-\ut)\frac{1}{\vt}(\th-\tht) \tht' \ut' d\x}_{\eqqcolon I_{62}}.
\end{equation*}
As in the case above, using \eqref{der-scale} and Young's inequality, we apply \eqref{ineq-p1} and \eqref{ineq-p8} to get
\begin{align*}
I_{61} &\le C \frac{\e^3}{\l} \Big( \int_\RR a' v\Big(\frac{1}{v}-\frac{1}{\vt}\Big)^2 d\x + \int_\RR a' \frac{\th^2}{v}(u-\ut)^2 d\x \Big)
\le C \frac{\e^3}{\l}\Big( \frac{\e}{\l}\Dcal(U) +\Gcal(U)\Big).
\end{align*}
From \eqref{der-scale} together with the bound for \(I_{53}\), \(I_{62}\) can be controlled by the same bound as \(I_{61}\). 
Thus, summing up, we obtain
\begin{align} \label{uI6}
I_6 \le C\frac{\e^3}{\l} \left( \frac{\e}{\l}\Dcal(U) + \Gcal(U) \right).
\end{align}

\bci{7}
This can be rewritten as follows:
\[
I_7= \int_\RR a \frac{(\th-\tht)^2}{v} (\ut')^2 d\x + \int_\RR a (\th-\tht) \Big(\frac{1}{v}-\frac{1}{\vt}\Big) \tht (\ut')^2 d\x.
\]
Using \eqref{ineq-p1}, \eqref{ineq-p5} and \eqref{der-scale}, we have
\begin{align}
\begin{aligned} \label{uI7}
I_7 &\le C\frac{\e^3}{\l} \Big(\int_\RR a' \frac{(\th-\tht)^2}{v} d\x + \int_\RR a' v \Big(\frac{1}{v}-\frac{1}{\vt}\Big)^2 d\x \Big)\\
&\le C \frac{\e^3}{\l} \left( \frac{\e}{\l}\Dcal(U) 
+ (\Gcal_v(U)+\Gcal_\th(U)) \right).
\end{aligned}
\end{align}

\bci{8}
This can be split as follows:
\begin{align*}
I_8
&= \underbrace{\int_\RR a (u-\ut)\th(\th-\tht)\Big(\frac{1}{\vt}\Big)' \ut' d\x}_{\eqqcolon I_{81}}
+ \underbrace{\int_\RR a (u-\ut)\tht (\th-\tht)\Big(\frac{1}{\vt}\Big)' \ut' d\x}_{\eqqcolon I_{82}}.
\end{align*}
For \(I_{81}\), from Young's inequality and \eqref{der-scale} with \eqref{ineq-p3} and \eqref{ineq-p8}, we obtain 
\begin{align*}
I_{81} &\le C \frac{\e^2}{\l^2} \Big( \int_\RR \abs{a'}^2 v(\th-\tht)^2 d\x + \int_\RR \abs{a'}^2 \frac{\th^2}{v}(u-\ut)^2 d\x \Big)
\le C \frac{\e^2}{\l^2}\left(\e^2\Dcal(U)+\e\l\Gcal(U)\right).
\end{align*}
Similarly for \(I_{82}\), using \eqref{ineq-p6} and \eqref{ineq-aux-u}, we can control this term with the same bound.
Thus, summing up, we obtain
\begin{align} \label{uI8}
I_8 \le C \frac{\e^2}{\l^2}\left(\e^2\Dcal(U)+\e\l\Gcal(U)\right).
\end{align}

Therefore, summing up all from \eqref{uI1} to \eqref{uI8}, we establish \eqref{para-u} as we desired.
\qed

\subsection{Control of $Y(U)$ outside truncation}\label{section-shift}
Recall the functional $Y$ in \eqref{ybg-first} first. Then, we decompose $Y$ into four parts $Y_g$,  $Y_b$, $Y_l$ and $Y_s$ as follows: \\
\[
Y= Y_g +Y_b +Y_l + Y_s,
\]
where 
\begin{align*}
&Y_g(U) \coloneqq -\int_\O a'\Big(R\tht\Phif{v}{\vt}+\frac{R}{\g-1}\tht\Phif{\th}{\tht}+\frac{(p-\pt)^2}{2\s_\e^2}\Big) d\x \\
&- \int_\O a\Big(R\Phif{v}{\vt}+\frac{R}{\g-1}\Phif{\th}{\tht}\Big)\tht' d\x 
+\int_\O a\Big(R\tht\frac{v-\vt}{\vt^2}\vt'+\frac{R}{\g-1}\frac{\th-\tht}{\tht}\tht'+\frac{p-\pt}{\s_\e}\ut'\Big) d\x, \\
&Y_b(U) \coloneqq -\frac{1}{2}\int_\O a'\Big(u-\ut-\frac{p-\pt}{\s_\e}\Big)^2 d\x -\frac{1}{\s_\e}\int_\O a'(p-\pt)\Big(u-\ut-\frac{p-\pt}{\s_\e}\Big)d\x, \\
&Y_l(U) \coloneqq \int_\O a\Big(u-\ut-\frac{p-\pt}{\s_\e}\Big)\ut'd\x, \\
&Y_s(U) \coloneqq -\int_{\O^c} a'\Big(R\tht\Phif{v}{\vt}+\frac{R}{\g-1}\tht\Phif{\th}{\tht}+\frac{(u-\ut)^2}{2}\Big) d\x \\
&
- \int_{\O^c} a\Big(R\Phif{v}{\vt}
+ \frac{R}{\g-1}\Phif{\th}{\tht}\Big) \tht' d\x
+ \int_{\O^c} a\Big(R\tht\frac{v-\vt}{\vt^2}\vt'+\frac{R}{\g-1}\frac{\th-\tht}{\tht}\tht'+(u-\ut)\ut'\Big) d\x.
\end{align*}
Notice that $Y_g$ consists of the terms related to \(1/v-1/\vt\) and \(\th-\tht\),  while $Y_b$ and $Y_l$ consist of terms related to $u-\ut$. 
In addition, while $Y_b$ is quadratic, $Y_l$ is linear. 
Here, we need to show that $|Y_g(U)-\Ical_g^Y(\vbar,\thbar)|^2$ (for $\Ical_g^Y$ in \eqref{note-in}), $|Y_b(U)|^2$, $|Y_l(U)|^2$ and $|Y_s(U)|^2$ are negligible by the good terms. 
\begin{proposition}\label{prop_shift_out}
For any \(C^*>0\), there exist constants \(\e_0, \d_0, C>0\) (in particular, \(C\) depends on the constant \(C^*\)) such that for any $\e<\e_0$ and $\d_0^{-1}\e<\l<\d_0<1/2$, the following statements hold true.
For any $U$ such that $\abs{Y(U)}\le \e^2$ and $\Dcal_v(U)+\Dcal_\th(U)\le 10 C^*\frac{\e^2}{\l}$,
\begin{align}
&\abs{Y_l(U)}^2 \le C\frac{\e^2}{\l}\Gcal_u^+(U) \label{Y-l} \\
&\abs{Y_g(U)-\Ical_g^Y(\vbar,\thbar)} + \abs{Y_b(U)} + \abs{Y_s(U)} \le C\frac{\e^2}{\l} 
\label{Y-a1} \\
&\begin{aligned}\label{Y-a2}
&\abs{Y_g(U)-\Ical_g^Y(\vbar,\thbar)}+\abs{Y_b(U)}+\abs{Y_s(U)} \\
&\le C\Big(\sqrt{\frac{\l}{\e}}\Gcal_u^+(U) + \frac{\e}{\l}(\Dcal_v(U) + \Dcal_\th(U))
+ \Gcal_u^-(U) \\
&\qquad \qquad
+ (\Gcal_v(U)-\Gcal_v(\Ubar)+\Gcal_\th(U)-\Gcal_\th(\Ubar)) 
+ \sqrt[4]{\frac{\e}{\l}}(\Gcal_v(\Ubar) + \Gcal_\th(\Ubar))\Big)
\end{aligned}\\ 
&\begin{aligned}\label{Y-conc}
&\abs{Y_g(U)-\Ical_g^Y(\vbar,\thbar)}^2 +\abs{Y_b(U)}^2+\abs{Y_l(U)}^2+\abs{Y_s(U)}^2 \\
&\le C\frac{\e^2}{\l}\Big(\sqrt{\frac{\l}{\e}}\Gcal_u^+(U) + \frac{\e}{\l}(\Dcal_v(U) + \Dcal_\th(U))
+ \Gcal_u^-(U) \\
&\qquad \qquad
+ (\Gcal_v(U)-\Gcal_v(\Ubar)+\Gcal_\th(U)-\Gcal_\th(\Ubar)) 
+ \sqrt{\frac{\e}{\l}}(\Gcal_v(\Ubar) + \Gcal_\th(\Ubar))\Big)
\end{aligned}
\end{align}
\end{proposition}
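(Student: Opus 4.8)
\textbf{Proof strategy for Proposition \ref{prop_shift_out}.}
The plan is to establish \eqref{Y-l}, \eqref{Y-a1} and \eqref{Y-a2} in turn and then deduce \eqref{Y-conc} by squaring. Throughout we work in the regime $\e\ll\e/\l\ll1$ and freely use the localization bound \eqref{locE}, the estimates \eqref{Btot} and \eqref{trans-bound} on $\Bcal_1^+(U)$ and $\Gcal_u^+(U)$ together with the hypothesis $\Dcal_v(U)+\Dcal_\th(U)\le 10C^*\frac{\e^2}{\l}$ (so that $\Bcal_1^+(U),\Gcal_u^+(U)\le C\frac{\e^2}{\l}$), the scale relations \eqref{der-scale}, $\int_\RR a'\,d\x=\l$, and the pointwise and integral estimates of Lemmas \ref{lemma_hyp_ineq} and \ref{lemma_para_ineq}. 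For \eqref{Y-l}, since $Y_l(U)=\int_\O a\,(u-\ut-\frac{p-\pt}{\s_\e})\,\ut'\,d\x$ is linear, the bound $|\ut'|\le C\frac{\e}{\l}|a'|$, the Cauchy--Schwarz inequality with weight $a'$, $\int_\O a'\,d\x\le\l$ and $\int_\O a'(u-\ut-\frac{p-\pt}{\s_\e})^2\,d\x\le C\Gcal_u^+(U)$ give $|Y_l(U)|^2\le C\frac{\e^2}{\l}\Gcal_u^+(U)$. For the crude bound \eqref{Y-a1}: the quadratic part of $Y_b$ equals $-\frac{1}{\s_\e}\Gcal_u^+(U)$ and its cross term is $\le C\sqrt{\Bcal_1^+(U)\Gcal_u^+(U)}$, both $\le C\frac{\e^2}{\l}$; the terms of $Y_s$ live on $\O^c$, where $v$ is away from $\vt$, so by \eqref{der-scale}, Cauchy--Schwarz and the linear lower bounds \eqref{rel_Phi_v}\(_2\) and \eqref{rel_Phi_th}\(_2\) they are controlled by $\frac{\e}{\l}\int_{\O^c}a'(\Phi(v/\vt)+\Phi(\th/\tht)+(u-\ut)^2)\,d\x$ plus the genuinely quadratic terms, all $\le C\frac{\e^2}{\l}$ by \eqref{locE}; and $Y_g(U)-\Ical_g^Y(\vbar,\thbar)$ differs from zero only through the truncation, so \eqref{bo1p}, \eqref{l2-v} and \eqref{l2-th}, \eqref{rel_Phi1} and \eqref{barsmall} yield $\le C\frac{\e^2}{\l}$.

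The heart of the proposition is the refined bound \eqref{Y-a2}, which I would obtain by re-running the estimates above while \emph{keeping} the good terms rather than crushing them to $\frac{\e^2}{\l}$. For $Y_b$: the quadratic term is $-\frac{1}{\s_\e}\Gcal_u^+(U)\le C\sqrt{\frac{\l}{\e}}\Gcal_u^+(U)$; for the cross term, write $p-\pt=(p-\pbar)+(\pbar-\pt)$ on $\O$, control $\int_\O a'(p-\pbar)^2$ by $C\frac{\e}{\l}(\Dcal_v(U)+\Dcal_\th(U))$ as in \eqref{ineq-p-linear}, use $(\pbar-\pt)^2\le C(\Phi(\vbar/\vt)+\Phi(\thbar/\tht))$ so that $\int_\O a'(\pbar-\pt)^2\le C(\Gcal_v(\Ubar)+\Gcal_\th(\Ubar))$, and apply Young's inequality with weight $\sqrt{\l/\e}$ against $\Gcal_u^+(U)$; inserting $\Gcal_u^+(U)\le C(\frac{\e}{\l}\Dcal(U)+\frac{\e^2}{\l})$ into the remainder produces exactly the term $C\sqrt[4]{\frac{\e}{\l}}(\Gcal_v(\Ubar)+\Gcal_\th(\Ubar))$. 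For $Y_s$: the quadratic terms on $\O^c$ are precisely $\Gcal_v(U)-\Gcal_v(\Ubar)$, $\Gcal_\th(U)-\Gcal_\th(\Ubar)$ and a $\Gcal_u^-(U)$ contribution, while the linear terms, estimated by Cauchy--Schwarz with weight $a'$ and $\int_{\O^c}a'\,d\x\le C(\frac{\e}{\l}\Dcal_v(U)+\e^2\Gcal_v(U))$ from \eqref{mainbad-one}, are absorbed into $C\Gcal_u^-(U)+C\frac{\e}{\l}(\Dcal_v(U)+\Dcal_\th(U))+\cdots$. For $Y_g(U)-\Ical_g^Y(\vbar,\thbar)$: the $(p-\pt)^2$ difference is handled by \eqref{bo1p}, the leftover integrals over $\O^c$ by \eqref{mainbad-one}, and the linear differences by \eqref{rel_Phi1}, giving the factor $\frac{\e}{\l}(\Gcal_v(U)-\Gcal_v(\Ubar)+\Gcal_\th(U)-\Gcal_\th(\Ubar))$. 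Summing all of this yields \eqref{Y-a2}.

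Finally, \eqref{Y-conc} follows by squaring. For $Y_l(U)$ one invokes \eqref{Y-l} directly. For each $A\in\{Y_g(U)-\Ical_g^Y(\vbar,\thbar),\ Y_b(U),\ Y_s(U)\}$ write $|A|^2=|A|\cdot|A|$, bound the first factor by $C\frac{\e^2}{\l}$ using \eqref{Y-a1} and the second by the right-hand side of \eqref{Y-a2}; every resulting product already matches a term of \eqref{Y-conc} except $C\frac{\e^2}{\l}\sqrt[4]{\frac{\e}{\l}}(\Gcal_v(\Ubar)+\Gcal_\th(\Ubar))$, for which I would instead apply \eqref{Y-a2} to \emph{both} factors of $|A|$: the mixed terms are absorbed since $\Gcal_v(\Ubar)+\Gcal_\th(\Ubar)\le C\frac{\e^2}{\l}$ and $\sqrt[4]{\e/\l}<1$, while the diagonal term $C\sqrt{\frac{\e}{\l}}(\Gcal_v(\Ubar)+\Gcal_\th(\Ubar))^2$ is $\le C\frac{\e^2}{\l}\sqrt{\frac{\e}{\l}}(\Gcal_v(\Ubar)+\Gcal_\th(\Ubar))$ by \eqref{locE}, which is precisely the last term of \eqref{Y-conc}.

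The main obstacle is the refined estimate for $Y_g(U)-\Ical_g^Y(\vbar,\thbar)$ in the second step: one must account, term by term, for the eight pieces of $\Ical_g^Y$ over the region $\O^c$ where $v$ may be small and $\Phi(v/\vt)$ grows only like $-\log$, which forces the systematic use of the exponential tails of $\vt'$ and of the sharp pushing and pointwise Lemmas \ref{lemma_pushing} and \ref{lemma_Linfty}, exactly as in the proof of Proposition \ref{prop_hyperbolic_out}. Equally delicate is the bookkeeping of the Young-inequality weights, so that the unavoidable $\Gcal_u^+(U)$ contribution carries the harmless factor $\sqrt{\l/\e}$ (which is neutralized once it is multiplied by the $\frac{\e^2}{\l}$ coming from \eqref{Y-a1} in the squaring step) and the residual of the $\Bcal_1^+$-type cross term lands on $\sqrt[4]{\e/\l}(\Gcal_v(\Ubar)+\Gcal_\th(\Ubar))$ rather than on a term with a dangerously large coefficient.
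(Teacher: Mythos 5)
Your treatment of \eqref{Y-l}, \eqref{Y-a1} and \eqref{Y-a2} follows essentially the same route as the paper: H\"older with $|\ut'|\sim\frac{\e}{\l}a'$ for $Y_l$; crude bounds via \eqref{locE}, \eqref{Btot}, \eqref{trans-bound}, \eqref{mainbad-one} and the hypothesis on $\Dcal_v+\Dcal_\th$ for \eqref{Y-a1}; and the refined bounds via \eqref{bo1p}, \eqref{ineq-p-linear}, \eqref{rel_Phi1}, \eqref{mainbad-one} together with the Young split $|Y_b|\le C\big(\sqrt{\l/\e}\,\Gcal_u^+(U)+\sqrt{\e/\l}\,|\Bcal_1^+(U)|\big)$ for \eqref{Y-a2}. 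Up to that point the proposal is sound.

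The gap is in your derivation of \eqref{Y-conc}. You correctly notice that multiplying \eqref{Y-a1} by \eqref{Y-a2} \emph{as stated} yields the coefficient $\frac{\e^2}{\l}\sqrt[4]{\e/\l}$ on $\Gcal_v(\Ubar)+\Gcal_\th(\Ubar)$ instead of the required $\frac{\e^2}{\l}\sqrt{\e/\l}$, but your repair --- applying \eqref{Y-a2} to both factors of $|A|^2$ --- does not close. Squaring the right-hand side of \eqref{Y-a2} produces, among others, the diagonal term $\frac{\l}{\e}\,\Gcal_u^+(U)^2$. Under the standing hypotheses the best available bound is $\Gcal_u^+(U)\le C\frac{\e^2}{\l}$ (from \eqref{trans-bound}, \eqref{Btot} and $\Dcal_v+\Dcal_\th\le 10C^*\frac{\e^2}{\l}$), which gives $\frac{\l}{\e}\,\Gcal_u^+(U)^2\le C\e\,\Gcal_u^+(U)$; the corresponding term allowed in \eqref{Y-conc} is $C\frac{\e^2}{\l}\sqrt{\l/\e}\,\Gcal_u^+(U)=C\e\sqrt{\e/\l}\,\Gcal_u^+(U)$, so you are off by the factor $\sqrt{\l/\e}$, which is unbounded in the admissible range $\d_0^{-1}\e<\l<\d_0$. (Your absorption argument only treats the mixed terms involving the $(\Gcal_v(\Ubar)+\Gcal_\th(\Ubar))$ piece; the purely quadratic $\Gcal_u^+$ contribution is left uncontrolled.) The correct fix is the one the paper uses implicitly: the proof of \eqref{Y-a2} actually delivers the sharper coefficient $\sqrt{\e/\l}$ on $\Gcal_v(\Ubar)+\Gcal_\th(\Ubar)$ --- from $|Y_b|\le C\big(\sqrt{\l/\e}\,\Gcal_u^+(U)+\sqrt{\e/\l}\,|\Bcal_1^+(U)|\big)$ together with $|\Bcal_1^+(U)|\le C\big(\frac{\e}{\l}(\Dcal_v+\Dcal_\th)+\Gcal_v(\Ubar)+\Gcal_\th(\Ubar)+\e^2(\Gcal_v(U)-\Gcal_v(\Ubar))\big)$, while the $Y_g-\Ical_g^Y$ and $Y_s$ pieces only contribute $\e^2\Gcal_v(\Ubar)$ --- and the statement \eqref{Y-a2} merely records the weakened power $\sqrt[4]{\e/\l}$. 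Multiplying this sharper intermediate form (one factor) by the crude bound \eqref{Y-a1} (the other factor), and using \eqref{Y-l} for $Y_l$, gives \eqref{Y-conc} directly, with no squaring of \eqref{Y-a2} needed.
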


\begin{proof}
\bpf{Y-l} Using H\"older's inequality with \eqref{der-scale}, we obtain the following: 
\begin{align*}
\abs{Y_l(U)}^2 &\le
\int_\O a\ut' d\x \cdot \int_\O a\Big(u-\ut-\frac{p-\pt}{\s_\e}\Big)^2 \ut' d\x 
\le C\frac{\e^2}{\l}\Gcal_u^+(U).
\end{align*}

\bpf{Y-a1} We control each term separately.
For \(\abs{Y_g(U)-\Ical_g^Y(\vbar,\thbar)}\), we have
\begin{equation}
\begin{aligned}\label{ineq-Yg}
\abs{Y_g(U)-\Ical_g^Y(\vbar,\thbar)} &\le C(\Gcal_v(U)-\Gcal_v(\Ubar)+\Gcal_\th(U)-\Gcal_\th(\Ubar)) + C\abs{\Bcal_1^+(U)-\Ical_1(\vbar,\thbar)} \\
&\quad
+ C\frac{\e}{\l}\int_\O a'\Big(\abs{v-\vbar}+|\th-\thbar|+\abs{\frac{\th}{v}-\frac{\thbar}{\vbar}}\Big) d\x 
+ C\d_3\int_{\O^c} a'd\x.
\end{aligned}
\end{equation}
Using \eqref{rel_Phi1}, \eqref{l2-v}, \eqref{l2-th}, \eqref{bo1p}, \eqref{ineq-p-linear} and \eqref{mainbad-one} together with the assumption \(\Dcal_v(U)+\Dcal_\th(U) \le 10C^*\frac{\e^2}{\l}\), we obtain that 
\[
\abs{Y_g(U)-\Ical_g^Y(\vbar,\thbar)} \le C\Big(\frac{\e}{\l}(\Dcal_v(U) + \Dcal_\th(U)) + \frac{\e^2}{\l}\Big) \le C\frac{\e^2}{\l}.
\]

For \(Y_b(U)\), we apply Young's inequality together with \eqref{trans-bound}, \eqref{Btot} and the assumption \(\Dcal_v(U)+\Dcal_\th(U) \le 10C^*\frac{\e^2}{\l}\) to obtain that
\[
\abs{Y_b(U)} \le C\Gcal_u^+(U) + C\abs{\Bcal_1^+(U)} \le C\frac{\e^2}{\l}.
\]

For \(Y_s(U)\), we decompose the terms as follows: 
\begin{equation}    
\begin{aligned}\label{Ys_decompose}
&Y_s(U) = -\int_{\O^c} a'\tht\et(U|\Ut) d\x
-\int_{\O^c}a\Big(R\Phif{v}{\vt}+\frac{R}{\g-1}\Phif{\th}{\tht}\Big)\tht' d\x \\
&+\int_{\O^c} \Big(R\tht\frac{v-\vt}{\vt^2}\vt' + \frac{R}{\g-1}\frac{\th-\tht}{\tht}\tht' + (u-\ut)\ut'\Big) d\x
\eqqcolon Y_s^1(U)+Y_s^2(U)+Y_s^3(U).
\end{aligned}
\end{equation}
For \(Y_s^1(U)\) and \(Y_s^2(U)\), we have \(\abs{Y_s^1(U)}+\abs{Y_s^2(U)} \le C\e^2/\l\) from \eqref{locE}. \\
For \(Y_s^3(U)\), we have
\begin{equation}\label{Ys3}
\abs{Y_s^3(U)} \le C\frac{\e}{\l}\int_{\O^c} a'(\abs{v-\vbar} + |\th-\thbar| + (u-\ut)^2 + 1) d\x, 
\end{equation}
which gives \(\abs{Y_s^3(U)}\le C\e^2/\l\) from \eqref{rel_Phi1}, \eqref{locE}, \eqref{l2-v}, \eqref{l2-th} and \eqref{mainbad-one} under the assumption \(\Dcal_v(U)+\Dcal_\th(U) \le 10C^*\frac{\e^2}{\l}\). Summing up, we obtain the desired bound. 

\bpf{Y-a2} Again we separately bound the three terms. 
For \(\abs{Y_g(U)-\Ical_g^Y(\vbar,\thbar)}\), we reuse \eqref{ineq-Yg}. From \eqref{rel_Phi1}, \eqref{bo1p}, \eqref{ineq-p-linear} and \eqref{mainbad-one}, we obtain that 
\begin{align*}
&\abs{Y_g(U)-\Ical_g^Y(\vbar,\thbar)} \\
&\le C\left(\frac{\e}{\l}(\Dcal_v(U) + \Dcal_\th(U)) + \e^2\Gcal_v(\Ubar) + (\Gcal_v(U)-\Gcal_v(\Ubar)+\Gcal_\th(U)-\Gcal_\th(\Ubar))\right).
\end{align*}

For \(Y_b(U)\), we use Young's inequality differently to obtain that 
\[
\abs{Y_b(U)} \le C\Big(\sqrt{\frac{\l}{\e}}\Gcal_u^+(U) + \sqrt{\frac{\e}{\l}}\abs{\Bcal_1^+(U)}\Big).
\]
Using \eqref{bo1p} and \eqref{rel_Phi_v}-\eqref{rel_Phi_th}, the second term on the right-hand side can be bounded:
\begin{align*}
\abs{\Bcal_1^+(U)} &\le 
\abs{\Bcal_1^+(U)-\Ical_1(\vbar, \thbar)} + \abs{\Ical_1(\vbar, \thbar)} \\
&\le C\Big(\frac{\e}{\l}(\Dcal_v(U) + \Dcal_\th(U))
+ \Gcal_v(\Ubar) + \Gcal_\th(\Ubar) + \e^2 (\Gcal_v(U)-\Gcal_v(\Ubar)) \Big).
\end{align*} 
Therefore, we obtain the desired bound
\[
\abs{Y_b(U)} \le C\Big(\sqrt{\frac{\l}{\e}}\Gcal_u^+(U) + \frac{\e}{\l}(\Dcal_v(U) + \Dcal_\th(U))
+ \sqrt{\frac{\e}{\l}}(\Gcal_v(\Ubar) + \Gcal_\th(\Ubar))
+ \e^2(\Gcal_v(U)-\Gcal_v(\Ubar))\Big).
\]

For \(Y_s(U)\), we consider the decomposition \eqref{Ys_decompose}. 
For \(Y_s^1(U)\), we use \eqref{mainbad-one} to get 
\begin{align*}
\abs{Y_s^1(U)} &\le
C\int_{\O^c} a'\Big(\Phif{v}{\vt}-\Phif{\vbar}{\vt}\Big) d\x
+ C\int_{\O^c} a'\Big(\Phif{\th}{\tht}-\Phif{\thbar}{\tht}\Big) d\x \\
&\qquad
+ C\int_{\O^c} a'(u-\ut)^2 d\x 
+ C\d_3\int_{\O^c} a' d\x \\
&\le C\left(\frac{\e}{\l}\Dcal_v(U) + \e^2\Gcal_v(\Ubar) + \Gcal_u^-(U)
+ (\Gcal_v(U)-\Gcal_v(\Ubar) + \Gcal_\th(U)-\Gcal_\th(\Ubar))\right).
\end{align*}
Similarly, for \(Y_s^2(U)\), we use \eqref{mainbad-one} again to get
\begin{align*}
\abs{Y_s^2(U)} &\le C\frac{\e}{\l}\Big(\int_{\O^c}a'\Big(\Phif{v}{\vt}-\Phif{\vbar}{\vt}\Big) d\x 
+ \int_{\O^c}a'\Big(\Phif{\th}{\tht}-\Phif{\thbar}{\tht}\Big) d\x + \d_3\int_{\O^c} a' d\x \Big) \\
&\le C\frac{\e}{\l}\Big(\frac{\e}{\l}\Dcal_v(U) + \e^2\Gcal_v(\Ubar)
+ (\Gcal_v(U)-\Gcal_v(\Ubar) + \Gcal_\th(U)-\Gcal_\th(\Ubar))\Big).
\end{align*}
For \(Y_s^3(U)\), we use \eqref{Ys3} with \eqref{rel_Phi1} and \eqref{mainbad-one} to obtain that 
\[
\abs{Y_s^3(U)} \le C \frac{\e}{\l} \left(\frac{\e}{\l}\Dcal_v(U) + \e^2\Gcal_v(\Ubar) + \Gcal_u^-(U) + (\Gcal_v(U)-\Gcal_v(\Ubar) + \Gcal_\th(U)-\Gcal_\th(\Ubar))\right).
\]
Summing up, we obtain 
\[
\abs{Y_s(U)} \le C\left(\frac{\e}{\l}\Dcal_v(U) + \e^2\Gcal_v(\Ubar)
+ \Gcal_u^-(U) + (\Gcal_v(U)-\Gcal_v(\Ubar) + \Gcal_\th(U)-\Gcal_\th(\Ubar))\right).
\]
These three inequalities give \eqref{Y-a2} as we desired.

\bpf{Y-conc}
From \eqref{Y-l}, \eqref{Y-a1}, and \eqref{Y-a2}, we obtain the desired bound. 
\end{proof}

\subsection{Proof of Proposition \ref{prop:main}}\label{section-main}
We now prove the main Proposition  \ref{prop:main}. We split the proof into two steps, depending on the strength of the dissipation term $\Dcal(U)$.

\step{1}
We first consider the case of \(\Dcal(U)\ge 10C^* \frac{\e^2}{\l}\), where the constant \(C^*\) is chosen to be the greater one between constants \(C^*\) given in Proposition \ref{prop_hyperbolic_out} and \ref{prop_parabolic_out}.
Then, using \eqref{bo} and taking \(\d_0\) small enough, we have 
\begin{align*}
\Rcal(U)
&\le
2\abs{\Bcal_{\d_3}(U)} + \abs{\Bcal_1^+(U)} + 2\abs{\Jcal^{para}(U)} - (1-\d_0)(\Dcal_v(U)+ \Dcal_\th(U) + \Dcal_u(U)) \\
&\le 5C^*\frac{\e^2}{\l} - (1-C\d_0-C\sqrt[4]{\d_0})(\Dcal_v(U)+ \Dcal_\th(U) + \Dcal_u(U)) \\
&\le 5C^*\frac{\e^2}{\l} - \frac{1}{2}(\Dcal_v(U)+ \Dcal_\th(U) + \Dcal_u(U)) \le 0,
\end{align*}
which gives the desired result. 

\step{2}
We now assume the other alternative, i.e., \(\Dcal(U)\le 10C^* \frac{\e^2}{\l}\). 
First of all, we have \eqref{lbis}, and for the small constant \(\d_3\) of Proposition \ref{prop:main3} associated to the constant \(C_2\) in \eqref{lbis}, we defined the truncation of size \(\d_3\).  
Using 
\[
\Ical_g^Y(\vbar,\thbar) = Y(U) - (Y_g(U)-\Ical_g^Y(\vbar,\thbar)) - Y_b(U) - Y_l(U) - Y_s(U),
\]
we have 
\[
-5\abs{Y(U)}^2 \le -\abs{\Ical_g^Y(\vbar,\thbar)}^2 + 5\abs{Y_g(U)-\Ical_g^Y(\vbar,\thbar)}^2 + 5\abs{Y_b(U)}^2 + 5\abs{Y_l(U)}^2 + 5\abs{Y_s(U)}^2.
\]
Now, taking \(\d_0\) small enough such that \(\d_0 \le \d_3^5\), we use \eqref{keyD} to find that for \(\e<\e_0(\le \d_3)\) and \(\e/\l<\d_0\), 
\begin{align*}
&\Rcal(U) \le -\frac{5}{\e\d_3}Y(U)^2 + \Bcal_{\d_3}(U) + \d_0\frac{\e}{\l}\abs{\Bcal_{\d_3}(U)} + \d_0\frac{\e}{\l}\Bcal_1^+(U) + \Jcal^{para}(U) + \d_0\abs{\Jcal^{para}(U)} \\
&\qquad\qquad -\Gcal_u^-(U)-\frac{1}{2}\Gcal_u^+(U) -\Big(1-\d_0\frac{\e}{\l}\Big)(\Gcal_v(U)+\Gcal_\th(U)) - (1-\d_0)\Dcal(U) \\
&\le -\frac{1}{\e\d_3}\abs{\Ical_g^Y(\vbar,\thbar)}^2 + \Ical_1(\vbar, \thbar) + \Bcal_2(\Ubar) + \Bcal_3(\Ubar) + \Bcal_4(\Ubar) + \d_0\frac{\e}{\l}\abs{\Ical_1(\vbar, \thbar)} \\
&\qquad
+ \d_0\frac{\e}{\l}\left(\abs{\Bcal_2(\Ubar)}+\abs{\Bcal_3(\Ubar)} + \abs{\Bcal_4(\Ubar)}\right) 
-\Big(1-\d_3\frac{\e}{\l}\Big)\left(\Gcal_v(\Ubar) + \Gcal_\th(\Ubar)\right) - (1-\d_3)\Dcal_v^1(\Ubar) \\
&\qquad
\underbrace{
\begin{aligned}
&+\left(1+\d_0\frac{\e}{\l}\right)\big(
\abs{\Bcal_1^+(U)-\Ical_1(\vbar, \thbar)} + \abs{\Bcal_1^-(U)} + \abs{\Bcal_2(U)-\Bcal_2(\Ubar)} \\
&\hspace{50mm}
+ \abs{\Bcal_3(U)-\Bcal_3(\Ubar)} + \abs{\Bcal_4(U)-\Bcal_4(\Ubar)}
\big)
\end{aligned}
}_{\eqqcolon J_1} \\
&\qquad
+\underbrace{\left(1+\d_0\right)\abs{\Jcal^{para}(U)}}_{\eqqcolon J_2} 
+\underbrace{\frac{5}{\e\d_3}\left(\abs{Y_g(U)-\Ical_g^Y(\vbar,\thbar)}^2 + \abs{Y_b(U)}^2 + \abs{Y_l(U)}^2 + \abs{Y_s(U)}^2\right)}_{\eqqcolon J_3} \\
&\qquad
-\Gcal_u^-(U)-\frac{1}{2}\Gcal_u^+(U)-\frac{1}{2}(\Gcal_v(U)-\Gcal_v(\Ubar)+\Gcal_\th(U)-\Gcal_\th(\Ubar)) \\
&\qquad
- (\d_3-\d_0)\frac{\e}{\l}(\Gcal_v(\Ubar)+\Gcal_\th(\Ubar))
- (\d_3-\d_0)\Dcal_v^1(U) - (1-\d_0)(\Dcal_v^2(U)+\Dcal_\th(U)+\Dcal_u(U)).
\end{align*}
We now claim that \(J_1, J_2, J_3\) are controlled by the remaining good terms above. 
Indeed, it follows from \eqref{bo1p}-\eqref{bo4} that for any \(\l, \e/\l<\d_0(\le \d_3^5)\),
\begin{align*}
J_1 
&\le \frac{\d_3}{4}(\Dcal_v(U)+\Dcal_\th(U)) + \d_3\Gcal_u^-(U) \\
&\qquad
+ \frac{\d_3}{4}(\Gcal_v(U)-\Gcal_v(\Ubar)+\Gcal_\th(U)-\Gcal_\th(\Ubar))
+ \frac{\d_3}{4}\frac{\e}{\l}(\Gcal_v(\Ubar)+\Gcal_\th(\Ubar))
\end{align*}
and from \eqref{para-total},
\begin{align*}
J_2 
&\le \frac{\d_3}{4}(\Dcal_v(U)+\Dcal_\th(U)+\Dcal_u(U)) + \d_0(\Gcal_u^-(U)+\Gcal_u^+(U)) \\
&\qquad
+ \frac{\d_3}{4}(\Gcal_v(U)-\Gcal_v(\Ubar)+\Gcal_\th(U)-\Gcal_\th(\Ubar))
+ \frac{\d_3}{4}\frac{\e}{\l}(\Gcal_v(\Ubar)+\Gcal_\th(\Ubar)).
\end{align*}
For \(J_3\), from \eqref{Y-conc}, we have
\begin{align*}
J_3
&\le \frac{\d_3}{4}\Big(\Dcal_v(U)+\Dcal_\th(U) + \Gcal_u^-(U)+\Gcal_u^+(U) \\
&\hspace{30mm}
+(\Gcal_v(U)-\Gcal_v(\Ubar)+\Gcal_\th(U)-\Gcal_\th(\Ubar)) 
+\frac{\e}{\l}(\Gcal_v(\Ubar)+\Gcal_\th(\Ubar))\Big).
\end{align*}
Therefore, we have 
\begin{align*}
&\Rcal(U) \le -\frac{1}{\e\d_3}\abs{\Ical_g^Y(\vbar,\thbar)}^2 
+ \left(\Ical_1(\vbar, \thbar) + \Bcal_2(\Ubar) + \Bcal_3(\Ubar) + \Bcal_4(\Ubar)\right)
+ \d_3\frac{\e}{\l} \abs{\Ical_1(\vbar, \thbar)} \\
&\qquad + \d_3\frac{\e}{\l} \left(\abs{\Bcal_2(\Ubar)}+\abs{\Bcal_3(\Ubar)}+ \abs{\Bcal_4(\Ubar)}\right)
-\Big(1-\d_3\frac{\e}{\l}\Big)\left(\Gcal_v(\Ubar) + \Gcal_\th(\Ubar)\right) - (1-\d_3)\Dcal_v^1(\Ubar).
\end{align*}
Since the above quantities \(\Ical_g^Y(\vbar,\thbar), \Bcal_2(\Ubar), \Bcal_3(\Ubar), \Bcal_4(\Ubar)\), \(\Gcal_v(\Ubar)\), \(\Gcal_\th(U)\), and \(\Dcal_v(\Ubar)\) depend only on \(\vbar\) and \(\thbar\), and 
\(
\Bcal_2(\Ubar) = \Ical_2(\vbar, \thbar), 
\Bcal_3(\Ubar) = \Ical_3(\vbar, \thbar), 
\Bcal_4(\Ubar) = \Ical_4(\vbar, \thbar),
\)
it holds by Proposition \ref{prop:main3} that \(\Rcal(U)\le 0\). 
This completes the proof of Proposition \ref{prop:main}. \qed


\section{Proof of Theorem \ref{thm_inviscid}}
\setcounter{equation}{0}

For the existence of well-prepared initial data, and the asymptotic analysis for \eqref{wconv} and \eqref{uni-est}, we follow the same argument as in \cite[Section 5]{KV-Inven}. So, we only present the proof of \eqref{X-control} on the control of shift,  that is subtle as mentioned in Section \ref{sec:idea}.

\subsection{Uniform estimates in \(\nu\)}
First of all, we obtain the uniform-in-$\nu$ estimates from the contraction estimate of Theorem \ref{thm_main} as follows.
Let \(\{ (v^\nu,u^\nu,\th^\nu) \}_{\nu>0}\) be a sequence of solutions on \((0,T)\) to \eqref{inveq} with the initial datum \((v_0^\nu,u_0^\nu,\th_0^\nu)\).
Moreover, throughout this section, let \(C\) be a positive constant that may vary from line to line but remains independent of \(\n\). (This may depend on \(\e,\l\) and \(T\) now.)

We apply Theorem \ref{thm_main} to the following functions:
\begin{align*}
    v(t,x)&=\vn(\nu t,\nu x), &u(t,x)&=\un(\nu t,\nu x), &\th(t,x)&=\thn(\nu t,\nu x), \\
    \vtil(x)&=\vtn(\nu x), &\util(x)&=\utn(\nu x), &\thtil(x)&=\thtn(\nu x),
\end{align*}
perform the change of variables \(t \mapsto t/\nu \) and \(x \mapsto x/\nu \), and make use of \eqref{ini_conv} so that we obtain for any \(\d\in(0,1)\), there exists \(\n_*>0\) such that for any \(\n<\n_*\) and any \(t\in[0,T]\), the following holds: (recall by \eqref{basic_ini} that \(\Ecal_0 \coloneqq \int_\RR \eta \left( (v^0,u^0,\th^0) | (\vbar,\ubar,\thbar) \right) dx\))
\begin{align}
\begin{aligned} \label{ineq-m}
    &\int_\RR \eta\left((\vn,\un,\thn)(t,x)|(\vtn,\utn,\thtn)(x-X_\nu(t))\right)dx \\
    &\qquad +\int_0^{T} \int_\RR \abs{(\vtn)'(x-X_\nu(t))} \Phi(\vn(t,x)/\vtn(x-X_\nu(t))) dxdt \\
    &\qquad +\nu\int_0^{T} \int_\RR \left(\vn(c+(\thn)^2)\right)(t,x)\abs{\rd_x\Big(\frac{1}{\vn(t,x)}-\frac{1}{\vtn(x-X_\nu(t))}\Big)}^2 dxdt \\
    &\qquad +\nu\int_0^{T} \int_\RR \frac{1}{\vn(t,x)}\abs{\rd_x\left( \thn(t,x)-\thtn(x-X_\nu(t)) \right)}^2 dxdt\\
    &\le C \Ecal_0 + \d.
\end{aligned}
\end{align}

\subsection{Proof of \eqref{X-control}: Control of the shift}
First, following the same argument as in \cite[Lemma 5.1]{KV-Inven}, we have  the $L^1$ convergence of \(\{X_\nu\}_{\nu>0}\) as follows.
\begin{lemma}\label{lem-X}
There exists \(X_\infty \in BV((0,T))\) such that
\begin{equation} \label{X-con}
\begin{aligned}
X_\nu \to X_\infty \quad \text{in } L^1(0,T), \quad \text{up to subsequence as } \nu \to 0.
\end{aligned}
\end{equation}
\end{lemma}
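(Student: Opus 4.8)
\textbf{Proof proposal for Lemma~\ref{lem-X}.}
The plan is to establish compactness of the shift family $\{X_\nu\}_{\nu>0}$ in $L^1(0,T)$ via a uniform $BV$ bound, following the scheme of \cite[Lemma 5.1]{KV-Inven} but keeping track of the $\nu$-scaling carefully. First I would record the scaled version of the shift ODE: since $X_\nu(t)=\nu X(t/\nu)$ and $X$ solves \eqref{X-def}, one has $\dot X_\nu(t)=\dot X(t/\nu)$, and hence from \eqref{est-shift} (or more directly from \eqref{contx} after the scaling $t\mapsto t/\nu$, $x\mapsto x/\nu$) we get the pointwise bound
\[
|\dot X_\nu(t)|\le \frac{C}{\e^2}\bigl(f_\nu(t)+1\bigr)\quad\text{for a.e. }t\in(0,T),
\]
where $f_\nu$ is the appropriately rescaled version of the function $f$ in \eqref{est-shift}.

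The key step is then to show that $\|f_\nu\|_{L^1(0,T)}$ is bounded uniformly in $\nu$. This is exactly where the contraction estimate \eqref{cont_main}, transferred to the $\nu$-level and combined with the well-prepared initial data \eqref{ini_conv}, is used: by \eqref{est-shift} the $L^1$ norm of $f$ over the rescaled time interval is controlled by $\frac{\l}{\d_0\e}\int_\RR\eta(U_0^\nu|\Ut^\nu)\,dx$, and by \eqref{ini_conv} this quantity converges to $\frac{\l}{\d_0\e}\Ecal_0$, hence is bounded by, say, $\frac{2\l}{\d_0\e}\Ecal_0$ for all small $\nu$. Unwinding the scaling gives $\|f_\nu\|_{L^1(0,T)}\le C(\e,\l)\,\Ecal_0$ independently of $\nu$. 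Consequently
\[
\|\dot X_\nu\|_{L^1(0,T)}\le \frac{C}{\e^2}\bigl(C(\e,\l)\Ecal_0+T\bigr)=:C(T),
\]
so $\{X_\nu\}$ is bounded in $W^{1,1}(0,T)$. Together with $X_\nu(0)=0$, this yields a uniform bound on $\|X_\nu\|_{BV(0,T)}$.

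Finally I would invoke the compact embedding $BV(0,T)\hookrightarrow\hookrightarrow L^1(0,T)$ (Helly's selection theorem): there exists $X_\infty\in BV((0,T))$ and a subsequence along which $X_\nu\to X_\infty$ in $L^1(0,T)$, and up to a further subsequence pointwise a.e.\ on $(0,T)$, which is what \eqref{X-con} asserts. The main obstacle is the uniform-in-$\nu$ $L^1$ control of $\dot X_\nu$, i.e.\ ensuring that the right-hand side of the shift ODE does not blow up under the scaling; this is handled precisely because \eqref{est-shift} provides a bound on the time integral of the bad/parabolic terms that scales correctly and is tied to the initial relative entropy, which is kept bounded by the well-preparedness \eqref{ini_conv}. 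The rest is a routine application of standard $BV$ compactness.
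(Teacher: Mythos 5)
Your proposal is correct and follows essentially the same route as the paper's proof: scale the shift ODE bound \eqref{est-shift} to get $|\dot X_\nu(t)|\le C(f_\nu(t)+1)$ with $f_\nu(t)=f(t/\nu)$, observe that \eqref{est-shift} together with the well-preparedness \eqref{ini_conv} gives a $\nu$-uniform $L^1(0,T)$ bound on $f_\nu$ (the factor $\nu$ from the time rescaling cancels the $1/\nu$ in the rescaled initial relative entropy), hence a uniform $W^{1,1}$/BV bound on $X_\nu$ with $X_\nu(0)=0$, and conclude by BV compactness. This is exactly the argument in the paper's Lemma \ref{lem-X}.
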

\begin{proof}
Since \(X_\nu(t)= \nu X(t/\nu)\) and \(X'_\nu(t)= X'(t/\nu)\), we use \eqref{est-shift} for the shift $X$ to have
\[
\abs{X'_\nu(t)} \le C \left( f_\nu(t)  +1 \right),    
\]
where  \(f_\nu(t)\coloneqq f(t/\nu)\). This and  \(X_\nu(0)=0\) yield
\begin{equation}\label{X-unif}
\abs{X_\nu(t)} \le Ct+C\int_0^t f_\nu(s) ds.
\end{equation}
Since \eqref{est-shift} and \eqref{ini_conv} imply the boundedness of \(\{f_\nu\}_{\nu>0}\) is in \(L^1(0,T)\), 
\(X_\nu'\) and \(X_\nu\) are uniformly bounded in \(L^1(0,T)\). Therefore, the compactness of BV \cite[Theorem 3.23]{AmbroFuscoPall00} gives the desired result.
\end{proof}

By \eqref{X-unif}, we choose a positive constant \(r = r(T)>1\) such that \(\norm{X_\n}_{L^\infty(0, T)}\le r/3\) for any \(\nu \in (0,\nu_*)\).
Note that \(r\) is a constant depends only on \(\Ecal_0\), not on \(\n\); we may take \(r\) to satisfy \(r \le C(\Ecal_0+T+1)\).
Then we consider a nonnegative smooth function \(\ps\colon \RR\to \RR\) such that \(\ps(x) = \ps(-x)\), and \(\ps'(x)\le 0\) for all \(x\ge 0\), and \(\abs{\ps'(x)}\le 2/r\) for all \(x\in \RR\), and
\[
\ps(x) = \begin{cases}
    1, &\text{ if } \abs{x}\le r, \\
    0, &\text{ if } \abs{x}\ge 2r.
\end{cases}
\]
On the other hand, let \(\th\colon \RR\to \RR\) be a nonnegative smooth function (not the absolute temperature) such that \(\th(s) = \th(-s)\), \(\int_\RR \th = 1\) and \(\supp \th \subseteq [-1, 1]\), and for any \(\e>0\), let 
\[
\th_\e(s)\coloneqq \frac{1}{\e}\th(\frac{s-\e}{\e}). 
\]
Here, \(\e\) is a parameter, not the shock strength. 
Then for a given \(t\in(0, T)\), and any \(\e<t/2\), we define a nonnegative smooth function 
\[
\phi_{t, \e}(s) \coloneqq \int_0^s\left(\th_\e(\t)-\th_\e(\t-t)\right)d\t. 
\]
Since \(v_t^\nu-u_x^\nu = \n((\t_0+(\thn)^2)\frac{v_x^\nu}{\vn})_x\), we have 
\begin{multline}\label{weak-eq}
\int_{[0, T]\times \RR} \left(\phi_{t, \e}'(s)\ps(x)\vn(s, x) - \phi_{t, \e}(s)\ps'(x)\un(s, x)\right)\, dsdx \\
= \int_{[0, T]\times \RR}\phi_{t, \e}(s)\ps'(x)\n(\t_0+(\thn)^2)\frac{v_x^\nu}{\vn}(s, x)\, dsdx.
\end{multline}
Note that \(\int_\RR \ps(x)\vn(s, x)dx\) is continuous with respect to \(s\); this is because our solutions lie in \(\Xcal_T\), which gives \(v_t^\n \in L^2(0,T;L^2(\RR))\), so that 
\begin{multline*}
\abs{\int_\RR \ps(x)\vn(s, x)dx - \int_\RR \ps(x)\vn(s', x)dx}
\le \int_{\abs{x}\le 2r}\int_{s}^{s'} \abs{v_t^\n(t, x)} dtdx \\
\le \sqrt{4r\abs{s-s'}}\norm{v_t^\n}_{L^2(0, T;L^2(\RR))}\to 0
\text{ as } \abs{s-s'}\to 0.
\end{multline*}
Then, since \(\un, \vn\), and \((\t_0+(\thn)^2)\frac{v_x^\n}{\vn}\) are all locally integrable, thanks to the dominated convergence theorem, we take \(\e\to 0\) to obtain 
\begin{multline}\label{eq-to-decompose}
\int_\RR \ps(x)v_0^\n(x)dx - \int_\RR \ps(x)\vn(t, x)dx - \int_0^t\int_\RR\ps'(x)\un(s, x) dxds \\
= \n\int_0^t\int_\RR \ps'(x)(\t_0+(\thn)^2)(s,x)\Big(\frac{v_x^\n}{\vn}\Big)(s,x) dxds\eqqcolon J_1.
\end{multline}
We now decompose the left-hand side of \eqref{eq-to-decompose} to obtain that 
\begin{align}
\begin{aligned} \label{eq-to-decompose1}
0 &= J_1
+\underbrace{\int_\RR \ps(x)\left[\vn(t, x)-\vtn(x-X_\n(t))\right] dx}_{\eqqcolon J_2} 
+\underbrace{\int_\RR \ps(x)\left[\vtn(x)-v_0^\n(x)\right] dx}_{\eqqcolon J_3} \\
&\qquad
+\underbrace{\int_0^t\int_\RR \ps'(x)\left[\un(s, x)-\utn(x-X_\n(s))\right]dxds}_{\eqqcolon J_4} \\
&\qquad
+\underbrace{\int_\RR \ps(x)\left[\vtn(x-X_\n(t))-\vtn(x)\right]dx
+\int_0^t\int_\RR \ps'(x)\utn(x-X_\n(s))dxds}_{\eqqcolon J_5}.
\end{aligned}
\end{align}
We will take \(\n\to 0\) to obtain a bound on \(\abs{X_{\infty}(t)-\s t}\). 
Note that, Lemma \ref{lem-X} guarantees the \(L^1\) convergence \(X_\n(t)\to X_\infty(t)\), and so up to a subsequence, we also have its pointwise convergence, namely, \(\abs{X_\n(t)-\s t}\to \abs{X_\infty(t)-\s t}\) almost everywhere.
In the below, the constant \(C\) does not depend on \(\Ecal_0\) or \(T\); in the case when we need constants depending on them, we will use \(C(\Ecal_0)\) or \(C(T)\) instead. 
Let \(\d\in(0, 1)\) be arbitrary and \(\n_*\) be the associated constant given by \eqref{ineq-m}.
Then, for \(\n<\n_*\), we have the following estimates. 

\(\bullet\) \textbf{Control of \(J_1\):}
Considering the support of \(\ps\) and the upper bound of \(\abs{\ps'}\), we have
\begin{align*}
\abs{J_1} &\le 
C\n\int_0^t\int_{[-2r, -r]\cup[r, 2r]}(\t_0+(\thn)^2)(s, x)\abs{\frac{v_x^\n}{\vn}(s, x)} dxds \\
&\le 
C\n\int_0^t\int_{[-2r, -r]\cup[r, 2r]}\left((\t_0+(\thn)^2)\vn\right)(s, x)\abs{\Big(\frac{1}{\vn(s, x)}-\frac{1}{\vtn(x-X_\n(s))}\Big)_x} dxds \\
&\qquad
+ C\n\int_0^t\int_{[-2r, -r]\cup[r, 2r]}\left((\t_0+(\thn)^2)\vn\right)(s, x)(\vtn)'(x-X_\n(s)) dxds
\eqqcolon J_{11}+J_{12}.
\end{align*}
For \(J_{11}\), we use Young's inequality to have the following: 
\begin{align*}
J_{11} &\le 
C\n\int_0^t\int_\RR \left((\t_0+(\thn)^2)\vn\right)(s, x)\abs{\Big(\frac{1}{\vn(s, x)}-\frac{1}{\vtn(x-X_\n(s))}\Big)_x}^2 dxds \\
&\qquad
+ C\n\int_0^t\int_{[-2r, -r]\cup[r, 2r]} \left((\t_0+(\thn)^2)\vn\right)(s, x) dxds.
\end{align*}
The first term can be bounded by \(C(\Ecal_0+\d)\), using \eqref{ineq-m}. 
For the second term, we need the following pointwise bound of \(\thn\). 
For any \(M>1\) and \(x\in [-M, M]\), we have
\begin{align*}
&\abs{2M\thn(t, x) - \int_{-M}^M\thn(t, y)dy} 
\le \int_{-M}^M \abs{\thn(t, x)-\thn(t, y)}dy \le \int_{-M}^M\int_{-M}^M \abs{\rd_x \thn(t, z)} dzdy \\
&\qquad\le 2M \int_{-M}^M \abs{\rd_x(\thn(t, z)-\thtn(z-X_\n(t)))} dz + 2M\int_{-M}^M \abs{\rd_x\thtn(z-X_\n(t))} dz \\
&\qquad\le 2M\sqrt{\int_{-M}^M \vn(t, z)dz}\sqrt{\int_{-M}^M \frac{1}{\vn}(t, z)\abs{(\thn(t, z)-\thtn(z-X_\n(t)))_x}^2dz} +2MC.
\end{align*}    
In addition, we use \eqref{rel_Phi_v}, \eqref{rel_Phi_th}, and \eqref{ineq-m} to obtain that 
\begin{equation}\label{ineq-m-loc-v}
\int_{-M}^M \vn(t, x) dx \le C\int_{-M}^M \left[\Phi\left(\vn(t, x)/\vtn(x-X_\n(t))\right) + 1\right] dx \le C\left(\Ecal_0 + 2M+ \d\right),
\end{equation}
and
\begin{equation}\label{ineq-m-loc-th}
\int_{-M}^M \thn(t, x) dx \le C\int_{-M}^M \left[\Phi\left(\thn(t, x)/\thtn(x-X_\n(t))\right) + 1\right] dx \le C\left(\Ecal_0 + 2M+ \d\right).
\end{equation}
Thus, 
\begin{equation}\label{pw-th-final}
\begin{aligned}
\thn(t, x) &\le \frac{1}{2M}\int_{-M}^M \thn(t, x)dx \\
&\qquad + \sqrt{\int_{-M}^M \vn(t, x)dx}\sqrt{\int_{-M}^M \frac{1}{\vn}(t, x)\abs{(\thn(t, x)-\thtn(x-X_\n(t)))_x}^2 dx} + C \\
&\le C\bigg(\Ecal_0 + 1 + \sqrt{\Ecal_0+2M}\sqrt{\int_\RR \frac{1}{\vn}(t, x)\abs{(\thn(t, x)-\thtn(x-X_\n(t)))_x}^2 dx}\bigg).
\end{aligned}
\end{equation} 
In the below, we fix \(M = 2r\). Note once again that \(r\) is independent of \(\n<\n_*\). 
With the pointwise estimate \eqref{pw-th-final}, we use \eqref{ineq-m} and \eqref{ineq-m-loc-v} to control the second term: 
\begin{equation}\label{int-th2}
\begin{aligned}
&\n\int_0^t\int_{[-2r, -r]\cup[r, 2r]} \left((\t_0+(\thn)^2)\vn\right)(s, x) dxds \\
&\le C\n(\Ecal_0+1)^2\int_0^t\int_\RR \vn(s, x) dxds \\
&\quad
+ C\n(\Ecal_0+1)\int_0^t\Big(\int_\RR \vn(s, x) dx\int_\RR \frac{1}{\vn}(s, x)\abs{(\thn(s, x)-\thtn(x-X_\n(s)))_x}^2 dx\Big)ds \\
&\le C(\Ecal_0, T)\n + C(\Ecal_0+1)^2(\Ecal_0+\d).
\end{aligned}
\end{equation} 
For \(J_{12}\), we first make an observation: from the second inequality in \eqref{tail} and
\[
\abs{x-X_\n(s)} \ge \abs{x}-r/3 \ge r/2 \text{ for } x\in[-2r, -r]\cup[r, 2r],
\] 
we observe that for any \(\n < \n_*\),
\[
\sup_{x\in [-2r, -r]\cup[r, 2r]} (\vtn)'(x-X_\n(s))
\le \sup_{x\in [-2r, -r]\cup[r, 2r]} \frac{1}{\n} \vt'\Big(\frac{x-X_\n(s)}{\n}\Big)
\le \frac{C}{\n}e^{-C\frac{r}{2\n}} \le C.
\]
Hence, \(J_{12}\) is bounded by the integral in \eqref{int-th2}. Thus, we conclude that  
\[
J_1 \le C(\Ecal_0, T)\n + C(\Ecal_0+1)^2(\Ecal_0+\d).
\]

\(\bullet\) \textbf{Control of \(J_2\):}
We split \(J_2\) into two parts:
\begin{align*}
J_2
&= \int_{\vn \in \left(\frac{1}{3}v_-,3v_-\right)} \ps(x)\left[\vn(t, x)-\vtn(x-X_\n(t))\right] dx \\
&\qquad + \int_{\vn \notin \left(\frac{1}{3}v_-,3v_-\right)} \ps(x)\left[\vn(t, x)-\vtn(x-X_\n(t))\right] dx
\eqqcolon J_{21}+J_{22}.
\end{align*}
From the definition of \(\ps\), we get
\[
\abs{J_{21}} \le \int_{\vn \in \left(\frac{1}{3}v_-,3v_-\right)} \abs{\vn(t, x)-\vtn(x-X_\n(t))} \one{\abs{x}\le 2r} dx.
\]
Hence, we use \eqref{rel_Phi_v} and H\"older's inequality together with \eqref{ineq-m} to obtain that for any \(\nu < \nu_*\),
\[
\abs{J_{21}} \le C \sqrt{\int_\RR \Phi(\vn(t,x)/\vtn(x-X_\nu(t)))dx} \le C \sqrt{\Ecal_0+\d}.   
\]
On the other hand, we apply \eqref{rel_Phi_v} more directly for \(J_{22}\).
That is, from \eqref{ineq-m}, it holds that for any \(\nu < \nu_*\),
\[
\abs{J_{22}} \le \int_\RR \Phi(\vn(t,x)/\vtn(x-X_\nu(t)))dx \le C(\Ecal_0 + \d).
\]
Thus, \(J_2\) can be controlled as follows:
\[
\abs{J_2} \le \abs{J_{21}} + \abs{J_{22}} \le C(\sqrt{\Ecal_0+\d}+\Ecal_0+\d).    
\]

\(\bullet\) \textbf{Control of \(J_3\):}
We also split \(J_3\) into two parts:
\begin{align*}
J_3
&= \int_{v_0^\nu \in \left(\frac{1}{3}v_-,3v_-\right)} \ps(x)\left[\vtn(x)-v_0^\n(x)\right] dx
+ \int_{v_0^\nu \notin \left(\frac{1}{3}v_-,3v_-\right)} \ps(x)\left[\vtn(x)-v_0^\n(x)\right] dx.
\end{align*}
Then, by the same argument as above, we obtain from \eqref{rel_Phi_v} that
\[
\abs{J_3} \le C \sqrt{\int_\RR \Phi(v_0^\nu(x)/\vtn(x)) dx} + \int_\RR \Phi(v_0^\nu(x)/\vtn(x)) dx.
\]
Hence, \eqref{ini_conv} gives that for any \(\nu < \nu_*\),
\[
\abs{J_3} \le C (\sqrt{\Ecal_0+\d}+\Ecal_0+\d).
\]

\(\bullet\) \textbf{Control of \(J_4\):}
Since \(\supp \ps' \subseteq [-2r,2r] \) and  \(\abs{\ps'} \le 2/r\),
H\"older's inequality gives that
\begin{align*}
\abs{J_4}
&\le \int_0^t\int_\RR \abs{\ps'(x)} \abs{\un(s, x)-\utn(x-X_\n(s))} \one{\abs{x}\le 2r} dxds \\
&\le C \int_0^t \sqrt{\int_\RR \abs{\un(s, x)-\utn(x-X_\n(s))}^2 dx} ds.
\end{align*}
On the other hand, from \eqref{ineq-m}, we have
\[
\int_\RR \frac{1}{2\thtn(x-X_\n(t))} \abs{\un(t,x)-\utn(x-X_\n(t))}^2
\le C\Ecal_0 +\d,
\]
and so, it simply follows from the boundedness of \(\thtn \in (\th_+,\th_-)\) that
\[
\int_\RR \abs{\un(t,x)-\utn(x-X_\n(t))}^2
\le C(\Ecal_0 +\d).
\]
Thus, we have
\[
\abs{J_4} \le CT\sqrt{\Ecal_0 + \d}.    
\]

\(\bullet\) \textbf{Control of \(J_5\):} We use the following lemma.
\begin{lemma}\label{lem-diff}
For any \(t\in(0,T)\) and any \(\nu<\nu_*\), the following holds:
\begin{equation}\label{X-control-5}
\abs{J_5-(X_\n(t)-\s t)(v_--v_+)} \le C \nu (t+1).
\end{equation}
\end{lemma}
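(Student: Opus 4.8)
The plan is to split $J_5=A+B$ with
\[
A:=\int_\RR \ps(x)\big[\vtn(x-X_\n(t))-\vtn(x)\big]\,dx,\qquad B:=\int_0^t\int_\RR \ps'(x)\,\utn(x-X_\n(s))\,dxds,
\]
and to show that $A$ produces the term $X_\n(t)(v_--v_+)$ while $B$ produces $t(u_--u_+)$, the two combining into $(X_\n(t)-\s t)(v_--v_+)$ by the first Rankine--Hugoniot identity in \eqref{end-con}, i.e. $u_--u_+=-\s(v_--v_+)$ (with $\s=\s_\e$). Everything else will be exponentially small in $1/\nu$, hence $\le C\nu(t+1)$ once $\nu<\nu_*$ is small enough. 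The only inputs are: the exponential bounds \eqref{tail} rewritten for the rescaled profiles $\vtn(\cdot)=\vtil(\cdot/\nu)$, $\utn(\cdot)=\util(\cdot/\nu)$; the shape of $\ps$; and the fact, fixed before the lemma, that $r=r(T)$ is chosen so that $\|X_\n\|_{L^\infty(0,T)}\le r/3$ for all $\nu\in(0,\nu_*)$.

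For $A$, substituting $y=x-X_\n(t)$ gives $A=\int_\RR\big[\ps(y+X_\n(t))-\ps(y)\big]\vtn(y)\,dy$. Since $\ps\equiv1$ on $[-r,r]$ and $|X_\n(t)|\le r/3$, the factor $\ps(\cdot+X_\n(t))-\ps(\cdot)$ vanishes on $\{|y|\le r-|X_\n(t)|\}$, hence is supported in $\{|y|\ge 2r/3\}$; there \eqref{tail} yields $|\vtn(y)-v_-|\le C\e\,e^{-c\e r/\nu}$ for $y<0$ and $|\vtn(y)-v_+|\le C\e\,e^{-c\e r/\nu}$ for $y>0$. Replacing $\vtn$ by $v_-$ on $\{y<0\}$ and by $v_+$ on $\{y>0\}$ up to these errors, and computing $\int_{-\infty}^0[\ps(y+X_\n(t))-\ps(y)]\,dy=X_\n(t)$ and $\int_0^{\infty}[\ps(y+X_\n(t))-\ps(y)]\,dy=-X_\n(t)$ (both because $\ps\equiv1$ on the segment joining $0$ to $X_\n(t)$ and $\ps$ is compactly supported), we obtain $A=X_\n(t)(v_--v_+)+O\!\big(r\,\e\,e^{-c\e r/\nu}\big)$.

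For $B$, on $\supp\ps'\subseteq\{r\le|x|\le 2r\}$ and for $s\in(0,t)$ we have $|x-X_\n(s)|\ge r-r/3=2r/3$, so by \eqref{tail} the value $\utn(x-X_\n(s))=\util\big((x-X_\n(s))/\nu\big)$ is within $C\e\,e^{-c\e r/\nu}$ of $u_-$ when $x<0$ and of $u_+$ when $x>0$. Using $\int_{-\infty}^0\ps'=\ps(0)=1$ and $\int_0^\infty\ps'=-\ps(0)=-1$, this gives $\int_\RR\ps'(x)\,\utn(x-X_\n(s))\,dx=u_--u_++O(\e\,e^{-c\e r/\nu})$ for each $s$, and integrating over $s\in(0,t)$, $B=t(u_--u_+)+O\!\big(t\,\e\,e^{-c\e r/\nu}\big)$. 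Adding $A$ and $B$ and using $u_--u_+=-\s(v_--v_+)$ from \eqref{end-con} gives $J_5=(X_\n(t)-\s t)(v_--v_+)+O\!\big((t+1)\,\e\,e^{-c\e r/\nu}\big)$; since $r\ge1$ and $\e$ is fixed, $\e\,e^{-c\e r/\nu}\le\nu$ for $\nu$ small, so the error is $\le C\nu(t+1)$, which is \eqref{X-control-5}.

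No step is hard; the only point needing care is the uniformity of the error in $\nu$. It holds because the transition layers of $\vtn,\utn$ have width $O(\nu/\e)$ and, once translated by $X_\n$, sit well inside $\{|x|\le 2r/3\}$ — exactly why $r$ was taken as $r(T)$ with $\|X_\n\|_{L^\infty(0,T)}\le r/3$ — so on $\supp\ps'$ and on the support of $\ps(\cdot+X_\n(t))-\ps(\cdot)$ the profiles are within $C\e\,e^{-c\e r/\nu}$ of their end states, and the $s$-integration in $B$ only multiplies this by $t$. (An equivalent route: differentiate $t\mapsto J_5$, use $J_5|_{t=0}=0$, $\tfrac{d}{dt}A=-\dot X_\n(t)\int_\RR\ps\,(\vtn)'(\cdot-X_\n(t))$ with $\int_\RR\ps\,(\vtn)'(\cdot-X_\n(t))=v_+-v_-+O(\e\,e^{-c\e r/\nu})$, and $\tfrac{d}{dt}B=\int_\RR\ps'\,\utn(\cdot-X_\n(t))$; the exponentially small errors then appear multiplied by $\dot X_\n$, which is merely bounded in $L^1(0,T)$ uniformly in $\nu$ by \eqref{est-shift}, still enough after integrating in time.)
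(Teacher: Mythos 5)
Your proof is correct. It follows the same skeleton as the paper — isolate the $\vtn$-translation term and the $\utn$-flux term, show they produce $X_\n(t)(v_--v_+)$ and $t(u_--u_+)$ respectively, and combine via the Rankine–Hugoniot relation — but the error control is genuinely different. The paper inserts the inviscid Riemann profile $(\vbar,\ubar)$ as an intermediary: $J_{51}+J_{52}$ is computed \emph{exactly} as $(X_\n(t)-\s t)(v_--v_+)$, and the three remainders $J_{53},J_{54},J_{55}$ are bounded by the $L^1$ distance between the viscous and inviscid shocks, $\int_\RR|\vtn-\vbar|\,dx=\n\int_\RR|\vtil-\vbar|\,dx\le C\n$ (and likewise for $u$), which is exactly the source of the $C\n(t+1)$ bound. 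You instead keep the viscous profile throughout and exploit that the sets where the test functions act — $\supp\bigl(\ps(\cdot+X_\n(t))-\ps\bigr)$ and $\supp\ps'$ — lie at distance at least $2r/3$ from the (shifted) layer, thanks to the choice $\norm{X_\n}_{L^\infty(0,T)}\le r/3$; the rescaled tail bounds from \eqref{tail} then give errors of size $O\bigl(r\,\e\,e^{-c\e r/\n}\bigr)$, which via $e^{-x}\le 1/x$ is $\le C\n$ for every $\n>0$ (so no extra smallness of $\n$ is actually needed, a minor point in your write-up). Your route yields an exponentially small error and avoids introducing $\vbar,\ubar$, but it leans on the pointwise far-field decay and the geometry of $\ps$; the paper's $L^1$ comparison is slightly more robust (it would survive with only $L^1$-closeness of the profiles, no decay rate needed) and delivers the stated $O(\n)$ bound directly. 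Either argument proves \eqref{X-control-5}.
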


\begin{proof}
First of all, we decompose \(J_5\) as follows:
\begin{align*}
&J_5
=\int_\RR \ps(x)\left[\vbar(x-X_\n(t))-\vbar(x)\right]dx
+\int_0^t\int_\RR \ps'(x)\ubar(x-X_\n(s))dxds \\
&\qquad
+\int_\RR \ps(x)\left[\vbar(x)-\vtn(x)\right]dx 
+\int_\RR \ps(x)\left[\vtn(x-X_\n(t))-\vbar(x-X_\n(t))\right]dx \\
&\qquad
+\int_0^t\int_\RR \ps'(x) \left[\utn(x-X_\n(s)) - \ubar(x-X_\nu(s)) \right]dxds 
\eqqcolon J_{51}+J_{52}+J_{53}+J_{54}+J_{55}.
\end{align*}
From the choice of \(r\), the definition of the inviscid shock \eqref{shock-0} and the Rankine-Hugoniot condition \eqref{end-con} imply that
\[
J_{51}+J_{52} = X_\n(t)(v_- - v_+) + t(u_- - u_+)
= (X_\n(t)-\s t)(v_- - v_+).
\]
On the other hand, we know from \eqref{tail} that
\[
\abs{J_{53}} \le \int_\RR \abs{\vtn(x)-\vbar(x)}dx
= \nu \int_\RR \abs{\vtil(x)-\vbar(x)}dx \le C \nu,
\]
and so, for any fixed \(s\in(0,T)\), we also have
\[
\abs{J_{54}} \le \int_\RR \abs{\vtn(x-X_\n(t))-\vbar(x-X_\n(t))} dx
= \int_\RR \abs{\vtn(x)-\vbar(x)} dx \le C \nu.
\]
Moreover, since \(\abs{\ps'}\le 2/r\), it follows from \eqref{tail} that
\begin{align*}
\abs{J_{55}}
\le C \int_0^t \int_\RR \abs{\utn(x-X_\n(s)) - \ubar(x-X_\nu(s))} dxds 
\le C \int_0^t \int_\RR \abs{\utn(x) - \ubar(x)} dxds = C \nu t.
\end{align*}
Thus, summing up all, we obtain the desired result.
\end{proof}

From \eqref{eq-to-decompose1}, Lemma \ref{lem-diff} gives a bound of \(\abs{X_\n-\s t}\). For any \(\d >0\), we have \(\n_*>0\) such that \(\n<\n_*\) implies the following: 
\[
\abs{X_\n(t)-\s t}\abs{v_+-v_-} \le C(\Ecal_0+1)^2(\Ecal_0+\d) + C(T)\sqrt{\Ecal_0+\d} + C(\Ecal_0, T)\nu.
\]
Therefore, for a.e. \(t\in(0, T)\), being a pointwise limit of \(X_\n(t)\), \(X_\infty(t)\) satisfies 
\[
\abs{X_\infty(t) -\s t}\abs{v_+-v_-} \le C(T) (\sqrt{\Ecal_0}+\Ecal_0^3).
\]
This completes the proof of \eqref{X-control}. \qed


\begin{appendix}
\section{Proof of Lemma \ref{lem-rel}} \label{appendix_RHS}
\setcounter{equation}{0}
To derive the desired quadratic structure, we use a change of variable \(\x\mapsto \x-X(t)\) as
\begin{equation}\label{move-X}
\int_{\RR} (a\tht)(\x)\eta(U^X(t,\x)|\Ut(\x)) d\x
=\int_{\RR} (a\tht)^{-X}(\x)\eta(U(t,\x)|\Ut^{-X}(\x)) d\x.
\end{equation}
For the sake of simplicity, we will omit the superscript \(-X\) in the shock variables in this proof.
Then, we use (\ref{relative_e}) to have 
\begin{equation} \label{RHS-0}
\begin{aligned}
&\frac{d}{dt}\int_{\RR} (a\tht)(\x)\eta(U(t,\x)|\Ut(\x)) d\x \\
&= -\dot{X}\int_\RR a' \tht \et(U|\Ut)d\x 
+\int_\RR a \rd_t\Big[R\tht\Phif{v}{\vt} + \frac{R\tht}{\g-1}\Phif{\th}{\tht} + \frac{(u-\ut)^2}{2}\Big]d\x.
\end{aligned}
\end{equation}
To compute the second term of the right-hand side, we first use the two systems (\ref{NS}) and (\ref{til-system}) (satisfied by \(\Ut^{-X}\)) to have
\begin{equation}\label{pereq1}
\left\{
\begin{aligned}
    &(v-\vt)_t -\s_\e(v-\vt)_\x -\dot{X}(t) \vt_\x -(u-\ut)_\x = Pv-P\vt, \\
    &(u-\ut)_t -\s_\e(u-\ut)_\x -\dot{X}(t)\ut_\x +(p-\pt)_\x = Pu-P\ut, \\
    &\frac{R}{\g-1}(\th-\tht)_t -\frac{R\s_\e}{\g-1}(\th-\tht)_\x - \frac{R}{\g-1}\dot{X}(t) \tht_\x +(pu_\x-\pt \ut_\x) = P\th-P\tht.
\end{aligned}
\right.
\end{equation}
Note that the NSF system and the BNSF system in Lagrangian mass coordinates share the same entropy, and their difference appears only in the parabolic effects.
Therefore, similar to \cite[Lemma 4.3]{KVW-NSF}, we can calculate the time derivative of the weighted relative entropy, handling only the parabolic terms separately.

Thanks to \cite[Lemma 4.3]{KVW-NSF}, we have
\begin{align*}
&\frac{d}{dt}\int_{\RR} (a\tht)(\x)\eta(U(t,\x)|\Ut(\x)) d\x 
= \dot{X}Y(U) + \Jcal^{bad}(U) - \Jcal^{good}(U)\\
&\quad
+ \int_\RR a \Big[R\tht v \Big(\frac{1}{\vt}-\frac{1}{v}\Big)\Big(\frac{Pv}{v}-\frac{P\vt}{\vt}\Big)
+ (\th-\tht)\Big(\frac{P\th}{\th}-\frac{P\tht}{\tht}\Big)
+ (u-\ut) (Pu-P\ut)\Big]d\x.
\end{align*}
Now, it remains to extract \(\Jcal^{para}(U^X) - \Dcal(U^X)\) from the last line.
To this end, we first gather all the terms with \(Pv\) and \(P\vt\):
\begin{align*}
&\int_\RR a R\tht v \Big(\frac{1}{\vt}-\frac{1}{v}\Big)\Big(\frac{Pv}{v}-\frac{P\vt}{\vt}\Big) d\x
=
R\int_\RR a\tht \Big(\frac{1}{v}-\frac{1}{\vt}\Big)
\Big(v(\t_0+\th^2)\Big(\frac{1}{v}-\frac{1}{\vt}\Big)_\x\Big)_\x d\x\\
&\qquad\qquad\qquad\qquad\qquad\qquad
+ R\int_\RR a\tht\Big(\frac{1}{v}-\frac{1}{\vt}\Big)
\Big(\Big(v(\t_0+\th^2)-\vt(\t_0+\tht^2)\Big)\Big(\frac{1}{\vt}\Big)_\x\Big)_\x d\x \\
&\qquad\qquad\qquad\qquad\qquad\qquad
-R\int_\RR a\tht v\Big(\frac{1}{v}-\frac{1}{\vt}\Big)^2
\Big((\t_0+\tht^2)\frac{\vt_\x}{\vt}\Big)_\x d\x.
\end{align*}
Then, integration by parts for the first two terms gives the first term of \(\Dcal(U)\) in \eqref{ybg-first} and \eqref{RHS-v}.
Now, we gather all the terms with \(Pu\) and \(P\ut\):
\[
\int_\RR a (u-\ut) (Pu-P\ut)d\x
=  \int_\RR a (u-\ut)\Big(\frac{\th^2}{v}(u-\ut)_\x +\Big(\frac{\th^2}{v}-\frac{\tht^2}{\vt}\Big)\ut_\x\Big)_\x d\x.
\]
Applying integration by parts for the first term,
\begin{align*}
&= - \int_\RR a \frac{\th^2}{v} \abs{(u-\ut)_\x}^2 d\x
- \int_\RR a' \frac{\th^2}{v}(u-\ut)(u-\ut)_\x d\x
+ \int_\RR a(u-\ut)\Big(\frac{\th^2}{v}-\frac{\tht^2}{\vt}\Big) \ut_{\x\x} d\x\\
&\qquad
+2 \int_\RR a(u-\ut)\frac{\th}{v}(\th-\tht)_\x \ut_\x d\x
+2 \int_\RR a(u-\ut)\Big(\frac{\th}{v}-\frac{\tht}{\vt}\Big) \tht_\x \ut_\x d\x\\
&\qquad
+ \int_\RR a(u-\ut)\th^2\Big(\frac{1}{v}-\frac{1}{\vt}\Big)_\x \ut_\x d\x
+ \int_\RR a(u-\ut)(\th^2-\tht^2)\Big(\frac{1}{\vt}\Big)_\x \ut_\x d\x.
\end{align*}
We gather terms containing \(u_\x\) and \(\ut_\x\) from \(P\th\) and \(P\tht\) as well:
\begin{align*}
& \int_\RR a(\th-\tht)\Big( \frac{\th}{v}(u_\x)^2 -\frac{\tht}{\vt}(\ut_\x)^2 \Big) d\x\\
&= \int_\RR a \frac{\th(\th-\tht)}{v} \abs{(u-\ut)_\x}^2 d\x
+2 \int_\RR a \frac{\th}{v}(\th-\tht)(u-\ut)_\x \ut_\x d\x 
+ \int_\RR a (\th-\tht)\Big(\frac{\th}{v}-\frac{\tht}{\vt}\Big) (\ut_\x)^2 d\x.
\end{align*}
Then, by summing up, we obtain the second term of \(\Dcal(U)\) in \eqref{ybg-first} and \eqref{RHS-u}.

Lastly, we gather all the terms from the remaining parts of \(P\th\) and \(P\tht\): 
\begin{align*}
& \int_\RR a(\th-\tht)\bigg( \frac{\big(\frac{\th^2}{v}\th_\x\big)_\x}{\th}-\frac{\big(\frac{\tht^2}{\vt}\tht_\x\big)_\x}{\tht}\bigg) d\x\\
&\quad
=  \int_\RR a(\th-\tht)\Big(\frac{\th}{v}(\th-\tht)_\x+\Big(\frac{\th}{v}-\frac{\tht}{\vt}\Big)\tht_\x\Big)_\x d\x
+  \int_\RR a(\th-\tht)\Big(\frac{(\th_\x)^2}{v}-\frac{(\tht_\x)^2}{\vt}\Big) d\x. \\
\end{align*}
Similarly, by applying integration by part for the first term, this expression can be simplified as the third term of \(\Dcal(U)\) in \eqref{ybg-first} with \eqref{RHS-th}.
This completes the proof.
\qed

\section{Proof of Proposition \ref{prop_nl_Poincare}} 
\setcounter{equation}{0}
\label{appendix_prop_nl_Poincare}
To prove Proposition \ref{prop_nl_Poincare}, we first need to obtain an estimate on a specific polynomial, which is the modified version of \cite[Proposition 3.2]{KV21}. 
Let \(\th \coloneqq \sqrt{5-\frac{\pi^2}{3}}\), and \(\d>0\) be any constant. 
We consider the following polynomial functionals. 
\begin{align*}
E(Z_1, Z_2) &\coloneqq Z_1^2 + Z_2^2 + 2Z_1, \\
P_\d(Z_1, Z_2) &\coloneqq (1+\d)(Z_1^2+Z_2^2) + 2Z_1Z_2^2 +\frac{2}{3}Z_1^3 + 6\d\abs{Z_1}(Z_1^2+Z_2^2) \\
&\qquad
- 2\Big(0.9-\d-\Big(\frac{2}{3}+\d\Big)\th Z_2\Big) Z_2^2.
\end{align*}
\begin{proposition}\label{prop_polynomial}
There exist constants \(\d_*, \d_1>0\) such that for any \(0<\d<\d_*\), the following is true. If \((Z_1, Z_2)\in \RR^2\) satisfies \(\abs{E(Z_1, Z_2)}\le \d_1\), then 
\begin{equation}
P_\d(Z_1, Z_2) - \abs{E(Z_1, Z_2)}^2 \le 0.
\end{equation}
\end{proposition}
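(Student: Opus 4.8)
The plan is to follow \cite[Proposition 3.2]{KV21}, adjusting for the modified coefficients --- most notably the factor $0.9$ in place of the $1$ there, which weakens the only beneficial term and so forces a re-verification of the underlying scalar inequality with the specific value $\th=\sqrt{5-\frac{\pi^2}{3}}$. First I would unpack the constraint: since $E(Z_1,Z_2)=(Z_1+1)^2+Z_2^2-1$, the set $\{|E|\le\d_1\}$ is a closed annular neighborhood $\Ncal_{\d_1}$ of the circle $\Ccal=\{(Z_1+1)^2+Z_2^2=1\}$, on which $|Z_1|\le 2$ and $Z_2^2\le 1+\d_1$. In particular, for $\d,\d_1$ small the quantity $0.9-\d-(\tfrac23+\d)\th Z_2$ is bounded below by a positive constant, so the last term of $P_\d$ is $\le 0$ throughout $\Ncal_{\d_1}$. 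Since $P_\d(0,0)=0=E(0,0)$, the origin is an equality point, and the whole content of the proposition is that $P_\d-|E|^2$ is negative definite around it and strictly negative on the rest of $\Ncal_{\d_1}$.

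The core step is the case $\d=0$ restricted to $\Ccal$. Using the identity $P_0(Z_1,Z_2)-P_0(Z_1,-Z_2)=\tfrac83\th Z_2^3$ and the symmetry of $\Ccal$ under $Z_2\mapsto -Z_2$, the maximum of $P_0$ over $\Ccal$ is attained at a point with $Z_2\ge0$; parametrizing $Z_1=\cos\phi-1$, $Z_2=\sin\phi$ with $\phi\in[0,\pi]$ and writing $u=1-\cos\phi\in[0,2]$, a direct computation collapses $P_0$ along $\Ccal$ to
\[
P_0=-\tfrac{8}{5}\,u-\tfrac{11}{5}\,u^2+\tfrac43 u^3+\tfrac43\,\th\,[u(2-u)]^{3/2}.
\]
Dividing by $u>0$, it remains to prove the elementary inequality $\tfrac43\,\th\,u^{1/2}(2-u)^{3/2}\le \tfrac85+\tfrac{11}{5}u-\tfrac43 u^2$ on $(0,2)$; the right-hand side is positive there (its zeros lie outside $[0,2]$), and one checks by one-variable calculus that the ratio of the two sides stays strictly below $1$ for $\th=\sqrt{5-\frac{\pi^2}{3}}$, while $P_0\to 0^-$ as $u\to0^+$. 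This yields $P_0<0$ on $\Ccal\setminus\{0\}$ and, quantitatively, $P_0\le -c_\eta<0$ on $\Ccal\setminus B_\eta(0)$ for every $\eta>0$.

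The argument then splits according to the distance to the origin. Away from the origin I would use compactness: continuity propagates the bound $P_0\le -c_\eta$ from $\Ccal\setminus B_\eta(0)$ to a full neighborhood, hence to $\Ncal_{\d_1}\setminus B_\eta(0)$ once $\d_1$ is small; since $P_\d-P_0$ and $|E|^2$ are controlled by $C\d$ and $C\d_1$ on this bounded set, $P_\d-|E|^2\le -c_\eta/2<0$ there for $\d<\d_*$ and $\d_1$ small. Near the origin I would expand: on $B_\eta(0)$ one has $Z_1=\tfrac12 E-\tfrac12(Z_1^2+Z_2^2)$, so $Z_1^2+Z_2^2=Z_2^2+\tfrac14 E^2+O(\eta^3)$, and substituting into $P_\d$ gives
\[
P_\d-|E|^2=-(0.8-3\d)\,Z_2^2-\Big(\tfrac34-\tfrac\d4\Big)E^2+O(\eta^3),
\]
which, for $\eta$ and $\d_*$ small enough (so that the $O(\eta^3)$ remainder is absorbed, using $|Z_1|,|Z_2|,|E|=O(\eta)$), is $\le -\tfrac1{10}(Z_2^2+E^2)\le 0$, with equality only at $(Z_1,Z_2)=(0,0)$. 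Combining the two regimes gives $P_\d-|E|^2\le 0$ on all of $\{|E|\le\d_1\}$.

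The main obstacle is the final verification of the scalar inequality $\tfrac43\th\,u^{1/2}(2-u)^{3/2}\le \tfrac85+\tfrac{11}5 u-\tfrac43 u^2$ with the exact constant $\th=\sqrt{5-\frac{\pi^2}{3}}$: the slack is small, so the passage from the coefficient $1$ of \cite{KV21} to $0.9$ must be tracked carefully to confirm that no critical point of the ratio crosses the threshold $1$. A secondary point of care is the near-origin expansion, where one must check that the quadratic form $-0.8\,Z_2^2-\tfrac34 E^2$ --- in which the $0.8$ encodes both the $0.9$ and the coefficient $1$ in front of $|E|^2$ --- is genuinely negative definite with enough room to absorb the $O(\d)$ and $O(\eta^3)$ corrections.
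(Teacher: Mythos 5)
Your overall architecture is the same as the paper's (which in turn follows \cite[Proposition 3.2]{KV21} with the constant $1$ replaced by $0.9$): a quadratic-form analysis near the origin, reduction of the far-from-origin case to the circle $\{E=0\}$, and a compactness/perturbation step to absorb the $O(\d)$ terms and the $O(\d_1)$ thickening of the circle. Your reduction is correct: restricting $P_0$ to the circle with $Z_2\ge 0$ and writing $u=-Z_1$ yields exactly
\[
P_0=-\tfrac{8}{5}u-\tfrac{11}{5}u^2+\tfrac{4}{3}u^3+\tfrac{4}{3}\th\,[u(2-u)]^{3/2},
\]
which is precisely the function $h(x)=1.6x-2.2x^2-\tfrac43x^3+\tfrac{4\th}{3}(-x^2-2x)^{3/2}$, $x=Z_1\in[-2,0)$, appearing in the paper's auxiliary lemma; your symmetry observation justifying the restriction to $Z_2\ge0$ and your near-origin expansion $P_\d-|E|^2=-(0.8-3\d)Z_2^2-(\tfrac34-\tfrac{\d}{4})E^2+\mathrm{remainder}$ are both correct.

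The genuine gap is that the scalar inequality you reduce to --- $\tfrac43\th\,u^{1/2}(2-u)^{3/2}<\tfrac85+\tfrac{11}{5}u-\tfrac43u^2$ on $(0,2)$, i.e.\ $h<0$ on $[-2,0)$ --- is asserted (``one checks by one-variable calculus'') rather than proved, and this is exactly the new quantitative content of the proposition relative to \cite{KV21}: with $\th=\sqrt{5-\pi^2/3}$ the supremum of the ratio of the two sides is about $0.99$, attained near $u\approx 1/3$, so crude bounds do not close and a careless ``calculus check'' can easily be wrong. The paper devotes a four-step lemma to it: it locates the sign changes of $h'$ by studying the quartic $p(x)=(1.6-4.4x-4x^2)^2-16\th^2(x+1)^2(1-(x+1)^2)$, isolates the interior local maximum $x_*\in(-1/3,-0.3)$, and then uses the relation $h'(x_*)=0$ to evaluate $h$ there \emph{exactly} as $h(x_*)=\frac{x_*(3x_*+1)(3x_*+8)}{15(x_*+1)}<0$, which is how the roughly $10^{-2}$ margin is certified without numerics. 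Your proposal needs an argument of this precision (or an equivalent rigorous verification) to be complete. A minor secondary point: in the near-origin regime, absorbing the cubic remainder requires the refined bound $|Z_1|\le C(|E|+\eta|Z_2|)$, so that the remainder is $O(\eta(E^2+Z_2^2))$; a bare $O(\eta^3)$ estimate cannot be absorbed by $-(Z_2^2+E^2)$ at points where both $Z_2$ and $E$ are much smaller than $\eta$, though this is easily repaired.
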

\begin{remark}
Note that the change of \(1\) to \(0.9\) does not alter the conclusion and the proof: For the case with \(Z_1^2 + Z_2^2 \le r^2\) for sufficiently small \(r>0\), we have 
\begin{align*}
P_\d-\abs{E}^2 \hspace{-1mm}&\le\hspace{-1mm} -2Z_1^2 + (1+\d)\Big(Z_1^2 + Z_2^2 + \frac{r^2}{1+\d}(Z_1^2+Z_2^2) + \frac{(2+6\d)r}{1+\d}Z_2^2 + \frac{((2/3)+6\d)r}{1+\d}Z_1^2\Big) \\
&\qquad
-2\Big(0.9-\d-\frac{2}{3}(1+\d)\th r\Big)Z_2^2
\le 0
\end{align*}
by taking \(\d_*\) sufficiently small. For the case with \(\abs{E(Z_1, Z_2)}\le \d_1\) but \(Z_1^2 + Z_2^2 \ge r^2\), we need the following lemma. With this lemma, by arguing exactly the same with \cite[Proposition 3.2]{KV21}, we obtain the conclusion of Proposition \ref{prop_polynomial}. 
\end{remark}

\begin{lemma}
For all \(x\in [-2, 0)\), 
\[
1.6x - 2.2x^2 -\frac{4}{3}x^3 + \frac{4\th}{3}(-x^2-2x)^{3/2} < 0.
\]
\end{lemma}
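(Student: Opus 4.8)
The plan is to make an explicit substitution that turns this elementary one‑variable inequality into a polynomial inequality with rational coefficients on a compact interval, and then to prove that polynomial inequality by pinning down its (at most two) interior minimizers from the sign pattern of the derivative. First I would substitute \(t=-x\in(0,2]\) and set \(p(t)\coloneqq 1.6+2.2t-\tfrac{4}{3}t^{2}=\tfrac{24+33t-20t^{2}}{15}\); the claimed inequality becomes
\[
\frac{4\th}{3}\bigl(t(2-t)\bigr)^{3/2}<t\,p(t),\qquad t\in(0,2].
\]
Since the quadratic \(24+33t-20t^{2}\) has roots \(\tfrac{33\pm\sqrt{3009}}{40}\notin[0,2]\), we have \(p>0\) on \([0,2]\); hence both sides above are strictly positive on \((0,2)\), while at \(t=2\) the left side vanishes and the right side equals \(\tfrac{4}{3}>0\), so that case is immediate. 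On \((0,2)\) I square and divide by \(t^{2}>0\), reducing the claim to \(\tfrac{16\th^{2}}{9}\,t(2-t)^{3}<p(t)^{2}\) for \(t\in(0,2)\).

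Next I would remove the transcendental constant. From \(\pi^{2}>9.86\) one gets \(\th^{2}=5-\tfrac{\pi^{2}}{3}<\tfrac{257}{150}\), hence \(\tfrac{16\th^{2}}{9}<\tfrac{4112}{1350}\), and since \(t(2-t)^{3}\ge0\) on \([0,2]\), after clearing denominators it suffices to prove
\[
R(t)\coloneqq 3\,(24+33t-20t^{2})^{2}-2056\,t(2-t)^{3}
 =3256\,t^{4}-16296\,t^{3}+25059\,t^{2}-11696\,t+1728>0
\]
for all \(t\in[0,2]\).

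To prove this I would analyze \(R'(t)=13024\,t^{3}-48888\,t^{2}+50118\,t-11696\). Checking the signs of \(R'\) at \(t=0,\,0.3,\,0.35,\,1,\,1.5,\,2,\,3\) yields the pattern \(-,-,+,+,-,-,+\); since \(R'\) is a cubic with at most three real roots, these three sign changes force exactly one simple root \(t_{1}\in(0.3,0.35)\), one \(t_{2}\in(1,1.5)\), and the last in \((2,3)\). Thus \(R\) decreases on \([0,t_{1}]\), increases on \([t_{1},t_{2}]\), and decreases on \([t_{2},2]\), so \(\min_{[0,2]}R=\min\{R(t_{1}),R(2)\}\). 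Here \(R(2)=300>0\) by direct evaluation. For \(R(t_{1})\) I use \(R'(t_{1})=0\), i.e.\ \(13024\,t_{1}^{3}=48888\,t_{1}^{2}-50118\,t_{1}+11696\), to reduce the quartic modulo \(R'\); a one–line elimination gives
\[
4R(t_{1})=-16296\,t_{1}^{3}+50118\,t_{1}^{2}-35088\,t_{1}+6912 .
\]
Since \(t_{1}\in(0.3,0.35)\), I bound \(-16296\,t_{1}^{3}\ge-16296\cdot(0.35)^{3}\) and, noting that \(g(t)\coloneqq 50118\,t^{2}-35088\,t\) is decreasing on \([0,0.35]\) (its vertex lies at \(t=35088/100236>0.35\)), I bound \(50118\,t_{1}^{2}-35088\,t_{1}=g(t_{1})\ge g(0.35)\); adding these gives \(4R(t_{1})\ge -16296\cdot(0.35)^{3}+g(0.35)+6912>0\). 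Hence \(R>0\) on \([0,2]\), and the lemma follows.

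The one genuinely delicate point is this last lower bound for \(R(t_{1})\): the surplus is tiny (about \(70\), against coefficients of size several thousand), so it is essential to reduce \(R\) modulo \(R'\) rather than bound \(R\) termwise on \([0.3,0.35]\) — the latter is far too lossy, even on an interval of width \(0.05\) — and to combine the \(t_{1}^{2}\) and \(t_{1}\) contributions into the single decreasing quadratic \(g\) before estimating, so that the final numerical inequality comes out strictly positive. Everything else is routine polynomial arithmetic and sign bookkeeping.
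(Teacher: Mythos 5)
Your proof is correct, and it takes a genuinely different route from the paper's. The paper works with \(h(x)=1.6x-2.2x^2-\tfrac43 x^3+\tfrac{4\theta}{3}(-x^2-2x)^{3/2}\) directly: it uses \(h''\) to get convexity near the endpoints, locates the sign changes of \(h'\) by squaring only inside an auxiliary polynomial, and then, at the interior local maximum \(x_*\), substitutes the critical-point identity back into \(h\) so that the \(\theta\)-term cancels \emph{exactly}, leaving the rational expression \(h(x_*)=\frac{x_*(3x_*+1)(3x_*+8)}{15(x_*+1)}\) whose negativity is evident. You instead square the whole inequality on \((0,2)\) (after checking \(t\,p(t)>0\) and treating \(t=2\) separately), discharge the transcendental constant at the outset via \(\pi^2>9.86\Rightarrow\theta^2<257/150\), and reduce everything to the positivity of the integer-coefficient quartic \(R\) on \([0,2]\), which you then prove by the sign pattern of the cubic \(R'\) and the reduction \(4R(t_1)=-16296t_1^3+50118t_1^2-35088t_1+6912\) modulo \(R'(t_1)=0\). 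I checked your arithmetic: the expansion of \(R\), the signs of \(R'\) at \(0,0.3,0.35,1,1.5,2,3\), \(R(2)=300\), the identity \(4R-tR'\), the location of the vertex of \(g\) at \(35088/100236>0.35\), and the final estimate \(4R(t_1)\ge -16296\cdot(0.35)^3+g(0.35)+6912\approx 72>0\) are all right. What each approach buys: yours turns the lemma into pure rational/polynomial bookkeeping after a single numerical bound on \(\pi^2\), at the price of a very thin margin at the interior minimum (which your reduction mod \(R'\) and the monotone bound on \(g\) handle, where naive termwise bounds on \([0.3,0.35]\) would indeed fail); the paper's exact cancellation of \(\theta\) at the critical point makes the sign there transparent with no delicate numerics, at the price of carrying \(\theta\) through all the sign checks of \(h'\) and \(h''\).
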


\begin{proof}
Let \(h\) be the given polynomial. Then, we have
\begin{align*}
h'(x) &= 1.6-4.4x -4x^2 -4\th(x+1)\sqrt{1-(x+1)^2}, \\
h''(x) &= -4.4-8x + 4\th\frac{2(x+1)^2-1}{\sqrt{1-(x+1)^2}}.
\end{align*}
We prove the lemma with the following four steps. 
\step{1} Near the endpoints \(-2\) and \(0\), \(h''>0\) or \(h\) is convex.
In particular, \(h''>0\) on \((-2, -1-\sqrt{2}/2)\cup (-0.1, 0)\). 
For \(x\in (-2, -1-\sqrt{2}/2)\), we have \((x+1)^2 > 1/2\), so that \(h''(x) \ge -8x -4.4 >0\).
For \(x\in (-0.1, 0)\), we have \((x+1)^2 > 0.81\), so that 
\[
4\th\frac{2(x+1)^2-1}{\sqrt{1-(x+1)^2}} \ge 4\th\frac{0.62}{\sqrt{1-0.81}} \ge 7. 
\]
On the other hand, \(-8x-4.4 \ge -4.4\) on this interval, and hence \(h''(x)>0\). 

\step{2} There are three points that \(h'\) changes its sign, say \(x_1,x_2\) and \(x_3\), which satisfies \(x_1\in(-2, -1-\sqrt{2}/2)\), \(x_2\in(-1/3, -0.3)\), \(x_3\in\left(-0.1, 0\right)\). 
To achieve this, let \(x\in [-2,0)\) satisfy \(h'(x) = 0\). Then, we have 
\[
(1.6-4.4x-4x^2)^2 = 16(\th(x+1))^2(1-(x+1)^2),
\]
hence it should be a zero of \(p\), where \(p(x) = (1.6-4.4x-4x^2)^2 - 16(\th(x+1))^2(1-(x+1)^2)\).
Note that \(p\) has at most four zeros in \(\RR\). To spot them, we calculate \(p\) on various points:
\begin{align*}
&p(-2) = 5.4^2 - 0 > 0, \qquad
p\Big(-1-\frac{\sqrt{2}}{2}\Big) = (1.8\sqrt{2})^2 - 4\th^2 < 0, \qquad
p(-1) = 2^2 - 0 >0, \\
&p(-1/3) = \Big(\frac{118}{45}\Big)^2 - \frac{320}{81}\th^2 > 0, \qquad
p(-0.3) = 2.56^2 - (2.8\th)^2\times 0.51 < 0, \\
&p(-0.1) = 2^2 - (3.6\th)^2\times 0.19 < 0, \qquad
p(0) = 1.6^2 >0.
\end{align*}
Thus, \(h'\) has at most four zeros, and each of them should be in the distinct interval among 
\[
\Big(-2, -1-\frac{\sqrt{2}}{2}\Big),\quad
\Big(-1-\frac{\sqrt{2}}{2}, -1\Big), \quad
\left(-1/3, -0.3\right), \quad
(-0.1, 0).
\]
Since \(h'(-2) = -5.4 < 0\) and \(h'(-1-\sqrt{2}/2) = 2\th - 1.8\sqrt{2} >0\), there is indeed a zero of \(h'\) in the first interval, and \(h'\) changes its sign there. 
However, on the second interval \(h'\) does not change its sign: we have \(h'(-1) = 2 > 0\), so if \(h'\) changes sign here, then we have two zeros in this interval, which is a contradiction. 
Since \(h'(-1/3) = 118/45 - 8\th/3\sqrt{5/9} > 0\), \(h'(-0.3) = 2.56 - 2.8\th\sqrt{0.51} < 0\), \(h'(-0.1) = 2-3.6\th\sqrt{0.19} < 0\), and \(h'(0) = 1.6 >0\), we have two zeros of \(h'\) in each of the third and fourth intervals, and \(h'\) changes its sign there.

\step{3} \(h < 0\) on \([-1/3, -0.3]\): In Step 2, we observe that there is a unique local maximum of \(h\) in this interval. Let \(x_*\in(-1/3, -0.3)\) be the local maximum. 
Then, it suffices to show that \(h(x_*) < 0\). Since it holds from \(h'(x_*)=0\) that 
\[
1.6 - 4.4x_* - 4x_*^2 = 4\th(x_*+1)\sqrt{-x_*^2-2x_*},
\]
we have 
\[
h(x_*) = 1.6x_* - 2.2x_*^2 - \frac{4}{3}x_*^3 +\frac{(-x_*^2-2x_*)(1.6-4.4x_*-4x_*^2)}{3(x_*+1)}
= \frac{x_*(3x_*+1)(3x_*+8)}{15(x_*+1)} < 0,
\]
which verifies our claim. 

\step{4} \(h<0\) on \([-2, 0)\). Since \(h(-2) = -4/3 <0\), \(h(-1-\sqrt{2}/2) = -(47+20\sqrt{2})/30 + \sqrt{2}\th/3 < 0\), \(h(-0.1) = -271/1500 + 0.76\th\sqrt{0.19}/3 <0\), and \(h(0) = 0\), it follows from Step 1 that \(h(x) < 0\) on \(x\in [-2, -1-\sqrt{2}/2)\cup(-0.1, 0)\). 
Since \(h\) increases on \((-1-\sqrt{2}/2, -1/3)\) and decreases on \((-0.3, -0.1)\), it follows from Step 3 that \(h(x) < 0\) on \([-1-\sqrt{2}/2, -0.1]\).
This completes the proof. 
\end{proof}

Now we prove Proposition \ref{prop_nl_Poincare}.
\begin{proof}[Proof of Proposition \ref{prop_nl_Poincare}]
The proof is largely the same as \cite[Proposition 3.3]{KV21}.
The only thing we need to show is that the smaller coefficient \(0.9\) for the diffusion term is enough for the inequality. 
Let 
\[
Z_1 \coloneqq \WBar = \int_0^1 W dy, \quad 
Z_2 \coloneqq \Big(\int_0^1(W-\WBar)^2 dy\Big)^{1/2}, \quad
E(Z_1, Z_2) \coloneqq Z_1^2 + Z_2^2 + 2Z_1.
\]
Then, following the same calculation as \cite[(3.13)-(3.15)]{KV21}, we obtain that 
\begin{equation*}
\Rcal_\d(W) 
\le -\frac{1}{\d}\abs{E(Z_1, Z_2)}^2 + (1+\d)(Z_1^2+Z_2^2) + 2Z_1Z_2^2 + \frac{2}{3}Z_1^3 + 6\d\abs{Z_1}(Z_1^2+Z_2^2) + \Pcal, 
\end{equation*}
where
\begin{equation*}
\Pcal \coloneqq \Big(\frac{2}{3}+\d\Big)\int_0^1 \abs{W-\WBar}^3 dy - (0.9-\d)\int_0^1 y(1-y)\abs{\rd_y W}^2 dy. 
\end{equation*}
For the cubic term in \(\Pcal\), we use \cite[(3.17)]{KV21}, so we have 
\[
\Pcal \le -\Big(0.9-\d-\Big(\frac{2}{3}+\d\Big)\th Z_2\Big)\int_0^1 y(1-y)\abs{\rd_y W}^2 dy.
\]
Since \((Z_1+1)^2 + Z_2^2 = 1 + E(Z_1, Z_2)\), we have \(Z_2 \le \sqrt{1 + \abs{E(Z_1, Z_2)}}\).
In addition, since \(\frac{2}{3}\th\approx 0.88 < 0.9\), there exists a positive constant \(\d_\th\ll 1\) such that 
\[
\frac{2}{3}\th\sqrt{1+\d_\th} < 0.9.
\]
Then, we take \(\d_2 < 1\) such that \(\d<\d_2\) implies 
\[
0.9-\d - \Big(\frac{2}{3}+\d\Big)\th\sqrt{1+\d_\th} >0.
\]
The rest of the proof follows \cite[Proposition 3.3]{KV21}, which separately handles the cases where \(\abs{E(Z_1, Z_2)}\le\min\{\d_\th,\d_1\}\) and \(\abs{E(Z_1, Z_2)}\ge\min\{\d_\th,\d_1\}\).
Thus, the details are omitted.
\end{proof}
\end{appendix}

\subsection*{Declaration of competing interest}
The authors declared that they have no conflict of interest to this work.

\bibliographystyle{plain}
\bibliography{reference} 

\begin{thebibliography}{10}

\bibitem{AmbroFuscoPall00}
L.~Ambrosio, N.~Fusco, and D.~Pallara.
\newblock {\em Functions of bounded variation and free discontinuity problems}.
\newblock Oxford Mathematical Monographs. The Clarendon Press, Oxford University Press, New York, 2000.

\bibitem{arkilic2001mass}
E.~B. Arkilic, K.~S. Breuer, and M.~A. Schmidt.
\newblock Mass flow and tangential momentum accommodation in silicon micromachined channels.
\newblock {\em Journal of fluid mechanics}, 437:29--43, 2001.

\bibitem{BianchiniBressan05}
S.~Bianchini and A.~Bressan.
\newblock Vanishing viscosity solutions of nonlinear hyperbolic systems.
\newblock {\em Ann. of Math. (2)}, 161(1):223--342, 2005.

\bibitem{PaulJeffrey}
P.~Blochas and J.~Cheng.
\newblock Viscous destabilization for large shocks of conservation laws.
\newblock {\em arXiv preprint arXiv:2501.01537}, 2025.

\bibitem{brenner2005kinematics}
H.~Brenner.
\newblock Kinematics of volume transport.
\newblock {\em Physica A: Statistical Mechanics and its Applications}, 349(1-2):11--59, 2005.

\bibitem{brenner2005navier}
H.~Brenner.
\newblock Navier--stokes revisited.
\newblock {\em Physica A: Statistical Mechanics and its Applications}, 349(1-2):60--132, 2005.

\bibitem{brenner2006fluid}
H.~Brenner.
\newblock Fluid mechanics revisited.
\newblock {\em Physica A: Statistical Mechanics and its Applications}, 370(2):190--224, 2006.

\bibitem{Bressan20}
A.~Bressan.
\newblock {\em Hyperbolic systems of conservation laws}, volume~20 of {\em Oxford Lecture Series in Mathematics and its Applications}.
\newblock Oxford University Press, Oxford, 2000.
\newblock The one-dimensional Cauchy problem.

\bibitem{BressanCrastaPiccoli00}
A.~Bressan, G.~Crasta, and B.~Piccoli.
\newblock Well-posedness of the {C}auchy problem for {$n\times n$} systems of conservation laws.
\newblock {\em Mem. Amer. Math. Soc.}, 146(694):viii+134, 2000.

\bibitem{BressanDeLellis23}
A.~Bressan and C.~De~Lellis.
\newblock A remark on the uniqueness of solutions to hyperbolic conservation laws.
\newblock {\em Arch. Ration. Mech. Anal.}, 247(6):Paper No. 106, 12, 2023.

\bibitem{BressanGuerra24}
A.~Bressan and G.~Guerra.
\newblock Unique solutions to hyperbolic conservation laws with a strictly convex entropy.
\newblock {\em J. Differential Equations}, 387:432--447, 2024.

\bibitem{BressanLiuYang99}
A.~Bressan, T.-P. Liu, and T.~Yang.
\newblock {$L^1$} stability estimates for {$n\times n$} conservation laws.
\newblock {\em Arch. Ration. Mech. Anal.}, 149(1):1--22, 1999.

\bibitem{Fe1}
J.~B\v{r}ezina and E.~Feireisl.
\newblock Measure-valued solutions to the complete {E}uler system revisited.
\newblock {\em Z. Angew. Math. Phys.}, 69(3):Paper No. 57, 17, 2018.

\bibitem{ChapmanCowling90}
S.~Chapman and T.~G. Cowling.
\newblock The mathematical theory of non uniform gases, cambridge mathematical library, 1970.

\bibitem{ChenKangVasseur24arxiv}
G.~Chen, M.-J. Kang, and A.~F. Vasseur.
\newblock From navier-stokes to bv solutions of the barotropic euler equations.
\newblock {\em arXiv preprint arXiv:2401.09305}, 2024.

\bibitem{ChenKrupaVasseur22}
G.~Chen, S.~G. Krupa, and A.~F. Vasseur.
\newblock Uniqueness and weak-{BV} stability for {$2\times 2$} conservation laws.
\newblock {\em Arch. Ration. Mech. Anal.}, 246(1):299--332, 2022.

\bibitem{ChenFridLi02}
G.-Q. Chen, H.~Frid, and Y.~Li.
\newblock Uniqueness and stability of {R}iemann solutions with large oscillation in gas dynamics.
\newblock {\em Comm. Math. Phys.}, 228(2):201--217, 2002.

\bibitem{ChenPerepelitsa10}
G.-Q. Chen and M.~Perepelitsa.
\newblock Vanishing viscosity limit of the {N}avier-{S}tokes equations to the {E}uler equations for compressible fluid flow.
\newblock {\em Comm. Pure Appl. Math.}, 63(11):1469--1504, 2010.

\bibitem{ChenVasseurYu}
R.~M. Chen, A.~F. Vasseur, and C.~Yu.
\newblock Non-uniqueness for continuous solutions to 1d hyperbolic systems.
\newblock {\em arXiv preprint arXiv:2407.02927}, 2024.

\bibitem{ChioDeLellisKreml15}
E.~Chiodaroli, C.~De~Lellis, and O.~Kreml.
\newblock Global ill-posedness of the isentropic system of gas dynamics.
\newblock {\em Comm. Pure Appl. Math.}, 68(7):1157--1190, 2015.

\bibitem{ChioFeireislKreml15}
E.~Chiodaroli, E.~Feireisl, and O.~Kreml.
\newblock On the weak solutions to the equations of a compressible heat conducting gas.
\newblock {\em Ann. Inst. H. Poincar\'e{} C Anal. Non Lin\'eaire}, 32(1):225--243, 2015.

\bibitem{CKKV}
K.~Choi, M.-J. Kang, Y.-S. Kwon, and A.~F. Vasseur.
\newblock Contraction for large perturbations of traveling waves in a hyperbolic-parabolic system arising from a chemotaxis model.
\newblock {\em Math. Models Methods Appl. Sci.}, 30(2):387--437, 2020.

\bibitem{dadzie2008continuum}
S.~K. Dadzie, J.~M. Reese, and C.~R. McInnes.
\newblock A continuum model of gas flows with localized density variations.
\newblock {\em Physica A: Statistical Mechanics and its Applications}, 387(24):6079--6094, 2008.

\bibitem{Dafermos79}
C.~M. Dafermos.
\newblock The second law of thermodynamics and stability.
\newblock {\em Arch. Rational Mech. Anal.}, 70(2):167--179, 1979.

\bibitem{Dafermos96}
C.~M. Dafermos.
\newblock Entropy and the stability of classical solutions of hyperbolic systems of conservation laws.
\newblock In {\em Recent mathematical methods in nonlinear wave propagation ({M}ontecatini {T}erme, 1994)}, volume 1640 of {\em Lecture Notes in Math.}, pages 48--69. Springer, Berlin, 1996.

\bibitem{dafermos2005hyperbolic}
C.~M. Dafermos.
\newblock {\em Hyperbolic conservation laws in continuum physics}, volume~3.
\newblock Springer, 2005.

\bibitem{DeLellisLaszlo09}
C.~De~Lellis and L.~Sz\'ekelyhidi, Jr.
\newblock The {E}uler equations as a differential inclusion.
\newblock {\em Ann. of Math. (2)}, 170(3):1417--1436, 2009.

\bibitem{DeLellisLaszlo10}
C.~De~Lellis and L.~Sz{\'e}kelyhidi, Jr.
\newblock On admissibility criteria for weak solutions of the {E}uler equations.
\newblock {\em Arch. Ration. Mech. Anal.}, 195(1):225--260, 2010.

\bibitem{Diperna79}
R.~J. DiPerna.
\newblock Uniqueness of solutions to hyperbolic conservation laws.
\newblock {\em Indiana Univ. Math. J.}, 28(1):137--188, 1979.

\bibitem{dongari2009extended}
N.~Dongari, R.~Sambasivam, and F.~Durst.
\newblock Extended navier-stokes equations and treatments of micro-channel gas flows.
\newblock {\em Journal of Fluid Science and Technology}, 4(2):454--467, 2009.

\bibitem{EE-DBNSF}
S.~Eo and N.~Eun.
\newblock Traveling wave solutions to {B}renner-{N}avier-{S}tokes-{F}ourier system with temperature dependent transport coefficients.
\newblock {\em in preparation}.

\bibitem{EEK}
S.~Eo, N.~Eun, and M.-J. Kang.
\newblock Global existence of large strong solutions to {B}renner-{N}avier-{S}tokes-{F}ourier system.
\newblock {\em in preparation}.

\bibitem{EEKO24}
S.~Eo, N.~Eun, M.-J. Kang, and H.~Oh.
\newblock Traveling wave solutions to {B}renner-{N}avier-{S}tokes-{F}ourier system.
\newblock {\em Journal of Differential Equations}, 422:639--658, 2025.

\bibitem{Fe2}
E.~Feireisl, M.~Luk\'a\v{c}ov\'a-Medvid'ov\'a, and H.~Mizerov\'a.
\newblock A finite volume scheme for the {E}uler system inspired by the two velocities approach.
\newblock {\em Numer. Math.}, 144(1):89--132, 2020.

\bibitem{FeVa}
E.~Feireisl and A.~F. Vasseur.
\newblock New perspectives in fluid dynamics: Mathematical analysis of a model proposed by howard brenner.
\newblock {\em New Directions in Mathematical Fluid Mechanics: The Alexander V. Kazhikhov Memorial Volume}, pages 153--179, 2010.

\bibitem{fermi2012thermodynamics}
E.~Fermi.
\newblock {\em Thermodynamics}.
\newblock Courier Corporation, 2012.

\bibitem{Glimm65}
J.~Glimm.
\newblock Solutions in the large for nonlinear hyperbolic systems of equations.
\newblock {\em Comm. Pure Appl. Math.}, 18:697--715, 1965.

\bibitem{greenshields2007structure}
C.~J. Greenshields and J.~M. Reese.
\newblock The structure of shock waves as a test of brenner's modifications to the navier--stokes equations.
\newblock {\em Journal of Fluid Mechanics}, 580:407--429, 2007.

\bibitem{harley1995gas}
J.~C. Harley, Y.~Huang, .aim~H. Bau, and J.~N. Zemel.
\newblock Gas flow in micro-channels.
\newblock {\em Journal of fluid mechanics}, 284:257--274, 1995.

\bibitem{Kang18}
M.-J. Kang.
\newblock Non-contraction of intermediate admissible discontinuities for 3-{D} planar isentropic magnetohydrodynamics.
\newblock {\em Kinet. Relat. Models}, 11(1):107--118, 2018.

\bibitem{Kang19}
M.-J. Kang.
\newblock {$L^2$}-type contraction for shocks of scalar viscous conservation laws with strictly convex flux.
\newblock {\em J. Math. Pures Appl. (9)}, 145:1--43, 2021.

\bibitem{KO}
M.-J. Kang and H.~Oh.
\newblock {$L^2$} decay for large perturbations of viscous shocks for multi-{D} {B}urgers equation.
\newblock {\em Anal. Appl. (Singap.)}, 23(3):475--488, 2025.

\bibitem{KV16}
M.-J. Kang and A.~F. Vasseur.
\newblock Criteria on contractions for entropic discontinuities of systems of conservation laws.
\newblock {\em Arch. Ration. Mech. Anal.}, 222(1):343--391, 2016.

\bibitem{Kang-V-1}
M.-J. Kang and A.~F. Vasseur.
\newblock {$L^2$}-contraction for shock waves of scalar viscous conservation laws.
\newblock {\em Ann. Inst. H. Poincar\'{e} C Anal. Non Lin\'{e}aire}, 34(1):139--156, 2017.

\bibitem{KV21}
M.-J. Kang and A.~F. Vasseur.
\newblock Contraction property for large perturbations of shocks of the barotropic {N}avier-{S}tokes system.
\newblock {\em J. Eur. Math. Soc.}, 23(2):585--638, 2021.

\bibitem{KV-Inven}
M.-J. Kang and A.~F. Vasseur.
\newblock Uniqueness and stability of entropy shocks to the isentropic {E}uler system in a class of inviscid limits from a large family of {N}avier-{S}tokes systems.
\newblock {\em Invent. Math.}, 224(1):55--146, 2021.

\bibitem{KV-JDE}
M.-J. Kang and A.~F. Vasseur.
\newblock Well-posedness of the {R}iemann problem with two shocks for the isentropic {E}uler system in a class of vanishing physical viscosity limits.
\newblock {\em J. Differential Equations}, 338:128--226, 2022.

\bibitem{KVW}
M.-J. Kang, A.~F. Vasseur, and Y.~Wang.
\newblock {$L^2$}-contraction of large planar shock waves for multi-dimensional scalar viscous conservation laws.
\newblock {\em J. Differential Equations}, 267(5):2737--2791, 2019.

\bibitem{KVW-CMP}
M.-J. Kang, A.~F. Vasseur, and Y.~Wang.
\newblock Uniqueness of a planar contact discontinuity for 3{D} compressible {E}uler system in a class of zero dissipation limits from {N}avier-{S}tokes-{F}ourier system.
\newblock {\em Comm. Math. Phys.}, 384(3):1751--1782, 2021.

\bibitem{KVW-NSF}
M.-J. Kang, A.~F. Vasseur, and Y.~Wang.
\newblock Time-asymptotic stability of generic riemann solutions for compressible navier-stokes-fourier equations.
\newblock {\em arXiv preprint arXiv:2306.05604}, 2023.

\bibitem{kittel1980thermal}
C.~Kittel and H.~Kroemer.
\newblock {\em Thermal physics}.
\newblock Macmillan, 1980.

\bibitem{kittel2018introduction}
C.~Kittel and P.~McEuen.
\newblock {\em Introduction to solid state physics}.
\newblock John Wiley \& Sons, 2018.

\bibitem{klimontovich1992need}
Y.~L. Klimontovich.
\newblock On the need for and the possibility of a unified description of kinetic and hydrodynamic processes.
\newblock {\em Theoretical and Mathematical Physics}, 92(2):909--921, 1992.

\bibitem{klimontovich1993hamiltonian}
Y.~L. Klimontovich.
\newblock From the hamiltonian mechanics to a continuous media. dissipative structures. criteria of self-organization.
\newblock {\em Theoretical and Mathematical Physics}, 96:1035--1056, 1993.

\bibitem{KlinKremlOMM20}
C.~Klingenberg, O.~Kreml, V.~M\'acha, and S.~Markfelder.
\newblock Shocks make the {R}iemann problem for the full {E}uler system in multiple space dimensions ill-posed.
\newblock {\em Nonlinearity}, 33(12):6517--6540, 2020.

\bibitem{Krupa21}
S.~G. Krupa.
\newblock Finite time stability for the {R}iemann problem with extremal shocks for a large class of hyperbolic systems.
\newblock {\em J. Differential Equations}, 273:122--171, 2021.

\bibitem{Krupa24}
S.~G. Krupa.
\newblock Finite time bv blowup for liu-admissible solutions to $ p $-system via computer-assisted proof.
\newblock {\em arXiv preprint arXiv:2403.07784}, 2024.

\bibitem{KrupaNonT4}
S.~G. Krupa and L.~Sz\'ekelyhidi, Jr.
\newblock Nonexistence of {$T_4$} configurations for hyperbolic systems and the {L}iu entropy condition.
\newblock {\em Adv. Math.}, 454:Paper No. 109856, 49, 2024.

\bibitem{landau2013statistical}
L.~D. Landau and E.~M. Lifshitz.
\newblock {\em Statistical Physics: Volume 5}, volume~5.
\newblock Elsevier, 2013.

\bibitem{LegerVasseur11}
N.~Leger and A.~F. Vasseur.
\newblock Relative entropy and the stability of shocks and contact discontinuities for systems of conservation laws with non-{BV} perturbations.
\newblock {\em Arch. Ration. Mech. Anal.}, 201(1):271--302, 2011.

\bibitem{LiuYang99}
T.-P. Liu and T.~Yang.
\newblock {$L^1$} stability for {$2\times 2$} systems of hyperbolic conservation laws.
\newblock {\em J. Amer. Math. Soc.}, 12(3):729--774, 1999.

\bibitem{Riemann1860}
B.~Riemann.
\newblock {\em {\"U}ber die Fortpflanzung ebener Luftwellen von endlicher Schwingungsweite}, volume~8.
\newblock Verlag der Dieterichschen Buchhandlung, 1860.

\bibitem{DV}
D.~Serre and A.~F. Vasseur.
\newblock About the relative entropy method for hyperbolic systems of conservation laws.
\newblock {\em Contemporary Mathematics}, 658:237--248, 2016.

\bibitem{Vasseur16}
A.~F. Vasseur.
\newblock Relative entropy and contraction for extremal shocks of conservation laws up to a shift.
\newblock In {\em Recent advances in partial differential equations and applications}, volume 666 of {\em Contemp. Math.}, pages 385--404. Amer. Math. Soc., Providence, RI, 2016.

\bibitem{VincentiKruger66}
W.~G. Vincenti, C.~H. Kruger~Jr., and T.~Teichmann.
\newblock Introduction to physical gas dynamics, 1966.

\end{thebibliography}

\end{document}